\DeclareRobustCommand{\greektext}{%
  \fontencoding{LGR}\selectfont\def\encodingdefault{LGR}}
\DeclareRobustCommand{\textgreek}[1]{\leavevmode{\greektext #1}}
\numberwithin{equation}{section}
\numberwithin{figure}{section}
\newcommand{\lyxaddress}[1]{
\par {\raggedright #1
\vspace{1.4em}
\noindent\par}
}
  \theoremstyle{plain}
  \newtheorem{thm}{\protect\theoremname}[section]
  \theoremstyle{plain}
  \newtheorem{cor}{\protect\corollaryname}[section]
 \theoremstyle{definition}
 \newtheorem*{defn*}{\protect\definitionname}
  \theoremstyle{remark}
  \newtheorem*{rem*}{\protect\remarkname}
  \theoremstyle{definition}
  \newtheorem{defn}{\protect\definitionname}[section]
  \theoremstyle{plain}
  \newtheorem{lem}{\protect\lemmaname}[section]
  \theoremstyle{plain}
  \newtheorem{prop}{\protect\propositionname}[section]
\newenvironment{customprop}[1]
  {\innercustomprop}
  {\endinnercustomprop}
  \providecommand{\definitionname}{Definition}
  \providecommand{\lemmaname}{Lemma}
  \providecommand{\propositionname}{Proposition}
  \providecommand{\remarkname}{Remark}
\providecommand{\corollaryname}{Corollary}
\providecommand{\theoremname}{Theorem}
\begin{document}
\title{The characteristic initial-boundary value problem for the\\ Einstein--massless Vlasov system in spherical symmetry}

\author{Georgios Moschidis}
\maketitle

\lyxaddress{University of California Berkeley, Department of Mathematics, Evans
Hall, Berkeley, CA 94720-3840, United States, \tt gmoschidis@berkeley.edu}
\begin{abstract}
In this paper, we initiate the study of the asymptotically AdS initial-boundary
value problem for the Einstein\textendash massless Vlasov system with
$\Lambda<0$ in spherical symmetry. We will establish the existence
and uniqueness of a maximal future development for the characteristic
initial-boundary value problem in the case when smooth initial data
are prescribed on a future light cone $\mathcal{C}^{+}$ emanating
from a point on the center of symmetry $\{r=0\}$ and a reflecting
boundary condition is imposed on conformal infinity $\mathcal{I}$.
We will then prove a number of continuation criteria for smooth solutions
of the spherically symmetric Einstein\textendash massless Vlasov system,
under the condition that the ratio $2m/r$ remains small in a neighborhood
of $\{r=0\}$. Finally, we will establish a Cauchy stability statement
for Anti-de~Sitter spacetime as a solution of the spherically symmetric
Einstein\textendash massless Vlasov system under initial perturbations
which are small only with respect to a low regularity, scale invariant
norm $||\cdot||$. This result will imply, in particular, a long time
of existence statement for $||\cdot||$-small initial data. 

This paper provides the necessary tools for addressing the AdS instability
conjecture in the setting of the spherically symmetric Einstein\textendash massless
Vlasov system, a task which is carried out in our companion paper
\cite{MoschidisVlasov}. However, the results of this paper are also
of independent interest. 
\end{abstract}
\tableofcontents{}

\section{Introduction}

In recent years, the study of the geometry and dynamics of asymptotically
Anti-de~Sitter solutions $(\mathcal{M}^{3+1},g)$ to the vacuum Einstein
equations 
\begin{equation}
Ric_{\mu\nu}-\frac{1}{2}Rg_{\mu\nu}+\Lambda g_{\mu\nu}=0\label{eq:VacuumIntro}
\end{equation}
with a \emph{negative} cosmological constant $\Lambda$ has been the
subject of intense ongoing research. In the high energy physics literature,
this surge of interest was mainly motivated by the \emph{AdS/CFT correspondence}
conjecture proposed by Maldacena, Gubser\textendash Klebanov\textendash Polyakov
and Witten \cite{Maldacena,gubser1998gauge,witten1998anti}; see \cite{AGMOO2000,Hartnoll2009,AmmonErdmenger}
and references therein. 

A distinctive feature of any asymptotically AdS spacetime $(\mathcal{M},g)$
is the presence of a \emph{conformal boundary} $\mathcal{I}$ at infinity,
which has the conformal structure of a timelike hypersurface (see
\cite{HawkingEllis1973}). In view of the timelike character of $\mathcal{I}$,
the appropriate framework for the study of asymptotically AdS solutions
$(\mathcal{M},g)$ of (\ref{eq:VacuumIntro}) is that of an initial-boundary
value problem, roughly formulated as follows:
\begin{itemize}
\item Initial data for $g$ are prescribed in the form of \emph{Cauchy}
or \emph{characteristic} data, satisfying, in each case, the associated
constraint equations for (\ref{eq:VacuumIntro}).
\item Boundary conditions are imposed asymptotically on $\mathcal{I}$,
with the requirement that the initial data and the boundary conditions
satisfy a certain set of compatibility conditions asymptotically on
the initial data hypersurface. 
\end{itemize}
The well-posedness of the asymptotically AdS initial-boundary value
problem for (\ref{eq:VacuumIntro}) was first addressed by Friedrich
\cite{Friedrich1995}. In particular, \cite{Friedrich1995} showed
that, for any suitably regular, asymptotically AdS Cauchy data set
$(\text{\textgreek{S}}^{3};\bar{g},k)$ and any smooth Lorentzian
conformal structure on $\mathcal{I}$ (such that a set of compatibility
conditions are satisfied, in an adapted gauge, at $\mathcal{I}\cap\text{\textgreek{S}}$),
there exists, at least locally in time, a unique corresponding solution
$g$ of (\ref{eq:VacuumIntro}), which is conformally regular up to
$\mathcal{I}$. The broad class of boundary conditions on $\mathcal{I}$
encoded in terms of the prescribed conformal structure on $\mathcal{I}$
contains examples both of \emph{reflecting} and of \emph{dissipative}
conditions (see also the discussion in \cite{Friedrich2014,HolzegelLukSmuleviciWarnick}).
For an extension of this result in higher dimensions (providing also
an alternative proof using wave coordinates), see \cite{EncisoKamran2014}.

In the presence of matter fields that are not conformally regular
up to $\mathcal{I}$, analogous well-posedness results for the associated
Einstein\textendash matter systems are only known under symmetry assumptions.
The study of the well-posedness of the initial-boundary value problem
for the \emph{spherically symmetric} Einstein\textendash Klein Gordon
system 
\begin{equation}
\begin{cases}
Ric[g]_{\mu\nu}-\frac{1}{2}R[g]g_{\mu\nu}+\Lambda g_{\mu\nu}=8\pi T_{\mu\nu}[\varphi],\\
\square_{g}\varphi-\mu\varphi=0,\\
T_{\mu\nu}[\varphi]\doteq\partial_{\mu}\varphi\partial_{\nu}\varphi-\frac{1}{2}g_{\mu\nu}\partial^{\alpha}\varphi\partial_{\alpha}\varphi,
\end{cases}\label{eq:EinsteinKlein--Gordon}
\end{equation}
 was initiated by Holzegel\textendash Smulevici \cite{HolzSmul2012},
who showed that, when the Klein\textendash Gordon mass $\mu$ satisfies
the Breitenlohner\textendash Freedman bound $\mu>-\frac{3}{4}|\Lambda|$,
the characterisitic initial-boundary value problem for (\ref{eq:EinsteinKlein--Gordon})
with \emph{Dirichlet} conditions on $\mathcal{I}$ is well-posed.
In \cite{HolzWarn2015}, this result was extended to a more general
class of boundary conditions on $\mathcal{I}$, including conditions
which require the initial energy of the scalar field $\varphi$ to
be \emph{infinite}.\footnote{In the case of the \emph{linear} Klein\textendash Gordon equation
on general asymptotically AdS backgrounds, well-posedness under Dirichlet
boundary conditions with no symmetry assumptions was established earlier
by Vasy \cite{VasyAdS}; well-posedness for a broader class of boundary
conditions (again, without any symmetry conditions) was shown Warnick
\cite{Warnick2013}.}

In this paper, we initiate the study of the asymptotically AdS initial-boundary
value problem for the \emph{Einstein\textendash massless Vlasov} system
in spherical symmetry. We will consider the case when initial data
are prescribed on a future light cone emanating from a point in the
center of symmetry and a reflecting boundary condition is imposed
on $\mathcal{I}$. In this setting, we will establish the well-posedness
of the initial-boundary value problem for \emph{smooth} initial data.
We will also obtain a number of extension principles for smooth developments,
presented in terms of scale invariant quantities associated to the
evolution. 

Our final result will be a Cauchy stability statement for Anti-de~Sitter
spacetime as a solution of the Einstein\textendash massless Vlasov
system, under spherically symmetric perturbations which are initially
small with respect to a low-regularity, scale-invariant norm. This
result provides the necessary first step in the direction of addressing
the AdS instability conjecture in the setting of the spherically symmetric
Einstein\textendash massless Vlasov system, with respect to a low
regularity initial data topology; this task is carried out in our
companion paper \cite{MoschidisVlasov}.

\subsection{\label{subsec:The-main-results}Statement of the main results}

The Einstein\textendash massless Vlasov system for a $3+1$ dimensional
Lorentzian manifold $(\mathcal{M},g)$ and a non-negative measure
$f$ on $T\mathcal{M}$ takes the form 

\begin{equation}
\begin{cases}
Ric[g]_{\mu\nu}-\frac{1}{2}R[g]g_{\mu\nu}+\Lambda g_{\mu\nu}=8\pi T_{\mu\nu}[f],\\
\mathcal{L}^{(g)}f=0,\\
supp(f)\subset\mathcal{N}^{+}
\end{cases}\label{eq:EinsteinSystemMasslessVlasov}
\end{equation}
where $T_{\mu\nu}[f]$ is the Vlasov energy momentum tensor (defined
by the relation (\ref{eq:EnergyMomentumTensor})), $\mathcal{L}^{(g)}\in\text{\textgreek{G}}(TT\mathcal{M})$
is the geodesic spray of $g$ and $\mathcal{N}^{+}\subset T\mathcal{M}$
is the set of future directed null vectors. The trivial solution of
(\ref{eq:EinsteinSystemMasslessVlasov}) is \emph{Anti-de~Sitter}
spacetime $(\mathcal{M}_{AdS},g_{AdS};0)$, where $\mathcal{M}_{AdS}\simeq\mathbb{R}^{3+1}$
and, in the standard double null coordinate chart $(u,v,\theta,\varphi)$
on $\mathcal{M}_{AdS}$: 
\[
g_{AdS}=-\Omega_{AdS}^{2}(u,v)dudv+r_{AdS}^{2}(u,v)\big(d\theta^{2}+\sin^{2}\theta d\varphi^{2}\big),
\]
where
\begin{align}
r_{AdS}(u,v) & =\sqrt{-\frac{3}{\Lambda}}\tan\Big(\frac{1}{2}\sqrt{-\frac{\Lambda}{3}}(v-u)\Big),\label{eq:AdSMetricValues-1}\\
\Omega_{AdS}^{2}(u,v) & =1-\frac{1}{3}\Lambda r_{AdS}^{2}(u,v).\nonumber 
\end{align}

In this paper, we will only consider the case when$\Lambda<0$ and
$(\mathcal{M},g;f)$ is \emph{spherically symmetric} and \emph{asymptotically
Anti-de~Sitter}, with regular axis of symmetry $\mathcal{Z}\neq\emptyset$
and regular conformal infinity $\mathcal{I}$; for the relevant definitions,
see Section \ref{sec:The-Einstein--Vlasov-system}. On this class
of spacetimes, a \emph{reflecting} boundary condition for (\ref{eq:EinsteinSystemMasslessVlasov})
can be naturally defined by the requirement that $f$ is conserved
along the reflection of null geodesics off $\mathcal{I}$ (see Section
\ref{subsec:Reflective-boundary-conditions}). 

We will establish a number of results related to the dynamics of spherically
symmetric and asymptotically AdS solutions of the system (\ref{eq:EinsteinSystemMasslessVlasov}),
including:
\begin{itemize}
\item A fundamental well-posedness result for the spherically symmetric,
asymptotically AdS characteristic initial-boundary value problem for
(\ref{eq:EinsteinSystemMasslessVlasov}), with initial data prescribed
on a future light cone,
\item A number of continuation criteria for smooth solutions of (\ref{eq:EinsteinSystemMasslessVlasov})
in spherical symmetry and
\item A Cauchy stability statement for the trivial solution of (\ref{eq:EinsteinSystemMasslessVlasov})
in a low regularity, scale invariant topology in spherical symmetry.
\end{itemize}
We will now proceed to briefly present the main results of this paper.

\subsubsection{\label{subsec:Well-posedness-for-smoothly}Well-posedness for smoothly
compatible characteristic initial data}

In this paper, we will consider the spherically symmetric characteristic
initial-boundary value problem for (\ref{eq:EinsteinSystemMasslessVlasov}),
obtained by prescribing characteristic initial data for $g$, $f$
on a future light cone $\mathcal{C}^{+}$ emanating from a point $p\in\mathcal{Z}$
and imposing the reflecting boundary condition on $\mathcal{I}$.
In terms of a spherically symetric, double null coordinate chart $(u,v,\theta,\varphi)$
on $(\mathcal{M},g)$, where the level sets of the optical functions
$u$ and $v$ are future light cones and past light cones, respectively,
of points lying on $\mathcal{Z}$, a characteristic initial data set
for $g$, $f$ on $\mathcal{C}^{+}$ consists of a triplet $(r,\Omega^{2};f)|_{u=0}$
defined along $u=0$ and satisfying the \emph{characteristic constraint
equations} for (\ref{eq:EinsteinSystemMasslessVlasov}), where $(r,\Omega^{2})|_{u=0}$
are the restrictions on $u=0$ of the components of a spherically
symmetric metric
\[
g=-\Omega^{2}(u,v)dudv+r^{2}(u,v)\big(d\theta^{2}+\sin^{2}\theta d\varphi^{2}\big).
\]

We will obtain the following fundamental well-posedness result:
\begin{thm}
\label{thm:Well-Posedness-Intro} Let $(r,\Omega^{2};f)|_{u=0}$ be
a regular, spherically symmetric and asymptotically AdS characteristic
initial data set for (\ref{eq:EinsteinSystemMasslessVlasov}) on $\{u=0\}$,
admitting a smooth expression in a smoothly compatible gauge. Assume,
in addition, $f|_{u=0}$ has bounded support in phase space. Then,
there exists a unique, maximal solution $(\mathcal{M},g;f)$ of (\ref{eq:EinsteinSystemMasslessVlasov})
inducing the given initial data on $\{u=0\}\subset\mathcal{M}$ and
satisfying the reflecting boundary condition on $\mathcal{I}$.
\end{thm}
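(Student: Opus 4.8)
The plan is to construct the maximal development by a standard two-step procedure: first establish local existence and uniqueness near the vertex $p\in\mathcal{Z}$ and along $\mathcal{C}^+$, then propagate the solution forward by a bootstrap/continuation argument and patch together the resulting local pieces. For the local theory, I would work in the double null gauge $g=-\Omega^2\,du\,dv+r^2\,d\sigma_{\mathbb{S}^2}$ and rewrite the system (\ref{eq:EinsteinSystemMasslessVlasov}) in spherical symmetry as a coupled system of transport equations: the Einstein equations reduce (via the Hawking mass $m$ and the standard reduction of $T_{\mu\nu}[f]$ to its spherically symmetric components) to first-order equations for $\partial_u r$, $\partial_v r$, $\partial_u\log\Omega^2$, $\partial_v\log\Omega^2$ along the two null directions, while $\mathcal{L}^{(g)}f=0$ becomes a transport equation for $f$ along the geodesic flow, which in spherical symmetry is parametrized by the momentum variables conjugate to $(r,v-u)$ with the angular momentum $\ell$ conserved along each null geodesic. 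Because $f|_{u=0}$ has bounded support in phase space, the momentum support stays in a compact set on any region where $r$ is bounded away from $0$ and $\infty$ and $\Omega^2$ is controlled, so the characteristics of the transport operator are Lipschitz and the usual method of characteristics applies.

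The local existence near the vertex requires care because the spherically symmetric metric is singular at $r=0$; here I would follow the approach used for the Einstein--scalar field system (Christodoulou, and its AdS adaptation by Holzegel--Smulevici) of introducing a renormalized coordinate near $\mathcal{Z}$ in which the equations become regular, setting up a contraction mapping in a small characteristic rectangle with one side on $\mathcal{C}^+$ and one side on the axis, using the smooth compatibility of the data to guarantee the iteration closes and produces a smooth solution with $r$ extending smoothly across $\mathcal{Z}$. Near conformal infinity $\mathcal{I}$ the metric degenerates as $r\to\infty$; here the asymptotically-AdS structure of the data and the reflecting boundary condition are imposed, and I would use the Holzegel--Smulevici/Holzegel--Warnick-type renormalized variables (subtracting the AdS values (\ref{eq:AdSMetricValues-1})) in which $r$ is replaced by a bounded coordinate and the equations again become a regular transport system up to the boundary; the reflecting condition on $f$ --- conservation of $f$ along null geodesics reflected off $\mathcal{I}$ --- is then realized as a well-defined boundary map on the momentum variables, and one checks it is compatible with the remaining constraints and preserves smoothness. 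Combining the vertex, interior and boundary local results and uniqueness gives a local solution on a full neighborhood of $\mathcal{C}^+$ in the maximal domain of dependence.

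To pass from local to maximal, I would define the maximal development as the union of all smooth solutions attaining the given data and satisfying the boundary condition, ordered by inclusion; uniqueness (which follows from the uniqueness in each local problem together with a standard connectedness argument, since two solutions agree on a relatively open and closed subset of the common domain) guarantees this union is itself a solution, and it is maximal by construction. The existence of such a maximal element among smooth solutions, together with the well-definedness of the domain as a subset of a canonical "ambient" spacetime, is routine once the local theory is in place, and I would phrase it exactly as in the analogous vacuum or scalar-field statements.

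The main obstacle I anticipate is the boundary analysis at $\mathcal{I}$: one must show that the reflecting boundary condition, phrased geometrically as conservation of $f$ along reflected null geodesics, is compatible with the characteristic constraint equations and propagates smoothness (in the renormalized gauge) from the data all the way up to $\mathcal{I}$, and that the momentum support of $f$ does not escape to infinity in finite time near $\mathcal{I}$ --- this requires quantitative control on $r$ and $\Omega^2$ near $\mathcal{I}$ of the type furnished by the asymptotically-AdS assumptions, and is the place where the smooth-compatibility hypothesis is genuinely used. The vertex regularity is a close second in difficulty, but it is essentially the same analysis as in the $\Lambda=0$ scalar-field case and should go through with minor modifications.
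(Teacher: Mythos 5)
Your proposal follows essentially the same route as the paper: a three-region decomposition (a characteristic triangle at the axis handled by an iteration/contraction scheme with the coordinate singularity at $r=0$ controlled via regularized variables and the smooth-compatibility condition, an intermediate double-characteristic region, and a neighborhood of $\mathcal{I}$ treated with renormalized quantities such as $\tan^{-1}(\sqrt{-\Lambda/3}\,r)$ and $\Omega^{2}/(1-\tfrac{1}{3}\Lambda r^{2})$ together with the reflecting condition realized as a swap of the null momentum components), with phase-space support bounds propagated along the geodesic flow, followed by gluing and the standard union-of-developments construction of the maximal solution from local uniqueness. You also correctly identify the two genuine difficulties (axis regularity and the boundary analysis at $\mathcal{I}$, including control of the momentum support), so the sketch is consistent with the paper's argument.
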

For a more detailed statement of Theorem \ref{thm:Well-Posedness-Intro},
see Theorem \ref{thm:LocalExistenceUniqueness} and Corollary \ref{cor:MaximalDevelopment}.
For the definition of the smooth compatibility gauge condition for
$(r,\Omega^{2};f)|_{u=0}$, as well as for a detailed discussion on
the gauge choices involved in the statement of Theorem \ref{thm:Well-Posedness-Intro},
see Section \ref{subsec:Asymptotically-AdS-Initial-Data}.

\subsubsection{Continuation criteria for smooth solutions}

Having established the well-posedness of the characteristic initial-boundary
value problem for (\ref{eq:EinsteinSystemMasslessVlasov}), we will
proceed to obtain a number of continuation criteria for smooth solutions
$(\mathcal{M},g;f)$ of (\ref{eq:EinsteinSystemMasslessVlasov}).
Our most technically involved continuation criterion, providing an
extension principle in a neighborhood of the axis $\mathcal{Z}$,
will be the following:
\begin{thm}
\label{thm:ExtensionPrincipleAxisIntro} Let $(\mathcal{M},g;f)$
be a smooth spherically symmetric solution of (\ref{eq:EinsteinSystemMasslessVlasov})
with a non-empty axis $\mathcal{Z}$, and let $J^{-}(\mathcal{Z})\subset\mathcal{M}$
be the past domain of influence of $\mathcal{Z}$. Let also $(u,v)$
be a double null coordinate pair on $(\mathcal{M},g)$ as in Section
\ref{subsec:Well-posedness-for-smoothly}. Assume that there exists
a point $p\in\mathcal{Z}$ such that the following conditions are
satisfied on the light cone $\{u=u(p)\}$:

\begin{itemize}

\item $\{u=u(p)\}\cap J^{-}(\mathcal{Z})$ has compact closure in
$\{u=u(p)\}$,

\item $f|_{\{u=u(p)\}\cap J^{-}(\mathcal{Z})}$ has bounded support
in phase space

\end{itemize}

Assume, moreover, that $(\mathcal{M},g;f)$ satisfies the scale invariant
condition 
\[
\sup_{J^{-}(\mathcal{Z})\cap\{u\ge u(p)\}}\frac{2m}{r}\ll1,
\]
where the Hawking mass $m$ is defined by (\ref{eq:DefinitionHawkingMass}).
Then, there exists a smooth, spherically symmetric solution $(\widetilde{\mathcal{M}},\tilde{g};\tilde{f})$
of (\ref{eq:EinsteinSystemMasslessVlasov}) which strictly extends
$(\mathcal{M},g;f)$ to the future along $\mathcal{Z}$. 
\end{thm}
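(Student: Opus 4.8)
The plan is to argue by contradiction: suppose $(\mathcal{M},g;f)$ admits no strict extension to the future along $\mathcal{Z}$. Then the center $\mathcal{Z}$ possesses a future limit point $q\notin\mathcal{M}$, playing the role of a first singular point of the evolution along the center, and in the double null coordinates $(u,v)$ the region $\mathcal{R}\doteq J^{-}(\mathcal{Z})\cap\{u\ge u(p)\}$ is bounded below by the segment $\{u=u(p)\}\cap J^{-}(\mathcal{Z})$, on one side by $\mathcal{Z}=\{r=0\}$, and terminates at $q$ along the center. Since $\{u=u(p)\}\cap J^{-}(\mathcal{Z})$ has compact closure in $\{u=u(p)\}$, the region $\mathcal{R}$ stays a definite distance from $\mathcal{I}$, and by finite speed of propagation the evolution in a neighborhood of $q$ is determined by data on a compact portion of $\{u=u(p)\}$; the whole argument is therefore local near $q$ and never interacts with conformal infinity.

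The heart of the proof is a family of a priori estimates on $\mathcal{R}$, closed by a continuity argument in which the smallness of $2m/r$ is decisive. The center regularity gives $\partial_v r>0$ and $\partial_u r<0$ along $\mathcal{Z}$; rearranging the definition (\ref{eq:DefinitionHawkingMass}) of the Hawking mass yields $-4\Omega^{-2}\partial_u r\,\partial_v r=1-\tfrac{2m}{r}-\tfrac13\Lambda r^2$, and since $\Lambda<0$ and $\tfrac{2m}{r}\ll1$ the right-hand side is bounded below by a positive constant and above by $1+\tfrac13|\Lambda|R_0^2$, where $R_0\doteq\sup_{\{u=u(p)\}\cap J^{-}(\mathcal{Z})}r<\infty$. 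Hence there are no trapped or anti-trapped spheres, the signs of $\partial_v r$ and $\partial_u r$ propagate by continuity into all of $\mathcal{R}$, so $r$ is monotone along null rays and $0\le r\le R_0$, and then $0\le m\le\tfrac12(2m/r)R_0$ is bounded. Integrating the wave equation for $\log\Omega^2$ twice — from $\mathcal{Z}$, where the center regularity fixes $\Omega^2$, and from $\{u=u(p)\}$, where the data is smooth — with a source controlled by $r$, $m$ and $T_{uv}[f]$, gives two-sided bounds on $\Omega^2$; together with the identity above this controls $\partial_v r$ and $-\partial_u r$ from above, and from below away from $\mathcal{Z}$.

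It remains to control $f$ and to close the loop with the connection coefficients, which I would do by a bootstrap. The null structure equations for $\partial_u\log\Omega^2$ and $\partial_v\log\Omega^2$ are sourced by $T_{uu}[f]$, $T_{vv}[f]$, $T_{uv}[f]$, which are mass-shell integrals of quantities quadratic in the particle momenta against $f$; conversely, the characteristics of the Vlasov transport equation — the geodesic equations for those momenta — are driven by the connection coefficients. Bounds on the connection coefficients therefore bound the growth of momenta along geodesics, hence bound the phase-space support of $f$ (finite on $\{u=u(p)\}$ by hypothesis and conserved along the reflected null flow at $\mathcal{Z}$), hence bound $T_{\mu\nu}[f]$, hence — via Gr\"onwall — close the estimates for the connection coefficients. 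The scale-invariant smallness $2m/r\ll1$ is precisely what makes this loop contractive: it excludes the focusing of null geodesics near the center that would otherwise permit unbounded amplification of momenta, the mechanism behind Christodoulou's naked singularities for the scalar field. Higher-order bounds then follow by commuting with $\partial_u$, $\partial_v$ and the phase-space vector fields. Near $\mathcal{Z}$ itself, where the double null gauge degenerates, I would use the center regularity together with the reflecting condition for $f$ to show that $r$, $\Omega^2$, $f$ extend smoothly up to $\mathcal{Z}$ in the smoothly compatible gauge, with $\Omega^2$ bounded away from $0$ and $\infty$; combined with the uniform estimates, all quantities extend smoothly to a punctured neighborhood of $q$ in $\overline{\mathcal{R}}$.

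To conclude, choose $p'\in\mathcal{Z}$ below and sufficiently close to $q$. By the above, the restriction of $(r,\Omega^2;f)$ to $\{u=u(p')\}\cap\overline{\mathcal{R}}$ is a regular, spherically symmetric characteristic initial data set admitting a smooth expression in a smoothly compatible gauge, with $f$ of bounded phase-space support. Applying the local existence and uniqueness theorem (Theorem \ref{thm:LocalExistenceUniqueness}), together with finite speed of propagation, to the characteristic initial value problem with this data in the triangular domain of dependence of the compact cone $\{u=u(p')\}\cap\overline{\mathcal{R}}$ together with the center — a region not seeing $\mathcal{I}$ — produces a smooth spherically symmetric solution on a domain containing $q$ and a portion of the center strictly to the future of $q$. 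Patching it with $(\mathcal{M},g;f)$ via the uniqueness part of Theorem \ref{thm:LocalExistenceUniqueness} gives the desired extension $(\widetilde{\mathcal{M}},\tilde g;\tilde f)$, contradicting non-extendibility. The main obstacle is the coupled bootstrap of the previous paragraph — propagating bounds on the connection coefficients and on the phase-space support of $f$ simultaneously, all the way up to the terminal center point $q$ — together with the regularity analysis at $\mathcal{Z}$; both depend essentially on the smallness of $2m/r$, and arranging the constants to close uniformly up to $q$ is where the real work lies.
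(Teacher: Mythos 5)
Your high-level skeleton (localize near the terminal point of the axis, prove a priori estimates there, then invoke the local well-posedness theory to extend and glue) is the same as the paper's, and the preliminary bounds you extract from $2m/r\ll1$ (absence of trapped/antitrapped spheres, monotonicity and boundedness of $r$, boundedness of $m$) are correct, modulo the small slip that $-4\Omega^{-2}\partial_u r\,\partial_v r=1-\tfrac{2m}{r}$ with the Hawking mass $m$ (your formula with the extra $-\tfrac13\Lambda r^2$ holds for the renormalised mass $\tilde m$), and the remark that there is no ``reflected null flow at $\mathcal{Z}$'' --- geodesics with $l>0$ never reach the axis and radial ones pass through it.

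The genuine gap is in the central bootstrap, which as described would not close. Smooth extendibility of the metric at the axis vertex requires $\tilde m=O(r^3)$ (equivalently, $\tilde m/r^3$ and $T_{\mu\nu}$ bounded), i.e.\ you must \emph{gain two powers of $r$} over the hypothesis $2m/r\ll1$; a Gr\"onwall loop of the form ``connection coefficients $\Rightarrow$ momenta $\Rightarrow$ $T_{\mu\nu}$ $\Rightarrow$ connection coefficients'' contains no mechanism for this gain, and moreover the relevant coefficients ($\partial_v r/r$ in the geodesic equations, $\tilde m/r^3\,\Omega^2$ and $T_{uv}$ in the equation for $\log\Omega^2$) are a priori singular like $1/r$ and $1/r^2$ at the vertex, so the Gr\"onwall factor is not integrable up to $q$ without further structure. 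The paper's resolution, which is the real content of the theorem, runs as follows: the sign of $\tfrac12\tfrac{6\tilde m/r-1}{r^2}\Omega^2-24\pi T_{uv}$ (negative precisely because $\delta_0\le\tfrac13$, which is where the quantitative smallness threshold enters) gives a \emph{one-sided} bound on $\Omega^2\dot\gamma^u$ along geodesics in $\mathrm{supp}(f)$, which combined with the mass-shell relation yields a lower bound $\inf_\gamma r\gtrsim l^{1+O(\delta_0)}$ in terms of the angular momentum $l$; hence only geodesics with $l\lesssim r^{1-O(\delta_0)}$ contribute to $T_{\mu\nu}$ at radius $r$, and integrating the conserved fixed-$l$ particle currents $N^{(\mathcal{V};l)}$ over the region near the vertex upgrades $\tilde m\lesssim r$ first to $\tilde m\lesssim r^{2-O(\delta_0)}$, then (after a separate estimate showing geodesics cannot linger near the vertex) to $\tilde m\lesssim r^{3-O(\delta_0)}$, and finally to $\tilde m/r^3\in L^\infty$ together with $L^\infty$ and Lipschitz bounds on $T_{\mu\nu}$ (the latter via a Jacobi-field argument in Cartesian coordinates). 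None of this iterative improvement of the power of $r$, nor the angular-momentum threshold that drives it, appears in your proposal, and without it the estimates do not reach the vertex $q$.
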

For a more detailed statement of Theorem \ref{thm:ExtensionPrincipleAxisIntro},
see Theorem \ref{thm:ExtensionPrinciple}.

As a Corollary of Proposition \ref{thm:ExtensionPrincipleAxisIntro}
(and a number of related extension principles), we will infer the
following general continuation criterion for the domain of outer communications
of $(\mathcal{M},g;f)$:
\begin{cor}
\label{cor:ExtensionPrincipleIntro} Let $(\mathcal{M},g;f)$ be a
smooth, spherically symmetric and asymptotically AdS solution of (\ref{eq:EinsteinSystemMasslessVlasov})
satisfying the reflecting boundary condition on conformal infinity
$\mathcal{I}$, such that $(\mathcal{M},g,f)$ is the maximal future
development of a characteristic initial data set $(r,\Omega^{2};f)|_{\{u=0\}}$
with $f|_{\{u=0\}}$ of bounded support in phase space, as in Theorem
\ref{thm:Well-Posedness-Intro}. Let us also fix a double null pair
$u,v$ satisfying the additional gauge condition
\begin{align}
u & =v\text{ on }\mathcal{Z},\label{eq:NormalizationIntro}\\
u & =v-v_{\mathcal{I}}\text{ on }\mathcal{I}\nonumber 
\end{align}
for some given $v_{\mathcal{I}}>0$.

Assume that, for some $u_{*}>0$, the projection of $\mathcal{M}$
in the $(u,v)$-plane contains the domain 
\[
\mathcal{U}_{u_{*};v_{\mathcal{I}}}\doteq\{0\le u<u_{*}\}\cap\{u\le v<u+v_{\mathcal{I}}\}.
\]
If $(\mathcal{M},g,f)$ satisfies the scale invariant conditions

\begin{equation}
\sup_{\mathcal{U}_{u_{*};v_{\mathcal{I}}}}\frac{2m}{r}<1\label{eq:NoTrappingContinuity-1}
\end{equation}
and
\begin{equation}
\limsup_{(u,v)\rightarrow(u_{*},u_{*})}\frac{2m}{r}\ll1,\label{eq:NoConcentrationContinuity-1}
\end{equation}
where the Hawking mass $m$ is defined by (\ref{eq:DefinitionHawkingMass}),
then there exists a $\bar{u}_{*}>u_{*}$ such that the projection
of $\mathcal{M}$ in the $(u,v)$-plane contains the larger domain
$\mathcal{U}_{\bar{u}_{*};v_{\mathcal{I}}}\supset\mathcal{U}_{u_{*};v_{\mathcal{I}}}$.
\end{cor}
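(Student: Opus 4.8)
The plan is to extend $(\mathcal{M},g;f)$ across the entire closed null segment $\mathcal{T}_{u_*}\doteq\{u=u_*\}\cap\{u_*\le v\le u_*+v_{\mathcal{I}}\}$ — the ``top'' of the strip $\mathcal{U}_{u_*;v_{\mathcal{I}}}$ — by extending it across a neighborhood of each point of $\mathcal{T}_{u_*}$, gluing the local extensions using uniqueness, upgrading to an extension over a genuinely taller strip using compactness of $\mathcal{T}_{u_*}$, and finally invoking maximality of $(\mathcal{M},g;f)$ to conclude that this taller strip was already contained in $\mathcal{M}$. One first notes that $\overline{\mathcal{U}_{u_*;v_{\mathcal{I}}}}$ is compact in the $(u,v)$-plane, so the only possible obstruction to enlarging the strip is a degeneration of the solution as $u\uparrow u_*$, localized near some point of $\mathcal{T}_{u_*}$.

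The extension across $\mathcal{T}_{u_*}$ is carried out in three regimes. \emph{Away from the axis and away from $\mathcal{I}$}, i.e.\ across points $(u_*,v_0)$ with $u_*<v_0<u_*+v_{\mathcal{I}}$, the hypothesis \eqref{eq:NoTrappingContinuity-1} ($2m/r<1$, no marginally trapped surfaces) together with the compactness of $\overline{\mathcal{U}_{u_*;v_{\mathcal{I}}}}$ yields two-sided bounds on $r,\Omega^{2},\partial_u r,\partial_v r$ on the causal past of $(u_*,v_0)$, so the relevant interior extension principle (one of the related extension principles alluded to in the statement of the Corollary) applies and produces a smooth spherically symmetric extension across $(u_*,v_0)$; here one also uses that $f$ has bounded phase-space support on $\mathcal{U}_{u_*;v_{\mathcal{I}}}$, discussed below. \emph{Across the corner} $(u_*,u_*)\in\mathcal{Z}$ one applies Theorem \ref{thm:ExtensionPrinciple} (the detailed form of Theorem \ref{thm:ExtensionPrincipleAxisIntro}), with $p\in\mathcal{Z}$ chosen so that $u(p)$ is sufficiently close to $u_*$: the required compact closure of $\{u=u(p)\}\cap J^{-}(\mathcal{Z})$ in $\{u=u(p)\}$ and the bounded phase-space support of $f$ there follow from compactness of $\overline{\mathcal{U}}$ and from the bounded-support property of $f$, while $\sup_{J^{-}(\mathcal{Z})\cap\{u\ge u(p)\}}2m/r\ll1$ holds once $u(p)$ is close enough to $u_*$, precisely because of \eqref{eq:NoConcentrationContinuity-1}. \emph{Across the corner} $(u_*,u_*+v_{\mathcal{I}})$ on $\mathcal{I}$ one instead uses the well-posedness of the characteristic initial-boundary value problem with reflecting boundary condition, Theorem \ref{thm:Well-Posedness-Intro}: taking the restriction of $(r,\Omega^{2};f)$ to the outgoing cone $\{u=u_*-\delta\}$ (a smooth, asymptotically AdS characteristic data set with $f$ of bounded phase-space support) as data, the associated reflecting development exists for $u\in[u_*-\delta,u_*-\delta+T)$ with $T>0$ bounded below uniformly in $\delta$ by the a priori bounds above, hence for $\delta$ small it extends $\mathcal{M}$ across a neighborhood of $(u_*,u_*+v_{\mathcal{I}})$.

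The bounded phase-space support of $f$ on $\mathcal{U}_{u_*;v_{\mathcal{I}}}$, which feeds into all three regimes, is itself a non-trivial input: it follows from the bounded support of $f|_{\{u=0\}}$, finite speed of propagation, and the fact that the reflecting boundary condition conserves $f$ along reflected null geodesics without amplifying their momenta, the bound $2m/r<1$ on $\mathcal{U}_{u_*;v_{\mathcal{I}}}$ preventing momentum concentration of the null geodesics in $supp(f)$ inside the strip; this is exactly the type of scale-invariant estimate established earlier in the paper, and it must be run uniformly up to the closure of $\mathcal{U}_{u_*;v_{\mathcal{I}}}$, in particular up to $\mathcal{I}$.

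Gluing the three extensions (consistency guaranteed by the uniqueness parts of Theorems \ref{thm:Well-Posedness-Intro} and \ref{thm:ExtensionPrinciple}) produces a smooth spherically symmetric solution of \eqref{eq:EinsteinSystemMasslessVlasov}, satisfying the reflecting boundary condition and inducing the prescribed data on $\{u=0\}$, defined on an open set containing the compact segment $\mathcal{T}_{u_*}$; by compactness this open set contains $\mathcal{U}_{\bar{u}_*;v_{\mathcal{I}}}$ for some $\bar{u}_*>u_*$. Since $(\mathcal{M},g;f)$ is the maximal development of the data on $\{u=0\}$ satisfying the reflecting boundary condition, this solution is isometric to a subset of $(\mathcal{M},g;f)$, so the $(u,v)$-projection of $\mathcal{M}$ already contains $\mathcal{U}_{\bar{u}_*;v_{\mathcal{I}}}$, which is the claim. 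The main obstacle in this argument is the extension across the axis corner $(u_*,u_*)$: it rests entirely on Theorem \ref{thm:ExtensionPrincipleAxisIntro}, whose proof is the technical heart of the paper and is where the scale-invariant smallness \eqref{eq:NoConcentrationContinuity-1} is genuinely needed. A secondary difficulty is establishing the bounded phase-space support of $f$ uniformly up to $\mathcal{I}$, where the interplay between the reflecting boundary condition and the bound $2m/r<1$ is what keeps the null geodesics in $supp(f)$ from acquiring unbounded momentum after repeated reflections.
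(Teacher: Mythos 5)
Your overall architecture -- extend across the interior of the top slice via the extension principle away from $r=0,\infty$, across the axis corner $(u_*,u_*)$ via Theorem \ref{thm:ExtensionPrinciple} using \eqref{eq:NoConcentrationContinuity-1}, glue by uniqueness, and invoke maximality -- is exactly the paper's proof of Corollary \ref{cor:GeneralContinuationCriterion} (which also first records $\partial_v r>0$, $\partial_u r<0$ and the two-sided bound on $\tilde m$ before applying Theorem \ref{thm:ExtensionPrinciple}, Proposition \ref{prop:ExtensionPrincipleMihalis} with a continuity argument, and then a dedicated extension principle at infinity). The one place where you deviate is the corner $(u_*,u_*+v_{\mathcal{I}})$ on $\mathcal{I}$, and that is where your argument has a genuine gap.

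At that corner you propose to apply Theorem \ref{thm:Well-Posedness-Intro} to the data induced on $\{u=u_*-\delta\}$ and assert that the time of existence $T$ is ``bounded below uniformly in $\delta$ by the a priori bounds above.'' But the local existence time in Theorem \ref{thm:LocalExistenceUniqueness} is controlled by high-order norms of the data (the constants $C_1$ and the smallness of $u_0,u_1,u_2$ in Section \ref{subsec:Local-existence-and} involve $C^2$ bounds on $\log\Omega^2$, $\log\partial_v r$, derivatives of $\bar f$, and the smoothly compatible asymptotic structure at $v=v_{\mathcal{I}}$), whereas the a priori information you have extracted from \eqref{eq:NoTrappingContinuity-1} and compactness is only low-order and scale-invariant. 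Showing that these higher-order norms on the slices $\{u=u_*-\delta\}$ do \emph{not} degenerate as $\delta\to 0$ is precisely the content of Proposition \ref{prop:ExtensionPrincipleInfinity}, whose proof requires the renormalised system \eqref{eq:RenormalisedEquations}, the $C^1$ control of $\log\widetilde{\Omega}^2$ and $\rho$, and the estimate \eqref{eq:FinalBoundGeodesics} on null geodesics maximally extended through reflections; without it your local existence times could shrink to zero and the extension would never cross $u=u_*$. The same circularity affects your treatment of the bounded phase-space support of $f$ up to $\mathcal{I}$: the statement that $2m/r<1$ ``prevents momentum concentration'' after arbitrarily many reflections is the conclusion of \eqref{eq:BoundGeodesicsFromChristoffelPrecise}--\eqref{eq:FinalBoundGeodesics}, which itself rests on the bounds \eqref{eq:BoundDerRho}--\eqref{eq:DerivativesOmegaTilde} established inside Proposition \ref{prop:ExtensionPrincipleInfinity}; it is not an independent input that can be fed into the well-posedness theorem. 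To close the argument you should replace the well-posedness step at that corner by an appeal to the extension principle at $r=\infty$ (or reproduce its estimates).
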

For a more detailed statement of Corollary (\ref{cor:ExtensionPrincipleIntro}),
see Corollary (\ref{cor:GeneralContinuationCriterion}). 

\subsubsection{Cauchy stability of AdS in a scale invariant initial data topology}

Our final result will be a low-regularity Cauchy stability statement
for the trivial solution $(\mathcal{M}_{AdS},g_{AdS};0)$ of (\ref{eq:EinsteinSystemMasslessVlasov})
in spherical symmetry. In particular, we will consider the following
norm on the space of spherically symmetric characteristic initial
data $(r,\Omega^{2};f)|_{\{u=0\}}$ for (\ref{eq:EinsteinSystemMasslessVlasov})
satisfying the conditions of Theorem \ref{thm:Well-Posedness-Intro}:
\begin{defn*}
For any characteristic initial data set $(r,\Omega^{2};f)|_{\{u=0\}}$
as in Theorem \ref{thm:Well-Posedness-Intro}, let $f^{(AdS)}$ be
the solution of the (free) Vlasov field equation 
\[
\mathcal{L}^{(g_{AdS})}f^{(AdS)}=0
\]
 on Anti-de~Sitter spacetime $(\mathcal{M}_{AdS},g_{AdS})$ arising
from initial data $f^{(AdS)}|_{\mathcal{C}_{AdS}^{+}}$ on a future
light cone $\mathcal{C}_{AdS}^{+}\subset\mathcal{M}_{AdS}$ which
are obtained from $f|_{\{u=0\}}$ through the choice of a suitable
normalising gauge condition along $\{u=0\}$. We will define the ``norm''
$||(r,\Omega^{2};f)|_{\{u=0\}}||$ of $(r,\Omega^{2};f)|_{\{u=0\}}$
by the following relation measuring the concentration of energy occuring
in the evolution of the free Vlasov field $f^{(AdS)}$ in the region
$\{u\ge0\}$ of $(\mathcal{M}_{AdS},g_{AdS})$: 
\begin{align}
||(r,\Omega^{2};f)|_{\{u=0\}}||\doteq\sup_{U\ge0} & \int_{U}^{U+\sqrt{-\frac{3}{\Lambda}}\pi}\Big(\frac{r_{AdS}T_{vv}[f^{(AdS)}]}{\partial_{v}r_{AdS}}(U,v)+\frac{r_{AdS}T_{uv}[f^{(AdS)}]}{-\partial_{u}r_{AdS}}(U,v)\Big)\,dv+\label{eq:InitialDataNorm-1}\\
+ & \sup_{V_{*}\ge0}\int_{\max\{0,V-\sqrt{-\frac{3}{\Lambda}}\pi\}}^{V}\Big(\frac{r_{AdS}T_{uu}[f^{(AdS)}]}{-\partial_{u}r_{AdS}}(u,V)+\frac{r_{AdS}T_{uv}[f^{(AdS)}]}{\partial_{v}r_{AdS}}(u,V)\Big)\,du+\nonumber \\
 & +\sqrt{-\Lambda}\tilde{m}_{/}|_{\mathcal{I}},\nonumber 
\end{align}
where $T_{\mu\nu}[f^{(AdS)}]$ are the components of the energy momenum
tensor associated to the free Vlasov $f^{(AdS)}$ on $(\mathcal{M}_{AdS},g_{AdS})$
and $\tilde{m}_{/}|_{\mathcal{I}}$ is the value of the (renormalised)
Hawking mass of the initial data set $(r,\Omega^{2};f)|_{\{u=0\}}$
at infinity.
\end{defn*}
For the precise definition of the functional $||\cdot||$, as well
as for a discussion on the properties of the resulting initial data
topology, see Definition \ref{def:InitialDataNorm} in Section \ref{sec:Cauchy_Stability_Low_Regularity}. 

We will establish the following Cauchy stability statement of the
trivial solution with respect to the initial data topology defined
by the norm $||\cdot||$:
\begin{thm}
\label{thm:CauchyStabilityIntro} For any $U>0$, there exists an
$\varepsilon>0$, such that, for every characteristic initial data
set $(r,\Omega^{2};f)|_{\{u=0\}}$ as in Theorem \ref{thm:Well-Posedness-Intro}
satisfying the smallness condition 
\begin{equation}
||(r,\Omega^{2};f)|_{\{u=0\}}||<\varepsilon,
\end{equation}
the corresponding maximal future development $(\mathcal{M},g;f)$
solving (\ref{eq:EinsteinSystemMasslessVlasov}) with the reflecting
boundary condition on $\mathcal{I}$ satisfies the following conditions:

\begin{itemize}

\item \textbf{Long time of existence.} Fixing a double null coordinate
pair $(u,v)$ on $\mathcal{M}$ satisfying the gauge condition (\ref{eq:NormalizationIntro})
with $v_{\mathcal{I}}=\sqrt{-\frac{3}{\Lambda}}\pi$, the projection
of $\mathcal{M}$ in the $(u,v)$-plane contains the domain 
\[
\mathcal{U}_{U}\doteq\big\{0\le u\le U\big\}\cap\big\{ u<v<u+\sqrt{-\frac{3}{\Lambda}}\pi\big\}.
\]

\item \textbf{Cauchy stability estimates.} In the region $\{0\le u\le U\}$,
the solution $(\mathcal{M},g;f)$ satisfies the scale invariant bounds
\begin{equation}
\sup_{u_{*}\in(0,U)}||(r,\Omega^{2};f)|_{u=u_{*}}||\le C\varepsilon\label{eq:SmallnessInNormCauchy-1}
\end{equation}
and
\begin{equation}
\sup_{\{0\le u\le U\}}\Big(\frac{2\tilde{m}}{r}(u,v)\Big)<C\varepsilon,\label{eq:SmallnessTrappingCauchy-1}
\end{equation}
where $C>0$ is an absolute constant, $(r,\Omega^{2};f)|_{u=u_{*}}$
are the initial data induced by $(\mathcal{M},g;f)$ on the cone $\{u=u_{*}\}$
and $\tilde{m}(u,v)$ is the renormalised Hawking mass associated
to the sphere $\{u,v=\text{const}\}$. In particular, $(g;f)$ remains
close to $(g_{AdS};0)$ in the region $\{0\le u\le U\}$ with respect
to the topology defined by (\ref{eq:InitialDataNorm-1}) on the slices
$\{u=u_{*}\}$, $u_{*}\in[0,U]$.

\end{itemize}
\end{thm}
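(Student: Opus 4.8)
The plan is to prove Theorem \ref{thm:CauchyStabilityIntro} as a bootstrap/continuity argument built on top of the local well-posedness and continuation results (Theorem \ref{thm:Well-Posedness-Intro}, Theorem \ref{thm:ExtensionPrincipleAxisIntro} and Corollary \ref{cor:ExtensionPrincipleIntro}), exploiting the scale-invariance of the norm $||\cdot||$ together with monotonicity properties of the renormalised Hawking mass. The first step is to set up the bootstrap region: fix $U>0$, and for a small parameter $\delta>0$ to be chosen, consider the set of $u_{*}\in[0,U]$ for which the solution exists on $\mathcal{U}_{u_{*};v_{\mathcal{I}}}$ (with $v_{\mathcal{I}}=\sqrt{-3/\Lambda}\,\pi$) and satisfies $\sup_{u_{*}'\le u_{*}}||(r,\Omega^{2};f)|_{u=u_{*}'}||\le \delta$ together with $\sup 2\tilde{m}/r\le\delta$. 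This set is nonempty (it contains a neighbourhood of $0$ by local well-posedness and continuity of the data-to-solution map in the $||\cdot||$ topology), closed by continuity, and the crux is to show it is open, i.e. to improve the bootstrap constants on any $\mathcal{U}_{u_{*};v_{\mathcal{I}}}$ in the region provided $\varepsilon$ (hence the initial size) is chosen small relative to $U$.

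The heart of the improvement step is a set of a priori estimates showing that the norm $||(r,\Omega^{2};f)|_{u=u_{*}}||$ of the data induced on later outgoing cones is controlled, up to a multiplicative constant depending only on $U$, by the norm of the original data on $\{u=0\}$. Here I would first use the smallness of $2\tilde{m}/r$ to establish that the double null coordinate system is uniformly comparable to the AdS one on $\mathcal{U}_{u_{*};v_{\mathcal{I}}}$: one shows $r$, $\partial_{v}r$, $-\partial_{u}r$ and $\Omega^{2}$ are all within a bounded factor of their AdS counterparts, using the Raychaudhuri equations $\partial_{u}(\partial_{u}r/\Omega^{2})$-type identities, the constraint $\partial_{u}\partial_{v}r$ equation, and the relation $1-2m/r = -4\partial_{u}r\partial_{v}r/\Omega^{2}$. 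Next, propagate the energy: the components $T_{vv},T_{uv},T_{uu}$ of the Vlasov energy-momentum tensor satisfy transport inequalities along null directions (obtained by contracting $\nabla^{\mu}T_{\mu\nu}=0$ with $\partial_{u},\partial_{v}$ and using the spherically symmetric structure), and integrating these — using the finite light-travel time $\sqrt{-3/\Lambda}\,\pi$ to $\mathcal{I}$, the reflecting boundary condition to handle the flux through $\mathcal{I}$, and the smallness of $2\tilde{m}/r$ to close the nonlinear error terms — yields that the spatial integrals defining $||\cdot||$ on $\{u=u_{*}\}$ grow at most by a factor $e^{Cu_{*}}$, i.e. they remain $\lesssim e^{CU}\varepsilon$. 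The renormalised Hawking mass $\tilde{m}|_{\mathcal{I}}$ is conserved (or monotone) under the reflecting condition, giving the third term of the norm directly. Finally, one must relate the "intrinsic" norm $||(r,\Omega^{2};f)|_{u=u_{*}}||$, which is defined via the auxiliary free Vlasov evolution on exact AdS, back to these energy integrals computed for the actual solution; this uses the coordinate comparison above plus the fact that the free AdS evolution of the induced data has energy integrals comparable to those of the true solution on a single cone. Choosing $\varepsilon$ so that $Ce^{CU}\varepsilon < \delta/2$ and $2\tilde{m}/r < \delta/2$ closes the bootstrap, and then Corollary \ref{cor:ExtensionPrincipleIntro} promotes the a priori smallness of $2m/r$ into the statement that the solution extends past $u_{*}$, giving openness and hence existence on all of $\mathcal{U}_{U}$.

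The main obstacle I expect is the low-regularity, scale-invariant nature of the norm $||\cdot||$: unlike a higher-order energy, it controls only a single power of the Vlasov energy flux, so the transport estimates must be run at exactly the critical level, with no room to absorb nonlinear terms via extra derivatives. Closing the estimate therefore hinges on genuinely exploiting the structure of the massless Vlasov energy-momentum tensor in spherical symmetry — in particular that $T_{uv}$ (the "angular-momentum-weighted" flux) has a definite sign and that the $r$-weights appearing in the definition of the norm are precisely those that make the flux integrals transport with only lower-order $2m/r$-errors — and on the fact that the domain $\mathcal{U}_{U}$ involves only $O(U)$ reflections off $\mathcal{I}$, so the accumulated loss is a finite (albeit $U$-dependent) constant rather than something divergent. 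A secondary technical point is the careful treatment of the axis $\mathcal{Z}$: the flux integrals degenerate as $r\to0$, and one must check, using the regularity built into the well-posedness theorem and the extension principle near $\mathcal{Z}$ (Theorem \ref{thm:ExtensionPrincipleAxisIntro}), that no concentration of energy forms at the centre — which is exactly what the hypothesis $\limsup 2m/r\ll1$ at the axis in Corollary \ref{cor:ExtensionPrincipleIntro} is designed to rule out, and which the bootstrap smallness of $2\tilde{m}/r$ guarantees here.
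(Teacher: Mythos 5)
Your overall architecture (a bootstrap on $\mathcal{U}_{u_*;v_{\mathcal{I}}}$ controlling the induced norm and $2\tilde{m}/r$, closed via the continuation criterion of Corollary \ref{cor:ExtensionPrincipleIntro}) matches the paper's, but the proposal misses the central mechanism and the key estimate (\ref{eq:SmallnessInNormCauchy-1}) does not follow from the argument as written. The difficulty is that $||(r,\Omega^{2};f)|_{u=u_{*}}||$ is \emph{not} a flux integral of the actual solution on the cone $\{u=u_{*}\}$: by Definition \ref{def:InitialDataNorm} it is a supremum over \emph{all future times} of the energy concentration of the \emph{free} Vlasov field on exact AdS generated by the induced data. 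Your step ``the free AdS evolution of the induced data has energy integrals comparable to those of the true solution on a single cone'' is therefore not enough; you must control where the free AdS flow sends the induced phase-space distribution for all subsequent reflections. The paper does this by adding to the bootstrap a quantitative comparison of each null geodesic of the perturbed metric with its exact AdS counterpart --- positions to within $O(\varepsilon\, l/E_{0})$ and momenta to within $O(\varepsilon E_{0})$, uniformly over the $O(U/v_{\mathcal{I}})$ reflections (assumptions (\ref{eq:GeodesicDistanceBootstrap})--(\ref{eq:GeodesicDistanceDerivativeBootstrap})). It is this geodesic tracking, combined with the explicit description of the AdS geodesic flow in the Appendix, that converts smallness of the data norm at $u=0$ into smallness of the induced norm at every later slice. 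Nothing in your proposal plays this role.

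Closing the geodesic comparison is itself the delicate point, and your proposed substitute --- transporting $T_{uu},T_{uv},T_{vv}$ via $\nabla^{\mu}T_{\mu\nu}=0$ and absorbing errors by $2\tilde m/r$-smallness --- runs into exactly the scale-critical obstruction you flag but do not resolve: for nearly radial geodesics ($l/E_{0}\ll1$) the relevant weight near the axis is $\sim l^{2}\Omega^{2}/r^{2}$, and a naive Gronwall would produce a factor diverging like $1/r_{\min}$. The paper's resolution is structural: the quantity that enters the Gronwall inequality (\ref{eq:BoundForGronwallCauchy}) is $\frac{l^{2}}{r_{\min}E_{0}^{2}}\int\frac{\Omega^{2}}{r^{2}}$ along the geodesic, whose integral is bounded \emph{independently of} $r_{\min}$ because $r_{\min}\sim l/E_{0}$ on AdS (see (\ref{eq:MinRAdS}) and the remark following (\ref{eq:BoundForWidthGrowth})); together with the integration-by-parts identity (\ref{eq:BoundForGeodesicFormulaInU}) turning $\int T_{uv}$ into boundary terms controlled by $\tilde m/r^{2}$, this gives a loss of only $\exp(C(-\Lambda)U^{2})$ over the whole evolution. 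Identifying a sign for $T_{uv}$ and ``the right $r$-weights,'' as you suggest, does not by itself produce this cancellation. Finally, a minor point: openness of your bootstrap set at $u_{*}=0$ should not be argued via ``continuity of the data-to-solution map in the $||\cdot||$ topology'' (a statement not established, and not needed); smooth local well-posedness plus continuity of the bootstrap quantities in $u_{*}$ suffices.
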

For a more detailed statement of Theorem \ref{thm:CauchyStabilityIntro},
see Theorem \ref{thm:CauchyStabilityAdS}.

\subsection{Outline of the paper}

This paper is organised as follows:
\begin{itemize}
\item In Section \ref{sec:The-Einstein--Vlasov-system}, we will introduce
the class of spherically symmetric and asymptotically AdS solutions
of the Einstein\textendash massless Vlasov system (\ref{eq:EinsteinSystemMasslessVlasov}),
expressed in a double null coordinate chart. We will also define the
notion of a reflecting boundary condition for (\ref{eq:EinsteinSystemMasslessVlasov})
at conformal infinity, and we will present some fundamental identities
for the null geodesic flow on spherically symmetric solutions of (\ref{eq:EinsteinSystemMasslessVlasov}).
\item In Section \ref{sec:InitialValueProblem}, we will introduce the notion
of a regular solution of (\ref{eq:EinsteinSystemMasslessVlasov})
with smooth axis and smooth conformal infinity. We will then proceed
to set up the asymptotically AdS, characteristic initial-boundary
value problem for (\ref{eq:EinsteinSystemMasslessVlasov}) with a
reflecting boundary condition at infinity. In particular, we will
define the class of smoothly compatible initial data sets for (\ref{eq:EinsteinSystemMasslessVlasov}),
and we will inspect the properties of gauge transformations mapping
smoothly compatible data to data satisfying a certain gauge normalisation
condition, which allows one to uniquely determine an initial data
set in terms of a freely prescribed initial datum for the Vlasov field;
the loss of smooth compatibility under gauge normalisation will be
also discussed.
\item In Section \ref{sec:Well-posedness-of-the}, we will establish the
well-posedness of the characterstic initial-boundary value problem
for (\ref{eq:EinsteinSystemMasslessVlasov}) in spherical symmetry,
in the class of smoothly compatible initial data. In particular, we
will prove Theorem \ref{thm:Well-Posedness-Intro}.
\item In Section \ref{sec:Extension-principles-for}, we will establish
a number of continuation criteria for smooth solutions of the system
(\ref{eq:EinsteinSystemMasslessVlasov}) in spherical symmetry. Among
the results obtained in Section \ref{sec:Extension-principles-for},
we will prove Theorem \ref{thm:ExtensionPrincipleAxisIntro} and Corollary
\ref{cor:ExtensionPrincipleIntro}.
\item In Section \ref{sec:Cauchy_Stability_Low_Regularity}, we will introduce
the scale invariant norm $||\cdot||$ on the space of smoothly comatible
initial data sets for (\ref{eq:EinsteinSystemMasslessVlasov}). In
the topology defined by $||\cdot||$, we will establish the Cauchy
stability statement of Theorem \ref{thm:CauchyStabilityIntro}.
\item In Section \ref{sec:Geodesic-flow-AdS} of the Appendix, we will review
some fundamental identities related to the geodesic flow on Anti-de~Sitter
spacetime.
\item Finally, in Section \ref{sec:CompleteI} of the Appendix, we will
establish that, in the presence of a black hole region, any maximally
extended, spherically symmetric and asymptotically AdS solution of
(\ref{eq:EinsteinSystemMasslessVlasov}) possesses a future complete
conformal infinity.
\end{itemize}

\subsection{Acknowledgements}

I would like to express my gratitude to Mihalis Dafermos for many
useful suggestions and insightful conversations. I would also like
to acknowledge support from the Miller Institute for Basic Research
in Science, University of California Berkeley.


\section{\label{sec:The-Einstein--Vlasov-system}Spherically symmetric spacetimes
and the Einstein\textendash massless Vlasov system}

This section will be mainly devoted to reviewing the properties of
the class of spherically symmetric and asymptotically AdS solutions
to the Einstein\textendash massless Vlasov system. We will follow
similar conventions as those adopted by Dafermos\textendash Rendall
in \cite{DafermosRendall}, expressing the Einstein\textendash massless
Vlasov system system in a double null coordinate chart. In paticular,
after reviewing the general properties of spherically symmetric and
asymptotically AdS spacetimes, we will proceed to fix our notations
regarding the Vlasov field equation and the Einstein\textendash Vlasov
system, listing a number of fundamental identities that will be useful
later in this paper. We will conclude this section by introducing
the reflecting boundary condition for a massless Vlasov field along
conformal infinity.

\subsection{\label{subsec:Spherically-symmetric-spacetimes}Spherically symmetric
spacetimes in double null coordinates and Anti-de~Sitter spacetime}

In this paper, we will follow similar conventions as \cite{MoschidisMaximalDevelopment,MoschidisNullDust}
regarding double null coordinate charts on spherically symmeric spacetimes.

Let $(\mathcal{M}^{3+1},g)$ be a connected, time oriented, smooth
Lorentzian manifold which is \emph{spherically symmetric}, i.\,e.~there
exists a smooth isometric action $\mathcal{A}:\mathcal{M}\times SO(3)\rightarrow\mathcal{M}$
of $SO(3)$ on $(\mathcal{M},g)$ such that, for each $p\in\mathcal{M}$,
$Orb(p)\simeq\mathbb{S}^{2}$ or $Orb(p)=\{p\}$, and at least one
point $p\in\mathcal{M}$ has a non-trivial orbit. We will define the
\emph{axis} $\mathcal{Z}$ of $(\mathcal{M},g)$ to be the set of
fixed points of $\mathcal{M}$, i.\,e.~.
\[
\mathcal{Z}\doteq\Big\{ p\in\mathcal{M}\,:\,\mathcal{A}(p,s)=p\,\forall s\in SO(3)\Big\}.
\]

Using the fact that $\mathcal{M}$ is connected, $g$ is a Lorentzian
metric and $\mathcal{A}$ maps geodesics of $(\mathcal{M},g)$ to
geodesics, it can be readily shown that $\mathcal{Z}$ consists of
a disjoint union of timelike geodesics. For the rest of this paper,
we will only consider the case when $\mathcal{Z}$ consists of a single
timelike geodesic and $\mathcal{M}\backslash\mathcal{Z}$ splits diffeomorphically
under the action $\mathcal{A}$ as
\begin{equation}
\mathcal{M}\backslash\mathcal{Z}\simeq\mathcal{U}\times\mathbb{S}^{2},\label{eq:SphericallySymmetricmanifold}
\end{equation}
where $\mathcal{U}$ is a smooth $2$ dimensional manifold. Furthermore,
we will restrict to spacetimes $(\mathcal{M},g)$ such that 
\begin{equation}
\bigcup_{p\in\mathcal{Z}}\mathcal{C}^{+}(p)\cup\mathcal{C}^{-}(p)=\mathcal{M},
\end{equation}
where $\mathcal{C}^{+}(p),\mathcal{C}^{-}(p)$ denote the future and
past light cones emanating from $p$, respectively.

Under the above assumptions on $(\mathcal{M},g)$, it can be readily
deduced that the two families of spherically symmetric null hypersurfaces
$\mathcal{H}=\big\{\mathcal{C}^{+}(p):\,p\in\mathcal{Z}\big\}$ and
$\overline{\mathcal{H}}=\big\{\mathcal{C}^{-}(p):\,p\in\mathcal{Z}\big\}$
foliate regularly the region $\mathcal{M}\backslash\mathcal{Z}$.
A pair of continuous functions $u,v:\mathcal{M}\rightarrow\mathbb{R}$
which are a smooth parametrization of the foliations $\mathcal{H},\overline{\mathcal{H}}$,
respectively, on $\mathcal{M}\backslash\mathcal{Z}$ will be called
a \emph{double null coordinate pair}. Note that any double null coordinate
pair $(u,v)$ on $(\mathcal{M},g)$ can be naturally viewed as a smooth
coordinate chart on $\mathcal{U}$.
\begin{rem*}
We will only consider double null coordinate pairs $(u,v)$ which
are compatible with the chosen time orientation of $(\mathcal{M},g)$,
i.\,e.~coordinate pairs such that both $u,v$ are increasing functions
along any future directed timelike curve. In this case, the vector
field $\partial_{u}+\partial_{v}$ on $\mathcal{M}\backslash\mathcal{Z}\simeq\mathcal{U}\times\mathbb{S}^{2}$
is timelike and future directed.
\end{rem*}
Given any double null coordinate pair on $(\mathcal{M},g)$, it readily
follows that the metric $g$ takes the following form on $\mathcal{M}\backslash\mathcal{Z}$:
\begin{equation}
g=-\text{\textgreek{W}}^{2}(u,v)dudv+r^{2}(u,v)g_{\mathbb{S}^{2}},\label{eq:SphericallySymmetricMetric}
\end{equation}
where $g_{\mathbb{S}^{2}}$ is the standard round metric on $\mathbb{S}^{2}$
and $\text{\textgreek{W}},r:\mathcal{U}\rightarrow(0,+\infty)$ are
smooth functions. 
\begin{rem*}
Viewed as a function on $\mathcal{M}\backslash\mathcal{Z}$, $r$
is expressed geometrically as 
\begin{equation}
r(p)=\sqrt{\frac{Area(Orb(p))}{4\pi}}.\label{eq:DefinitionR-1}
\end{equation}
As a result, $r$ extends continuously to $0$ on $\mathcal{Z}$.
\end{rem*}
For any pair of smooth functions $U,V:\mathbb{R}\rightarrow\mathbb{R}$
with $U^{\prime},V^{\prime}\neq0$, in the new double null coordinate
pair 
\begin{equation}
(\bar{u},\bar{v})=(U(u),V(v)),\label{eq:GeneralCoordinateTransformation}
\end{equation}
the metric $g$ takes the form 
\begin{equation}
g=-\bar{\text{\textgreek{W}}}^{2}(\bar{u},\bar{v})d\bar{u}d\bar{v}+r^{2}(\bar{u},\bar{v})g_{\mathbb{S}^{2}},\label{eq:SphericallySymmetricMetricNewGauge}
\end{equation}
where 
\begin{gather}
\bar{\text{\textgreek{W}}}^{2}(\bar{u},\bar{v})=\frac{1}{U^{\prime}V^{\prime}}\text{\textgreek{W}}^{2}(U^{-1}(\bar{u}),V^{-1}(\bar{v})),\label{eq:NewOmega}\\
r(\bar{u},\bar{v})=r(U^{-1}(\bar{u}),V^{-1}(\bar{v})).\label{eq:NewR}
\end{gather}

We will also define the \emph{Hawking mass} $m:\mathcal{M}\rightarrow\mathbb{R}$
by the expression 
\begin{equation}
m=\frac{r}{2}\big(1-g(\nabla r,\nabla r)\big).
\end{equation}
 Viewed as a function on $\mathcal{U}$, $m$ is related to $\Omega$
and $r$ by the following formula 
\begin{equation}
m=\frac{r}{2}\big(1+4\text{\textgreek{W}}^{-2}\partial_{u}r\partial_{v}r\big)\Leftrightarrow\Omega^{2}=\frac{4\partial_{v}r(-\partial_{u}r)}{1-\frac{2m}{r}}.\label{eq:DefinitionHawkingMass}
\end{equation}

In any local coordinate chart $(y^{1},y^{2})$ on $\mathbb{S}^{2}$,
the non-zero Christoffel symbols of (\ref{eq:SphericallySymmetricMetric})
in the $(u,v,y^{1},y^{2})$ local coordinate chart on $\mathcal{M}\backslash\mathcal{Z}$
are computed as follows: 
\begin{gather}
\text{\textgreek{G}}_{uu}^{u}=\partial_{u}\log(\text{\textgreek{W}}^{2}),\hphantom{A}\text{\textgreek{G}}_{vv}^{v}=\partial_{v}\log(\text{\textgreek{W}}^{2}),\label{eq:ChristoffelSymbols}\\
\text{\textgreek{G}}_{AB}^{u}=\text{\textgreek{W}}^{-2}\partial_{v}(r^{2})(g_{\mathbb{S}^{2}})_{AB},\hphantom{\,}\text{\textgreek{G}}_{AB}^{v}=\text{\textgreek{W}}^{-2}\partial_{u}(r^{2})(g_{\mathbb{S}^{2}})_{AB},\nonumber \\
\text{\textgreek{G}}_{uB}^{A}=r^{-1}\partial_{u}r\text{\textgreek{d}}_{B}^{A},\hphantom{\,}\text{\textgreek{G}}_{vB}^{A}=r^{-1}\partial_{v}r\text{\textgreek{d}}_{B}^{A},\nonumber \\
\text{\textgreek{G}}_{BC}^{A}=(\text{\textgreek{G}}_{\mathbb{S}^{2}})_{BC}^{A},\nonumber 
\end{gather}
where the latin indices $A,B,C$ are associated to the spherical coordinates
$y^{1},y^{2}$, $\text{\textgreek{d}}_{B}^{A}$ is Kronecker delta
and $\text{\textgreek{G}}_{\mathbb{S}^{2}}$ are the Christoffel symbols
of the round sphere in the $(y^{1},y^{2})$ coordinate chart.

A fundamental example of a family of spherically symmetric spacetimes
admitting a globally defined double null coordinate pair is the family
of Anti-de~Sitter spacetimes $(\mathcal{M}_{AdS},g_{AdS}^{(\Lambda)})$,
parametrised by the value $\Lambda<0$ of the cosmological constant.
In polar coordinates $(t,\bar{r},\theta,\varphi)$ on $\mathcal{M}_{AdS}\simeq\mathbb{R}^{3+1}$,
$g_{AdS}^{(\Lambda)}$ takes the form 
\begin{equation}
g_{AdS}^{(\Lambda)}=-\big(1-\frac{1}{3}\Lambda\bar{r}^{2}\big)dt^{2}+\big(1-\frac{1}{3}\Lambda\bar{r}^{2}\big)^{-1}d\bar{r}^{2}+\bar{r}^{2}\big(d\theta^{2}+\sin^{2}\theta d\varphi^{2}\big).\label{eq:AdSMetricPolarCoordinates}
\end{equation}
In this paper, we will usually drop the superscript $\Lambda$ in
(\ref{eq:AdSMetricPolarCoordinates}). The standard polar coordinate
pair $(u,v)$ on $(\mathcal{M}_{AdS},g_{AdS})$, defined by 
\begin{align*}
du & =dt-\frac{d\bar{r}}{1-\frac{1}{3}\Lambda\bar{r}^{2}},\\
dv & =dt+\frac{d\bar{r}}{1-\frac{1}{3}\Lambda\bar{r}^{2}},
\end{align*}
maps the manifold $\mathcal{M}_{AdS}\backslash\mathcal{Z}\simeq\mathbb{R}^{3+1}\backslash\{r=0\}$
to the planar domain 
\[
\mathcal{U}_{AdS}=\big\{ u<v<u+\sqrt{-\frac{3}{\Lambda}}\pi\big\}.
\]
In these coordinates, the metric coefficients (\ref{eq:SphericallySymmetricMetric})
associated to $g_{AdS}$ take the form (\ref{eq:AdSMetricValues-1}).

\subsection{\label{subsec:Asymptotically-AdS-spacetimes}Asymptotically AdS spacetimes }

Let $(\mathcal{M},g)$ be a spherically symmetric spacetime as in
Section \ref{subsec:Spherically-symmetric-spacetimes}. Recall that
$\mathcal{M}\backslash\mathcal{Z}$ splits topologically as the product
\[
\mathcal{M}\backslash\mathcal{Z}\simeq\mathcal{U}\times\mathbb{S}^{2}
\]
and, in any spherically symmetric double null coordinate chart on
$\mathcal{M}\backslash\mathcal{Z}$, the metric $g$ splits as (\ref{eq:SphericallySymmetricMetric}).
Note also that any choice of double null coordinate pair $(u,v)$
on $\mathcal{M}$ fixes a smooth embedding $(u,v):\mathcal{U}\rightarrow\mathbb{R}^{2}$.
In this section, we will identify $\mathcal{U}$ with its image in
$\mathbb{R}^{2}$ associated to a given null coordinate pair.

\begin{figure}[h] 
\centering 
\begingroup%
  \makeatletter%
  \providecommand\color[2][]{%
    \errmessage{(Inkscape) Color is used for the text in Inkscape, but the package 'color.sty' is not loaded}%
    \renewcommand\color[2][]{}%
  }%
  \providecommand\transparent[1]{%
    \errmessage{(Inkscape) Transparency is used (non-zero) for the text in Inkscape, but the package 'transparent.sty' is not loaded}%
    \renewcommand\transparent[1]{}%
  }%
  \providecommand\rotatebox[2]{#2}%
  \newcommand*\fsize{\dimexpr\f@size pt\relax}%
  \newcommand*\lineheight[1]{\fontsize{\fsize}{#1\fsize}\selectfont}%
  \ifx\svgwidth\undefined%
    \setlength{\unitlength}{150bp}%
    \ifx\svgscale\undefined%
      \relax%
    \else%
      \setlength{\unitlength}{\unitlength * \real{\svgscale}}%
    \fi%
  \else%
    \setlength{\unitlength}{\svgwidth}%
  \fi%
  \global\let\svgwidth\undefined%
  \global\let\svgscale\undefined%
  \makeatother%
  \begin{picture}(1,1.5)%
    \lineheight{1}%
    \setlength\tabcolsep{0pt}%
    \put(0,0){\includegraphics[width=\unitlength,page=1]{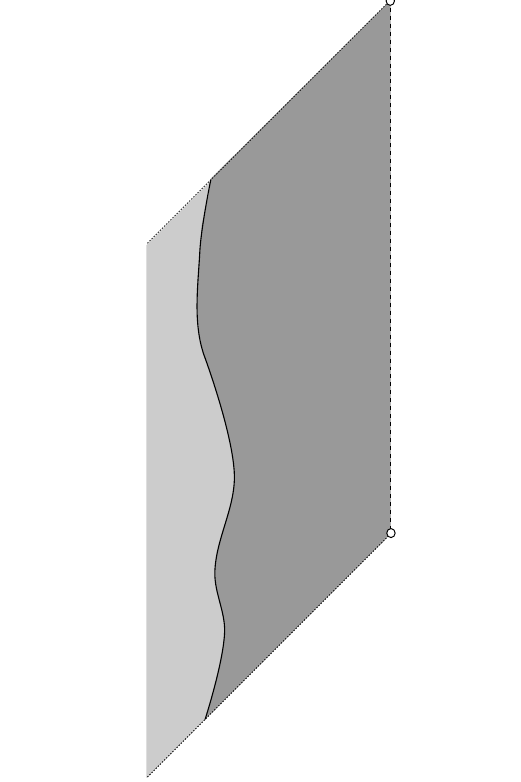}}%
    \put(0.46447705,0.10161798){\color[rgb]{0,0,0}\rotatebox{45}{\makebox(0,0)[lt]{\lineheight{1.25}\smash{\begin{tabular}[t]{l}$u=u_1$\end{tabular}}}}}%
    \put(0.40012062,1.1878184){\color[rgb]{0,0,0}\rotatebox{45}{\makebox(0,0)[lt]{\lineheight{1.25}\smash{\begin{tabular}[t]{l}$u=u_2$\end{tabular}}}}}%
    \put(0.52599007,0.83539608){\color[rgb]{0,0,0}\makebox(0,0)[lt]{\lineheight{1.25}\smash{\begin{tabular}[t]{l}$\mathcal{V}_{as}$\end{tabular}}}}%
    \put(0.79085451,1.08158867){\color[rgb]{0,0,0}\rotatebox{-90}{\makebox(0,0)[lt]{\lineheight{1.25}\smash{\begin{tabular}[t]{l}$v=u+v_{\mathcal{I}}$\end{tabular}}}}}%
    \put(0.38483024,0.29399715){\color[rgb]{0,0,0}\rotatebox{90}{\makebox(0,0)[lt]{\lineheight{1.25}\smash{\begin{tabular}[t]{l}$v=u+v_{R_0}(u)$\end{tabular}}}}}%
  \end{picture}%
\endgroup%
 
\caption{Schematic depiction of the asymptotic region $\mathcal{V}_{as}=\{r \ge R_0 \gg 1 \}$ of an asymptotically AdS spacetime. The function $r$ extends to $+\infty$ on conformal infinity $\mathcal{I}=\{ v=u+v_{\mathcal{I}}\}$. \label{fig:Asymptotic_Domain}}
\end{figure}
\begin{defn}
\label{def:AsADS}Let $(\mathcal{M},g)$ be a spacetime as above with
$\sup_{\mathcal{M}}r=+\infty$. We will call $(\mathcal{M},g)$ \emph{asymptotically
AdS} if, for some $R_{0}\gg1$, there exists a spherically symmetric
double null coordinate pair $(u,v)$ on $\mathcal{M}$ covering the
whole region $\mathcal{V}_{as}\doteq\{r\ge R_{0}\}$, with the following
properties: 

\begin{enumerate}

\item The region $\mathcal{V}_{as}$ has the form
\[
\mathcal{V}_{as}=\big\{ u_{1}<u<u_{2}\big\}\cap\big\{ u+v_{R_{0}}(u)\le v<u+v_{\mathcal{I}}\big\}
\]
for some $u_{1}<u_{2}\in\mathbb{R}\cup\{\pm\infty\}$, $v_{\mathcal{I}}\in\mathbb{R}$
and $v_{R_{0}}:(u_{1},u_{2})\rightarrow\mathbb{R}$ with $v(u)<v_{\mathcal{I}}$
(see Figure \ref{fig:Asymptotic_Domain}).

\item The function $\frac{1}{r}$ on $\mathcal{U}$ extends smoothly
on 
\begin{equation}
\mathcal{I}\doteq\big\{ u_{1}<u<u_{2}\big\}\cap\big\{ v=u+v_{\mathcal{I}}\big\}\subset clos(\mathcal{U})\label{eq:TimelikeInfinity}
\end{equation}
with 
\begin{equation}
\frac{1}{r}\Big|_{\mathcal{I}}=0.\label{eq:1/r0}
\end{equation}

\item The renormalised metric coefficient $r^{-2}\Omega^{2}$ extends
smoothly on $\mathcal{I}$, with 
\begin{equation}
r^{-2}\Omega^{2}\big|_{\mathcal{I}}\neq0.\label{eq:ConformalExtensionI}
\end{equation}

\end{enumerate}
\end{defn}
\begin{rem*}
The boundary condition (\ref{eq:1/r0}) implies that 
\begin{equation}
\partial_{v}(\frac{1}{r})\Big|_{\mathcal{I}}=-\partial_{u}(\frac{1}{r})\Big|_{\mathcal{I}}.\label{eq:BoundaryConditionRInfinity}
\end{equation}
\end{rem*}
In the class of spacetimes introduced in Section \ref{subsec:Spherically-symmetric-spacetimes},
Definition \ref{def:AsADS} coincides with the standard definition
of asymptotically AdS spacetimes (see e.\,g.~\cite{Friedrich1995}).
In particular, for an asymptotically AdS spacetime $(\mathcal{M},g)$
as above, the conformal metric 
\[
\tilde{g}=r^{-2}g
\]
in the region $\{r\ge R_{0}\}$ admits a smooth extension across $r=+\infty$,
with $\mathcal{I}^{(2+1)}=\{r=+\infty\}$ corresponding to a timelike
conformal boundary of $(\mathcal{M},g)$. The isometric action of
$SO(3)$ on $(\mathcal{M},g)$ by rotations extends to a smooth isometric
action on the manifold with boundary $(\widetilde{\mathcal{M}},\tilde{g})$,
where
\[
\widetilde{\mathcal{M}}=\mathcal{M}\cup\mathcal{I}^{(2+1)}.
\]
 In this extension, $\mathcal{I}$ defined by (\ref{eq:TimelikeInfinity})
corresponds to the spherical quotient of $\mathcal{I}^{(2+1)}$. We
will use the term \emph{conformal infinity} for both $\mathcal{I}^{(2+1)}$
and $\mathcal{I}$.

\subsection{\label{subsec:VlasovEquations}The massless Vlasov equation in spherical
symmetry}

Let $(\mathcal{M},g)$ be as in Section \ref{subsec:Spherically-symmetric-spacetimes}.
In any local coordinate chart $(x^{0},x^{1},x^{2},x^{3})$ on $\mathcal{M}$
with associated momentum coordinates $(p^{0},p^{1},p^{2},p^{3})$
on the fibers of $T\mathcal{M}$, the \emph{geodesic flow} takes the
form 
\begin{equation}
\begin{cases}
\frac{dx^{\alpha}}{ds}=p^{\alpha},\\
\frac{dp^{a}}{ds}+\text{\textgreek{G}}_{\beta\gamma}^{\alpha}p^{\beta}p^{\gamma}=0,
\end{cases}\label{eq:GeodesicFlow}
\end{equation}
 where $\text{\textgreek{G}}_{\text{\textgreek{b}\textgreek{g}}}^{\text{\textgreek{a}}}$
are the Christoffel symbols of $g$ in the coordinates $(x^{0},x^{1},x^{2},x^{3})$.
The set $\mathcal{P}^{+}\subset T\mathcal{M}$ of future directed
null vectors, i.\,e.~the subset of $T\mathcal{M}$ where
\begin{equation}
g_{\text{\textgreek{a}\textgreek{b}}}(x)p^{\text{\textgreek{a}}}p^{\text{\textgreek{b}}}=0\text{ and }g_{\text{\textgreek{a}\textgreek{b}}}(x)p^{\text{\textgreek{a}}}Q^{\text{\textgreek{b}}}(x)\le0,\label{eq:NullShell}
\end{equation}
where $Q$ is a fixed, non-vanishing future directed vector field
on $\mathcal{M}$, is preserved under (\ref{eq:GeodesicFlow}).

In view of the spherical symmetry of $(\mathcal{M},g)$, the \emph{angular
momentum} function $l:T\mathcal{M}\rightarrow[0,+\infty)$ defined
(in a local coordinate chart $(u,v,y^{1},y^{2})$ as in Section \ref{subsec:Spherically-symmetric-spacetimes})
by
\begin{equation}
l^{2}\doteq r^{2}g_{AB}p^{A}p^{B}=r^{4}(g_{\mathbb{S}^{2}})_{AB}p^{A}p^{B}\label{eq:AngularMomentum}
\end{equation}
is a constant of motion for the geodesic flow (\ref{eq:GeodesicFlow}).\footnote{Note that (\ref{eq:AngularMomentum}) is coordinate independent.}
Thus, the geodesic flow equations (\ref{eq:GeodesicFlow}) can be
reduced to a system depending only on the parameters $u$, $v$, $p^{u}$,
$p^{v}$ and $l$. In terms of these parameters, condition (\ref{eq:NullShell})
is expressed as 
\begin{equation}
\Omega^{2}p^{u}p^{v}=\frac{l^{2}}{r^{2}}\text{ and }\text{ }p^{u}\ge0\label{eq:NullShellAngularMomentum}
\end{equation}
 while (\ref{eq:GeodesicFlow}) for null geodesics is reduced to
\begin{equation}
\begin{cases}
\frac{du}{ds}=p^{u},\\
\frac{dv}{ds}=p^{v},\\
\frac{d}{ds}\big(\Omega^{2}p^{u}\big)=\Big(\partial_{v}\log(\Omega^{2})-2\frac{\partial_{v}r}{r}\Big)\frac{l^{2}}{r^{2}},\\
\frac{d}{ds}\big(\Omega^{2}p^{v}\big)=\Big(\partial_{u}\log(\Omega^{2})-2\frac{\partial_{u}r}{r}\Big)\frac{l^{2}}{r^{2}},\\
\frac{dl}{ds}=0.
\end{cases}\label{eq:NullGeodesicsSphericalSymmetry}
\end{equation}
Note that the relations (\ref{eq:NullShellAngularMomentum}) and (\ref{eq:NullGeodesicsSphericalSymmetry})
imply that, on a smooth spacetime $(\mathcal{M},g)$ as above, a geodesic
$\gamma$ with angular momentum $l>0$ cannot reach the axis $r=0$.
\begin{rem*}
We will frequently identify a geodesic $\gamma$ in $\mathcal{M}$
with its image in $\mathcal{U}$. For this reason, we will frequently
refer to (\ref{eq:NullGeodesicsSphericalSymmetry}) simply as the
equations of motion for a geodesic in $\mathcal{U}$.
\end{rem*}
A \emph{Vlasov field} $f$ on $(\mathcal{M},g)$ is a non-negative
measure on $T\mathcal{M}$ which is invariant under the geodesic flow
(\ref{eq:GeodesicFlow}). In particular, in any coordinate chart $(x^{\alpha};p^{\alpha})$
on $T\mathcal{M}$, a Vlasov field $f$ satisfies (as a distribution)
the equation 
\begin{equation}
p^{\alpha}\partial_{x^{\alpha}}f-\text{\textgreek{G}}_{\beta\gamma}^{\alpha}p^{\beta}p^{\gamma}\partial_{p^{\alpha}}f=0.\label{eq:Vlasov}
\end{equation}
A Vlasov field $f$ supported on (\ref{eq:NullShell}) will be called
a \emph{massless Vlasov field}.

Associated to any Vlasov field $f$ is a symmetric $(0,2)$-form on
$\mathcal{M}$, the \emph{energy momentum }tensor of $f$, given (formally)
by the expression 
\begin{equation}
T_{\alpha\beta}(x)\doteq\int_{\pi^{-1}(x)}p_{\alpha}p_{\beta}f\,\sqrt{-det(g(x))}dp^{0}\cdots dp^{3},\label{eq:EnergyMomentumTensor}
\end{equation}
where $\pi^{-1}(x)$ denotes the fiber of $T\mathcal{M}$ over $x\in\mathcal{M}$
and the indices of the momentum coordinates are lowered with the use
of the metric $g$, i.\,e.
\begin{equation}
p_{\gamma}=g_{\gamma\delta}(x)p^{\delta}.
\end{equation}
Equation (\ref{eq:Vlasov}) implies that $T_{\alpha\beta}$ is conserved,
i.\,e.: 
\begin{equation}
\nabla^{\alpha}T_{\alpha\beta}=0.\label{eq:ConservationEnergyMomentum}
\end{equation}
Furthermore, for any Vlasov field $f$ on $(\mathcal{M},g)$, associated
to each open set $\mathcal{V}\subseteq T\mathcal{M}$ is a 1-form
$N^{(\mathcal{V})}$ called the the \emph{particle current}, expressed
in any coordinate chart $(x^{\alpha};p^{\alpha})$ on $T\mathcal{M}$
as
\begin{equation}
N_{a}^{(\mathcal{V})}(x)\doteq\int_{T_{x}\mathcal{M}\cap\mathcal{V}}p_{a}f\,\sqrt{-det(g(x))}dp^{0}\cdots dp^{3}.\label{eq:ParticleCurrent}
\end{equation}
In the case when $\mathcal{V}$ is invariant under the geodesic flow,
(\ref{eq:ParticleCurrent}) is conserved: 
\begin{equation}
\nabla^{\alpha}N_{a}^{(\mathcal{V})}=0.\label{eq:ParticleConservation}
\end{equation}
We will denote
\begin{equation}
N\doteq N^{(T\mathcal{M})}.\label{eq:TotalParticleCurrent}
\end{equation}
\begin{rem*}
We will only consider smooth Vlasov fields $f$ which decay sufficiently
fast as $p^{a}\rightarrow+\infty$ (for any fixed $x$). In this case,
the expressions (\ref{eq:EnergyMomentumTensor}) and (\ref{eq:TotalParticleCurrent})
are finite and depend smoothly on $x\in\mathcal{M}$. 
\end{rem*}
It can be readily inferred that a \emph{spherically symmetric} Vlasov
field $f$, i.\,e.~a Vlasov field which is invariant under the induced
action of $SO(3)$ on $T\mathcal{M}$, only depends on $u$, $v$,
$p^{u}$, $p^{v}$ and $l$, and, in the massless case, is conserved
along the flow lines of the reduced system (\ref{eq:NullGeodesicsSphericalSymmetry}).
In particular, a \emph{smooth }spherically symmetric massless Vlasov
field $f$ will necessarily be of the form
\begin{equation}
f(u,v;p^{u},p^{v},l)=\bar{f}(u,v;p^{u},p^{v},l)\cdot\delta\big(\Omega^{2}p^{u}p^{v}-\frac{l^{2}}{r^{2}}\big),\label{eq:FAsASmoothDeltaFunction}
\end{equation}
where $\bar{f}$ is smooth in its variables and $\delta$ is Dirac's
delta function. For a spherically symmetric Vlasov field $f$, equation
(\ref{eq:Vlasov}) takes the form:
\begin{equation}
p^{u}\partial_{u}f+p^{v}\partial_{v}f=\Big(\partial_{u}\log(\Omega^{2})(p^{u})^{2}+\frac{2}{r}\Omega^{-2}\partial_{v}r\frac{l^{2}}{r^{2}}\Big)\partial_{p^{u}}f+\Big(\partial_{v}\log(\Omega^{2})(p^{v})^{2}+\frac{2}{r}\Omega^{-2}\partial_{u}r\frac{l^{2}}{r^{2}}\Big)\partial_{p^{v}}f.\label{eq:VlasovEquationSphericalSymmetry}
\end{equation}
\begin{rem*}
Given a smooth, spherically symmetric massless Vlasov field $f$,
we will frequently denote with $\bar{f}$ any smooth function for
which (\ref{eq:FAsASmoothDeltaFunction}) holds. Note that $\bar{f}$
is uniquely determined only along the future null set $\mathcal{P}^{+}$.
\end{rem*}
For a smooth, spherically symmetric massless Vlasov field, the energy-momentum
tensor (\ref{eq:EnergyMomentumTensor}) is of the form 
\begin{equation}
T=T_{uu}(u,v)du^{2}+2T_{uv}(u,v)dudv+T_{vv}(u,v)dv^{2}+T_{AB}(u,v)dy^{A}dy^{B}\label{eq:SphericallySymmetricTensor}
\end{equation}
and the components of (\ref{eq:SphericallySymmetricTensor}) can be
expressed in terms of the variables $p^{u},p^{v},l$ as
\begin{align}
T_{uu} & =\frac{\pi}{2}r^{-2}\int_{0}^{+\infty}\int_{0}^{+\infty}\big(\Omega^{2}p^{v}\big)^{2}\bar{f}(u,v;p^{u},p^{v},l)\Big|_{\Omega^{2}p^{u}p^{v}=\frac{l^{2}}{r^{2}}}\,\frac{dp^{u}}{p^{u}}ldl,\label{eq:SphericallySymmetricComponentsEnergyMomentum}\\
T_{vv} & =\frac{\pi}{2}r^{-2}\int_{0}^{+\infty}\int_{0}^{+\infty}\big(\Omega^{2}p^{u}\big)^{2}\bar{f}(u,v;p^{u},p^{v},l)\Big|_{\Omega^{2}p^{u}p^{v}=\frac{l^{2}}{r^{2}}}\,\frac{dp^{u}}{p^{u}}ldl,\nonumber \\
T_{uv}=\frac{1}{4}\Omega^{2}g^{AB}T_{AB} & =\frac{\pi}{2}r^{-2}\int_{0}^{+\infty}\int_{0}^{+\infty}\big(\Omega^{2}p^{u}\big)\cdot\big(\Omega^{2}p^{v}\big)\bar{f}(u,v;p^{u},p^{v},l)\Big|_{\Omega^{2}p^{u}p^{v}=\frac{l^{2}}{r^{2}}}\,\frac{dp^{u}}{p^{u}}ldl.\nonumber 
\end{align}
Similarly, in this case, provided $\mathcal{V}\subseteq T\mathcal{M}$
is invariant under the action of $SO(3)$ on $T\mathcal{M}$, (\ref{eq:ParticleCurrent})
takes the form 
\begin{equation}
N^{(\mathcal{V})}=N_{u}^{(\mathcal{V})}du+N_{v}^{(\mathcal{V})}dv,
\end{equation}
with 
\begin{align}
N_{u}^{(\mathcal{V})} & =\pi r^{-2}\int_{\mathcal{V}\cap T_{x}\mathcal{M}\cap\{\Omega^{2}p^{u}p^{v}=\frac{l^{2}}{r^{2}}\}\cap\{p^{u}\ge0\}}\Omega^{2}p^{v}\bar{f}(u,v;p^{u},p^{v},l)\Big|_{\Omega^{2}p^{u}p^{v}=\frac{l^{2}}{r^{2}}}\,\frac{dp^{u}}{p^{u}}ldl,\label{eq:SphericallySymmetricParticleCurrent}\\
N_{v}^{(\mathcal{V})} & =\pi r^{-2}\int_{\mathcal{V}\cap T_{x}\mathcal{M}\cap\{\Omega^{2}p^{u}p^{v}=\frac{l^{2}}{r^{2}}\}\cap\{p^{u}\ge0\}}\Omega^{2}p^{u}\bar{f}(u,v;p^{u},p^{v},l)\Big|_{\Omega^{2}p^{u}p^{v}=\frac{l^{2}}{r^{2}}}\,\frac{dp^{u}}{p^{u}}ldl.\nonumber 
\end{align}

For any smooth and spherically symmetric massless Vlasov field $f$
, any open set $\mathcal{V}\subseteq T\mathcal{M}$ which is invariant
under the action of $SO(3)$ on $T\mathcal{M}$ and any $l\ge0$,
we will also introduce the quantities
\begin{equation}
N_{\mu}^{(\mathcal{V};l)}=2\pi r^{-2}\int_{\mathcal{V}^{(l)}\cap T_{x}\mathcal{M}\cap\{\Omega^{2}p^{u}p^{v}=\frac{l^{2}}{r^{2}}\}}p_{\mu}\bar{f}(u,v;p^{u},p^{v},l)\Big|_{\Omega^{2}p^{u}p^{v}=\frac{l^{2}}{r^{2}}}\,\frac{dp^{u}}{p^{u}}\label{eq:ReducedParticleCurrent}
\end{equation}
(where $\mathcal{V}^{(l_{0})}\doteq\mathcal{V}\cap\{l=l_{0}\}$) and
\begin{equation}
T_{\mu\nu}^{(l)}=2\pi r^{-2}\int_{0}^{+\infty}p_{\mu}p_{\nu}\bar{f}(u,v;p^{u},p^{v},l)\Big|_{\Omega^{2}p^{u}p^{v}=\frac{l^{2}}{r^{2}}}\,\frac{dp^{u}}{p^{u}}.\label{eq:ReducedEnergyMomentum}
\end{equation}
Note that, when $\mathcal{V}$ is invariant under the geodesic flow,
(\ref{eq:ReducedParticleCurrent}) is conserved, i.\,e.
\begin{equation}
\nabla^{\mu}N_{\mu}^{(\mathcal{V};l)}=0.\label{eq:ReducedParticleConservation}
\end{equation}
Note also the relations 
\begin{equation}
N_{\mu}^{(\mathcal{V})}=\int_{0}^{+\infty}N_{\mu}^{(\mathcal{V};l)}\,ldl,\text{ }T_{\mu\nu}=\int_{0}^{+\infty}T_{\mu\nu}^{(l)}\,ldl.\label{eq:TotalCurrentsFromReduced}
\end{equation}
We will also denote 
\begin{equation}
N_{\mu}^{(l)}\doteq N_{\mu}^{(T\mathcal{M};l)}.\label{eq:TotalReducedParticleCurrent}
\end{equation}

Combining the expressions (\ref{eq:DefinitionHawkingMass}), (\ref{eq:ReducedParticleCurrent}),
(\ref{eq:ReducedEnergyMomentum}), we can readily estimate for any
$l>0$:
\begin{equation}
\frac{1-\frac{2m}{r}}{\partial_{v}r}T_{vv}^{(l)}(u,v)+\frac{1-\frac{2m}{r}}{-\partial_{u}r}T_{uv}^{(l)}(u,v)\le2\sup_{supp\big(f(u,v;\cdot,\cdot,l)\big)}\Big(\partial_{v}r(u,v)p^{v}-\partial_{u}r(u,v)p^{u}\Big)\cdot N_{v}^{(l)}(u,v)\label{eq:VDerivativeMassfromReducedParticleCurrent}
\end{equation}
and 
\begin{equation}
\frac{1-\frac{2m}{r}}{\partial_{v}r}T_{uv}^{(l)}(u,v)+\frac{1-\frac{2m}{r}}{-\partial_{u}r}T_{uu}^{(l)}(u,v)\le2\sup_{supp\big(f(u,v;\cdot,\cdot,l)\big)}\Big(\partial_{v}r(u,v)p^{v}-\partial_{u}r(u,v)p^{u}\Big)\cdot N_{u}^{(l)}(u,v).\label{eq:UDerivativeMassFromReducedParticleCurrent}
\end{equation}

\subsection{\label{subsec:The-Einstein-equations}The spherically symmetric Einstein\textendash massless
Vlasov system}

Let $(\mathcal{M}^{3+1},g)$ be a smooth Lorentzian manifold and let
$f$ be a non-negative measure on $T\mathcal{M}$. The \emph{Einstein\textendash Vlasov}
system for $(\mathcal{M},g;f)$ with a cosmological constant $\Lambda<0$
is 
\begin{equation}
\begin{cases}
Ric_{\mu\nu}(g)-\frac{1}{2}R(g)g_{\mu\nu}+\Lambda g_{\mu\nu}=8\pi T_{\mu\nu},\\
p^{\alpha}\partial_{x^{\alpha}}f-\text{\textgreek{G}}_{\beta\gamma}^{\alpha}p^{\beta}p^{\gamma}\partial_{p^{\alpha}}f=0,
\end{cases}\label{eq:EinsteinVlasovEquations}
\end{equation}
where $T_{\mu\nu}$ is expressed in terms of $f$ by (\ref{eq:EnergyMomentumTensor})
(see also \cite{DafermosRendall,MoschidisMaximalDevelopment,MoschidisNullDust}). 

In the case when $(\mathcal{M},g)$ is a spherically symmetric spacetime
as in Section \ref{subsec:Spherically-symmetric-spacetimes} and $f$
is a spherically symmetric massless Vlasov field (see Section \ref{subsec:VlasovEquations}),
(\ref{eq:EinsteinVlasovEquations}) is equivalent to the following
system for $(r,\Omega^{2},f)$:
\begin{align}
\partial_{u}\partial_{v}(r^{2})= & -\frac{1}{2}(1-\Lambda r^{2})\Omega^{2}+8\pi r^{2}T_{uv},\label{eq:RequationFinal}\\
\partial_{u}\partial_{v}\log(\Omega^{2})= & \frac{\Omega^{2}}{2r^{2}}\big(1+4\Omega^{-2}\partial_{u}r\partial_{v}r\big)-8\pi T_{uv}-2\pi\Omega^{2}g^{AB}T_{AB},\label{eq:OmegaEquationFinal}\\
\partial_{v}(\Omega^{-2}\partial_{v}r)= & -4\pi rT_{vv}\Omega^{-2},\label{eq:ConstraintVFinal}\\
\partial_{u}(\Omega^{-2}\partial_{u}r)= & -4\pi rT_{uu}\Omega^{-2},\label{eq:ConstraintUFinal}\\
p^{u}\partial_{u}f+p^{v}\partial_{v}f= & \Big(\partial_{u}\log(\Omega^{2})(p^{u})^{2}+\frac{2}{r}\Omega^{-2}\partial_{v}r\frac{l^{2}}{r^{2}}\Big)\partial_{p^{u}}f+\label{eq:VlasovFinal}\\
 & \hphantom{\Big(\partial_{u}}+\Big(\partial_{v}\log(\Omega^{2})(p^{v})^{2}+\frac{2}{r}\Omega^{-2}\partial_{u}r\frac{l^{2}}{r^{2}}\Big)\partial_{p^{v}}f,\nonumber \\
supp(f)\subseteq & \big\{\Omega^{2}p^{u}p^{v}-\frac{l^{2}}{r^{2}}=0,\text{ }p^{u}\ge0\big\}.\label{NullShellFinal}
\end{align}
Note that $(r,\Omega^{2},f)=(r_{AdS},\Omega_{AdS}^{2},0)$ (where
$r_{AdS}$, $\Omega_{AdS}^{2}$ are given by (\ref{eq:AdSMetricValues-1}))
is a trivial solution for the system (\ref{eq:RequationFinal})\textendash (\ref{NullShellFinal}).

Defining the renormalised Hawking mass as 
\begin{equation}
\tilde{m}\doteq m-\frac{1}{6}\Lambda r^{3},\label{eq:RenormalisedHawkingMass}
\end{equation}
 and using the relation (\ref{eq:DefinitionHawkingMass}), the constraint
equations (\ref{eq:ConstraintVFinal})\textendash (\ref{eq:ConstraintUFinal})
are equivalent (in the region of $\mathcal{M}$ where $\partial_{v}r>0$,
$\partial_{u}r<0$ and $1-\frac{2m}{r}>0$) to 
\begin{align}
\partial_{u}\log\big(\frac{\partial_{v}r}{1-\frac{2m}{r}}\big)= & -4\pi r^{-1}\frac{r^{2}T_{uu}}{-\partial_{u}r},\label{eq:DerivativeInUDirectionKappa}\\
\partial_{v}\log\big(\frac{-\partial_{u}r}{1-\frac{2m}{r}}\big)= & 4\pi r^{-1}\frac{r^{2}T_{vv}}{\partial_{v}r}.\label{eq:DerivativeInVDirectionKappaBar}
\end{align}
Equations (\ref{eq:RequationFinal})\textendash (\ref{eq:VlasovFinal})
also formally give rise to the following set of equations for $r$,
$\tilde{m}$:

\begin{align}
\partial_{u}\partial_{v}r= & -\frac{2\tilde{m}-\frac{2}{3}\Lambda r^{3}}{r^{2}}\frac{(-\partial_{u}r)\partial_{v}r}{1-\frac{2m}{r}}+4\pi rT_{uv},\label{eq:EquationRForProof}\\
\partial_{u}\tilde{m}= & -2\pi\big(1-\frac{2m}{r}\big)\Big(\frac{r^{2}T_{uu}}{-\partial_{u}r}+\frac{r^{2}T_{uv}}{\partial_{v}r}\Big),\label{eq:DerivativeTildeUMass}\\
\partial_{v}\tilde{m}= & 2\pi\big(1-\frac{2m}{r}\big)\Big(\frac{r^{2}T_{vv}}{\partial_{v}r}+\frac{r^{2}T_{uv}}{-\partial_{u}r}\Big),\label{eq:DerivativeTildeVMass}
\end{align}
Note that, in view of the relation $4\Omega^{-2}T_{uv}=g^{AB}T_{AB}$,
(\ref{eq:OmegaEquationFinal}) is equivalent to 
\begin{equation}
\partial_{u}\partial_{v}\log(\Omega^{2})=4\frac{m}{r^{3}}\frac{(-\partial_{u}r)\partial_{v}r}{1-\frac{2m}{r}}-16\pi T_{uv}.\label{eq:EquationOmegaForProof}
\end{equation}

When considering asymptotically AdS solutions of the system (\ref{eq:RequationFinal})\textendash (\ref{NullShellFinal}),
it will be useful to consider the following renormalised quantities
near $\mathcal{I}$:
\begin{align}
\widetilde{\Omega}^{2} & \doteq\frac{\Omega^{2}}{1-\frac{1}{3}\Lambda r^{2}},\label{eq:RenormalisedQuantities}\\
\rho & \doteq\tan^{-1}\big(\sqrt{-\frac{\Lambda}{3}}r\big),\nonumber \\
\tau_{\mu\nu} & \doteq r^{2}T_{\mu\nu}.\nonumber 
\end{align}
The quantities $(\widetilde{\Omega}^{2},\rho,\tau_{\mu\nu})$ satisfy
the following renormalised equations (readily obtained from (\ref{eq:EquationRForProof})
and (\ref{eq:EquationOmegaForProof})):
\begin{align}
\partial_{u}\partial_{v}\log(\widetilde{\Omega}^{2}) & =\frac{\tilde{m}}{r}\Big(\frac{1}{r^{2}}+\frac{1}{3}\Lambda\frac{\Lambda r^{2}-1}{1-\frac{1}{3}\Lambda r^{2}}\Big)\widetilde{\Omega}^{2}-16\pi\frac{1-\frac{1}{2}\Lambda r^{2}}{1-\frac{1}{3}\Lambda r^{2}}r^{-2}\tau_{uv},\label{eq:RenormalisedEquations}\\
\partial_{u}\partial_{v}\rho & =-\frac{1}{2}\sqrt{-\frac{\Lambda}{3}}\frac{\tilde{m}}{r^{2}}\frac{1-\frac{2}{3}\Lambda r^{2}}{1-\frac{1}{3}\Lambda r^{2}}\widetilde{\Omega}^{2}+4\pi\sqrt{-\frac{\Lambda}{3}}\frac{1}{r-\frac{1}{3}\Lambda r^{3}}\tau_{uv}.\nonumber 
\end{align}

The following relation will be useful throughout this paper: Let $u_{1}(v)$
be a given function of $v$ and let $\gamma:[0,a)\rightarrow\mathcal{M}$
be a null geodesic contained in the region $\{u\ge u_{1}(v)\}$ and
having non-zero angular momentum $l$, such that $\gamma(0)$ lies
on the curve $\{u=u_{1}(v)\}$. Then, the projection of $\gamma$
on $\mathcal{U}$ will be a strictly timelike curve with respect to
the reference metric 
\begin{equation}
g_{ref}=-dudv\label{eq:ReferenceMetric}
\end{equation}
on $\mathcal{U}$, and the equations of motion (\ref{eq:NullGeodesicsSphericalSymmetry})
(combined with the relation (\ref{eq:NullShellAngularMomentum}) for
null vectors) imply that, for any $s\in[0,a)$:

\begin{align}
\log\big(\Omega^{2}\dot{\gamma}^{u}\big)(s)-\log\big(\Omega^{2}\dot{\gamma}^{u}\big)(0)= & \int_{\text{\textgreek{g}}([0,s]))}\Big(\partial_{v}\log(\Omega^{2})-2\frac{\partial_{v}r}{r}\Big)\,dv=\label{eq:UsefulRelationForGeodesic1}\\
= & \int_{v(\gamma(0))}^{v(\gamma(s))}\int_{u_{1}(v)}^{u(\gamma(s_{v}))}\Big(\partial_{u}\partial_{v}\log(\Omega^{2})-2\partial_{u}\frac{\partial_{v}r}{r}\Big)\,dudv+\nonumber \\
 & +\int_{v(\gamma(0))}^{v(\gamma(s))}\big(\partial_{v}\log(\Omega^{2})-2\frac{\partial_{v}r}{r}\big)(u_{1}(v),v)\,dv,\nonumber 
\end{align}
where, for any $\bar{v}\in(v(\gamma(0)),v(\gamma(s)))$, $s_{\bar{v}}$
denotes the value of the parameter $s$ for which 
\begin{equation}
v(\gamma(s_{\bar{v}}))=\bar{v}.\label{eq:s_v}
\end{equation}
In view of the evolution equations (\ref{eq:EquationRForProof}),
(\ref{eq:EquationOmegaForProof}) for $r,\Omega^{2}$ and the definition
(\ref{eq:RenormalisedHawkingMass}) of $\tilde{m}$, the relation
(\ref{eq:UsefulRelationForGeodesic1}) can be expressed as 
\begin{align}
\log\big(\Omega^{2}\dot{\gamma}^{u}\big)(s)-\log\big(\Omega^{2}\dot{\gamma}^{u}\big)(0)= & \int_{v(\gamma(0))}^{v(\gamma(s))}\int_{u_{1}(v)}^{u(\gamma(s_{v}))}\Big(\frac{1}{2}\frac{\frac{6\tilde{m}}{r}-1}{r^{2}}\Omega^{2}-24\pi T_{uv}\Big)\,dudv+\label{eq:UsefulRelationForGeodesicWithMu-U}\\
 & +\int_{v(\gamma(0))}^{v(\gamma(s))}\big(\partial_{v}\log(\Omega^{2})-2\frac{\partial_{v}r}{r}\big)(u_{1}(v),v)\,dv\nonumber 
\end{align}
(see also Figure \ref{fig:Geodesic_Integration}).

\begin{figure}[h] 
\centering 
\scriptsize
\begingroup%
  \makeatletter%
  \providecommand\color[2][]{%
    \errmessage{(Inkscape) Color is used for the text in Inkscape, but the package 'color.sty' is not loaded}%
    \renewcommand\color[2][]{}%
  }%
  \providecommand\transparent[1]{%
    \errmessage{(Inkscape) Transparency is used (non-zero) for the text in Inkscape, but the package 'transparent.sty' is not loaded}%
    \renewcommand\transparent[1]{}%
  }%
  \providecommand\rotatebox[2]{#2}%
  \newcommand*\fsize{\dimexpr\f@size pt\relax}%
  \newcommand*\lineheight[1]{\fontsize{\fsize}{#1\fsize}\selectfont}%
  \ifx\svgwidth\undefined%
    \setlength{\unitlength}{150bp}%
    \ifx\svgscale\undefined%
      \relax%
    \else%
      \setlength{\unitlength}{\unitlength * \real{\svgscale}}%
    \fi%
  \else%
    \setlength{\unitlength}{\svgwidth}%
  \fi%
  \global\let\svgwidth\undefined%
  \global\let\svgscale\undefined%
  \makeatother%
  \begin{picture}(1,1.1)%
    \lineheight{1}%
    \setlength\tabcolsep{0pt}%
    \put(0,0){\includegraphics[width=\unitlength,page=1]{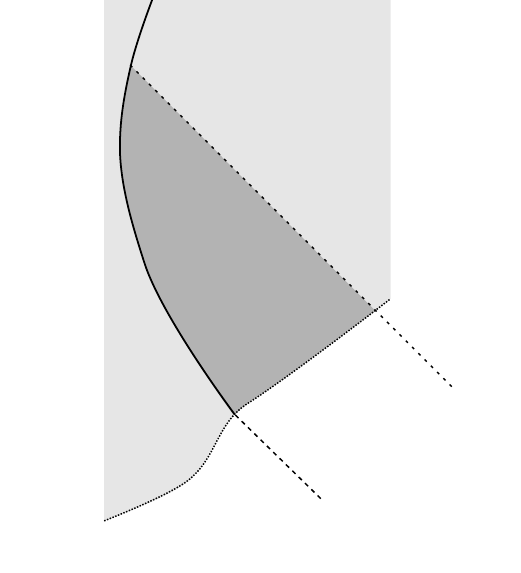}}%
    \put(0.23762377,0.49381178){\color[rgb]{0,0,0}\makebox(0,0)[lt]{\lineheight{1.25}\smash{\begin{tabular}[t]{l}$\gamma$\end{tabular}}}}%
    \put(0.28836633,0.97524738){\color[rgb]{0,0,0}\makebox(0,0)[lt]{\lineheight{1.25}\smash{\begin{tabular}[t]{l}$\gamma (s)$\end{tabular}}}}%
    \put(0.29950493,0.28589105){\color[rgb]{0,0,0}\makebox(0,0)[lt]{\lineheight{1.25}\smash{\begin{tabular}[t]{l}$\gamma (0)$\end{tabular}}}}%
    \put(0.24679985,0.06206183){\color[rgb]{0,0,0}\rotatebox{25}{\makebox(0,0)[lt]{\lineheight{1.25}\smash{\begin{tabular}[t]{l}$u=u_1(v)$\end{tabular}}}}}%
    \put(0.80420483,0.46035699){\color[rgb]{0,0,0}\rotatebox{-45}{\makebox(0,0)[lt]{\lineheight{1.25}\smash{\begin{tabular}[t]{l}$v=v(\gamma (s))$\end{tabular}}}}}%
    \put(0.55915533,0.23263432){\color[rgb]{0,0,0}\rotatebox{-45}{\makebox(0,0)[lt]{\lineheight{1.25}\smash{\begin{tabular}[t]{l}$v=v(\gamma (0))$\end{tabular}}}}}%
  \end{picture}%
\endgroup%
 
\caption{For a null geodesic $\gamma$, the domain of integration appearing in the right hand side of (\ref{eq:UsefulRelationForGeodesicWithMu-U}) is as depicted above. \label{fig:Geodesic_Integration}}
\end{figure}
\begin{rem*}
In this paper, we will be mainly interested in the case when $u_{1}(v)$
is of the form 
\[
u_{1}(v)=\begin{cases}
u_{0}, & v\le v_{c}\\
v-v_{\mathcal{I}}, & v\ge v_{c}
\end{cases}
\]
for some constants $u_{0}$, $v_{\mathcal{I}}$ and $v_{c}$.
\end{rem*}
Similarly, inverting the roles of the $u,v$ variables, for any null
geodesic $\gamma:[0,a)\rightarrow\{v\ge v_{1}(u)\}$ with $\gamma(0)\in\{v=v_{1}(u)\}$
we calculate: 
\begin{align}
\log\big(\Omega^{2}\dot{\gamma}^{v}\big)(s)-\log\big(\Omega^{2}\dot{\gamma}^{v}\big)(0)= & \int_{u(\gamma(0))}^{u(\gamma(s))}\int_{v_{1}(u)}^{v(\gamma(s_{u}))}\Big(\frac{1}{2}\frac{\frac{6\tilde{m}}{r}-1}{r^{2}}\Omega^{2}-24\pi T_{uv}\Big)\,dvdu+\label{eq:UsefulRelationForGeodesicWithMu-V}\\
 & +\int_{u(\gamma(0))}^{u(\gamma(s))}\big(\partial_{u}\log(\Omega^{2})-2\frac{\partial_{u}r}{r}\big)(u,v_{1}(u))\,du,\nonumber 
\end{align}
where $s_{u}$ is defined by 
\begin{equation}
u(\gamma(s_{\bar{u}}))=\bar{u}.\label{eq:s_u}
\end{equation}

\subsection{\label{subsec:Reflective-boundary-conditions} Reflection of null
geodesics and the reflecting boundary condition on $\mathcal{I}$}

Let $(\mathcal{M},g)$ be a spacetime as in Section \ref{subsec:Asymptotically-AdS-spacetimes}.
Let $\gamma:[0,+\infty)\rightarrow\mathcal{M}$ be a future directed
null geodesic with respect to $g$, which is future inextendible and
satisfies 
\begin{equation}
\lim_{s\rightarrow+\infty}r\circ\gamma(s)=+\infty\label{eq:InfiniteRLimit}
\end{equation}
and 
\begin{equation}
\limsup_{s\rightarrow+\infty}u\circ\gamma(s)<\sup_{\mathcal{I}}u.\label{eq:BoundedULimit}
\end{equation}
From the expression (\ref{eq:ChristoffelSymbols}) of the Christoffel
symbols and the fact that $r^{-2}\text{\textgreek{W}}^{2}$ extends
smoothly on $\mathcal{I}$ and satisifes (\ref{eq:ConformalExtensionI}),
it can be readily deduced that the curve 
\[
(\gamma,r^{2}\dot{\gamma}):[0,+\infty)\rightarrow T\widetilde{\mathcal{M}}
\]
has a regular limit on $\mathcal{I}^{(2+1)}$:
\begin{equation}
\lim_{s\rightarrow+\infty}(\gamma(s),r^{2}\dot{\gamma}(s))=(q,v)\in T|_{\mathcal{I}^{(2+1)}}\widetilde{\mathcal{M}},\label{eq:LimitOfGeodesic}
\end{equation}
with $v\in T_{q}\widetilde{\mathcal{M}}$ being a \emph{non-zero }null
vector with respect to $\tilde{g}$, satisfying 
\begin{equation}
\tilde{g}(v,n_{\mathcal{I}})>0,\label{eq:PositiveSignNormal}
\end{equation}
where $n_{\mathcal{I}}\in\text{\textgreek{G}}\big(T|_{\mathcal{I}^{(2+1)}}\widetilde{\mathcal{M}}\big)$
is the (spacelike) unit normal to $\mathcal{I}^{(2+1)}$ pointing
outwards. An analogous statement also holds for past inxtendible,
future directed null geodesics $\gamma:(-\infty,0]\rightarrow\mathcal{M}$
with past limit on $\mathcal{I}^{(2+1)}$, in which case (\ref{eq:PositiveSignNormal})
holds with the opposite sign. Conversely, for any point $q\in\mathcal{I}^{(2+1)}$
and any non-zero vector $v\in T_{q}\widetilde{\mathcal{M}}$ which
is null and future directed with respect to $\tilde{g}$:

\begin{itemize}

\item In the case $\tilde{g}(v,n_{\mathcal{I}})>0$, there exists
a unique future directed, \emph{future} inextendible null geodesic
\[
\gamma:(\alpha,+\infty)\rightarrow(\mathcal{M},g)
\]
 for which (\ref{eq:LimitOfGeodesic}) holds. 

\item In the case $\tilde{g}(v,n_{\mathcal{I}})<0$, there exists
a unique future directed, \emph{past} inextendible null geodesic 
\[
\gamma:(-\infty,a)\rightarrow(\mathcal{M},g)
\]
 for which (\ref{eq:LimitOfGeodesic}) holds with $-\infty$ in place
of $+\infty$. 

\end{itemize}

We can therefore define the reflection of null geodesics on conformal
infinity as follows:
\begin{defn}
\label{def:Reflection} Let $(\mathcal{M},g)$ be as in Definition
\ref{def:AsADS} and let $\gamma:(a,+\infty)\rightarrow(\mathcal{M},g)$
be a future directed, future inextendible null geodesic, satisfying
(\ref{eq:InfiniteRLimit}) and (\ref{eq:BoundedULimit}), and let
$q\in\mathcal{I}^{(2+1)}$, $v\in T_{q}\widetilde{\mathcal{M}}$ be
defined by (\ref{eq:LimitOfGeodesic}). Let also $v_{\models}\in T_{q}\widetilde{\mathcal{M}}$
be the reflection of $v$ across $T_{q}\mathcal{I}^{(2+1)}$, i.~e.~the
unique future directed, null vector satisfying $v_{\models}\neq v$
and 
\[
(v_{\models}-v)\parallel n_{\mathcal{I}}.
\]
We define the \emph{reflection} of $\gamma$ off $\mathcal{I}^{(2+1)}$
to be the unique future directed, past inextendible null geodesic
\[
\gamma_{\models}:(-\infty,b)\rightarrow(\mathcal{M},g)
\]
 such that 
\begin{equation}
\lim_{s\rightarrow-\infty}(\gamma_{\models}(s),r^{2}\dot{\gamma}_{\models}(s))=(q,v_{\models}).\label{eq:ReflectingConditionGeodesics}
\end{equation}
\end{defn}
\begin{rem*}
Note that the angular momenta of $\gamma$ and $\gamma_{\models}$
(defined by (\ref{eq:AngularMomentum})) are necessarily the same,
since $n_{\mathcal{I}}$ points in the radial direction. In particular,
the projections of $\gamma$, $\gamma_{\models}$ on $\mathcal{U}$
will satisfy on $\mathcal{I}$ (in the notation of Section (\ref{subsec:VlasovEquations})):
\begin{equation}
\begin{cases}
r^{2}\dot{\gamma}_{\models}^{u}\big|_{\mathcal{I}}=r^{2}\dot{\gamma}^{v}\big|_{\mathcal{I}},\\
r^{2}\dot{\gamma}_{\models}^{v}\big|_{\mathcal{I}}=r^{2}\dot{\gamma}^{u}\big|_{\mathcal{I}},\\
r^{2}\dot{\gamma}_{\models}^{A}\big|_{\mathcal{I}}=r^{2}\dot{\gamma}^{A}\big|_{\mathcal{I}}.
\end{cases}\label{eq:ReflectionConditionSphericallySymmetric}
\end{equation}
\end{rem*}
By successively extending a null geodesic $\gamma$ through its reflections
off $\mathcal{I}^{(2+1)}$, we can construct its maximal extension
through reflections:
\begin{defn}
\label{def:MaximalExtensionReflections} Let $(\mathcal{M},g)$ be
as in Definition \ref{def:AsADS} and let $\gamma=\cup_{n=0}^{N}\gamma_{n}$,
$N\in\mathbb{N}\cup\{\infty\}$, be the union of future directed,
future inextendible, affinely parametrised null geodesics $\gamma_{n}:(a_{n},b_{n})\rightarrow\mathcal{M}$,
$-\infty\le a_{n}<b_{n}\le+\infty$. We will say that $\gamma$ is
an \emph{affinely parametrised, maximally extended geodesic through
reflections }if all of the following conditions hold:

\begin{enumerate}

\item $a_{n}=-\infty$ for any $n>0$ and $b_{n}=+\infty$ for any
$n<N$,

\item For any $0<n\le N$, $\gamma_{n}$ is the reflection of $\gamma_{n-1}$
off $\mathcal{I}^{(2+1)}$, in accordance with Definition \ref{def:Reflection}.

\item If $N\neq\infty$, then $\lim_{s\rightarrow b_{N}^{-}}\gamma_{N}(s)$
does not exist in $\widetilde{\mathcal{M}}$.

\end{enumerate}
\end{defn}
\begin{rem*}
The reflecting condition (\ref{eq:ReflectingConditionGeodesics})
uniquely determines the affine parametrisation of $\gamma_{n}$ in
terms of the parametrisation of $\gamma_{n-1}$. Thus, the affine
parametrisation of $\dot{\gamma}_{0}$ uniquely determines the parametrisation
of all the $\gamma_{n}$'s.
\end{rem*}
\begin{figure}[h] 
\centering 
\begingroup%
  \makeatletter%
  \providecommand\color[2][]{%
    \errmessage{(Inkscape) Color is used for the text in Inkscape, but the package 'color.sty' is not loaded}%
    \renewcommand\color[2][]{}%
  }%
  \providecommand\transparent[1]{%
    \errmessage{(Inkscape) Transparency is used (non-zero) for the text in Inkscape, but the package 'transparent.sty' is not loaded}%
    \renewcommand\transparent[1]{}%
  }%
  \providecommand\rotatebox[2]{#2}%
  \newcommand*\fsize{\dimexpr\f@size pt\relax}%
  \newcommand*\lineheight[1]{\fontsize{\fsize}{#1\fsize}\selectfont}%
  \ifx\svgwidth\undefined%
    \setlength{\unitlength}{150bp}%
    \ifx\svgscale\undefined%
      \relax%
    \else%
      \setlength{\unitlength}{\unitlength * \real{\svgscale}}%
    \fi%
  \else%
    \setlength{\unitlength}{\svgwidth}%
  \fi%
  \global\let\svgwidth\undefined%
  \global\let\svgscale\undefined%
  \makeatother%
  \begin{picture}(1,1.75)%
    \lineheight{1}%
    \setlength\tabcolsep{0pt}%
    \put(0,0){\includegraphics[width=\unitlength,page=1]{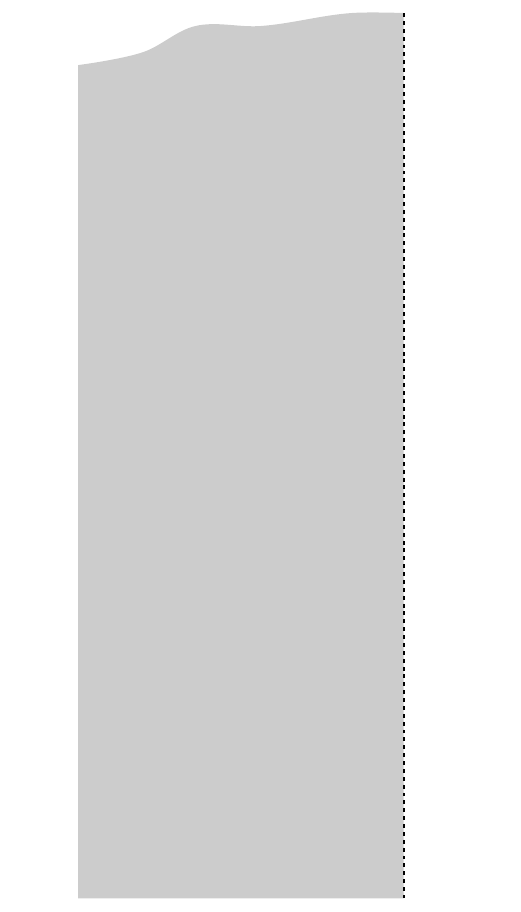}}%
    \put(0.83582917,0.75705444){\color[rgb]{0,0,0}\makebox(0,0)[lt]{\lineheight{1.25}\smash{\begin{tabular}[t]{l}$\mathcal{I}$\end{tabular}}}}%
    \put(0,0){\includegraphics[width=\unitlength,page=2]{Maximal_geodesic.pdf}}%
    \put(0.41367573,1.41862625){\color[rgb]{0,0,0}\makebox(0,0)[lt]{\lineheight{1.25}\smash{\begin{tabular}[t]{l}$\gamma_N$\end{tabular}}}}%
    \put(0.37252476,0.75804458){\color[rgb]{0,0,0}\makebox(0,0)[lt]{\lineheight{1.25}\smash{\begin{tabular}[t]{l}$\gamma_{N-1}$\end{tabular}}}}%
    \put(0.38551977,0.05414621){\color[rgb]{0,0,0}\makebox(0,0)[lt]{\lineheight{1.25}\smash{\begin{tabular}[t]{l}$\gamma_{N-2}$\end{tabular}}}}%
  \end{picture}%
\endgroup%
 
\caption{Schematic depiction of the components $\gamma_n$ of a maximally  extended geodesic through reflections (as projected on the $(u,v)$-plane).}
\end{figure}

Having defined the reflection of null geodesics on $\mathcal{I}^{(2+1)}$,
we can now introduce the notion of the reflecting boundary condition
for the massless Vlasov equation on $\mathcal{I}^{(2+1)}$:
\begin{defn}
\label{def:ReflectingBoundaryCondition} Let $(\mathcal{M},g)$ be
as in Definition \ref{def:AsADS}, and let $f$ be a smooth massless
Vlasov field on $T\mathcal{M}$ (see Section \ref{subsec:VlasovEquations}
for the relevant definition). We will say that $f$ satisfies the
\emph{reflecting} boundary condition on conformal infinity if, for
any pair of future directd null geodesics $\gamma:(a,+\infty)\rightarrow\mathcal{M}$
and $\gamma_{\models}:(-\infty,b)\rightarrow\mathcal{M}$ such that
$\gamma_{\models}$ is the reflection of $\gamma$ on $\mathcal{I}^{(2+1)}$
according to Definition \ref{def:Reflection}, $f$ satisfies 
\begin{equation}
f|_{(\gamma,\dot{\gamma})}=f|_{(\gamma_{\models},\dot{\gamma}_{\models})},\label{eq:ReflectingCondition}
\end{equation}
where $f|_{(\gamma,\dot{\gamma})}$ is the (constant) value of $f$
along the curve $(\gamma,\dot{\gamma})$ in $T\mathcal{M}$. 
\end{defn}
\begin{rem*}
Equivalently, $f$ satisfies the reflecting condition on $\mathcal{I}^{(2+1)}$
if $f$ is constant along the trajectory of $(\gamma,\dot{\gamma})$
for any future directed, affinely paramterised null geodesic $\gamma$
which is maximally extended through reflections.
\end{rem*}
The following Lemma is a straightforward consequence of the relations
(\ref{eq:DerivativeTildeUMass})\textendash (\ref{eq:DerivativeTildeVMass}),
the condition (\ref{eq:1/r0}) on $\mathcal{I}$ and the reflecting
boundary condition (\ref{eq:ReflectionConditionSphericallySymmetric}):
\begin{lem}
Let $(r,\Omega^{2},f)$ be an asymptotically AdS solution of the spherically
symmetric Einstein\textendash massless Vlasov system (\ref{eq:RequationFinal})\textendash (\ref{NullShellFinal}),
such that $f$ satisfies the reflecting boundary condition on conformal
infinity. Then, 
\begin{equation}
(\partial_{u}+\partial_{v})\tilde{m}|_{\mathcal{I}}=0,\label{eq:ConservedMassI}
\end{equation}
i.\,e.~$\tilde{m}$ is conserved along $\mathcal{I}$. In particular,
if $\tilde{m}$ has a finite limit on some point $q\in\mathcal{I}$,
then it has a finite limit everywhere on $\mathcal{I}$.
\end{lem}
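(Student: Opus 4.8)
The statement to prove is that for an asymptotically AdS solution satisfying the reflecting boundary condition, the renormalised Hawking mass $\tilde{m}$ is conserved along $\mathcal{I}$, i.e.\ $(\partial_u+\partial_v)\tilde m|_{\mathcal{I}}=0$. The natural approach is to add the two evolution equations \eqref{eq:DerivativeTildeUMass} and \eqref{eq:DerivativeTildeVMass} for $\partial_u\tilde m$ and $\partial_v\tilde m$, so that
\[
(\partial_u+\partial_v)\tilde m = 2\pi\Big(1-\tfrac{2m}{r}\Big)\Big(\frac{r^2 T_{vv}}{\partial_v r}-\frac{r^2 T_{uu}}{-\partial_u r}\Big),
\]
and then show that the right-hand side vanishes in the limit $r\to+\infty$, i.e.\ on $\mathcal{I}=\{v=u+v_{\mathcal{I}}\}$. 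The key mechanism is that the reflecting boundary condition \eqref{eq:ReflectionConditionSphericallySymmetric}, which exchanges $r^2\dot\gamma^u$ and $r^2\dot\gamma^v$ at $\mathcal{I}$, forces the ingoing and outgoing fluxes of the Vlasov field to agree on $\mathcal{I}$, which should translate into a relation between $r^2 T_{vv}/\partial_v r$ and $r^2 T_{uu}/(-\partial_u r)$ at the boundary.

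\textbf{Main steps.} First I would rewrite the combination above in terms of the reduced quantities: using \eqref{eq:ReducedEnergyMomentum} and \eqref{eq:ReducedParticleCurrent}, one expresses $\dfrac{r^2 T_{vv}^{(l)}}{\partial_v r}$ and $\dfrac{r^2 T_{uu}^{(l)}}{-\partial_u r}$ as integrals over the $p^u$-fiber of the reduced Vlasov density $\bar f(u,v;p^u,p^v,l)$, weighted by $(\Omega^2 p^u)^2/\partial_v r$ and $(\Omega^2 p^v)^2/(-\partial_u r)$ respectively, on the null shell $\Omega^2 p^u p^v = l^2/r^2$. Second, I would change variables in each fiber integral to the geometric quantity $r^2\dot\gamma^v = r^2 p^v$ (resp.\ $r^2\dot\gamma^u = r^2 p^u$), which is precisely the invariant that extends regularly to $\mathcal{I}$ by \eqref{eq:LimitOfGeodesic}. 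Third, I would use the reflecting boundary condition: every future-directed null geodesic reaching a point $q\in\mathcal{I}$ with data $(q,v_{\text{in}})$ is the reflection of one leaving with $(q,v_{\text{out}})$ where the $u$- and $v$-components are swapped (cf.\ \eqref{eq:ReflectionConditionSphericallySymmetric}), and \eqref{eq:ReflectingCondition} says $\bar f$ takes the same value on both. Since each geodesic crossing $\mathcal{I}$ is simultaneously incoming (from the point of view of the $v$-flux it deposits) and outgoing, this pairing identifies the integrand of $\dfrac{r^2 T_{vv}}{\partial_v r}\big|_{\mathcal{I}}$ with that of $\dfrac{r^2 T_{uu}}{-\partial_u r}\big|_{\mathcal{I}}$ after the variable swap, yielding equality of the two limits and hence $(\partial_u+\partial_v)\tilde m|_{\mathcal{I}}=0$. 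Finally, the ``in particular'' clause is immediate: $\tilde m|_{\mathcal{I}}$ is then constant as a function along the (connected, one-dimensional) boundary $\mathcal{I}$, so a finite value at one point propagates to all of $\mathcal{I}$.

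\textbf{Main obstacle.} The delicate point is the boundary behaviour of the factors $\Omega^2$, $\partial_u r$ and $\partial_v r$ individually, each of which degenerates or blows up at $\mathcal{I}$ (indeed $r\to\infty$, and $\partial_u r,\partial_v r$ are governed by \eqref{eq:1/r0}--\eqref{eq:BoundaryConditionRInfinity}). One must check that the ratios $(\Omega^2 p^v)^2/\partial_v r$ and $(\Omega^2 p^u)^2/(-\partial_u r)$ admit finite limits on $\mathcal{I}$ when expressed through the regular variables $r^2 p^u$, $r^2 p^v$ and the conformal factor $r^{-2}\Omega^2$ (which is finite and nonzero by \eqref{eq:ConformalExtensionI}), and that the measure $\frac{dp^u}{p^u}\,l\,dl$ transforms correctly under the change of variables to $(r^2 p^v, l)$. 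This is a routine but careful bookkeeping exercise using the null shell relation \eqref{eq:NullShellAngularMomentum} and the regularity statements in Definition \ref{def:AsADS}; once it is done, the symmetry of the reflecting condition does the rest. A secondary point to be careful about is that the support of $f$ near $\mathcal{I}$ consists of geodesics that genuinely reach infinity in the sense of \eqref{eq:InfiniteRLimit}--\eqref{eq:BoundedULimit}, so that Definition \ref{def:Reflection} applies to all of them; geodesics with angular momentum $l$ so large that they turn back before reaching $\mathcal{I}$ simply do not contribute to the fiber integral in the limit.
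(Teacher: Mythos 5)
Your proposal is correct and follows essentially the route the paper indicates: the paper states this lemma without a written proof, citing exactly the ingredients you use, namely the sum of (\ref{eq:DerivativeTildeUMass}) and (\ref{eq:DerivativeTildeVMass}), the boundary condition (\ref{eq:1/r0}), and the reflecting condition (\ref{eq:ReflectionConditionSphericallySymmetric}) pairing $r^{2}\dot{\gamma}^{u}$ with $r^{2}\dot{\gamma}^{v}$ so that the renormalised ingoing and outgoing fluxes agree on $\mathcal{I}$. One small correction: the sum of the two mass equations also contains the cross terms $2\pi\big(1-\frac{2m}{r}\big)\Big(\frac{r^{2}T_{uv}}{-\partial_{u}r}-\frac{r^{2}T_{uv}}{\partial_{v}r}\Big)$, which are absent from your displayed identity; these cancel on $\mathcal{I}$ precisely because (\ref{eq:BoundaryConditionRInfinity}) gives $\partial_{v}r|_{\mathcal{I}}=-\partial_{u}r|_{\mathcal{I}}$, so your conclusion is unaffected.
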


\section{\label{sec:InitialValueProblem}The initial-boudary value problem
for the spherically symmetric Einstein\textendash massless Vlasov
system}

The aim of this section is to introduce the characteristic initial-boundary
value problem for (\ref{eq:RequationFinal})\textendash (\ref{NullShellFinal}),
expressed in terms of characteristic initial data sets at $u=0$.
We will focus on the class of \emph{smoothly compatible} initial data
sets, which are precisely those initial data sets which, formally
at least, are induced on $u=0$ by smooth solutions $(\mathcal{M},g;f)$
of (\ref{eq:EinsteinVlasovEquations}) satisfying the reflecting boundary
condition on $\mathcal{I}$ (the fact that any such initial data set
indeed admits a smooth development will be established in Section
\ref{sec:Cauchy_Stability_Low_Regularity}). To this end, we will
first introduce the notion of a smooth, asymptotically AdS solution
$(r,\Omega^{2},f)$ to (\ref{eq:RequationFinal})\textendash (\ref{NullShellFinal})
with a regular axis of symmetry. 

\subsection{\label{subsec:SmoothSolutions}The class of smooth solutions of (\ref{eq:RequationFinal})\textendash (\ref{NullShellFinal}) }

Before formulating the characteristic initial-boundary value problem
for (\ref{eq:RequationFinal})\textendash (\ref{NullShellFinal}),
we will introduce the regularity conditions that will define the class
of solutions to (\ref{eq:RequationFinal})\textendash (\ref{NullShellFinal})
which will be of interest to us in the following sections. This class
will consist of precisely those solutions $(r,\Omega^{2},f)$ to (\ref{eq:RequationFinal})\textendash (\ref{NullShellFinal})
which arise from \emph{smooth }spherically symmetric solutions $(\mathcal{M},g;f)$
of (\ref{eq:EinsteinVlasovEquations}). 

Checking whether a solution of (\ref{eq:RequationFinal})\textendash (\ref{NullShellFinal})
corresponds to a smooth solution of (\ref{eq:EinsteinVlasovEquations})
requires taking into consideration the coordinate singularities introduced
along the axis when switching to a spherically symmetric double null
coordinate chart. We will therefore adopt the following definition
for the smoothness of a solution $(r,\Omega^{2},f)$ of (\ref{eq:RequationFinal})\textendash (\ref{NullShellFinal})
in the presence of an axis-type boundary, which guarantees the existence
of a corresponding smooth solution $(\mathcal{M},g;f)$ of (\ref{eq:EinsteinVlasovEquations})
with a non-trivial axis:
\begin{defn}
\label{def:SmoothnessAxis} Let $\mathcal{U}$ be a domain in the
$(u,v)$-plane satisfying $\mathcal{U}\subset\{u<v\}$, such that
\[
\bar{\gamma}_{\mathcal{Z}}\doteq\{u=v\}\cap\partial\mathcal{U}
\]
is a non-empty connected curve of the form $\{u=v\}\cap\{u_{1}\le u\le u_{2}\}$
for some $u_{1}<u_{2}$. Let us also set 
\[
\gamma_{\mathcal{Z}}\doteq\{u=v\}\cap\{u_{1}<u<u_{2}\}.
\]
A solution $(r,\Omega^{2},f)$ of (\ref{eq:RequationFinal})\textendash (\ref{NullShellFinal})
on $\mathcal{U}$ will possess a\emph{ smooth axis }$\gamma_{\mathcal{Z}}$
if

\begin{itemize}

\item The functions $(r,\Omega^{2})$ are smooth and positive on
$\mathcal{U}$, and $r$ extends continuously to $0$ on $\gamma_{\mathcal{Z}}$.

\item The Vlasov field $f$ is of the form (\ref{eq:FAsASmoothDeltaFunction}),
where $\bar{f}$ is smooth in its variables.

\item The coefficients of the metric (\ref{eq:SphericallySymmetricMetric})
on $\mathcal{U}\times\mathbb{S}^{2}$, when expressed in the Cartesian
coordinate chart 
\[
(x^{0},x^{1},x^{2},x^{3}):\mathcal{U}\times\mathbb{S}^{2}\rightarrow\mathbb{R}\times\big(\mathbb{R}^{3}\backslash\{0\}\big)
\]
 defined by the relations
\begin{align}
x^{0} & =\frac{1}{2}(u+v),\label{eq:CartesianCoordinates}\\
\sqrt{(x^{1})^{2}+(x^{2})^{2}+(x^{3})^{2}} & =r,\nonumber \\
\cos^{-1}\big(x^{3}/r\big) & =\theta,\nonumber \\
x^{2}/x^{1} & =\tan\varphi\nonumber 
\end{align}
(where $(\varphi,\theta)$ are the standard angular coordinates on
$\mathbb{S}^{2}$), can be smoothly extended on $(x^{1})^{2}+(x^{2})^{2}+(x^{3})^{2}=0$.
Similarly, expressed as a function of the Cartesian coordinates $x^{a}$
and their conjugate momentum coordinates $p^{a}$, the function $\bar{f}(x^{a},p^{a})$
can be smoothly extended on $(x^{1})^{2}+(x^{2})^{2}+(x^{3})^{2}=0$.

\end{itemize}
\end{defn}
\begin{figure}[h] 
\centering 
\begingroup%
  \makeatletter%
  \providecommand\color[2][]{%
    \errmessage{(Inkscape) Color is used for the text in Inkscape, but the package 'color.sty' is not loaded}%
    \renewcommand\color[2][]{}%
  }%
  \providecommand\transparent[1]{%
    \errmessage{(Inkscape) Transparency is used (non-zero) for the text in Inkscape, but the package 'transparent.sty' is not loaded}%
    \renewcommand\transparent[1]{}%
  }%
  \providecommand\rotatebox[2]{#2}%
  \newcommand*\fsize{\dimexpr\f@size pt\relax}%
  \newcommand*\lineheight[1]{\fontsize{\fsize}{#1\fsize}\selectfont}%
  \ifx\svgwidth\undefined%
    \setlength{\unitlength}{150bp}%
    \ifx\svgscale\undefined%
      \relax%
    \else%
      \setlength{\unitlength}{\unitlength * \real{\svgscale}}%
    \fi%
  \else%
    \setlength{\unitlength}{\svgwidth}%
  \fi%
  \global\let\svgwidth\undefined%
  \global\let\svgscale\undefined%
  \makeatother%
  \begin{picture}(1,1)%
    \lineheight{1}%
    \setlength\tabcolsep{0pt}%
    \put(0,0){\includegraphics[width=\unitlength,page=1]{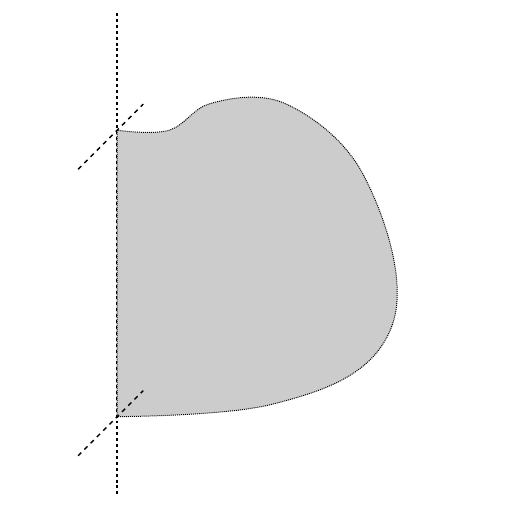}}%
    \put(0.39999999,0.47500013){\color[rgb]{0,0,0}\makebox(0,0)[lt]{\lineheight{1.25}\smash{\begin{tabular}[t]{l}$\mathcal{U}$\end{tabular}}}}%
    \put(0.19326242,0.83590303){\color[rgb]{0,0,0}\rotatebox{90}{\makebox(0,0)[lt]{\lineheight{1.25}\smash{\begin{tabular}[t]{l}$u=v$\end{tabular}}}}}%
    \put(0.04269498,0.05978632){\color[rgb]{0,0,0}\rotatebox{45}{\makebox(0,0)[lt]{\lineheight{1.25}\smash{\begin{tabular}[t]{l}$u=u_1$\end{tabular}}}}}%
    \put(0.03279391,0.60681584){\color[rgb]{0,0,0}\rotatebox{45}{\makebox(0,0)[lt]{\lineheight{1.25}\smash{\begin{tabular}[t]{l}$u=u_2$\end{tabular}}}}}%
    \put(0.11856435,0.47623775){\color[rgb]{0,0,0}\makebox(0,0)[lt]{\lineheight{1.25}\smash{\begin{tabular}[t]{l}$\gamma_{\mathcal{Z}}$\end{tabular}}}}%
    \put(0,0){\includegraphics[width=\unitlength,page=2]{Axis_Domain_Smooth.pdf}}%
  \end{picture}%
\endgroup%
 
\caption{Schematic depiction of a domain $\mathcal{U}$ satisfying the conditions of Definition \ref{def:SmoothnessAxis}.}
\end{figure}
\begin{rem*}
In the Cartesian coordinates $x^{a}$ defined by \ref{eq:CartesianCoordinates},
the metric (\ref{eq:SphericallySymmetricMetric}) is expressed as:
\begin{equation}
g=-\Omega^{2}\Big(1-\frac{\dot{r}^{2}}{(r^{\prime})^{2}}\Big)(dx^{0})^{2}-2\frac{\dot{r}\Omega^{2}}{(r^{\prime})^{2}}\frac{x^{k}\delta_{ki}}{r}dx^{i}dx^{0}+\Big(\delta_{ij}+\big(\frac{\frac{2m}{r}+\lambda-1}{1-\frac{2m}{r}}\big)\frac{x^{k}x^{l}}{r^{2}}\delta_{ki}\delta_{lj}\Big)dx^{i}dx^{j},\label{eq:CartesianMetric}
\end{equation}
where latin indices range over the index set $\{1,2,3\}$ and 
\begin{gather*}
\dot{r}=\partial_{v}r+\partial_{u}r,\\
r^{\prime}=\partial_{v}r-\partial_{u}r,\\
\lambda=\frac{-4\partial_{u}r\partial_{v}r}{(\partial_{v}r-\partial_{u}r)^{2}}.
\end{gather*}
The fact that $r|_{\gamma_{\mathcal{Z}}}=0$ also implies that 
\begin{equation}
\partial_{v}r|_{\gamma_{\mathcal{Z}}}=-\partial_{u}r|_{\gamma_{\mathcal{Z}}}.\label{eq:BoundaryConditionRAxis}
\end{equation}
\end{rem*}
For asymptotically AdS solutions, smoothness along conformal infinity
will be defined in accordance to Definition \ref{def:AsADS}:
\begin{defn}
\label{def:SmoothnessConformalInfinity}Let $\mathcal{U}$ be a domain
in the $(u,v)$-plane satisfying $\mathcal{U}\subset\{v<u+v_{\mathcal{I}}\}$
for some $v_{\mathcal{I}}\in\mathbb{R}$, such that 
\[
\overline{\mathcal{I}}\doteq\{v=u+v_{\mathcal{I}}\}\cap\partial\mathcal{U}
\]
is a non-empty connected curve of the form $\{v=u+u_{\mathcal{I}}\}\cap\{u_{1}\le u\le u_{2}\}$
for some $u_{1}<u_{2}$. Let us also set 
\begin{equation}
\mathcal{I}\doteq\{v=u+v_{\mathcal{I}}\}\cap\{u_{1}<u<u_{2}\}.
\end{equation}
A solution $(r,\Omega^{2},f)$ of (\ref{eq:RequationFinal})\textendash (\ref{NullShellFinal})
on $\mathcal{U}$ will be said to possess a\emph{ smooth conformal
infinity }$\mathcal{I}$ if

\begin{itemize}

\item The functions $(r,\Omega^{2})$ extend on the whole of the
domain
\[
\mathcal{U}^{*}\doteq\{u+v_{*}(u)\le v<u+v_{\mathcal{I}}\}\cap\{u_{1}<u<u_{2}\}
\]
 (for some smooth $v_{*}(u)<v_{\mathcal{I}}$), so that $(\mathcal{U}^{*};r,\Omega^{2})$
is asymptotically AdS in accordance with Definition \ref{def:AsADS}, 

\item The distribution $f$ is of the form (\ref{eq:FAsASmoothDeltaFunction}),
for some smooth function $\bar{f}$ which extends smoothly on $\{v=u+v_{\mathcal{I}}\}$
as a function of $\bar{p}^{u}=\Omega^{2}p^{u}$, $\bar{p}^{v}=\Omega^{2}p^{v}$
and $l$.

\end{itemize}
\end{defn}
\begin{figure}[h] 
\centering 
\begingroup%
  \makeatletter%
  \providecommand\color[2][]{%
    \errmessage{(Inkscape) Color is used for the text in Inkscape, but the package 'color.sty' is not loaded}%
    \renewcommand\color[2][]{}%
  }%
  \providecommand\transparent[1]{%
    \errmessage{(Inkscape) Transparency is used (non-zero) for the text in Inkscape, but the package 'transparent.sty' is not loaded}%
    \renewcommand\transparent[1]{}%
  }%
  \providecommand\rotatebox[2]{#2}%
  \newcommand*\fsize{\dimexpr\f@size pt\relax}%
  \newcommand*\lineheight[1]{\fontsize{\fsize}{#1\fsize}\selectfont}%
  \ifx\svgwidth\undefined%
    \setlength{\unitlength}{150bp}%
    \ifx\svgscale\undefined%
      \relax%
    \else%
      \setlength{\unitlength}{\unitlength * \real{\svgscale}}%
    \fi%
  \else%
    \setlength{\unitlength}{\svgwidth}%
  \fi%
  \global\let\svgwidth\undefined%
  \global\let\svgscale\undefined%
  \makeatother%
  \begin{picture}(1,1)%
    \lineheight{1}%
    \setlength\tabcolsep{0pt}%
    \put(0,0){\includegraphics[width=\unitlength,page=1]{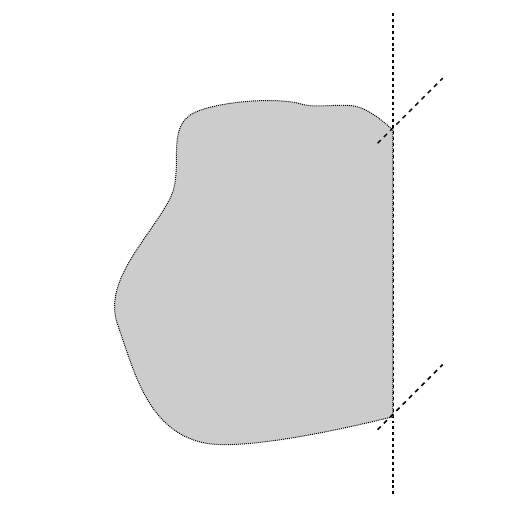}}%
    \put(0.48044554,0.46633679){\color[rgb]{0,0,0}\makebox(0,0)[lt]{\lineheight{1.25}\smash{\begin{tabular}[t]{l}$\mathcal{U}$\end{tabular}}}}%
    \put(0.71677724,0.8099133){\color[rgb]{0,0,0}\rotatebox{90}{\makebox(0,0)[lt]{\lineheight{1.25}\smash{\begin{tabular}[t]{l}$v=u+v_{\mathcal{I}}$\end{tabular}}}}}%
    \put(0.82982357,0.18107352){\color[rgb]{0,0,0}\rotatebox{45}{\makebox(0,0)[lt]{\lineheight{1.25}\smash{\begin{tabular}[t]{l}$u=u_1$\end{tabular}}}}}%
    \put(0.82363568,0.73305347){\color[rgb]{0,0,0}\rotatebox{45}{\makebox(0,0)[lt]{\lineheight{1.25}\smash{\begin{tabular}[t]{l}$u=u_2$\end{tabular}}}}}%
    \put(0.80049508,0.46633679){\color[rgb]{0,0,0}\makebox(0,0)[lt]{\lineheight{1.25}\smash{\begin{tabular}[t]{l}$\mathcal{I}$\end{tabular}}}}%
    \put(0,0){\includegraphics[width=\unitlength,page=2]{Infinity_Domain_Smooth.pdf}}%
  \end{picture}%
\endgroup%
 
\caption{Schematic depiction of a domain $\mathcal{U}$ satisfying the conditions of Definition \ref{def:SmoothnessConformalInfinity}.}
\end{figure}

Away from the axis and conformal infinity, smoothness for solutions
of (\ref{eq:RequationFinal})\textendash (\ref{NullShellFinal}) will
be defined in the usual way:
\begin{defn}
\label{def:SmoothnessGenerally}Let $\mathcal{U}$ be a domain in
the $(u,v)$-plane. A solution $(r,\Omega^{2},f)$ of (\ref{eq:RequationFinal})\textendash (\ref{NullShellFinal})
on $\mathcal{U}$ will be called \emph{smooth} if the functions $(r,\Omega^{2})$
are smooth and positive on $\mathcal{U}$, while $f$ is of the form
(\ref{eq:FAsASmoothDeltaFunction}), for some smooth function $\bar{f}$.
\end{defn}
\begin{rem*}
More generally, a solution $(r,\Omega^{2},f)$ of (\ref{eq:RequationFinal})\textendash (\ref{NullShellFinal})
will be called smooth on a subset $\mathcal{F}$ of the $(u,v)$ plane
of $(r,\Omega^{2},f)$ is smooth on an open domain $\mathcal{U}\supset\mathcal{F}$
(with a similar generalization applying to the definition of a smooth
axis and a smooth conformal infinity).
\end{rem*}

\subsection{\label{subsec:Asymptotically-AdS-Initial-Data}Asymptotically AdS
characteristic initial data sets for (\ref{eq:RequationFinal})\textendash (\ref{NullShellFinal})}

In this paper, as well as in the companion paper \cite{MoschidisVlasov},
we will be mainly interested in solutions to (\ref{eq:RequationFinal})\textendash (\ref{NullShellFinal})
arising from asymptotically AdS characteristic initial data sets prescribed
at $u=0$. In this section, we will introduce the class of initial
data under consideration. We will also present a gauge normalisation
condition, that will later allow us to compare different initial data
sets with the trivial one, as well as a condition related to the compatibility
of initial data sets with the reflecting boundary condition at $\mathcal{I}$.

We will adopt the following fundamental definition:
\begin{defn}
\label{def:AsymptoticallyAdSData} Let $v_{\mathcal{I}}>0$ and let
$r_{/},\Omega_{/}:[0,v_{\mathcal{I}})\rightarrow[0,+\infty)$, $\bar{f}_{/}:(0,v_{\mathcal{I}})\times[0,+\infty)^{2}\rightarrow[0,+\infty)$
be smooth functions. We will call $(r_{/},\Omega_{/}^{2},\bar{f}_{/};v_{\mathcal{I}})$
a \emph{smooth, asymptotically AdS initial data set }for (\ref{eq:RequationFinal})\textendash (\ref{NullShellFinal})
if: 

\begin{enumerate}

\item On $(0,v_{\mathcal{I}})$, the functions $(r_{/},\Omega_{/}^{2},\bar{f}_{/})$
satisfy the constraint equation (\ref{eq:ConstraintVFinal}), where
the energy momentum component $T_{vv}$ is defined on $(0,v_{\mathcal{I}})$
using the relation (\ref{eq:SphericallySymmetricComponentsEnergyMomentum})
(with $\bar{f}_{/}(v;p^{u},l)$ in place of $\bar{f}(u,v;p^{u},p^{v},l)\Big|_{\Omega^{2}p^{u}p^{v}=\frac{l^{2}}{r^{2}}}$).

\item At $v=0$, the functions $r_{/}$, $\Omega_{/}^{2}$ satisfy
\begin{equation}
r_{/}(0)=0,
\end{equation}
\begin{equation}
\Omega_{/}^{2}(0)>0.
\end{equation}

\item At $v=v_{\mathcal{I}}$, the functions $1/r_{/}$, $r_{/}^{-2}\Omega_{/}^{2}$
extend smoothly and satisfy
\begin{equation}
1/r_{/}(v_{\mathcal{I}})=0,
\end{equation}
\begin{equation}
\partial_{v}(1/r_{/})(v_{\mathcal{I}})<0,\label{eq:PositiveDvRInitially}
\end{equation}
\begin{equation}
\frac{\Omega_{/}^{2}}{r_{/}^{2}}(v_{\mathcal{I}})>0.
\end{equation}
Furthermore, for any $\bar{p}\ge0$ and $l\ge0$, $\bar{f}(v,\Omega_{/}^{-2}(v)p,l)$
extends smoothly on $v=v_{\mathcal{I}}$.

\end{enumerate}

We will also say that an initial data set $(r_{/},\Omega_{/}^{2},\bar{f}_{/};v_{\mathcal{I}})$
is of \emph{bounded support in phase space }if there exists some $C>0$
such that, for every $v\in(0,v_{\mathcal{I}})$ and $l\ge0$:
\begin{equation}
\sup_{p^{u}\in supp(f_{/}(v;\cdot,l))}\Big(\Omega_{/}^{2}\big(p^{u}+\frac{l^{2}}{\Omega_{/}^{2}r_{/}^{2}p^{u}}\big)\Big)\le C.\label{eq:BoundedSupportDefinition}
\end{equation}
\end{defn}
We should make the following remarks regarding the class of asymptotically
AdS initial data sets:

\medskip{}

\noindent \emph{Remark 1.} The inequality $\partial_{v}(\Omega_{/}^{-2}\partial_{v}r_{/})\le0$
(following from the constraint equation (\ref{eq:ConstraintVFinal}))
and the fact that 
\[
r_{/}(v_{\mathcal{I}})=+\infty,\text{ }\frac{\Omega_{/}^{2}}{r_{/}^{2}}(v_{\mathcal{I}})>0\text{ and }\Omega_{/}^{2}(0)>0,
\]
 imply that, for any smooth asymptotically AdS initial data set $(r_{/},\Omega_{/}^{2},\bar{f}_{/};v_{\mathcal{I}})$,
\begin{equation}
\inf_{v\in[0,v_{\mathcal{I}})}\partial_{v}r_{/}(v)>0\text{ and }\inf_{v\in[0,v_{\mathcal{I}})}\Omega_{/}^{2}(v)>0.\label{eq:NonTrappingForAsymptoticallyAdSInitialData}
\end{equation}
In particular, a smooth asymptotically AdS initial data set does \emph{not}
contain trapped spheres. 

\medskip{}

\noindent \emph{Remark 2.} For any smooth asymptotically AdS initial
data set $(r_{/},\Omega_{/}^{2},\bar{f}_{/};v_{\mathcal{I}})$, we
can formally define the initial renormalised Hawking mass $\tilde{m}_{/}$
in terms of $(r_{/},\Omega_{/}^{2},\bar{f}_{/})$ by the relation
(\ref{eq:DerivativeTildeVMass}), i.\,e.: 
\begin{equation}
\begin{cases}
\partial_{v}\tilde{m}_{/}=2\pi\Big(\big(1-\frac{2\tilde{m}_{/}}{r_{/}}-\frac{1}{3}\Lambda r_{/}^{2}\big)\frac{r_{/}^{2}(T_{/})_{vv}}{\partial_{v}r_{/}}+4\frac{r_{/}^{2}(T_{/})_{uv}}{\Omega_{/}^{2}}\Big),\\
\tilde{m}_{/}(0)=0,\\
r_{/}^{2}(T_{/})_{vv}(v)\doteq\frac{\pi}{2}\int_{0}^{+\infty}\int_{0}^{+\infty}\big(\Omega_{/}^{2}(v)p\big)^{2}\bar{f}_{/}(v;p,l)\,\frac{dp}{p}ldl,\\
r_{/}^{2}(T_{/})_{uv}(v)\doteq\frac{\pi}{2}\Omega_{/}^{2}(v)\int_{0}^{+\infty}\int_{0}^{+\infty}\frac{l^{2}}{r_{/}^{2}(v)}\bar{f}_{/}(v;p,l)\,\frac{dp}{p}ldl.
\end{cases}\label{eq:DefinitionMassInitially}
\end{equation}
Arguing as in the proof of Proposition \ref{prop:ExtensionPrincipleInfinity},
the condition (\ref{eq:BoundedSupportDefinition}) implies that, for
any smooth asymptotically AdS initial data set $(r_{/},\Omega_{/}^{2},\bar{f}_{/};v_{\mathcal{I}})$
with bounded support in phase space, we have:
\begin{equation}
\lim_{v\rightarrow v_{\mathcal{I}}^{-}}\tilde{m}(v)<+\infty.\label{eq:FiniteTotalMassInitialData}
\end{equation}

\medskip{}

\noindent \emph{Remark 3.} Let $(r_{/},\Omega_{/}^{2},\bar{f}_{/};v_{\mathcal{I}})$
be a smooth asymptotically AdS initial data set as in Definition \ref{def:AsymptoticallyAdSData},
satisfying, moreover, the gauge condition condition 
\begin{equation}
\frac{\partial_{v}r_{/}}{1-\frac{1}{3}\Lambda r_{/}^{2}}(0)=\frac{\Omega_{/}^{2}}{4\partial_{v}r_{/}}(0)\label{eq:AxisNormalization}
\end{equation}
at the axis, as well as the regularity condition 
\begin{equation}
\limsup_{v\rightarrow0}\Big|\partial_{v}^{n}\Big(\frac{\tilde{m}_{/}}{r_{/}^{3}}\Big)\Big|<+\infty\text{ for all }n\in\mathbb{N},\label{eq:SmoothnessInitialData}
\end{equation}
where $\tilde{m}_{/}$ is defined in terms of $(r_{/},\Omega_{/}^{2},\bar{f}_{/})$
by (\ref{eq:DefinitionMassInitially}). Let also $(r,\Omega^{2},f)$
be a solution of the Einstein\textendash massless Vlasov (\ref{eq:RequationFinal})\textendash (\ref{NullShellFinal})
which satisfies at $u=0$
\begin{equation}
(r,\Omega^{2})(0,v)=(r_{/},\Omega_{/}^{2})(v)
\end{equation}
and 
\begin{equation}
f(0,v;p^{u},p^{v},l)=\bar{f}_{/}(v;p^{u},l)\cdot\delta\big(\Omega_{/}^{2}p^{u}p^{v}-\frac{l^{2}}{r_{/}^{2}}\big),\label{eq:VlasovFieldInitially}
\end{equation}
as well as the boundary condition 
\begin{equation}
(\partial_{u}+\partial_{v})^{n}r(0,0)=0\text{ for all }n\in\mathbb{N}\label{eq:BoundaryConditionInitialDataForAxisCompatibility}
\end{equation}
(note that \ref{eq:BoundaryConditionInitialDataForAxisCompatibility}
is necessary for $(r,\Omega^{2},f)$ to satisfy $r=0$ along $u=v$;
the condition (\ref{eq:AxisNormalization}) is a necessary condition
for (\ref{eq:BoundaryConditionInitialDataForAxisCompatibility}) to
hold when $n=1$). 

For any $k\in\mathbb{N}$, we can formally determine the values of
the higher order transversal derivatives $\partial_{u}^{k}r|_{u=0}$,
$\partial_{u}^{k}\Omega|_{u=0}$ and $\partial_{u}^{k}f|_{u=0}$ in
terms of $(r_{/},\Omega_{/}^{2},\bar{f}_{/})$ using equations (\ref{eq:RequationFinal}),
(\ref{eq:OmegaEquationFinal}) and (\ref{eq:VlasovFinal}). In particular,
by defining $\bar{f}$ in terms of $f$ through the relation 
\begin{equation}
f(u,v;p^{u},p^{v},l)=\bar{f}(v;p^{u},l)\cdot\delta\big(\Omega^{2}(u,v)p^{u}p^{v}-\frac{l^{2}}{r^{2}(u,v)}\big)\label{eq:RelationVlasovFieldSmooth}
\end{equation}
differentiating (\ref{eq:RequationFinal}), (\ref{eq:OmegaEquationFinal})
and (\ref{eq:VlasovEquationSphericalSymmetry}), using also (\ref{NullShellFinal}),
we formally obtain: 
\begin{align}
\partial_{u}^{k+1} & r(0,v)=\partial_{u}^{k+1}r(0,0)+\label{eq:FormulaForInitialR}\\
 & +\int_{0}^{\bar{v}}\partial_{u}^{k}\Big(-\frac{1}{2}\frac{\tilde{m}-\frac{2}{3}\Lambda r^{3}}{r^{2}}\Omega^{2}+2\pi^{2}\Omega^{2}r^{-3}\int_{0}^{+\infty}\int_{0}^{+\infty}l^{2}\bar{f}(\cdot;p,l)\,ldl\frac{dp}{p}\Big)(0,\bar{v})\,d\bar{v},\nonumber 
\end{align}
\begin{align}
\partial_{u}^{k+1} & \log\Omega^{2}(0,v)=\partial_{u}^{k+1}\log\Omega^{2}(0,0)+\label{eq:FormulaForInitialOmega}\\
 & +\int_{0}^{\bar{v}}\partial_{u}^{k}\Big(\Omega^{2}\big(\frac{2\tilde{m}}{r^{3}}+\frac{1}{3}\Lambda\big)-8\pi^{2}\Omega^{2}r^{-4}\int_{0}^{+\infty}\int_{0}^{+\infty}l^{2}\bar{f}(\cdot;p,l)\,ldl\frac{dp}{p}\Big)(0,\bar{v})\,d\bar{v}\nonumber 
\end{align}
and 
\begin{equation}
\partial_{u}^{k+1}\bar{f}(0,v;p,l)=\partial_{u}^{k}\Big\{-\frac{l^{2}}{\Omega^{2}r^{2}p^{2}}\partial_{v}\bar{f}+\frac{1}{p}\Big(\partial_{u}\log(\Omega^{2})p^{2}+\frac{2}{r}\Omega^{-2}\partial_{v}r\frac{l^{2}}{r^{2}}\Big)\partial_{p}\bar{f}\Big\}(0,v;p,l).\label{eq:FormulaForInitialF}
\end{equation}
Arguing inductively in $k\ge0$ and assuming that $\partial_{u}^{\bar{k}}r|_{u=0}$,
$\partial_{u}^{\bar{k}}\Omega|_{u=0}$ and $\partial_{u}^{\bar{k}}\bar{f}|_{u=0}$
have been computed for $0\le\bar{k}\le k$ in terms of $(r_{/},\Omega_{/}^{2},\bar{f}_{/})$,
the relations (\ref{eq:FormulaForInitialR})\textendash (\ref{eq:FormulaForInitialF})
(combined with the expressions (\ref{eq:DerivativeTildeUMass})\textendash (\ref{eq:DerivativeTildeVMass})
for the derivatives of $\tilde{m}$) uniquely determine, successively,
$\partial_{u}^{k+1}r|_{u=0}$, $\partial_{u}^{k+1}\Omega|_{u=0}$
and $\partial_{u}^{k+1}\bar{f}|_{u=0}$ in terms of $(r_{/},\Omega_{/}^{2},\bar{f}_{/})$,
using at $v=0$ the boundary conditions 
\begin{equation}
\partial_{u}^{k+1}r(0,0)=\sum_{\bar{k}=0}^{k}(-1)^{k-\bar{k}+1}{k+1 \choose \bar{k}}\partial_{v}^{k-\bar{k}+1}(\partial_{u}^{\bar{k}}r)(0,0)\label{eq:HigherOrderInitialAxisR}
\end{equation}
and 
\begin{align}
\partial_{u}^{k+1}\Omega^{2}(0,0)=4\Big(\sum_{\bar{k}=0}^{k} & \Big\{(-1)^{k-\bar{k}}{k+1 \choose \bar{k}}\partial_{v}^{k-\bar{k}+1}(\partial_{u}^{\bar{k}+1}r)(0,0)\Big\}\Big)\cdot\frac{\partial_{v}r}{1-\frac{2\tilde{m}}{r}-\frac{1}{3}\Lambda r^{2}}(0,0)-\label{eq:HigherOrderInitialAxisOmega}\\
 & -4\sum_{\bar{k}=0}^{k}\Big\{\partial_{u}^{k+1-\bar{k}}r(0,0)\cdot\partial_{u}^{\bar{k}+1}\Big(\frac{\partial_{v}r}{1-\frac{2\tilde{m}}{r}-\frac{1}{3}\Lambda r^{2}}\Big)(0,0)\Big\}\nonumber 
\end{align}
 (following from (\ref{eq:BoundaryConditionInitialDataForAxisCompatibility})
and the relation (\ref{eq:DefinitionHawkingMass})). 

\medskip{}

It is natural to consider asymptotically AdS initial data sets related
by a \emph{gauge transformation} as equivalent. For characteristic
initial data as those introduced in Definition \ref{def:AsymptoticallyAdSData},
a general gauge transformation will consist of a change of coordinates
$v\rightarrow V(v)$ and a parameter $\frac{dU}{du}(0)$ at $u=0$,
satisfying
\begin{equation}
v\rightarrow V(v),\text{ }\frac{dV}{dv}(v)>0\text{ for all }v,\text{ }V(0)=0,\text{ }\frac{dU}{du}(0)>0.\label{eq:GaugeCoordinateChange}
\end{equation}
At a spacetime level, this corresponds to a coordinate transformation
of the form 
\begin{equation}
(u,v)\rightarrow(u',v')=\big(U(u),V(v)\big),\label{eq:SpacetimeCoordinateTransformation}
\end{equation}
where $U(u)$ is a function with $U(0)=0$ and $\frac{dU}{du}(0)$
equal to the given parameter. Under such a gauge transformation, an
asymptotically AdS initial data set $(r_{/},\Omega_{/}^{2},\bar{f}_{/};v_{\mathcal{I}})$
transforms as $(r_{/},\Omega_{/}^{2},\bar{f}_{/};v_{\mathcal{I}})\rightarrow(r_{/}^{\prime},(\Omega_{/}^{\prime})^{2},\bar{f}_{/}^{\prime};v'(v_{\mathcal{I}}))$,
where 
\begin{align}
r_{/}^{\prime}(v') & \doteq r_{/}(v),\label{eq:GeneralGaugeTransformationInitialData}\\
(\Omega_{/}^{\prime})^{2}(v') & \doteq\frac{1}{\frac{dU}{du}(0)\cdot\frac{dV}{dv}(v)}\Omega_{/}^{2}(v),\nonumber \\
\bar{f}_{/}^{\prime}(v';\frac{dU}{du}(0)\cdot p,l) & \doteq\bar{f}_{/}(v;p,l).\nonumber 
\end{align}
\begin{rem*}
Under the gauge transformation (\ref{eq:GeneralGaugeTransformationInitialData}),
the higher order transversal derivatives computed along $u=0$ through
the relations (\ref{eq:FormulaForInitialR})\textendash (\ref{eq:FormulaForInitialF})
transform analogously, i.\,e.~by assuming that $(r,\Omega^{2},f)$
is transformed under the gauge transformation 
\begin{align}
r^{\prime}(u',v') & \doteq r(u,v),\label{eq:GeneralGaugeTransformationSpacetime}\\
(\Omega^{\prime})^{2}(u',v') & \doteq\frac{1}{\frac{dU}{dv}(u)\cdot\frac{dV}{dv}(v)}\Omega^{2}(u,v),\nonumber \\
f^{\prime}(u',v';\frac{dU}{du}(u)p^{u^{\prime}},\frac{dV}{dv}(v)p^{v^{\prime}}l) & \doteq f(u,v;p^{u},p^{v},l).\nonumber 
\end{align}
 provided $(u,v)\rightarrow(u',v')$ fixes the straight line $\{u=v\}$
(recall that we have assumed that the axis $r=0$ lies on $\{u=v\}$
in the computations (\ref{eq:FormulaForInitialR})\textendash (\ref{eq:FormulaForInitialF}));
the latter condition necessitates that $U(x)=V(x)$ in a neighborhood
of $x=0$, hence fixing germ of $U$ at $u=0$ by the condition 
\[
\frac{d^{k}U}{(du)^{k}}(0)=\frac{d^{k}V}{(du)^{k}}(0)\text{ for all }k\in\mathbb{N}.
\]
\end{rem*}
In this paper, we will be mainly interested in initial data sets $(r_{/},\Omega_{/}^{2},\bar{f}_{/};v_{\mathcal{I}})$
on $\{u=0\}$ which give rise to solutions $(r,\Omega^{2},f)$ of
(\ref{eq:RequationFinal})\textendash (\ref{NullShellFinal}) which
are smooth, with smooth axis $\{u=v\}$ and smooth conformal infinity
$\{u=v-v_{\mathcal{I}}\}$, in accordance with Definitions \ref{def:SmoothnessAxis}\textendash \ref{def:SmoothnessGenerally}.
To this end, we will have to ensure that the class of initial data
under consideration is compatible with smoothness both at the axis
and at conformal infinity; in the latter case, additional compatibility
issues arise in regard to the reflecting boundary condition (\ref{eq:ReflectingCondition}).
The class of \emph{smoothly compatible} initial data, consisting of
data which are consistent with the aforementioned regularity conditions,
will be defined as follows:
\begin{defn}
\label{def:CompatibilityCondition} Let $(r_{/},\Omega_{/}^{2},\bar{f}_{/};v_{\mathcal{I}})$
be a smooth asymptotically AdS initial data set\emph{ }for (\ref{eq:RequationFinal})\textendash (\ref{NullShellFinal}),
in accordance with Definition \ref{def:AsymptoticallyAdSData}. We
will say that $(r_{/},\Omega_{/}^{2},\bar{f}_{/};v_{\mathcal{I}})$
is \emph{smoothly compatible }if there exists a $T>0$, a pair of
smooth functions $r,\Omega^{2}>0$ on the domain
\[
\mathcal{U}_{T;v_{\mathcal{I}}}=\big\{0\le u<T\big\}\cap\big\{ u<v<u-v_{\mathcal{I}}\big\}
\]
 in the $(u,v)$ plane and a smooth function $\tilde{f}:\mathcal{U}_{T;v_{\mathcal{I}}}\times[0,+\infty)^{3}\rightarrow[0,+\infty)$,
such that, defining 
\[
f(u,v;p^{u},p^{v},l)\doteq\tilde{f}(u,v;\Omega^{2}(u,v)p^{u},\Omega^{2}(u,v)p^{v},l)\cdot\delta\Big(\Omega^{2}(u,v)p^{u}p^{v}-\frac{l^{2}}{r^{2}(u,v)}\Big),
\]
the following conditions are satisfied by the triplet $(r,\Omega^{2},f)$:

\begin{enumerate}

\item \emph{Smooth extendibility of the data.} Along $u=0$ 
\[
r(0,v)=r_{/}(v),\text{ }\Omega^{2}(0,v)=\Omega_{/}^{2}(0,v),\text{ }\tilde{f}(0,v;\Omega^{2}(0,v)p^{u},\frac{l^{2}}{r^{2}(0,v)p^{u}},l)=\bar{f}_{/}(v,p^{u},l)
\]
 and, for every integer $k\ge1$, the quantities $\partial_{u}^{k}r|_{u=0}$,
$\partial_{u}^{k}\Omega^{2}|_{u=0}$ and $\partial_{u}^{k}f|_{u=0}$
are equal to the values determined by the initial data set $(r_{/},\Omega_{/}^{2},\bar{f}_{/};v_{\mathcal{I}})$
according to the process decscribed in Remark 3 below Definition \ref{def:AsymptoticallyAdSData}.

\item \emph{Axis regularity.} The functions $r$, $\Omega^{2}$ and
$\tilde{f}$ extend smoothly on $\{u=v\}\cap\{0\le u<T\}$ and satisfy
\[
r|_{u=v}=0\text{, }\Omega^{2}|_{u=v}>0.
\]
Moreover, the distribution $f$ satisfies the Vlasov equation (\ref{eq:VlasovEquationSphericalSymmetry})
with respect to $(r,\Omega^{2})$ in a neighborhood of $(0,0)$. Furthermore,
switching to the Cartesian coordinate chart $(x^{0},x^{1},x^{2},x^{3})$
defined by (\ref{eq:CartesianCoordinates}), the Cartesian components
(\ref{eq:CartesianMetric}) of the metric $g_{\alpha\beta}$ extend
smoothly on the axis $\sum_{i=1}^{3}(x^{i})^{2}=0$. 

\item \emph{Compatibility at infinity.} The functions $\frac{1}{r}$
and $\frac{\Omega^{2}}{r^{2}}$ extend smoothly on $\{u=v-v_{\mathcal{I}}\}\cap\{0\le u<T\}$
and satisfy 
\[
\frac{1}{r}|_{u=v-v_{\mathcal{I}}}=0,\text{ }\partial_{v}\big(\frac{1}{r}\big)|_{u=v-v_{\mathcal{I}}}>0,
\]
and 
\[
\frac{\Omega^{2}}{r^{2}}|_{u=v-v_{\mathcal{I}}}>0.
\]
The functions $\frac{1}{r}$ and $\frac{\Omega^{2}}{r^{2}}$ extend
smoothly in an open neighborhood of the point $(0,v_{\mathcal{I}})$
in the $(u,v)$-plane, while $\tilde{f}$ extends smoothly in a neighborhood
of $\{(0,v_{\mathcal{I}})\}\times[0,+\infty)^{3}$ in $\mathbb{R}^{2}\times[0,+\infty)^{3}$.
Finally, the distribution $f$ solves the Vlasov equation (\ref{eq:VlasovEquationSphericalSymmetry})
with respect to $(r,\Omega^{2})$ in a left neighborhood of $\{u=v-v_{\mathcal{I}}\}\cap\{0\le u<T\}$
in $\mathcal{U}_{T;v_{\mathcal{I}}}$, satisfying moreover the reflecting
boundary condition (\ref{eq:ReflectingCondition}) on $\{u=v-v_{\mathcal{I}}\}$.

\end{enumerate}

Given a smoothly compatible, asymptotically AdS initial data set $(r_{/},\Omega_{/}^{2},\bar{f}_{/};v_{\mathcal{I}})$,
any gauge transformation of the form (\ref{eq:GaugeCoordinateChange})\textendash (\ref{eq:GeneralGaugeTransformationInitialData})
that preserves the Conditions 1\textendash 3 above will be called
a \emph{smoothly compatible} transformation for $(r_{/},\Omega_{/}^{2},\bar{f}_{/};v_{\mathcal{I}})$.
\end{defn}
The following remarks should be noted regarding Definition \ref{def:CompatibilityCondition}:
\begin{itemize}
\item For a smoothly compatible, asymptotically AdS initial data set $(r_{/},\Omega_{/}^{2},\bar{f}_{/};v_{\mathcal{I}})$
of bounded support in phase space, the function $\tilde{m}_{/}$ extends
smoothly on $v=0$ and $v=v_{\mathcal{I}}$, with $\tilde{m}_{/}=O(r_{/}^{3})$
as $v\rightarrow0$. 
\item In view of Condition 2 of Definition \ref{def:CompatibilityCondition}
(using, in particular, the relation (\ref{eq:DefinitionHawkingMass})
on $u=0$ and the fact that $r$ is assumed to extend smoothly on
$\{u=v\}$ with $r|_{u=v}=0$), a smoothly compatible asymptotically
AdS initial data set necessarily satisfies 
\begin{equation}
4(\partial_{v}r_{/})^{2}(0)=\Omega_{/}^{2}(0).\label{eq:SmoothlyCompatibleAxisCondition}
\end{equation}
\item While the triplet $(r,\Omega^{2},f)$ in Definition \ref{def:CompatibilityCondition}
is not assumed to satisfy any particular set of equations (apart from
the Vlasov equation satisfied by $f$ near $v=0$ and $v=v_{\mathcal{I}}$),
the condition that the higher order transversal derivatives $\partial_{u}^{k}r|_{u=0}$
and $\partial_{u}^{k}\Omega^{2}|_{u=0}$ coincide with the values
determined by the process described in Remark 3 below Definition \ref{def:AsymptoticallyAdSData}
is equivalent to the statement that, along $u=0$, $(r,\Omega^{2},f)$
satisfies the system (\ref{eq:RequationFinal})\textendash (\ref{NullShellFinal})
at all orders.
\item Let $(r,\Omega^{2},f)$ be a smooth, asymptotically AdS solution of
(\ref{eq:RequationFinal})\textendash (\ref{NullShellFinal}), such
that $(r,\Omega^{2},f)$ has smooth axis $\gamma_{\mathcal{Z}}$ and
smooth conformal infinity $\mathcal{I}$ and $f$ satisfies the reflecting
boundary condition on $\mathcal{I}$. In this case, it follows trivially
from Definition \ref{def:CompatibilityCondition} that the initial
data set induced by $(r,\Omega^{2},f)$ on any slice of the form $\{u=u_{*}\}$
is smoothly compatible; in particular, Condition 1 of Definition \ref{def:CompatibilityCondition}
follows trivially from the fact that $(r,\Omega^{2},f)$ satisfies
the Einstein-equations (\ref{eq:RequationFinal})\textendash (\ref{NullShellFinal})
in a whole neighborhood of $\{u=u_{*}\}$, while Conditions 2 and
3 are a consequence of the smoothness of $(r,\Omega^{2},f)$ on $\gamma_{\mathcal{Z}}$
and $\mathcal{I}$.
\item Given a smoothly compatible, asymptotically AdS initial data set $(r_{/},\Omega_{/}^{2},\bar{f}_{/};v_{\mathcal{I}})$,
for a gauge transformation of the form (\ref{eq:GeneralGaugeTransformationInitialData})
to be smoothly compatible, it is necessary that 
\begin{equation}
\frac{dU}{du}(0)=\frac{dV}{dv}(0)\text{ and }V\in C^{\infty}([0,v_{\mathcal{I}}]).\label{eq:NecessaryForSmoothCompatibility}
\end{equation}
However, (\ref{eq:NecessaryForSmoothCompatibility}) is not sufficient
for a gauge transformation to be smoothly compatible, since Conditions
1\textendash 3 of Definition \ref{def:CompatibilityCondition} imply
an additional relation between $\frac{d^{k}V}{(dv)^{k}}(0)$ and $\frac{d^{k}V}{(dv)^{k}}(v_{\mathcal{I}})$
for all $k\in\mathbb{N}$. 
\end{itemize}
It will be useful for us in this paper, as well as in our companion
paper \cite{MoschidisVlasov}, to fix a suitable gauge condition on
initial data sets introduced by Definition \ref{def:AsymptoticallyAdSData}
that uniquely chooses a representative in each equivalence class under
gauge transformations (at least up to rescalings $(u,v)\rightarrow(\lambda u,\lambda v)$),
with the additional property that initial data sets can be uniquely
constructed by freely prescribing the initial Vlasov field $\bar{f}_{/}$
in this gauge. This will be achieved at the expense of choosing a
gauge in which smooth compatibility is, in general, not preserved.

In particular, we will introduce the following condition:
\begin{defn}
\label{def:GaugeConditionNormalise} Let $(r_{/},\Omega_{/}^{2},\bar{f}_{/};v_{\mathcal{I}})$
be a smooth, asymptotically AdS initial data set as in Definition
\ref{def:AsymptoticallyAdSData}. We will say that $(r_{/},\Omega_{/}^{2},\bar{f}_{/};v_{\mathcal{I}})$
is \emph{gauge normalised} if it satisfies the condition 
\begin{equation}
\frac{\partial_{v}r_{/}}{1-\frac{1}{3}\Lambda r_{/}^{2}}=\frac{\Omega_{/}^{2}}{4\partial_{v}r_{/}}\text{on }[0,v_{\mathcal{I}}).\label{eq:GaugeConditionNormalisedData}
\end{equation}
The gauge condition (\ref{eq:GaugeConditionNormalisedData}) is equivalent
to 
\begin{equation}
\frac{\partial_{v}r_{/}}{1-\frac{1}{3}\Lambda r_{/}^{2}}=-\frac{(\partial_{u}r)_{/}}{1-\frac{2\tilde{m}_{/}}{r_{/}}-\frac{1}{3}\Lambda r_{/}^{2}},\label{eq:EqualDuRDvRInitially}
\end{equation}
where $(\partial_{u}r)_{/}=\partial_{u}r|_{u=0}$ is determined in
terms of $(r_{/},\Omega_{/}^{2},\bar{f}_{/};v_{\mathcal{I}})$ by
(\ref{eq:FormulaForInitialR}) and (\ref{eq:HigherOrderInitialAxisR})
for $k=0$.
\end{defn}
\begin{rem*}
The condition (\ref{eq:GaugeConditionNormalisedData}) fixes a unique
representation of the trivial initial data (i.\,e.~for $\bar{f}_{/}\equiv0$)
for each value of the endpoint parameter $v_{\mathcal{I}}$. For the
standard choice $v_{\mathcal{I}}=\sqrt{-\frac{3}{\Lambda}}\pi$, the
trivial initial data set $(r_{AdS/},\Omega_{AdS/}^{2},0;\sqrt{-\frac{3}{\Lambda}}\pi)$
is expressed as: 
\begin{align}
r_{AdS/}(v) & =\sqrt{-\frac{3}{\Lambda}}\tan\Big(\frac{1}{2}\sqrt{-\frac{\Lambda}{3}}v\Big),\label{eq:AdSMetricValues}\\
\Omega_{AdS/}^{2}(v) & =1-\frac{1}{3}\Lambda r_{AdS}^{2}(v).\nonumber 
\end{align}
For different values of $v_{\mathcal{I}}$, we obtain by rescaling:
\begin{align}
r_{AdS/}^{(v_{\mathcal{I}})}(v) & =r_{AdS/}\Big(\sqrt{-\frac{3}{\Lambda}}\pi\frac{v}{v_{\mathcal{I}}}\Big),\label{eq:AdSMetricValuesRescaled1}\\
\big(\Omega_{AdS/}^{(v_{\mathcal{I}})}\big)^{2}(v) & =-\frac{3}{\Lambda}\frac{\pi^{2}}{v_{\mathcal{I}}^{2}}\Omega_{AdS/}^{2}\Big(\sqrt{-\frac{3}{\Lambda}}\pi\frac{v}{v_{\mathcal{I}}}\Big).\nonumber 
\end{align}

Let us also point out that, in view of the constraint equation (\ref{eq:ConstraintVFinal}),
the gauge condition (\ref{eq:GaugeConditionNormalisedData}) can be
alternatively expressed as 
\begin{equation}
\frac{\partial_{v}r_{/}}{1-\frac{1}{3}\Lambda r_{/}^{2}}(v)=\frac{\Omega_{/}^{2}}{4\partial_{v}r_{/}}(0)\cdot\exp\Big(4\pi\int_{0}^{v}\frac{r_{/}(T_{/})_{vv}}{(\partial_{v}r_{/})^{2}}(\bar{v})\,(\partial_{v}r_{/})d\bar{v}\Big)\label{eq:FormulaWithOmegaInitial}
\end{equation}
or, equivalently (by noting that the left hand side of (\ref{eq:FormulaWithOmegaInitial})
integrates to $\sqrt{-\frac{3}{\Lambda}}\frac{\pi}{2}$ over $[0,v_{\mathcal{I}})$):
\begin{equation}
\frac{\partial_{v}r_{/}}{1-\frac{1}{3}\Lambda r_{/}^{2}}(v)=\frac{1}{2a}\exp\Big(4\pi\int_{0}^{v}\frac{r_{/}(T_{/})_{vv}}{(\partial_{v}r_{/})^{2}}(\bar{v})\,(\partial_{v}r_{/})d\bar{v}\Big),\label{eq:FormulaForDvR/2}
\end{equation}
where the constant
\begin{equation}
a\doteq\sqrt{-\frac{\Lambda}{3}}\frac{1}{\pi}\int_{0}^{v_{\mathcal{I}}}\exp\Big(4\pi\int_{0}^{v}\frac{r_{/}(T_{/})_{vv}}{(\partial_{v}r_{/})^{2}}(\bar{v})\,(\partial_{v}r_{/})d\bar{v}\Big)\,dv\label{eq:AlphaReGavGav}
\end{equation}
is determined by the condition that $r_{/}(0)=0$, $r_{/}(v_{\mathcal{I}})=\infty$.

Finally, we should highlight that, in general, an initial data set
$(r_{/},\Omega_{/}^{2},\bar{f}_{/};v_{\mathcal{I}})$ satisfying the
gauge condition (\ref{eq:GaugeConditionNormalisedData}) will \emph{not}
be smoothly compatible (although smooth compatibility might be feasible
through a suitable gauge transformation). This follows from the observation
that, at $v=v_{\mathcal{I}}$, (\ref{eq:GaugeConditionNormalisedData})
is not consistent with $(\partial_{u}+\partial_{v})^{k}\frac{1}{r}$
vanishing at $u=v-v_{\mathcal{I}}$ for $k\ge3$ when $\tilde{m}\neq0$.
In the case of the trivial AdS initial data set (\ref{eq:RescaledAdSVlasov}),
the renormalised gauge is trivially a smoothly compatible gauge.
\end{rem*}
The gauge condition (\ref{eq:GaugeConditionNormalisedData}) allows
us to construct initial data sets $(r_{/},\Omega_{/}^{2},\bar{f}_{/};v_{\mathcal{I}})$
by freely prescribing the initial Vlasov field $\bar{f}_{/}$ under
a few regularity conditions. In particular, the following lemma will
be useful for the constructions in \cite{MoschidisVlasov}:
\begin{lem}
\label{lem:ConstructingInitialData} Let $v_{\mathcal{I}}>0$ and
let $F:[0,v_{\mathcal{I}})\times[0,+\infty)^{2}\rightarrow[0,+\infty)$
be a smooth function which is compactly supported in $(0,v_{\mathcal{I}})\times(0,+\infty)^{2}$.
Then there exists a unique smooth, asymptotically AdS initial data
set $(r_{/},\Omega_{/}^{2},\bar{f}_{/};v_{\mathcal{I}})$ for (\ref{eq:RequationFinal})\textendash (\ref{NullShellFinal})
(as in Definition \ref{def:AsymptoticallyAdSData}) satisfying the
gauge condition (\ref{eq:GaugeConditionNormalisedData}), such that
\begin{equation}
\bar{f}_{/}(v;p^{u},l)=F\big(v;\text{ }\partial_{v}r_{/}(v)p^{u},\text{ }l\big).\label{eq:AlmostVlasovFieldInvariant Form}
\end{equation}
Furthermore, in the case when $F$ satisfies the smallness condition
\begin{equation}
\mathcal{M}[F]\doteq\int_{0}^{v_{\mathcal{I}}}\frac{r_{AdS/}^{(v_{\mathcal{I}})}(T_{AdS}^{(v_{\mathcal{I}})}[F])_{vv}}{\partial_{v}r_{AdS/}^{(v_{\mathcal{I}})}}(\bar{v})\,d\bar{v}<c_{0}\ll1,\label{eq:SmallnessConditionForConstruction}
\end{equation}
where $c_{0}>0$ is an absolute constant, $(T_{AdS}^{(v_{\mathcal{I}})}[F])_{vv}$
is defined by
\[
(T_{AdS}^{(v_{\mathcal{I}})}[F])_{vv}(v)\doteq\frac{\pi}{2}\frac{1}{(r_{AdS/}^{(v_{\mathcal{I}})})^{2}(v)}\int_{0}^{+\infty}\int_{0}^{+\infty}p^{2}F(v;p,l)\,\frac{dp}{p}ldl
\]
 and $r_{AdS/}^{(v_{\mathcal{I}})},\text{ }(\Omega_{AdS/}^{(v_{\mathcal{I}})})^{2}$
are given by (\ref{eq:AdSMetricValuesRescaled1}), the following estimates
hold for some absolute constant $C>0$: 
\begin{equation}
\sup_{v\in(0,v_{\mathcal{I}})}\Big|\frac{\partial_{v}r_{/}}{1-\frac{1}{3}\Lambda r_{/}^{2}}(v)-\frac{\partial_{v}r_{AdS/}^{(v_{\mathcal{I}})}}{1-\frac{1}{3}\Lambda(r_{AdS/}^{(v_{\mathcal{I}})})^{2}}(v)\Big|\le C\mathcal{M}[F]\label{eq:SmallDifferenceInRs}
\end{equation}
and 
\begin{equation}
\int_{0}^{v_{\mathcal{I}}}\frac{r_{/}(T_{/})_{vv}}{\partial_{v}r_{/}}(\bar{v})\,d\bar{v}\le C\mathcal{M}[F].\label{eq:SmallEnergyInitial}
\end{equation}
\end{lem}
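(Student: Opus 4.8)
The plan is to reduce the construction to a single scalar integro-differential equation for $r_{/}$ on $[0,v_{\mathcal{I}})$, to solve it by a contraction argument, and to pin down the normalisation constant $a$ self-consistently. In the gauge \eqref{eq:GaugeConditionNormalisedData} one necessarily has $\Omega_{/}^{2}=4(\partial_{v}r_{/})^{2}/(1-\frac{1}{3}\Lambda r_{/}^{2})$, and once $r_{/}$ is known the prescription \eqref{eq:AlmostVlasovFieldInvariant Form} fixes $\bar{f}_{/}$; hence it suffices to produce $r_{/}$. Substituting $\bar{f}_{/}(v;p^{u},l)=F(v;\partial_{v}r_{/}(v)p^{u},l)$ into \eqref{eq:DefinitionMassInitially} and changing variables $q=\partial_{v}r_{/}(v)p^{u}$, one finds that $r_{/}^{2}(T_{/})_{vv}$ — and hence the integrand of the exponent in \eqref{eq:FormulaForDvR/2} — is expressed pointwise in terms of $r_{/}$, $\partial_{v}r_{/}$ and the fixed, smooth, compactly supported $v$-profile $\mathcal{F}(v)\doteq\int_{0}^{\infty}\int_{0}^{\infty}q^{2}F(v;q,l)\,\frac{dq}{q}\,l\,dl$. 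Thus \eqref{eq:FormulaForDvR/2} becomes a closed fixed-point equation for $r_{/}$ on $[0,v_{\mathcal{I}})$ with $a>0$ a parameter; differentiating it and inserting $\Omega_{/}^{2}=4(\partial_{v}r_{/})^{2}/(1-\frac{1}{3}\Lambda r_{/}^{2})$ shows the constraint \eqref{eq:ConstraintVFinal} holds, so any solution is precisely a gauge-normalised asymptotically AdS data set with Vlasov field of the prescribed form.

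\textbf{Solving the equation and the endpoints.} It is convenient to pass to $\rho=\tan^{-1}(\sqrt{-\Lambda/3}\,r_{/})$, in which the equation reads $\partial_{v}\rho=\sqrt{-\Lambda/3}\,(2a)^{-1}\exp\big(E[\rho](v)\big)$ with $\rho(0)=0$, where $E[\rho]$ is an explicit functional of $\rho|_{[0,v]}$ whose $v$-integrand equals a smooth function of $\rho\in(0,\pi/2)$ multiplied by $\mathcal{F}(v)$. Since $F$, hence $\mathcal{F}$, is supported in a compact subinterval $[\delta,v_{\mathcal{I}}-\delta]$ of $(0,v_{\mathcal{I}})$, on $[0,\delta]$ the equation reduces to $\partial_{v}\rho=\sqrt{-\Lambda/3}\,(2a)^{-1}$, so there $r_{/}$ is exactly a rescaled AdS profile — automatically smooth down to $v=0$ with $r_{/}(0)=0$, $\partial_{v}r_{/}(0)=\frac{1}{2a}>0$, $\Omega_{/}^{2}(0)=a^{-2}>0$; likewise on $[v_{\mathcal{I}}-\delta,v_{\mathcal{I}})$ the solution coincides with a rescaled AdS profile, which supplies all the smooth extensions and inequalities at $v=v_{\mathcal{I}}$ required by Definition \ref{def:AsymptoticallyAdSData}. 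On the interior $(\delta,v_{\mathcal{I}}-\delta)$ the right-hand side is smooth in $\rho\in(0,\pi/2)$, so for each fixed $a>0$ a standard Picard iteration on $C^{0}([0,v_{\mathcal{I}}-\delta])$, continued by the explicit linear solution near the endpoints, yields a unique solution $\rho_{a}$, smooth by bootstrapping, as long as $\rho_{a}<\pi/2$.

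\textbf{Fixing $a$ and checking the data.} The constant $a$ is determined by the requirement $\rho_{a}(v_{\mathcal{I}}^{-})=\pi/2$, i.e. $r_{/}(v_{\mathcal{I}}^{-})=+\infty$, which is exactly the relation \eqref{eq:AlphaReGavGav}. One shows $a\mapsto\rho_{a}$ is continuous, that $\rho_{a}(v_{\mathcal{I}})\to0$ as $a\to\infty$ while $\rho_{a}$ reaches $\pi/2$ before $v_{\mathcal{I}}$ for $a$ small, and concludes by an intermediate-value and monotone-comparison argument (the exponent $E$ increases under pointwise comparison of $\rho$'s since its integrand is non-negative) that a unique admissible $a$ exists; alternatively one treats $(a,\rho)$ jointly in a single contraction, reading $a$ off \eqref{eq:AlphaReGavGav} at each step, which also yields uniqueness of the whole data set. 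With $r_{/}$ in hand, $\Omega_{/}^{2}$ is given by the gauge condition and $\bar{f}_{/}$ by \eqref{eq:AlmostVlasovFieldInvariant Form}, both smooth; the positivity and endpoint conditions of Definition \ref{def:AsymptoticallyAdSData} were obtained above, and bounded support in phase space \eqref{eq:BoundedSupportDefinition} holds because on $\mathrm{supp}(\bar{f}_{/})$ the quantities $l$ and $\partial_{v}r_{/}(v)p^{u}$ are bounded above and bounded below by positive constants (as $\partial_{v}r_{/}$, $r_{/}$ are bounded above and below on the compact $v$-support of $F$).

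\textbf{Smallness estimates and the main difficulty.} When $\mathcal{M}[F]$ is small one runs the contraction inside a small ball around the rescaled AdS data of \eqref{eq:AdSMetricValuesRescaled1}: since $E$ and the deviation of $a$ from its AdS value are controlled linearly by the $v$-integral of the energy profile, and that integral — evaluated along the actual solution — differs from the AdS-normalised $\mathcal{M}[F]$ only by a term quadratic in the smallness, a continuity/bootstrap argument yields \eqref{eq:SmallDifferenceInRs} and \eqref{eq:SmallEnergyInitial}. The main obstacle is the coupling appearing in the third step: $a$ enters the integro-differential equation yet is simultaneously fixed by the global requirement that $r_{/}$ exhaust $(0,+\infty)$ over $[0,v_{\mathcal{I}})$, so existence and uniqueness must be argued for this coupled problem rather than for a plain ODE; the compact-support hypothesis on $F$ is precisely what keeps the equation regular at both endpoints and permits the explicit comparison with AdS profiles there, and the same closeness-to-AdS bookkeeping is what closes the smallness estimates.
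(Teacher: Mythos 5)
Your proposal is correct and follows essentially the same route as the paper: the same fixed-point/shooting formulation of \eqref{eq:FormulaForDvR/2} with the normalisation constant $a$ as a parameter, the same use of the compact support of $F$ to control the endpoints and of the condition $r_{/}(v_{\mathcal{I}}^{-})=+\infty$ to fix $a$ uniquely, and the same perturbative comparison with the rescaled AdS profile for \eqref{eq:SmallDifferenceInRs}--\eqref{eq:SmallEnergyInitial}. The only step you gloss over that the paper makes explicit is \emph{how} the deviation of $a$ from its AdS value is controlled linearly by $\mathcal{M}[F]$ — namely via the identity that $\int_{0}^{v_{\mathcal{I}}}\frac{\partial_{v}r_{/}}{1-\frac{1}{3}\Lambda r_{/}^{2}}\,dv=\sqrt{-\frac{3}{\Lambda}}\frac{\pi}{2}$ is independent of the data, which is the mechanism that closes the Gronwall/bootstrap argument.
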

\begin{rem*}
As noted below Definition \ref{def:GaugeConditionNormalise}, the
initial data set $(r_{/},\Omega_{/}^{2},\bar{f}_{/};v_{\mathcal{I}})$
constructed in Lemma \ref{lem:ConstructingInitialData} will not,
in general, satisfy the last condition of Definition \ref{def:CompatibilityCondition}.
\end{rem*}
\begin{proof}
Let $v_{\mathcal{I}}>0$ and $F:(0,v_{\mathcal{I}})\times(0,+\infty)^{2}\rightarrow[0,+\infty)$
be a smooth function of compact support. Let us define the function
$G\in C_{c}^{\infty}\big((0,v_{\mathcal{I}})\big)$ by the relation
\begin{equation}
G(v)\doteq32\pi^{2}\int_{0}^{+\infty}\int_{0}^{+\infty}p^{2}F(v;p,l)\,\frac{dp}{p}ldl\,dv.
\end{equation}

For any $\bar{a}\ge0$, we will consider the following integral equation
for the function $\bar{\rho}(v)$, $v\ge0$: 
\begin{equation}
\bar{\rho}(v)=\frac{1}{2\bar{a}}\exp\Big(\int_{0}^{v}\frac{1}{\bar{r}(x)\big(1-\frac{1}{3}\Lambda\bar{r}^{2}(x)\big)}\bar{\rho}(x)G(x)\,dx\Big),\label{eq:ODERhoBar}
\end{equation}
where 
\begin{equation}
\bar{r}(x)\doteq\sqrt{-\frac{3}{\Lambda}}\tan\Big(\sqrt{-\frac{\Lambda}{3}}\int_{0}^{x}\bar{\rho}(y)\,dy\Big).\label{eq:RBar}
\end{equation}
The standard theory of ordinary differential equations (and the fact
that $G(x)$ is compactly supported away from $x=0$, as a consequence
of our assumptions for $F$) implies that, for any $\bar{a}>0$, there
exists a maximal $v^{(\bar{a})}\in(0,+\infty]$ such that (\ref{eq:ODERhoBar})
determines a unique pair $\bar{\rho},\bar{r}\in C^{\infty}([0,v^{(\bar{a})}))$
on the interval $[0,v^{(\bar{a})})$ and, if $v^{(\bar{a})}<+\infty$,
then 
\begin{equation}
\limsup_{v\rightarrow v^{(\bar{a})-}}|\bar{\rho}(v)|=+\infty,\label{eq:MaybeRinfinity}
\end{equation}
\begin{equation}
\liminf_{v\rightarrow v^{(\bar{a})-}}|\bar{r}(v)|=0\text{ and }v^{(\bar{a})}\in\text{supp}(G)\label{eq:TrivialRZero}
\end{equation}
or 
\begin{equation}
\limsup_{v\rightarrow v^{(\bar{a})-}}|\bar{r}(v)|=+\infty.\label{eq:MaybeRhoInfinity}
\end{equation}
 We will show that, for any $\bar{a}>0$, the endpoint $v^{(\bar{a})}$
is finite, i.\,e.
\begin{equation}
v^{(\bar{a})}<+\infty\label{eq:FiniteVa}
\end{equation}
 and 
\begin{equation}
\lim_{v\rightarrow v^{(\bar{a})-}}\bar{r}(v)=+\infty.\label{eq:BoundaryForRInfinity}
\end{equation}

\medskip{}

\noindent \emph{Proof of (\ref{eq:FiniteVa}) and (\ref{eq:BoundaryForRInfinity}).}
The fact that (\ref{eq:FiniteVa}) holds follows readily from the
assumption that $F$ is compactly supported, and hence there exists
a $v_{0}>0$ such that $G(v)=0$ for $v\ge v_{0}$: Assuming, for
the sake of contradiction that 
\[
v^{(\bar{a})}=+\infty
\]
from (\ref{eq:ODERhoBar}) we infer that 
\begin{equation}
\bar{\rho}(v)=\text{const}=c_{1}>0\text{ for }v\ge v_{0}.
\end{equation}
Thus, there exists a $v_{1}>v_{0}$ such that 
\begin{equation}
\sqrt{-\frac{\Lambda}{3}}\int_{0}^{v_{1}}\bar{\rho}(y)\,dy=\frac{\pi}{2}.
\end{equation}
From (\ref{eq:RBar}), we therefore infer that 
\[
\bar{r}(v_{1})=+\infty,
\]
which implies that $v^{(\bar{a})}<v_{1}$; hence, assuming that (\ref{eq:FiniteVa})
does not hold, we reach a contradiction.

We will now proceed to establish (\ref{eq:BoundaryForRInfinity}).
The relation (\ref{eq:ODERhoBar}) implies that $\bar{\rho}>0$ and,
hence, $\bar{r}(v)$ defined by (\ref{eq:RBar}) is strictly increasing
and positive for $v>0$; hence, if (\ref{eq:MaybeRhoInfinity}) holds,
then (\ref{eq:BoundaryForRInfinity}) would follow. Moreover, since
$G(v)$ is compactly supported away from $v=0$ and $\bar{r}(v)\ge0$
is strictly increasing in $v$, (\ref{eq:TrivialRZero}) cannot hold.
In order to establish (\ref{eq:BoundaryForRInfinity}), it thus suffices
to rule the case where
\begin{equation}
\limsup_{v\rightarrow v^{(\bar{a})-}}|\bar{\rho}(v)|=+\infty\text{ and }\limsup_{v\rightarrow v^{(\bar{a})-}}|\bar{r}(v)|<+\infty.\label{eq:ForContradiction}
\end{equation}

Assume, for the sake of contradiction, that (\ref{eq:ForContradiction})
holds. Since $\bar{\rho}>0$, the upper bound (\ref{eq:ForContradiction})
for $\bar{r}$ and the relation (\ref{eq:RBar}) between $\bar{r}$
and $\bar{\rho}$ imply that 
\begin{equation}
\int_{0}^{v^{(\bar{a})}}\bar{\rho}(v)\,dv<+\infty.\label{eq:UpperBoundRho}
\end{equation}
From (\ref{eq:ODERhoBar}), using the bound (\ref{eq:ForContradiction})
for $\bar{r}$, the fact that $G\in C_{c}^{\infty}\big((0,v_{\mathcal{I}}))$
and that $\bar{r}>0$ on the support of $G$, we can readily estimate
for some $C_{1}>0$ depending on $\bar{r}$, $G$:
\begin{align*}
\limsup_{v\rightarrow v^{(\bar{a})-}}|\bar{\rho}(v)| & =\frac{1}{2\bar{a}}\exp\Big(\int_{0}^{v^{(\bar{a})}}\frac{1}{\bar{r}(x)\big(1-\frac{1}{3}\Lambda\bar{r}^{2}(x)\big)}\bar{\rho}(x)G(x)\,dx\Big)\le\\
 & \le\frac{1}{2\bar{a}}\exp\Big(C_{1}\int_{0}^{v^{(\bar{a})}}\bar{\rho}(v)\,dv\Big)<+\infty.
\end{align*}
Therefore, we deduce that (\ref{eq:ForContradiction}) does not hold,
reaching a conrtradiction. As a consequence, (\ref{eq:BoundaryForRInfinity})
holds.

\medskip{}

It can be readily shown that $v^{(\bar{a})}$ depends continuously
on $\bar{a}>0$. Furthermore, the relations (\ref{eq:ODERhoBar})
and (\ref{eq:RBar}) imply that, for any fixed value $v_{0}>0$, the
functions $\bar{\rho}(v_{0})$ and $\bar{r}(v_{0})$ are strictly
decreasing in $\bar{a}$, provided $v_{0}<v^{(\bar{a})}$, and we
have
\[
\lim_{\bar{a}\rightarrow0^{+}}v^{(\bar{a})}=0\text{ and }\lim_{\bar{a}\rightarrow+\infty}v^{(\bar{a})}=+\infty.
\]
Therefore, there exists a unique $\bar{a}_{0}>0$ such that 
\begin{equation}
v^{(\bar{a}_{0})}=v_{\mathcal{I}}.\label{eq:Vabar0}
\end{equation}

Let us fix $\bar{a}=\bar{a}_{0}$ and let us define the function $r_{/}:[0,v_{\mathcal{I}})\rightarrow[0,+\infty)$
by the relation 
\begin{equation}
r_{/}(v)=\bar{r}(v)\label{eq:Fixingrslash}
\end{equation}
(note that this is possible in view of (\ref{eq:Vabar0})). The relations
(\ref{eq:ODERhoBar})\textendash (\ref{eq:RBar}) and (\ref{eq:BoundaryForRInfinity})
then imply that $\frac{1}{r_{/}}$ extends smoothly at $v=v_{\mathcal{I}}$,
and we have
\begin{align}
r_{/}(0) & =0\text{, }\frac{1}{r_{/}(v_{\mathcal{I}})}=0,\label{eq:BoundaryConditionsR}\\
\partial_{v}r_{/}(0) & >0,\text{ }\partial_{v}\Big(\frac{1}{r_{/}}\Big)(v_{\mathcal{I}})<0.\nonumber 
\end{align}
Let us also define $\bar{f}_{/}:(0,v_{\mathcal{I}})\times[0,+\infty)\rightarrow[0,+\infty)$
by (\ref{eq:AlmostVlasovFieldInvariant Form}). The relations (\ref{eq:ODERhoBar})\textendash (\ref{eq:RBar})
can be reexpressed in terms of $r_{/}$ and $\bar{f}_{/}$ as follows:
\begin{equation}
\frac{\partial_{v}r_{/}}{1-\frac{1}{3}\Lambda r_{/}^{2}}(v)=\frac{1}{2\bar{a}_{0}}\exp\Bigg\{32\pi^{2}\int_{0}^{v}\frac{(\partial_{v}r_{/}(x))^{3}}{\bar{r}(x)\big(1-\frac{1}{3}\Lambda\bar{r}^{2}(x)\big)^{2}}\Big(\int_{0}^{+\infty}\int_{0}^{+\infty}(p^{u})^{2}\bar{f}_{/}(x;p^{u},l)\,\frac{dp^{u}}{p^{u}}ldl\,dv\Big)\,dx\Bigg\}.\label{eq:FormulaForDvR/2-1-1}
\end{equation}
Note that (\ref{eq:BoundaryForRInfinity}) and (\ref{eq:FormulaForDvR/2-1-1})
imply that 
\begin{equation}
\bar{a}_{0}=\sqrt{-\frac{\Lambda}{3}}\frac{1}{\pi}\int_{0}^{v_{\mathcal{I}}}\exp\Bigg\{32\pi^{2}\int_{0}^{v}\frac{(\partial_{v}r_{/}(x))^{3}}{r_{/}(x)\big(1-\frac{1}{3}\Lambda r_{/}^{2}(x)\big)^{2}}\Big(\int_{0}^{+\infty}\int_{0}^{+\infty}(p^{u})^{2}\bar{f}_{/}(x;p^{u},l)\,\frac{dp^{u}}{p^{u}}ldl\,dv\Big)\,dx\Bigg\}.\label{eq:AlphaBar0}
\end{equation}

Using (\ref{eq:GaugeConditionNormalisedData}) to define $\Omega_{/}^{2}$,
i.\,e.
\begin{equation}
\Omega_{/}^{2}(v)\doteq\frac{4(\partial_{v}r_{/})^{2}}{1-\frac{1}{3}\Lambda r_{/}^{2}}(v),\label{eq:OmegaDef}
\end{equation}
we can readily calculate that the right hand side of (\ref{eq:FormulaForDvR/2-1-1})
is the same as the right hand side of (\ref{eq:FormulaForDvR/2}).
Furthermore, (\ref{eq:OmegaDef}) and (\ref{eq:BoundaryConditionsR})
imply that $r_{/}^{-2}\Omega_{/}^{2}$ extends smoothly on $v=v_{\mathcal{I}}$
and satisfies 
\[
\Omega_{/}^{2}(0)>0\text{, }r_{/}^{-2}\Omega_{/}^{2}(v_{\mathcal{I}})>0.
\]
As a result (in view also of the fact that $\bar{f}_{/}\in C_{c}^{\infty}\big((0,v_{\mathcal{I}})\times(0,+\infty)\big)$,
the quadruplet $(r_{/},\Omega_{/}^{2},\bar{f}_{/};v_{\mathcal{I}})$
satisfies all conditions of Definition \ref{def:AsymptoticallyAdSData}. 

It remains to show that, when $F$ satisfies the smallness condition
(\ref{eq:SmallnessConditionForConstruction}), then the bounds (\ref{eq:SmallDifferenceInRs})
and (\ref{eq:SmallEnergyInitial}) hold. Let us introduce the continuity
parameter $\varepsilon\in[0,1]$, and let us consider the family of
initial data sets $(r_{/}^{(\varepsilon)},(\Omega_{/}^{(\varepsilon)})^{2},\bar{f}_{/}^{(\varepsilon)};v_{\mathcal{I}})$
corresponding to 
\[
F^{(\varepsilon)}\doteq\varepsilon F.
\]
Note that the standard theory for odes applied to (\ref{eq:ODERhoBar})\textendash (\ref{eq:RBar})
implies that, for any $v\in(0,v_{\mathcal{I}})$, the quantities $r_{/}^{(\varepsilon)}(v)$,
$(\Omega_{/}^{(\varepsilon)})^{2}(v)$ depend continuously on $\varepsilon$.
Note also that, when $\varepsilon=0$, (\ref{eq:ODERhoBar})\textendash (\ref{eq:RBar})
imply that 
\[
(r_{/}^{(0)},(\Omega_{/}^{(0)})^{2},\bar{f}_{/}^{(0)};v_{\mathcal{I}})=(r_{AdS/}^{(v_{\mathcal{I}})},(\Omega_{AdS/}^{(v_{\mathcal{I}})})^{2},0;v_{\mathcal{I}}).
\]
We will argue by continuity: We will assume that, if, for some $\varepsilon_{0}\in(0,1]$
we can estimate for all $\varepsilon\in[0,\varepsilon_{0}]$
\begin{equation}
\sup_{v\in(0,v_{\mathcal{I}})}\Big|\frac{\partial_{v}r_{/}^{(\varepsilon)}}{1-\frac{1}{3}\Lambda r_{/}^{(\varepsilon)}}(v)-\frac{\partial_{v}r_{AdS/}^{(v_{\mathcal{I}})}}{1-\frac{1}{3}\Lambda(r_{AdS/}^{(v_{\mathcal{I}})})^{2}}(v)\Big|\le2C_{1}\varepsilon\mathcal{M}[F]\label{eq:SmallDifferenceInRs-1}
\end{equation}
and 
\begin{equation}
\int_{0}^{v_{\mathcal{I}}}\frac{r_{/}^{(\varepsilon)}(T_{/}^{(\varepsilon)})_{vv}}{\partial_{v}r_{/}^{(\varepsilon)}}(\bar{v})\,d\bar{v}\le2C_{1}\varepsilon\mathcal{M}[F]\label{eq:SmallEnergyInitial-1}
\end{equation}
then, in fact, the following stronger estimates hold for all $\varepsilon\in[0,\varepsilon_{0}]$:
\begin{equation}
\sup_{v\in(0,v_{\mathcal{I}})}\Big|\frac{\partial_{v}r_{/}^{(\varepsilon)}}{1-\frac{1}{3}\Lambda r_{/}^{(\varepsilon)}}(v)-\frac{\partial_{v}r_{AdS/}^{(v_{\mathcal{I}})}}{1-\frac{1}{3}\Lambda(r_{AdS/}^{(v_{\mathcal{I}})})^{2}}(v)\Big|\le C_{1}\varepsilon\mathcal{M}[F]\label{eq:SmallDifferenceInRs-1-1}
\end{equation}
and 
\begin{equation}
\int_{0}^{v_{\mathcal{I}}}\frac{r_{/}^{(\varepsilon)}(T_{/}^{(\varepsilon)})_{vv}}{\partial_{v}r_{/}^{(\varepsilon)}}(\bar{v})\,d\bar{v}\le C_{1}\varepsilon\mathcal{M}[F]\label{eq:SmallEnergyInitial-1-1}
\end{equation}

Differentiating (\ref{eq:ODERhoBar})\textendash (\ref{eq:RBar})
for $F^{(\varepsilon)}$ with respect to $\varepsilon$ (noting that
$\frac{1}{2\bar{a}_{0}}=\bar{\rho}^{(\varepsilon)}(0)$), we obtain
\begin{align}
\frac{d}{d\varepsilon}\Big(\bar{\rho}^{(\varepsilon)}(v)-\bar{\rho}^{(\varepsilon)}(0)\Big)= & \bar{\rho}^{(\varepsilon)}(v)\cdot\int_{0}^{v}\frac{d}{d\varepsilon}\big(\frac{1}{\bar{r}^{(\varepsilon)}\big(1-\frac{1}{3}\Lambda(\bar{r}^{(\varepsilon)})^{2}\big)}\bar{\rho}^{(\varepsilon)}\big)(x)G^{(\varepsilon)}(x)\,dx+\label{eq:ODERhoBar-1}\\
 & +\bar{\rho}^{(\varepsilon)}(v)\cdot\int_{0}^{v}\frac{1}{\bar{r}^{(\varepsilon)}\big(1-\frac{1}{3}\Lambda(\bar{r}^{(\varepsilon)})^{2}\big)}\bar{\rho}^{(\varepsilon)}(x)G(x)\,dx\nonumber 
\end{align}
and
\begin{equation}
\frac{d}{d\varepsilon}\bar{r}^{(\varepsilon)}(v)\doteq\big(1-\frac{1}{3}\Lambda(\bar{r}^{(\varepsilon)})^{2}\big)\int_{0}^{v}\frac{d}{d\varepsilon}\bar{\rho}^{(\varepsilon)}(y)\,dy.\label{eq:RBar-1}
\end{equation}
Setting for convenience 
\[
R_{1}^{(\varepsilon)}\doteq\frac{d}{d\varepsilon}\log\bar{\rho}^{(\varepsilon)},\text{ }R_{2}^{(\varepsilon)}\doteq\frac{\frac{d}{d\varepsilon}\bar{r}^{(\varepsilon)}(v)}{1-\frac{1}{3}\Lambda(\bar{r}^{(\varepsilon)})^{2}}
\]
and using the fact that 
\[
\bar{r}^{(\varepsilon)}=r_{/}^{(\varepsilon)},\text{ }\bar{\rho}^{(\varepsilon)}=\frac{\partial_{v}r_{/}^{(\varepsilon)}}{1-\frac{1}{3}\Lambda r_{/}^{(\varepsilon)}}(v),
\]
(see (\ref{eq:Fixingrslash})), the relations (\ref{eq:ODERhoBar-1})\textendash (\ref{eq:RBar-1})
and the bounds (\ref{eq:SmallnessConditionForConstruction}) and (\ref{eq:SmallDifferenceInRs-1})\textendash (\ref{eq:SmallEnergyInitial-1})
readily yield: 
\begin{equation}
|R_{1}^{(\varepsilon)}(v)-R_{1}^{(\varepsilon)}(0)|\le C_{0}\int_{0}^{v}H_{1}^{(\varepsilon)}(\bar{v})\cdot\big(|R_{1}^{(\varepsilon)}(\bar{v})|+\frac{|R_{2}^{(\varepsilon)}(\bar{v})|}{\bar{v}}\big)\,d\bar{v}+C_{0}\mathcal{M}[F]\label{eq:ForGronwalR1}
\end{equation}
 and 
\begin{equation}
|R_{2}^{(\varepsilon)}(v)|\le C_{0}\int_{0}^{v}|R_{1}^{(\varepsilon)}(\bar{v})|\,d\bar{v}.\label{eq:R2}
\end{equation}
for some absolute constant $C_{0}>0$, where $H_{1}^{(\varepsilon)}(v)\ge0$
satisfies the bound 
\[
\int_{0}^{v}H_{1}^{(\varepsilon)}(\bar{v})\,d\bar{v}\le C(C_{1})\varepsilon\mathcal{M}[F]
\]
 for some $C(C_{1})>0$ depending only on $C_{1}$. 

Plugging (\ref{eq:R2}) in (\ref{eq:ForGronwalR1}) and applying Gronwall's
inequality for the function 
\[
\bar{R}_{1}^{(\varepsilon)}(v)\doteq\max_{\bar{v}\in[0,v]}|R_{1}^{(\varepsilon)}(\bar{v})-R_{1}^{(\varepsilon)}(0)|,
\]
we readily obtain that 
\begin{equation}
\sup_{v\in[0,v_{\mathcal{I}})}|R_{1}^{(\varepsilon)}(v)-R_{1}^{(\varepsilon)}(0)|\le C_{0}\exp\big(C(C_{1})\mathcal{M}[F]\big)\mathcal{M}[F]\cdot\Big(1+|R_{1}^{(\varepsilon)}(0)|\Big).\label{eq:UpperBoundR1}
\end{equation}

The identity 
\[
\frac{d}{d\varepsilon}\Big(\int_{0}^{v_{\mathcal{I}}}\bar{\rho}^{(\varepsilon)}(v)\,dv\Big)=\frac{d}{d\varepsilon}\Big(\sqrt{-\frac{3}{\Lambda}}\frac{\pi}{2}\Big)=0,
\]
which holds for all $\varepsilon\in[0,\varepsilon_{0}]$ (as a consequence
of (\ref{eq:RBar}) and the fact that $\bar{r}^{(\varepsilon)}(v_{\mathcal{I}})=+\infty$,
in view of (\ref{eq:Vabar0})), implies, in view of the bound (\ref{eq:UpperBoundR1})
and the estimate (\ref{eq:SmallDifferenceInRs-1-1}) that 
\begin{align}
0 & =\int_{0}^{v_{\mathcal{I}}}\bar{\rho}^{(\varepsilon)}(v)R_{1}^{(\varepsilon)}(v)\,dv=\label{eq:IdentityForR0}\\
 & =\int_{0}^{v_{\mathcal{I}}}\Big(\frac{\partial_{v}r_{AdS/}^{(v_{\mathcal{I}})}}{1-\frac{1}{3}\Lambda(r_{AdS/}^{(v_{\mathcal{I}})})^{2}}(v)+O\big(C_{1}\mathcal{M}[F]\big)\Big)\cdot\Big(R_{1}^{(\varepsilon)}(0)+O\big(\exp\big(C(C_{1})\mathcal{M}[F]\big)\mathcal{M}[F]\big)\Big)R_{1}^{(\varepsilon)}(v)\,dv.\nonumber
\end{align}
The identity (\ref{eq:IdentityForR0}) implies (since $R_{1}^{(\varepsilon)}(0)$
is independent of $v$ and hence can be moved outside of the integral)
that, after possibly choosing a larger absolute constant $C_{0}>0$,
and assuming that the constant $c_{0}$ in (\ref{eq:SmallnessConditionForConstruction})
has been chosen small enough in terms of $C_{1}$: 
\begin{equation}
|R_{1}^{(\varepsilon)}(0)|\le C_{0}\exp\big(C(C_{1})\mathcal{M}[F]\big)\mathcal{M}[F]\le C_{0}\mathcal{M}[F].\label{eq:UpperBoundR0}
\end{equation}
From (\ref{eq:UpperBoundR1}) and (\ref{eq:UpperBoundR0}) we therefore
infer that, provided $C_{1}\gg C_{0}$ and $c_{0}$ is small enough
in terms of $C_{1}$, we can estimate for for any $\bar{\varepsilon}\in[0,\varepsilon_{0}]$:
\begin{equation}
\sup_{v\in(0,v_{\mathcal{I}})}\Bigg|\frac{d}{d\varepsilon}\Big(\frac{\partial_{v}r_{/}^{(\varepsilon)}}{1-\frac{1}{3}\Lambda r_{/}^{(\varepsilon)}}\Big)(v)\Big|_{\varepsilon=\bar{\varepsilon}}\Bigg|=\sup_{v\in(0,v_{\mathcal{I}})}\Bigg|\frac{d}{d\varepsilon}\bar{\rho}^{(\varepsilon)}(v)\Big|_{\varepsilon=\bar{\varepsilon}}\Bigg|\le2C_{0}\mathcal{M}[F]\le C_{1}\mathcal{M}[F].\label{eq:DerivativeInREpsilon}
\end{equation}

Integrating (\ref{eq:DerivativeInREpsilon}) in $\bar{\varepsilon}\in[0,\varepsilon_{0}]$,
we readily infer (\ref{eq:SmallDifferenceInRs-1-1}). Using (\ref{eq:SmallDifferenceInRs-1-1}),
the bound (\ref{eq:SmallEnergyInitial-1-1}) follows readily from
(\ref{eq:SmallnessConditionForConstruction}) and the relation (\ref{eq:AlmostVlasovFieldInvariant Form})
between $\bar{f}_{/}$ and $F$. Therefore, the proof of the lemma
is complete.

\end{proof}

\subsection{\label{subsec:GaugeFixing} Transformations between gauge normalised
and smoothly compatible initial data sets}

In this section, we will investigate the relationship between smoothly
compatible and gauge normalised initial data sets, as introduced by
Definitions \ref{def:CompatibilityCondition} and \ref{def:GaugeConditionNormalise},
respectively.

The following lemma shows that, for every smoothly compatible, asymptotically
AdS initial data set $(r_{/},\Omega_{/}^{2},\bar{f}_{/};v_{\mathcal{I}})$,
there exists a unique gauge transformation fixing $v=0,v_{\mathcal{I}}$,
for which the gauge normalisation condition (\ref{eq:GaugeConditionNormalisedData})
is achieved:
\begin{lem}
\label{lem:SmoothToNorm} Let $(r_{/},\Omega_{/}^{2},\bar{f}_{/};v_{\mathcal{I}})$
be a smoothly compatible asymptotically AdS initial data set, in accordance
with Definition \ref{def:CompatibilityCondition}, with bounded support
in phase space (i.\,e.~satisfying the condition (\ref{eq:BoundedSupportDefinition})).
Then, there exists a gauge transformation $v\rightarrow v'=V(v)$
of the form (\ref{eq:GaugeCoordinateChange}) satisfying (\ref{eq:NecessaryForSmoothCompatibility})
and
\[
V(v_{\mathcal{I}})=v_{\mathcal{I}},
\]
 such that the transformed initial data set $(r_{/}^{\prime},(\Omega_{/}^{\prime})^{2},\bar{f}_{/}^{\prime};v_{\mathcal{I}})$
(see the relation (\ref{eq:GeneralGaugeTransformationInitialData}))
satisfies the gauge condition (\ref{eq:GaugeConditionNormalisedData}).
\end{lem}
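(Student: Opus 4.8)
The plan is to recast the gauge normalisation condition (\ref{eq:GaugeConditionNormalisedData}) as a first--order ODE for the coordinate change $V$, solve it explicitly, and then verify that the solution has the regularity demanded by (\ref{eq:NecessaryForSmoothCompatibility}). First I would record how (\ref{eq:GaugeConditionNormalisedData}) transforms under a gauge transformation of the form (\ref{eq:GaugeCoordinateChange}), which is a pair $\big(V,\,c\big)$ with $c\doteq\frac{dU}{du}(0)>0$ acting by (\ref{eq:GeneralGaugeTransformationInitialData}). Using $r_{/}^{\prime}(v')=r_{/}(v)$ (so $\partial_{v'}r_{/}^{\prime}=\partial_v r_{/}/V'$) and $(\Omega_{/}^{\prime})^{2}(v')=\Omega_{/}^{2}(v)/\big(cV'(v)\big)$, a short calculation shows that the transformed data $(r_{/}^{\prime},(\Omega_{/}^{\prime})^{2},\bar f_{/}^{\prime};v_{\mathcal{I}})$ satisfies (\ref{eq:GaugeConditionNormalisedData}) precisely when
\[
V'(v)=c\,g(v),\qquad g(v)\doteq\frac{4\big(\partial_v r_{/}\big)^{2}(v)}{\Omega_{/}^{2}(v)\big(1-\tfrac13\Lambda r_{/}^{2}(v)\big)},\qquad v\in[0,v_{\mathcal{I}}).
\]
Thus the lemma reduces to producing a constant $c>0$ and a function $V\in C^{\infty}([0,v_{\mathcal{I}}])$ with $V'>0$ solving this ODE together with $V(0)=0$, $V(v_{\mathcal{I}})=v_{\mathcal{I}}$ and the axis--compatibility constraint $c=\frac{dV}{dv}(0)$.

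The next step is to analyse $g$. On $[0,v_{\mathcal{I}})$ it is smooth and strictly positive, which uses that asymptotically AdS data contain no trapped spheres, i.e. $\inf_{[0,v_{\mathcal{I}})}\partial_v r_{/}>0$ and $\inf_{[0,v_{\mathcal{I}})}\Omega_{/}^{2}>0$ (see (\ref{eq:NonTrappingForAsymptoticallyAdSInitialData})), together with $\Lambda<0$. To control $g$ near $v=v_{\mathcal{I}}$ I would pass to the conformal variables $\psi\doteq 1/r_{/}$ and $\omega\doteq r_{/}^{-2}\Omega_{/}^{2}$, which by Definitions \ref{def:AsymptoticallyAdSData} and \ref{def:CompatibilityCondition} extend smoothly to $v=v_{\mathcal{I}}$ with $\psi(v_{\mathcal{I}})=0$, $\psi'(v_{\mathcal{I}})<0$ (see (\ref{eq:PositiveDvRInitially})) and $\omega(v_{\mathcal{I}})>0$; substituting $\partial_v r_{/}=-\psi^{-2}\psi'$, $\Omega_{/}^{2}=\psi^{-2}\omega$ and $1-\tfrac13\Lambda r_{/}^{2}=\psi^{-2}\big(\psi^{2}-\tfrac13\Lambda\big)$ gives
\[
g=\frac{4(\psi')^{2}}{\omega\big(\psi^{2}-\tfrac13\Lambda\big)},
\]
which, since $\psi^{2}-\tfrac13\Lambda>0$, extends smoothly and strictly positively across $v=v_{\mathcal{I}}$. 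Overlapping the two regimes gives $g\in C^{\infty}([0,v_{\mathcal{I}}])$ and $I\doteq\int_0^{v_{\mathcal{I}}}g(\bar v)\,d\bar v\in(0,+\infty)$.

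With $g$ in hand I would simply set $c\doteq v_{\mathcal{I}}/I$ and $V(v)\doteq c\int_0^v g(\bar v)\,d\bar v$; then $V\in C^{\infty}([0,v_{\mathcal{I}}])$, $V(0)=0$, $V(v_{\mathcal{I}})=v_{\mathcal{I}}$ and $V'=cg>0$, so (\ref{eq:GaugeCoordinateChange}) holds. Since $r_{/}(0)=0$ and the smoothly--compatible axis relation $4(\partial_v r_{/})^{2}(0)=\Omega_{/}^{2}(0)$ (see (\ref{eq:SmoothlyCompatibleAxisCondition})) force $g(0)=1$, one gets $\frac{dV}{dv}(0)=c=\frac{dU}{du}(0)$, which is (\ref{eq:NecessaryForSmoothCompatibility}); and by the first step this $(V,c)$ carries $(r_{/},\Omega_{/}^{2},\bar f_{/};v_{\mathcal{I}})$ to a data set obeying (\ref{eq:GaugeConditionNormalisedData}). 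Uniqueness is then automatic: the first step pins down $V'$ up to the multiplicative constant $c$, $V(v_{\mathcal{I}})=v_{\mathcal{I}}$ pins down $c=v_{\mathcal{I}}/I$, and axis compatibility pins down $\frac{dU}{du}(0)=V'(0)=c$.

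I expect the only genuinely delicate point to be the endpoint analysis of $g$ at $v=v_{\mathcal{I}}$, namely establishing that the candidate $V$ is smooth --- not merely continuous --- up to $v=v_{\mathcal{I}}$, which is exactly what makes the transformation admissible in the sense of (\ref{eq:NecessaryForSmoothCompatibility}); this is where the smooth conformal--infinity structure of the data (smoothness of $1/r_{/}$ and $r_{/}^{-2}\Omega_{/}^{2}$ at $v_{\mathcal{I}}$ together with (\ref{eq:PositiveDvRInitially})) is used. Everything else --- the transformation law of the gauge condition and the construction of $V$ --- is routine, and the bounded--support hypothesis enters only peripherally (via (\ref{eq:FiniteTotalMassInitialData}) and the first remark after Definition \ref{def:CompatibilityCondition}) to guarantee that the transformed object is again a legitimate member of the initial--data class of Definition \ref{def:AsymptoticallyAdSData}.
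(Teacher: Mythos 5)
Your proposal is correct and follows essentially the same route as the paper's proof: both recast the normalisation condition (\ref{eq:GaugeConditionNormalisedData}) as the first-order ODE $V'(v)=c\,g(v)$ with $g=\frac{4(\partial_{v}r_{/})^{2}}{\Omega_{/}^{2}(1-\frac{1}{3}\Lambda r_{/}^{2})}$ (the paper writes the same right-hand side via the gauge-invariant exponential factor $F(v)$ coming from the constraint equation (\ref{eq:ConstraintVFinal})), note that the axis relation (\ref{eq:SmoothlyCompatibleAxisCondition}) makes the condition automatic at $v=0$ and forces $g(0)=1$, and fix the single free constant by $V(v_{\mathcal{I}})=v_{\mathcal{I}}$. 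The only divergence is at the endpoint $v=v_{\mathcal{I}}$, where the paper appeals to bounded support in phase space to get finiteness of $\int_{0}^{v_{\mathcal{I}}}g$, whereas you derive the stronger fact that $g$ extends smoothly and positively across $v=v_{\mathcal{I}}$ from the conformal regularity built into Definition \ref{def:AsymptoticallyAdSData} — which also gives $V\in C^{\infty}([0,v_{\mathcal{I}}])$, i.e.\ (\ref{eq:NecessaryForSmoothCompatibility}), more explicitly than the paper does.
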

\begin{rem*}
As noted below Definition \ref{def:GaugeConditionNormalise} (see
also the remark below Definition \ref{def:CompatibilityCondition}),
the aforementioned gauge transformation will not be, in general, smoothly
compatible, despite satisfying (\ref{eq:NecessaryForSmoothCompatibility}).
See also Lemma \ref{lem:TransformationForNormalisationDevelopment}.
\end{rem*}
\begin{proof}
In order to construct the function $v'=V(v)$, we will make use of
the equivalent form (\ref{eq:FormulaWithOmegaInitial}) of the gauge
condition (\ref{eq:GaugeConditionNormalisedData}). To this end, let
us note that the function 
\[
F(v)\doteq\exp\Big(4\pi\int_{0}^{v}\frac{r_{/}(T_{/})_{vv}}{(\partial_{v}r_{/})^{2}}(\bar{v})\,(\partial_{v}r_{/})d\bar{v}\Big)
\]
(appearing in the right hand side of (\ref{eq:FormulaWithOmegaInitial}))
is gauge independent, i.\,e.~transforms under a gauge transformation
of the form (\ref{eq:GeneralGaugeTransformationInitialData}) as 
\[
F'(V(v))=F(v).
\]
As a result, the function 
\[
G(v)\doteq\frac{\Omega_{/}^{2}}{4\partial_{v}r_{/}}(0)\cdot F(v)
\]
transforms under such a gauge transformation with $\frac{dU}{du}(0)\doteq\frac{dV}{dv}(0)$
as:
\[
G'(V(v))=\frac{1}{\frac{dV}{dv}(0)}\cdot G(v).
\]

The gauge condition (\ref{eq:GaugeCoordinateChange}) for the gauge
transformed initial data set $(r_{/}^{\prime},(\Omega_{/}^{\prime})^{2},\bar{f}_{/}^{\prime};v_{\mathcal{I}})$
is equivalent to the relation 
\begin{equation}
\frac{dV}{dv}(v)=\frac{dV}{dv}(0)\cdot\frac{\partial_{v}r_{/}}{1-\frac{1}{3}\Lambda r_{/}^{2}}(v)\cdot\frac{1}{G(v)}\text{ for all }v\in[0,v_{\mathcal{I}}).\label{eq:ODEForV}
\end{equation}
Note that the relation (\ref{eq:ODEForV}) is trivially satisfied
at $v=0$ for any choice of the function $V(v)$, as a consequence
of the fact that (\ref{eq:SmoothlyCompatibleAxisCondition}) holds
for any smoothly compatible initial data set. 

Considering 
\[
b\doteq\frac{dV}{dv}(0)
\]
 as a positive parameter, we infer that the proof of the lemma will
conclude by showing that there exists a unique $b>0$ such that the
smooth function $V(v)$ determined by (\ref{eq:ODEForV}) and the
condition $V(0)=0$, i.\,e.
\[
V(v)\doteq b\int_{0}^{v}\frac{\partial_{v}r_{/}}{1-\frac{1}{3}\Lambda r_{/}^{2}}(\bar{v})\cdot\frac{1}{G(\bar{v})}\,d\bar{v},
\]
satisfies in addition
\begin{equation}
V(v_{\mathcal{I}})=v_{\mathcal{I}}.\label{eq:Aman}
\end{equation}
It can be readily verified that (\ref{eq:Aman}) uniquely fixes $b$
by the relation
\begin{equation}
b=\frac{v_{\mathcal{I}}}{\int_{0}^{v_{\mathcal{I}}}\frac{\partial_{v}r_{/}}{1-\frac{1}{3}\Lambda r_{/}^{2}}(\bar{v})\cdot\frac{1}{G(\bar{v})}\,d\bar{v}},\label{eq:B}
\end{equation}
noting that the finiteness of the denominator in (\ref{eq:B}) follows
from the fact that $\bar{f}_{/}$ was assumed to be of bounded support
in phase space.
\end{proof}
Let us now turn to the opposite question of that addressed by Lemma
\ref{lem:SmoothToNorm}, namely that of determining whether an initial
data set $(r_{/},\Omega_{/}^{2},\bar{f}_{/};v_{\mathcal{I}})$ given
in the normalised gauge of Definition \ref{def:GaugeConditionNormalise}
is gauge-equivalent to a smoothly compatible initial data set, as
in Definition \ref{def:CompatibilityCondition}.\footnote{The advantage of working in the normalised gauge condition of Definition
\ref{def:GaugeConditionNormalise} lies in the flexibility it provides
to uniquely determine an initial data set $(r_{/},\Omega_{/}^{2},\bar{f}_{/};v_{\mathcal{I}})$
by freely prescribing the value of $\bar{f}_{/}$. } The following lemma (which will not be used again in this paper,
but which will be useful for our companion paper \cite{MoschidisVlasov})
provides a broad class of normalised initial data sets for which such
a transformation always exists; this class contains, in particular,
the initial data sets considered in our companion paper \cite{MoschidisVlasov}.
The proof of this result will in make use of Proposition \ref{prop:MihalisWellPosedness}
for double characterisitc initial value problems for (\ref{eq:RequationFinal})\textendash (\ref{NullShellFinal}),
established later in Section \ref{sec:Well-posedness-of-the}. 
\begin{lem}
\label{lem:SufficientForCompatibility} Let $(r_{/},\Omega_{/}^{2},\bar{f}_{/};v_{\mathcal{I}})$
be a smooth asymptotically AdS initial data set with bounded support
in phase space, in accordance with Definition \ref{def:AsymptoticallyAdSData},
satisfying the normalised gauge condition (\ref{eq:GaugeConditionNormalisedData}).
Assume that $\bar{f}_{/}$ is supported away from $v=0,v_{\mathcal{I}}$
and $l=0$, i.\,e.~there exists some $\varepsilon>0$, such that
$\bar{f}_{/}$ satisfies 
\begin{equation}
\bar{f}_{/}(v;p,l)=0\text{ for }v\in(0,\varepsilon]\cup[v_{\mathcal{I}}-\varepsilon,v_{\mathcal{I}})\label{eq:ZeroFNearBoundary}
\end{equation}
and 
\begin{equation}
\bar{f}_{/}(v;p,l)=0\text{ for }l\in[0,\varepsilon].\label{eq:ZeroFSmallL}
\end{equation}
Then, there exists a gauge transformation of the form (\ref{eq:GaugeCoordinateChange})\textendash (\ref{eq:GeneralGaugeTransformationInitialData})
with $\frac{dU}{du}(0)=1$ and
\begin{equation}
v'(v)=v\text{ for }v\le v_{\mathcal{I}}-\frac{1}{2}\varepsilon\text{ and }v'(v_{\mathcal{I}})=v_{\mathcal{I}}\label{eq:ConditionGaugeOnlAway}
\end{equation}
such that the transformed initial data set $(r_{/}^{\prime},(\Omega_{/}^{\prime})^{2},\bar{f}_{/}^{\prime};v_{\mathcal{I}})$
is smoothly compatible, in accordance with Definition \ref{def:CompatibilityCondition}.
Furthermore, for any $\eta_{0}\in(0,1)$, the gauge transformation
can be chosen so that 
\begin{equation}
1-\eta_{0}\le\frac{dv'}{dv}(v)\le1+\eta_{0}\text{ for }v\in[0,v_{\mathcal{I}}]\label{eq:FirstDerivativeTransformation}
\end{equation}
and 
\begin{equation}
\max_{v\in[0,v_{\mathcal{I}}]}\Big|\frac{d^{2}v'}{(dv)^{2}}(v)\Big|\le\int_{0}^{v_{\mathcal{I}}}\Big(\frac{1-\Lambda r_{/}^{2}}{1-\frac{1}{3}\Lambda r_{/}^{2}}(T_{/})_{vv}+3(T_{/})_{uv}\Big)(v)\,dv+\frac{\eta_{0}}{v_{\mathcal{I}}},\label{eq:SecondDerivativeTransformation}
\end{equation}
where $(T_{/})_{vv}$, $(T_{/})_{uv}$ are defined in terms of $(r_{/},\Omega_{/}^{2},\bar{f}_{/})$
by the corresponding relations in (\ref{eq:DefinitionMassInitially}).
\end{lem}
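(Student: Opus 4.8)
The plan is to construct the gauge transformation $v \to v'$ explicitly, working backwards from the requirement that the transformed data be smoothly compatible. The key observation is that, since $\bar{f}_/$ vanishes for $v \in (0,\varepsilon] \cup [v_{\mathcal{I}}-\varepsilon, v_{\mathcal{I}})$, the data set $(r_/,\Omega_/^2,\bar{f}_/;v_{\mathcal{I}})$ coincides, near $v=0$, with a trivial (vacuum AdS) data set in the normalised gauge — and this is already smoothly compatible there by the remark below Definition \ref{def:GaugeConditionNormalise}. The only potential failure of smooth compatibility is near $v = v_{\mathcal{I}}$: the normalised gauge condition (\ref{eq:GaugeConditionNormalisedData}) is incompatible with the vanishing of $(\partial_u+\partial_v)^k(1/r)$ on $\{u=v-v_{\mathcal{I}}\}$ for $k \ge 3$ whenever $\tilde{m} \ne 0$. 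So the gauge transformation needs to be the identity for $v \le v_{\mathcal{I}} - \tfrac12\varepsilon$ and must correct the behavior near $v_{\mathcal{I}}$ to restore compatibility with both the axis/infinity smoothness conditions and the reflecting boundary condition.

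First I would invoke Proposition \ref{prop:MihalisWellPosedness} (established in Section \ref{sec:Well-posedness-of-the}) to solve the double characteristic initial value problem for (\ref{eq:RequationFinal})--(\ref{NullShellFinal}) with data $(r_/,\Omega_/^2,\bar{f}_/)$ on $\{u=0\}$ and appropriate data on an ingoing cone near $\{v=v_{\mathcal{I}}\}$, producing a smooth solution $(r,\Omega^2,f)$ in a neighborhood of the relevant portion of $\{u=0\}$; since $\bar{f}_/$ is supported away from $v_{\mathcal{I}}$ and away from $l=0$, the Vlasov field is locally trivial near conformal infinity and the solution there is exactly a gauge transform of pure AdS. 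The reflecting boundary condition is then automatically satisfied in that neighborhood (there is no matter reaching $\mathcal{I}$). Comparing the double null coordinate $v$ induced by this solution with the gauge-normalised coordinate gives the desired reparametrisation $v' = V(v)$: one sets $V$ to be the identity on $[0, v_{\mathcal{I}}-\tfrac12\varepsilon]$ and, on $[v_{\mathcal{I}}-\tfrac12\varepsilon, v_{\mathcal{I}}]$, defines $V$ by matching the renormalised quantity $\partial_{v'}r/(1-\tfrac13\Lambda r^2)$ to its value in the smooth solution's gauge — concretely, $\tfrac{dV}{dv}$ is the ratio of the normalised-gauge $\partial_v r_/$ to the smooth-solution $\partial_{v'}r'$, which is smooth and positive by construction and extends smoothly to $v=v_{\mathcal{I}}$ because both sides satisfy (\ref{eq:1/r0})--(\ref{eq:ConformalExtensionI}). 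The condition $V(v_{\mathcal{I}}) = v_{\mathcal{I}}$ fixes the one free constant, exactly as in the proof of Lemma \ref{lem:SmoothToNorm}.

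For the quantitative bounds (\ref{eq:FirstDerivativeTransformation}) and (\ref{eq:SecondDerivativeTransformation}), I would use the formula (\ref{eq:FormulaWithOmegaInitial}) for $\partial_v r_/$ in the normalised gauge together with the evolution equations (\ref{eq:DerivativeTildeVMass}) and (\ref{eq:EquationRForProof}) controlling how $\partial_v r$ varies in the smooth solution. The difference $\log\tfrac{dV}{dv}$ is an integral of $T_{vv}$ and $T_{uv}$ components weighted by explicit $r$-dependent factors; differentiating once more and using that $T_{\mu\nu}$ is conserved along $\{u=0\}$ up to the $r$-dependent factors appearing in (\ref{eq:RenormalisedEquations}) gives the bound on $\tfrac{d^2V}{(dv)^2}$ in terms of $\int_0^{v_{\mathcal{I}}}\big(\tfrac{1-\Lambda r_/^2}{1-\frac13\Lambda r_/^2}(T_/)_{vv}+3(T_/)_{uv}\big)\,dv$. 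The extra $\eta_0/v_{\mathcal{I}}$ slack absorbs the freedom in how one interpolates $V$ between the identity region and the corrected region near $v_{\mathcal{I}}$, and one simply chooses the transition function with small enough first and second derivatives — the smallness of $\varepsilon$ is irrelevant since the interpolation can be spread over the full interval $[0,v_{\mathcal{I}}]$ with the bulk of the variation concentrated wherever $(T_/)_{vv}, (T_/)_{uv}$ are supported.

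The main obstacle I expect is verifying that the constructed $V$ genuinely produces a \emph{smoothly compatible} data set in the full sense of Definition \ref{def:CompatibilityCondition} — that is, that Conditions 1--3 there hold, not merely the normalised gauge condition. Condition 1 (agreement of all transversal derivatives with the values forced by the equations) is automatic because the comparison solution solves the system. Condition 2 (axis regularity, including the Cartesian-coordinate smoothness of the metric on $\{u=v\}$) holds because near $v=0$ the transformation is the identity and the data agree with a smoothly compatible trivial set. The delicate point is Condition 3 near $v = v_{\mathcal{I}}$: one must check that after transformation the functions $1/r'$ and $\Omega'^2/r'^2$ extend smoothly across $\{u=v-v_{\mathcal{I}}\}$ \emph{and} in a neighborhood of the corner $(0,v_{\mathcal{I}})$, and that $f'$ still solves the Vlasov equation and the reflecting condition there. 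This follows because the comparison solution from Proposition \ref{prop:MihalisWellPosedness} is, by construction, smooth up to and across conformal infinity (its data there being pure AdS), so the induced coordinate $v'$ inherits the correct smooth structure — but making this precise requires carefully tracking the smooth extension of the double null chart across $\mathcal{I}^{(2+1)}$, which is the technically heaviest part of the argument.
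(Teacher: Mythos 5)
Your overall architecture matches the paper's: build a local solution via Proposition \ref{prop:MihalisWellPosedness}, observe that the region near conformal infinity is vacuum because the matter is supported on timelike-projecting geodesics away from $v=v_{\mathcal{I}}$, and then reparametrise $v$ near $v_{\mathcal{I}}$ so that conformal infinity of the vacuum region becomes a straight line. However, there is a genuine conceptual error at the heart of your argument: you assert that the solution near $\mathcal{I}$ is ``exactly a gauge transform of pure AdS.'' By Birkhoff's theorem (for $\Lambda<0$), a spherically symmetric vacuum region is Schwarzschild--AdS with mass $M=\tilde{m}_{/}(v_{\mathcal{I}})$, which is strictly positive whenever $\bar{f}_{/}\not\equiv0$. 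This is not a detail: if the vacuum region were gauge-equivalent to pure AdS, the normalised gauge would already be smoothly compatible there (as the remark below Definition \ref{def:GaugeConditionNormalise} notes, it is precisely $\tilde{m}\neq0$ that obstructs the vanishing of $(\partial_{u}+\partial_{v})^{k}(1/r)$ on $\{u=v-v_{\mathcal{I}}\}$ for $k\ge3$), and the lemma would be vacuous. The paper's construction works explicitly with the Schwarzschild--AdS functions $K(v)=\frac{\partial_{v}r_{/}}{1-\frac{2M}{r_{/}}-\frac{1}{3}\Lambda r_{/}^{2}}$ and $\bar{K}(u)$, and the new coordinate is determined by the ODE $\frac{dv'}{dv}=K(v)/\bar{K}(v'(v)-v_{\mathcal{I}})$ imposed on the \emph{extension} interval $[v_{\mathcal{I}},v_{\mathcal{I}}+\delta']$ — not by matching $\partial_{v}r$ ratios on $[v_{\mathcal{I}}-\frac{1}{2}\varepsilon,v_{\mathcal{I}}]$, where no canonical ``smooth solution's gauge'' exists until one decides where $\{r=\infty\}$ of the extended vacuum metric should sit.

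Your argument for the quantitative bounds is also off. The transformation cannot be ``spread over the full interval $[0,v_{\mathcal{I}}]$'': condition (\ref{eq:ConditionGaugeOnlAway}) confines it to $[v_{\mathcal{I}}-\frac{1}{2}\varepsilon,v_{\mathcal{I}}]$, which is disjoint from the support of $(T_{/})_{vv},(T_{/})_{uv}$, so the variation of $v'$ is certainly not ``concentrated where the energy-momentum is supported.'' What actually makes the bounds work is that $\frac{dv'}{dv}$ equals $1$ at \emph{both} endpoints of $[v_{\mathcal{I}}-\frac{1}{2}\varepsilon,v_{\mathcal{I}}]$ (since $K(v_{\mathcal{I}})/\bar{K}(0)=1$ by (\ref{eq:InfinitySchwarzschildNorm}) evaluated at $r_{/}=\infty$), while its derivative at $v=v_{\mathcal{I}}$ is forced to equal $\frac{d}{dv}\frac{K(v)}{\bar{K}(v'(v)-v_{\mathcal{I}})}\big|_{v=v_{\mathcal{I}}}=\frac{d}{du}\log\bar{K}(0)$; this quantity is then computed by integrating the constraint (\ref{eq:DerivativeInVDirectionKappaBar}) along $u=0$ and using the conservation law (\ref{eq:ConservationEnergyMomentum}) plus an integration by parts, which is where the integral $\int_{0}^{v_{\mathcal{I}}}\big(\frac{1-\Lambda r_{/}^{2}}{1-\frac{1}{3}\Lambda r_{/}^{2}}(T_{/})_{vv}+3(T_{/})_{uv}\big)dv$ appears. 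The $\eta_{0}$ slack then covers the interpolation of a function with vanishing endpoint values and a prescribed endpoint derivative. Without the Schwarzschild--AdS identification and this computation, neither the existence of the correcting transformation nor the stated bound on $\frac{d^{2}v'}{(dv)^{2}}$ follows from your outline.
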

\begin{rem*}
At a spacetime level, the gauge transformation in the statement of
Lemma \ref{lem:SufficientForCompatibility} is of the form $(u,v)\rightarrow(u,v')$. 
\end{rem*}
\begin{proof}
The proof of Lemma \ref{lem:SufficientForCompatibility} will proceed
by constructing an asymptotically AdS solution $(r,\Omega^{2},f)$
of (\ref{eq:RequationFinal})\textendash (\ref{NullShellFinal}) on
the domain $\mathcal{U}_{T;v_{\mathcal{I}}}$ for some $T>0$, satisfying
the reflecting boundary condition on $\{u=v-v_{\mathcal{I}}\}$, such
that: 

\begin{itemize}

\item $(r,\Omega^{2},f)$ induces on $u=0$ the initial data set
$(r_{/},\Omega_{/}^{2},\bar{f}_{/};v_{\mathcal{I}})$,

\item $(r,\Omega^{2},f)$ can be transformed into a solution with
smooth axis $\{u=v\}$ and smooth conformal infinity $\{u=v-v_{\mathcal{I}}\}$
after applying a gauge transformation $(u,v)\rightarrow(u,v')$ with
$v'$ satisfying (\ref{eq:ConditionGaugeOnlAway}).

\end{itemize} 

The construction of $(r,\Omega^{2},f)$ will be performed in three
steps.

\begin{figure}[h] 
\centering 
\scriptsize
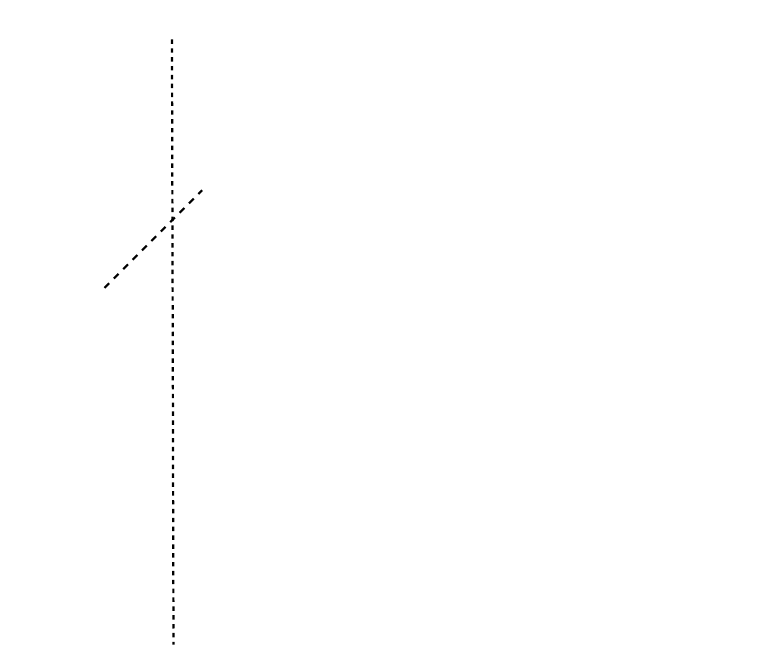 
\caption{Schematic depiction of the (partially overlapping) domains $\mathcal{D}_{0}^{\epsilon}$ and $\mathcal{W}_{*}$  in the $(u,v)$-plane. The fact that the Vlasov field $f$ is initially supported on $v\le v_\mathcal{I}-\epsilon$ and its phase space support contains only null geodesics with non-zero angular momentum, projecting as timelike curves on the $(u,v)$-plane, implies that the development $(r,\Omega^2,f)$ is vacuum (and hence isometric to Schwarzschild--AdS) in a neighborhood of the point ${0}\times {v_{\mathcal{I}}-\frac{1}{4}\epsilon}$ of the form $\{v\ge v_{\mathcal{I}}-\epsilon+Cu\}\cap \mathcal{W}_*$. \label{fig:Data_Compatibility_First}}
\end{figure}

\begin{enumerate}

\item In view of the assumption (\ref{eq:ZeroFNearBoundary}) for
$\bar{f}_{/}$ and the fact that $r_{/}$, $\Omega_{/}^{2}$ satisfy
the normalised gauge condition (\ref{eq:GaugeConditionNormalisedData}),
it follows readily that $(r_{/},\Omega_{/}^{2},\bar{f}_{/})$ coincides,
for $v\in(0,\varepsilon]$, with the normalised (rescaled) trivial
initial data set $(r_{AdS/}^{(v_{0})},\big(\Omega_{AdS/}^{(v_{0})}\big)^{2},0)$
, given by (\ref{eq:AdSMetricValuesRescaled1}) for some $v_{0}>0$.
Therefore, if we define the triplet $(r,\Omega^{2},f)$ on the domain
\[
\mathcal{D}_{0}^{\varepsilon}=\big\{0\le u\le\varepsilon\big\}\cap\big\{ u<v<\varepsilon\big\}
\]
 (see Figure \ref{fig:Data_Compatibility_First}) as 
\begin{equation}
(r,\Omega^{2},f)|_{\mathcal{D}_{0}^{\varepsilon}}\doteq(r_{AdS}^{(v_{0})},(\Omega_{AdS}^{(v_{0})})^{2},0),\label{eq:SolutionFirstStep}
\end{equation}
where 
\begin{align}
r_{AdS}^{(v_{0})}(u,v) & =r_{AdS}\Big(\sqrt{-\frac{3}{\Lambda}}\pi\frac{u}{v_{0}},\sqrt{-\frac{3}{\Lambda}}\pi\frac{v}{v_{0}}\Big),\label{eq:AdSMetricValuesRescaled-1-1}\\
\big(\Omega_{AdS}^{(v_{0})}\big)^{2}(u,v) & =-\frac{3}{\Lambda}\frac{\pi^{2}}{v_{0}^{2}}\Omega_{AdS}^{2}\Big(\sqrt{-\frac{3}{\Lambda}}\pi\frac{u}{v_{0}},\sqrt{-\frac{3}{\Lambda}}\pi\frac{v}{v_{0}}\Big),\nonumber 
\end{align}
then $(r,\Omega^{2},f)|_{\mathcal{D}_{0}^{\varepsilon}}$ is (trivially)
a smooth solution of (\ref{eq:RequationFinal})\textendash (\ref{NullShellFinal}),
with smooth axis $\{u=v\}$, inducing on $\{u=0\}\cap\mathcal{D}_{0}^{\varepsilon}$
the initial data $(r_{/},\Omega_{/}^{2},\bar{f}_{/})$.

\item For any $u_{*}\in(0,\frac{1}{4}\varepsilon)$, let us consider
the double characteristic initial value problem for the system (\ref{eq:RequationFinal})\textendash (\ref{NullShellFinal})
on the domain 
\[
\mathcal{W}_{*}\doteq[0,u_{*}]\times[\frac{1}{2}\varepsilon,v_{\mathcal{I}}-\frac{1}{4}\varepsilon]
\]
 (see Figure \ref{fig:Data_Compatibility_First}) with characteristic
initial data on $[0,u_{*}]\times\{\frac{1}{2}\varepsilon\}$ and $\{0\}\times[\frac{1}{2}\varepsilon,v_{\mathcal{I}}-\frac{1}{4}\varepsilon]$
given, respectively, by 
\begin{equation}
(r_{\backslash},\Omega_{\backslash}^{2},\bar{f}_{\backslash})(u)=(r_{AdS}^{(v_{0})}(u,\frac{1}{2}\varepsilon),\big(\Omega_{AdS}^{(v_{0})}\big)^{2}(u,\frac{1}{2}\varepsilon),0)\text{ for }u\in[0,u_{*}]\label{eq:LeftIntialData}
\end{equation}
 and 
\begin{equation}
(r_{/},\Omega_{/}^{2},\bar{f}_{/})|_{[\frac{1}{2}\varepsilon,v_{\mathcal{I}}-\frac{1}{4}\varepsilon]}.\label{eq:RightInitialData}
\end{equation}
In view of the fact that $u_{*}<\frac{1}{4}\varepsilon$, there exists
a constant $c_{0}$ (depending on $\varepsilon$) such that 
\[
\min_{u\in[0,u_{*}]}r_{\backslash}(u),\text{ }\min_{v\in[\frac{1}{2}\varepsilon,v_{\mathcal{I}}-\frac{1}{4}\varepsilon]}r_{/}(v)>c_{0}
\]
 and 
\[
\max_{u\in[0,u_{*}]}r_{\backslash}(u),\text{ }\max_{v\in[\frac{1}{2}\varepsilon,v_{\mathcal{I}}-\frac{1}{4}\varepsilon]}r_{/}(v)<c_{0}^{-1}.
\]
 As a result, by applying Proposition \ref{prop:MihalisWellPosedness},
we infer that, provided $u_{*}$ is chosen small enough in terms of
$(r_{/},\Omega_{/}^{2},\bar{f}_{/})$ and $\varepsilon$, there exists
a unique smooth solution $(r,\Omega^{2},f)$ of (\ref{eq:RequationFinal})\textendash (\ref{NullShellFinal})
on $\mathcal{W}_{*}$ inducing on $[0,u_{*}]\times\{\frac{1}{2}\varepsilon\}$
and $\{0\}\times[\frac{1}{2}\varepsilon,v_{\mathcal{I}}-\frac{1}{4}\varepsilon]$
the initial data (\ref{eq:LeftIntialData}) and (\ref{eq:RightInitialData}),
respectively.

Furthermore, since $(r_{/},\Omega_{/}^{2},\bar{f}_{/})\equiv(r_{AdS/}^{(v_{0})},\big(\Omega_{AdS/}^{(v_{0})}\big)^{2},0)$
for $v\in[\frac{1}{2}\varepsilon,\varepsilon]$, the solution $(r,\Omega^{2},f)$
satisfies 
\begin{equation}
(r,\Omega^{2},f)|_{[0,u_{*}]\times[\frac{1}{2}\varepsilon,\varepsilon]}\equiv(r_{AdS}^{(v_{0})},(\Omega_{AdS}^{(v_{0})})^{2},0).
\end{equation}
Therefore, the solution $(r,\Omega^{2},f)$ constructed in this step
on $\mathcal{W}_{*}$ coincides on $\mathcal{D}_{0}^{\varepsilon}\cap\mathcal{W}_{*}$
with the solution $(r,\Omega^{2},f)$ constructed in the previous
step on $\mathcal{D}_{0}^{\varepsilon}$ (see Figure \ref{fig:Data_Compatibility_First});
as a result, by gluing the two solutions, we obtain a single smooth
solution $(r,\Omega^{2},f)$ of (\ref{eq:RequationFinal})\textendash (\ref{NullShellFinal})
on $\mathcal{D}_{0}^{\varepsilon}\cup\mathcal{W}_{*}$.

\item In view of the assumption that $\bar{f}_{/}$ has bounded support
in phase space and satisfies (\ref{eq:ZeroFSmallL}), we infer that
there exists constant $c>0$ such that, for all $v\in[0,v_{\mathcal{I}}-\frac{1}{4}\varepsilon]$.
\begin{equation}
\bar{f}(v;p,l)=0\text{ }\text{when }p\le c\text{ or }\frac{1}{p}\le c\text{ or }l\ge c^{-1}v_{\mathcal{I}}.\label{eq:InitialBoundfbar}
\end{equation}
The condition (\ref{eq:InitialBoundfbar}) and the smoothness of the
solution $(r,\Omega^{2},f)$ constructed in the previous step on $\mathcal{D}_{0}^{\varepsilon}\cup\mathcal{W}_{*}$
implies there exists a $u^{\prime}>0$ sufficiently small and a $C>0$
sufficiently large such that $f$ satisfies on $\{0\le u\le u^{\prime}\}$:
\begin{equation}
supp(f)\cap\{0\le u\le u^{\prime}\}\subset\Big\{\frac{p^{v}}{p^{u}}\le C\Big\},
\end{equation}
i.\,e.~that the phase space support of $f$ restricted on the physical
space domain $\{0\le u\le u^{\prime}\}\cap\mathcal{W}_{*}$ contains
null geodesics which project on the $(u,v)$-plane as timelike curves
$\gamma$ of slope satisfying 
\[
\frac{\dot{\gamma}^{v}}{\dot{\gamma}^{u}}\le C.
\]
 In particular, since $\bar{f}_{/}$ satisfies (\ref{eq:ZeroFNearBoundary})
(and hence $\bar{f}_{/}\equiv0$ for $v\ge v_{\mathcal{I}}-\varepsilon$),
every null geodesic $\gamma$ in the support of $f$ satisfies 
\[
\gamma\cap\{0\le u\le u^{\prime}\}\subset\{v\le v_{\mathcal{I}}-\varepsilon+Cu\},
\]
that is to say:
\begin{equation}
f\equiv0\text{ on }\mathcal{W}_{*}\cap\big([0,u^{\prime}]\times[v_{\mathcal{I}}-\varepsilon+Cu^{\prime},+\infty)\big).\label{eq:ZeroFAway}
\end{equation}
The constraint equations (\ref{eq:ConstraintVFinal})\textendash (\ref{eq:ConstraintUFinal})
then imply that, on $\mathcal{W}_{*}\cap\big([0,u^{\prime}]\times[v_{\mathcal{I}}-\varepsilon+Cu^{\prime},+\infty)\big)$,
$(r,\Omega^{2})$ is locally isometric to a member of the Schwarzschild\textendash AdS
family of metrics (this is a consequence of the extension of Birkhoff's
theorem to the case $\Lambda<0$; see \cite{Eiesland1925}). In particular,
setting 
\[
M\doteq\tilde{m}_{/}(v_{\mathcal{I}}-\frac{1}{4}\varepsilon)=\tilde{m}_{/}(v_{\mathcal{I}})
\]
(the last equality following from the assumption (\ref{eq:ZeroFNearBoundary})
on $\bar{f}_{/}$) and assuming that $u^{\prime}$ has been fixed
small enough so that 
\[
Cu^{\prime}<\frac{1}{2}\varepsilon,
\]
 the relations (\ref{eq:DefinitionHawkingMass}), (\ref{eq:ConstraintVFinal}),
(\ref{eq:ConstraintUFinal}) and (\ref{eq:ZeroFAway}) imply that,
on $\mathcal{W}_{*}\cap\big([0,u^{\prime}]\times[v_{\mathcal{I}}-\frac{1}{2}\varepsilon,+\infty)\big)=[0,u^{\prime}]\times[v_{\mathcal{I}}-\frac{1}{2}\varepsilon,v_{\mathcal{I}}-\frac{1}{4}\varepsilon]$:
\begin{equation}
\Omega^{2}=4\frac{\partial_{v}r(-\partial_{u}r)}{1-\frac{2M}{r}-\frac{1}{3}\Lambda r^{2}}\label{eq:OmegaForSchwarzschild}
\end{equation}
and
\begin{equation}
\partial_{u}\Big(\frac{\partial_{v}r}{1-\frac{2M}{r}-\frac{1}{3}\Lambda r^{2}}\Big)=\partial_{v}\Big(\frac{-\partial_{u}r}{1-\frac{2M}{r}-\frac{1}{3}\Lambda r^{2}}\Big)=0.\label{eq:AlmostConstantKappaKappaBar}
\end{equation}

\medskip{}

\noindent \emph{Remark.} By considering a possibly smaller value of
$\varepsilon$ (depending on $(r_{/},\Omega_{/}^{2},\bar{f}_{/})$),
we can arrange so that $\frac{2\tilde{m}_{/}(v_{\mathcal{I}})}{r_{/}(v)}<1$
for $v\ge v_{\mathcal{I}}-\frac{1}{2}\varepsilon$. In this case,
assuming that $u^{\prime}$ is even smaller, if necessary, we can
bound $1-\frac{2M}{r}-\frac{1}{3}\Lambda r^{2}>0$ on $[0,u^{\prime}]\times[v_{\mathcal{I}}-\frac{1}{2}\varepsilon,v_{\mathcal{I}}-\frac{1}{4}\varepsilon]$.
Hence, for the rest of the proof, dividing with $1-\frac{2M}{r}-\frac{1}{3}\Lambda r^{2}$
will not pose a concern. 

\medskip{}

Let us define the functions $K:[v_{\mathcal{I}}-\frac{1}{2}\varepsilon,v_{\mathcal{I}})\rightarrow(0,+\infty)$
and $\bar{K}:[0,u^{\prime}]\rightarrow(0,+\infty)$ by 
\[
K(v)\doteq\frac{\partial_{v}r_{/}}{1-\frac{2\tilde{m}_{/}}{r_{/}}-\frac{1}{3}\Lambda r_{/}^{2}}(v)
\]
and 
\[
\bar{K}(u)\doteq\frac{-\partial_{u}r}{1-\frac{2\tilde{m}}{r}-\frac{1}{3}\Lambda r^{2}}(u,v_{\mathcal{I}}-\frac{1}{2}\varepsilon).
\]
Since the functions $\frac{1}{r_{/}}$, $\frac{\Omega_{/}^{2}}{r_{/}^{2}}$
and $\bar{f}_{/}(\cdot;\Omega_{/}^{-2}p,l)$ extend smoothly on $v=v_{\mathcal{I}}$
for any $p,l\ge0$ (as a consequence of the conditions imposed by
Definition \ref{def:AsymptoticallyAdSData}), the function $K$ can
be extended on the whole of the interval $[v_{\mathcal{I}}-\frac{1}{2}\varepsilon,v_{\mathcal{I}}+\delta]$
for some $\delta>0$ such that 
\[
K\in C^{\infty}\big([v_{\mathcal{I}}-\frac{1}{2}\varepsilon,v_{\mathcal{I}}+\delta]\big).
\]
 From now on, we will assume that we have fixed such a smooth extension
of $K$. Note also that, in view of the fact that $\partial_{v}\tilde{m}_{/}=0$
on $[v_{\mathcal{I}}-\frac{1}{2}\varepsilon,v_{\mathcal{I}})$ (and
hence $\tilde{m}_{/}=M$ on that interval), the normalised gauge condition
(\ref{eq:GaugeConditionNormalisedData}) implies that 
\begin{equation}
\frac{\bar{K}(0)}{K(v)}=\frac{1-\frac{2M}{r_{/}(v)}-\frac{1}{3}\Lambda r_{/}^{2}(v)}{1-\frac{1}{3}\Lambda r_{/}^{2}(v)}\text{ for }v\in[v_{\mathcal{I}}-\frac{1}{2}\varepsilon,v_{\mathcal{I}}).\label{eq:InfinitySchwarzschildNorm}
\end{equation}

The relation (\ref{eq:AlmostConstantKappaKappaBar}) implies that
on $[0,u^{\prime}]\times[v_{\mathcal{I}}-\frac{1}{2}\varepsilon,v_{\mathcal{I}}-\frac{1}{4}\varepsilon]$,
we have 
\begin{equation}
\frac{\partial_{v}r}{1-\frac{2M}{r}-\frac{1}{3}\Lambda r^{2}}(u,v)=K(v)\text{ and }\frac{-\partial_{u}r}{1-\frac{2M}{r}-\frac{1}{3}\Lambda r^{2}}(u,v)=\bar{K}(u)\text{ for all }(u,v)\in[0,u^{\prime}]\times[v_{\mathcal{I}}-\frac{1}{2}\varepsilon,v_{\mathcal{I}}-\frac{1}{4}\varepsilon],\label{eq:KappaKappaBarSchw}
\end{equation}
in which case (\ref{eq:OmegaForSchwarzschild}) can be reexpressed
as 
\begin{equation}
\frac{\Omega^{2}}{1-\frac{2M}{r}-\frac{1}{3}\Lambda r^{2}}(u,v)=4K(v)\bar{K}(u)\text{ for all }(u,v)\in[0,u^{\prime}]\times[v_{\mathcal{I}}-\frac{1}{2}\varepsilon,v_{\mathcal{I}}-\frac{1}{4}\varepsilon].\label{eq:OmegaSchw}
\end{equation}

Let us fix a smooth function $v':\mathbb{R}\rightarrow\mathbb{R}$
with the following properties:

\begin{enumerate}

\item $v'(v)=v$ for $v\le v_{\mathcal{I}}-\frac{1}{2}\varepsilon$,

\item $\frac{dv'}{dv}>0$ for $v\in[0,v_{\mathcal{I}}]$,

\item $v'(v_{\mathcal{I}})=v_{\mathcal{I}}$ and

\item There exists some $\delta^{\prime}\in(0,\delta)$ such that
$\frac{dv'}{dv}(v)$ satisfies 
\begin{equation}
\frac{dv'}{dv}(v)=\frac{K(v)}{\bar{K}\big(v'(v)-v_{\mathcal{I}}\big)}\text{ for all }v\in[v_{\mathcal{I}},v_{\mathcal{I}}+\delta^{\prime}]\label{eq:dv'dv}
\end{equation}
(recall that we have extended $K(v)$ smoothly for $v\in[v_{\mathcal{I}},v_{\mathcal{I}}+\delta]$).

\item The function $v'(v)$ satisfies the following $C^{2}$ bounds
on $[0,v_{\mathcal{I}}]$: 
\begin{align}
1-\eta_{0} & \le\frac{dv'}{dv}(v)\le1+\eta_{0}\text{ for }v\in[0,v_{\mathcal{I}}]\text{ and }\label{eq:UpperBoundsV'}\\
 & \max_{v\in[0,v_{\mathcal{I}}]}\Big|\frac{d^{2}v'}{(dv)^{2}}(v)\Big|\le2\Bigg|\frac{d}{dv}\frac{K(v)}{\bar{K}\big(v'(v)-v_{\mathcal{I}}\big)}\Big|_{v=v_{\mathcal{I}}}\Bigg|+\frac{\eta_{0}}{v_{\mathcal{I}}}.\nonumber 
\end{align}

\end{enumerate}

\begin{figure}[h] 
\centering 
\scriptsize
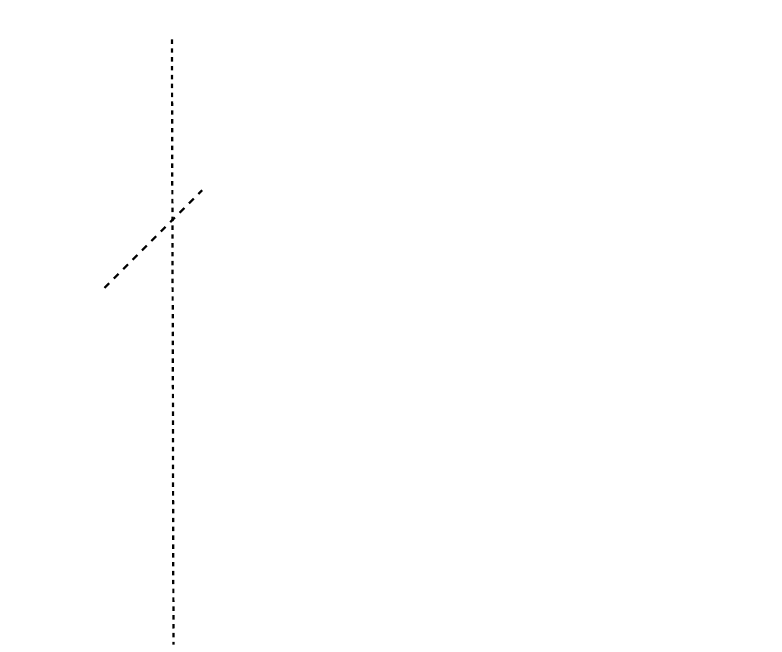 
\caption{Schematic depiction of the domains $\mathcal{D}_{0}^{\prime \epsilon}$, $\mathcal{W}_{*}^{\prime}$ and $\mathcal{V}$ in the $(u,v^{\prime})$-plane, i.\,e.~after applying the transformation $\mathcal{T}: (u,v)\rightarrow \mathcal{T}(u,v)=(u,v^{\prime}(v))$ in the $(u,v)$ plane. The assumption that $\frac{dv'}{dv}(v)=\frac{K(v)}{\bar{K}\big(v'(v)-v_{\mathcal{I}}\big)}$ for $v\in[v_{\mathcal{I}},v_{\mathcal{I}}+\delta^{\prime}]$ guarantees that, in the $(u,v')$ coordinates, the conformal infinity of the vacuum Schwarzschild--AdS region $\mathcal{V}$ lies on the vertical line $u=v'-v_{\mathcal{I}}$. \label{fig:Data_Compatibility}}
\end{figure}

\medskip{}

\noindent \emph{Remark.} It is trivial to verify that a function $v':\mathbb{R}\rightarrow\mathbb{R}$
satisfying Conditions 1-4 exists. The fact that $v'$ can be chosen
in the interval $[v_{\mathcal{I}}-\frac{1}{2}\varepsilon,v_{\mathcal{I}}]$
so that it satisfies, in addition, the bounds (\ref{eq:UpperBoundsV'})
follows from the observation that Conditions 1\textendash 4 and the
relation (\ref{eq:InfinitySchwarzschildNorm}) imply that
\[
\frac{dv'}{dv}(v)=1\text{ for }v\le v_{\mathcal{I}}-\frac{1}{2}\varepsilon\text{ and }\frac{dv'}{dv}(v_{\mathcal{I}})=\frac{K(v_{\mathcal{I}})}{\bar{K}(0)}=1
\]
and 
\[
\frac{d^{2}v'}{(dv)^{2}}(v)=0\text{ for }v\le v_{\mathcal{I}}-\frac{1}{2}\varepsilon\text{ and }\frac{d^{2}v'}{(dv)^{2}}(v_{\mathcal{I}})=\frac{d}{dv}\frac{K(v)}{\bar{K}\big(v'(v)-v_{\mathcal{I}}\big)}\Big|_{v=v_{\mathcal{I}}}.
\]

\medskip{}

Let us consider the global change of coordinates, $\mathcal{T}:\mathbb{R}^{2}\rightarrow\mathbb{R}^{2}$,
$(u,v)\rightarrow(u,v')$ (note that $\mathcal{T}$ is equal to the
identity when restricted on $\{v\le v_{\mathcal{I}}-\frac{1}{2}\varepsilon\}$)
and let us define
\[
\mathcal{D}_{0}^{\prime\varepsilon}\doteq\mathcal{T}(\mathcal{D}_{0}^{\varepsilon})=\mathcal{D}_{0}^{\varepsilon},
\]
\[
\mathcal{W}_{*}^{\prime}\doteq\mathcal{T}(\mathcal{W}_{*})=[0,u_{*}]\times[\frac{1}{2}\varepsilon,v'(v_{\mathcal{I}}-\frac{1}{4}\varepsilon)]
\]
(see Figure \ref{fig:Data_Compatibility}). Under this change of coordinates,
the intial data set $(r_{/},\Omega_{/}^{2},\bar{f}_{/};v_{\mathcal{I}})$
is mapped to a new initial data set $(r_{/}^{\prime},(\Omega_{/}^{\prime})^{2},\bar{f}_{/}^{\prime};v_{\mathcal{I}})$
determined by (\ref{eq:GeneralGaugeTransformationInitialData}), while
the solution $(r,\Omega^{2},f)$ on $\mathcal{D}_{0}^{\varepsilon}\cup\mathcal{W}_{*}$
is mapped to a new solution $(r',(\Omega^{\prime})^{2},f^{\prime})$
on $\mathcal{D}_{0}^{\prime\varepsilon}\cup\mathcal{W}_{*}^{\prime}$
under the transformation
\begin{align}
r^{\prime}(u,v'(v)) & \doteq r(u,v),\label{eq:GeneralGaugeTransformationInitialData-1}\\
(\Omega^{\prime})^{2}(u,v'(v)) & \doteq\Big(\frac{dv'}{dv}(v)\Big)^{-1}\Omega^{2}(u,v),\nonumber \\
f^{\prime}(u,v'(v);p^{u},\frac{dv'}{dv}p^{v},l) & \doteq f(u,v;p^{u},p^{v},l).\nonumber 
\end{align}
 Since $v'(v)=v$ for $v\le v_{\mathcal{I}}-\frac{1}{2}\varepsilon$,
we have 
\[
(r',(\Omega^{\prime})^{2},f^{\prime})\equiv(r,\Omega^{2},f)\text{ on }(\mathcal{D}_{0}^{\prime\varepsilon}\cup\mathcal{W}_{*}^{\prime})\cap\{v\le v_{\mathcal{I}}-\frac{1}{2}\varepsilon\}.
\]

On the domain 
\[
[0,u^{\prime}]\times[v_{\mathcal{I}}-\frac{1}{2}\varepsilon,v'(v_{\mathcal{I}}-\frac{1}{4}\varepsilon)]=\mathcal{T}\big([0,u^{\prime}]\times[v_{\mathcal{I}}-\frac{1}{2}\varepsilon,v_{\mathcal{I}}-\frac{1}{4}\varepsilon]\big),
\]
the relations (\ref{eq:KappaKappaBarSchw}) and (\ref{eq:OmegaSchw})
become: 
\begin{equation}
\frac{\partial_{v'}r'}{1-\frac{2M}{r'}-\frac{1}{3}\Lambda(r')^{2}}(u,v')=\Big(\frac{dv}{dv'}(v')\Big)K\big(v(v')\big)\text{, }\frac{-\partial_{u}r'}{1-\frac{2M}{r'}-\frac{1}{3}\Lambda(r')^{2}}(u,v')=\bar{K}(u)\label{eq:KappaKappaBarSchw-1}
\end{equation}
and 
\begin{equation}
\frac{(\Omega^{\prime})^{2}}{1-\frac{2M}{r'}-\frac{1}{3}\Lambda(r')^{2}}(u,v')=4\Big(\frac{dv}{dv'}(v')\Big)K\big(v(v')\big)\bar{K}(u).\label{eq:OmegaSchw-1}
\end{equation}
In view of the fact that $v'$ was chosen so that (\ref{eq:dv'dv})
holds, we infer that, by choosing some $T\in(0,\min\{u^{\prime},\delta^{\prime}\})$
sufficiently small, the pair $(r^{\prime},(\Omega^{\prime})^{2})$
can be extended by the formulas (\ref{eq:KappaKappaBarSchw-1}) and
(\ref{eq:OmegaSchw-1}) on the whole of the domain
\[
\mathcal{V}\doteq\{0\le u\le T\}\cap\{v_{\mathcal{I}}-\frac{1}{2}\varepsilon<v<u+v_{\mathcal{I}}\},
\]
such that $(r^{\prime},(\Omega^{\prime})^{2},0)$ is a smooth asymptotically
AdS solution of (\ref{eq:RequationFinal})\textendash (\ref{NullShellFinal})
with smooth conformal infinity $\{u=v'-v_{\mathcal{I}}\}$, inducing
on $\mathcal{V}\cap\{u=0\}$ the initial data set $(r',(\Omega^{\prime})^{2},f^{\prime})|_{[v_{\mathcal{I}}-\frac{1}{2}\varepsilon,v_{\mathcal{I}})}=(r',(\Omega^{\prime})^{2},0)$ 

\medskip{}

\noindent \emph{Remark.} The functions $r^{\prime},(\Omega^{\prime})^{2}$
are simply the metric components of the Schwarzscild\textendash AdS
metric with mass $M$ in the $(u,v')$ coordinates, with (\ref{eq:dv'dv})
ensuring that $\{r=\infty\}$ coincides with the straight line $\{u=v'-v_{\mathcal{I}}\}$. 

\end{enumerate}

By gluing the extension $(r^{\prime},(\Omega^{\prime})^{2},0)$ on
$\mathcal{V}$ with $(r',(\Omega^{\prime})^{2},f^{\prime})|_{\mathcal{D}_{0}^{\prime\varepsilon}\cup\mathcal{W}_{*}^{\prime}}$
along $\big(\mathcal{D}_{0}^{\prime\varepsilon}\cup\mathcal{W}_{*}^{\prime}\big)\cap\mathcal{V}=[0,T]\times[v_{\mathcal{I}}-\frac{1}{2}\varepsilon,v'(v_{\mathcal{I}}-\frac{1}{4}\varepsilon)]$
(see Figure \ref{fig:Data_Compatibility}), we therefore obtain a
smooth solution $(r',(\Omega^{\prime})^{2},f^{\prime})$ of the system
(\ref{eq:RequationFinal})\textendash (\ref{NullShellFinal}) on 
\[
\mathcal{D}_{0}^{\prime\varepsilon}\cup\mathcal{W}_{*}^{\prime}\cup\mathcal{V}\supset\mathcal{U}_{T;v_{\mathcal{I}}}
\]
 such that $(r',(\Omega^{\prime})^{2},f^{\prime})$ has smooth axis
$\{u=v'\}$ and smooth conformal infinity $\{u=v'-v_{\mathcal{I}}\}$
and induces on $\{u=0\}$ the initial data set $(r_{/}^{\prime},(\Omega_{/}^{\prime})^{2},\bar{f}_{/}^{\prime};v_{\mathcal{I}})$.
Therefore, we conclude that $(r_{/}^{\prime},(\Omega_{/}^{\prime})^{2},\bar{f}_{/}^{\prime};v_{\mathcal{I}})$
is smoothly compatible, in accordance with Definition \ref{def:CompatibilityCondition}. 

Furthermore, using the bounds (\ref{eq:UpperBoundsV'}) for $\frac{d^{2}v'}{dv^{2}}$
and the relations (\ref{eq:InfinitySchwarzschildNorm}), 
\begin{equation}
\frac{d\log(\bar{K})}{du}(0)=\partial_{u}\log\Big(\frac{-\partial_{u}r}{1-\frac{2M}{r}-\frac{1}{3}\Lambda r^{2}}\Big)(0,v_{\mathcal{I}})=-4\pi\int_{0}^{v_{\mathcal{I}}}\partial_{u}\Big(\frac{rT_{vv}}{\partial_{v}r}\Big)(0,v)\,dv\label{eq:DerivativeKappaBar}
\end{equation}
(which is obtained by integrating (\ref{eq:DerivativeInVDirectionKappaBar})
in $v$ for $u=0$), 
\[
\partial_{u}(rT_{vv})=-\Omega^{2}\partial_{v}(\Omega^{-2}rT_{uv})-\partial_{u}rT_{vv}-3\partial_{v}rT_{uv}
\]
(which is obtained from the conservation of energy relation (\ref{eq:ConservationEnergyMomentum})),
we readily calculate:
\begin{align}
\max\Big|\frac{d^{2}v'}{(dv)^{2}}(v)\Big| & \le2\Bigg|\frac{d}{dv}\frac{K(v)}{\bar{K}\big(v'(v)-v_{\mathcal{I}}\big)}\Big|_{v=v_{\mathcal{I}}}\Bigg|+\frac{\eta_{0}}{v_{\mathcal{I}}}=\label{eq:FirstBoundD^2V}\\
= & 2\Bigg|\frac{\frac{dK}{dv}(v_{\mathcal{I}})}{\bar{K}(0)}-\frac{K(v_{\mathcal{I}})}{\bar{K}(0)}\frac{\frac{d\bar{K}}{du}(0)}{\bar{K}(0)}\frac{dv'}{dv}(v_{\mathcal{I}})\Bigg|+\frac{\eta_{0}}{v_{\mathcal{I}}}=\nonumber \\
= & 2\Bigg|\frac{d}{du}\log(\bar{K})(0)\Bigg|+\frac{\eta_{0}}{v_{\mathcal{I}}}=\nonumber \\
= & 8\pi\Big|\int_{0}^{v_{\mathcal{I}}}\partial_{u}\Big(\frac{rT_{vv}}{\partial_{v}r}\Big)(0,v)\,dv\Big|+\frac{\eta_{0}}{v_{\mathcal{I}}}=\nonumber \\
= & 8\pi\Big|\int_{0}^{v_{\mathcal{I}}}\Big(-\frac{1}{\Omega^{-2}\partial_{v}r}\partial_{v}(\Omega^{-2}rT_{uv})-\frac{\partial_{u}r}{\partial_{v}r}T_{vv}-3T_{uv}-\frac{\partial_{u}\partial_{v}r}{\partial_{v}r}\cdot\frac{rT_{vv}}{\partial_{v}r}\Big)(0,v)\,dv\Big|+\frac{\eta_{0}}{v_{\mathcal{I}}}.\nonumber 
\end{align}
Integrating by parts in $v$ in the first term in the right hand side
of (\ref{eq:FirstBoundD^2V}) and using the fact that $T_{uv}(0,v)\equiv0$
for $v\in[0,\varepsilon]\cup[\varepsilon,v_{\mathcal{I}})$, the constraint
equation (\ref{eq:ConstraintVFinal}), the relation (\ref{eq:EquationRForProof})
for $\partial_{u}\partial_{v}r$, the gauge condition (\ref{eq:GaugeConditionNormalisedData})
and the relation (\ref{eq:DefinitionHawkingMass}) for $\Omega^{2}$,
we obtain from (\ref{eq:FirstBoundD^2V}) that: 
\begin{align}
\max\Big|\frac{d^{2}v'}{(dv)^{2}}(v)\Big| & \le\Big|\int_{0}^{v_{\mathcal{I}}}\Big(-\frac{\partial_{v}(\Omega^{-2}\partial_{v}r)}{(\Omega^{-2}\partial_{v}r)^{2}}\Omega^{-2}rT_{uv}-\frac{\partial_{u}r}{\partial_{v}r}T_{vv}-3T_{uv}-\frac{\partial_{u}\partial_{v}r}{\partial_{v}r}\cdot\frac{rT_{vv}}{\partial_{v}r}\Big)(0,v)\,dv\Big|+\frac{\eta_{0}}{v_{\mathcal{I}}}=\label{eq:SecondBoundD^2v}\\
 & =\Big|\int_{0}^{v_{\mathcal{I}}}\Big(4\pi\frac{r^{2}}{(\partial_{v}r)^{2}}T_{vv}T_{uv}+\frac{1-\frac{2\tilde{m}}{r}-\frac{1}{3}\Lambda r^{2}}{1-\frac{1}{3}\Lambda r^{2}}T_{vv}-3T_{uv}+\nonumber \\
 & \hphantom{=\Big|\int_{0}^{v_{\mathcal{I}}}\Big(4\pi}+\Big(\frac{2\tilde{m}-\frac{2}{3}\Lambda r^{3}}{r^{2}}\frac{1}{1-\frac{1}{3}\Lambda r^{2}}-4\pi r\frac{T_{uv}}{(\partial_{v}r)^{2}}\Big)rT_{vv}\Big)(0,v)\,dv\Big|+\frac{\eta_{0}}{v_{\mathcal{I}}}=\nonumber \\
 & =\Big|\int_{0}^{v_{\mathcal{I}}}\Big(\frac{1-\Lambda r_{/}^{2}}{1-\frac{1}{3}\Lambda r_{/}^{2}}(T_{/})_{vv}-3(T_{/})_{uv}\Big)(v)\,dv\Big|+\frac{\eta_{0}}{v_{\mathcal{I}}}.\nonumber 
\end{align}
The bounds (\ref{eq:FirstDerivativeTransformation}) and (\ref{eq:SecondDerivativeTransformation})
follow immediately from (\ref{eq:UpperBoundsV'}) (for $\frac{dv'}{dv}$)
and (\ref{eq:SecondBoundD^2v}).
\end{proof}

\section{\label{sec:Well-posedness-of-the}Well-posedness of the smooth characteristic
initial-boundary value problem and properties of the maximal development}

In this section, we will introduce the notion of a \emph{development}
of a smoothly compatible, asymptotically AdS initial data set for
(\ref{eq:RequationFinal})\textendash (\ref{NullShellFinal}) with
\emph{reflecting} boundary conditions on $\mathcal{I}$. We will then
present some fundamental well-posedness results related to the characteristic
initial-boundary value problem for (\ref{eq:RequationFinal})\textendash (\ref{NullShellFinal})
with reflecting boundary conditions on $\mathcal{I}$, and show the
existence and uniqueness of a \emph{maximal }smooth development for
any smoothly compatible, asymptotically AdS initial data set for (\ref{eq:RequationFinal})\textendash (\ref{NullShellFinal}). 

\subsection{\label{subsec:Developments} Developments of characteristic initial
data sets}

We will fix a class of domains in the $(u,v)$-plane which will naturally
arise as domains of definition for solutions $(r,\Omega^{2},f)$ to
the characteristic initial-boundary value problem for (\ref{eq:RequationFinal})\textendash (\ref{NullShellFinal});
this class of domains has been also considered in \cite{MoschidisMaximalDevelopment,MoschidisNullDust},
in the context of the study of the Einstein\textendash null dust system.

\begin{figure}[h] 
\centering 
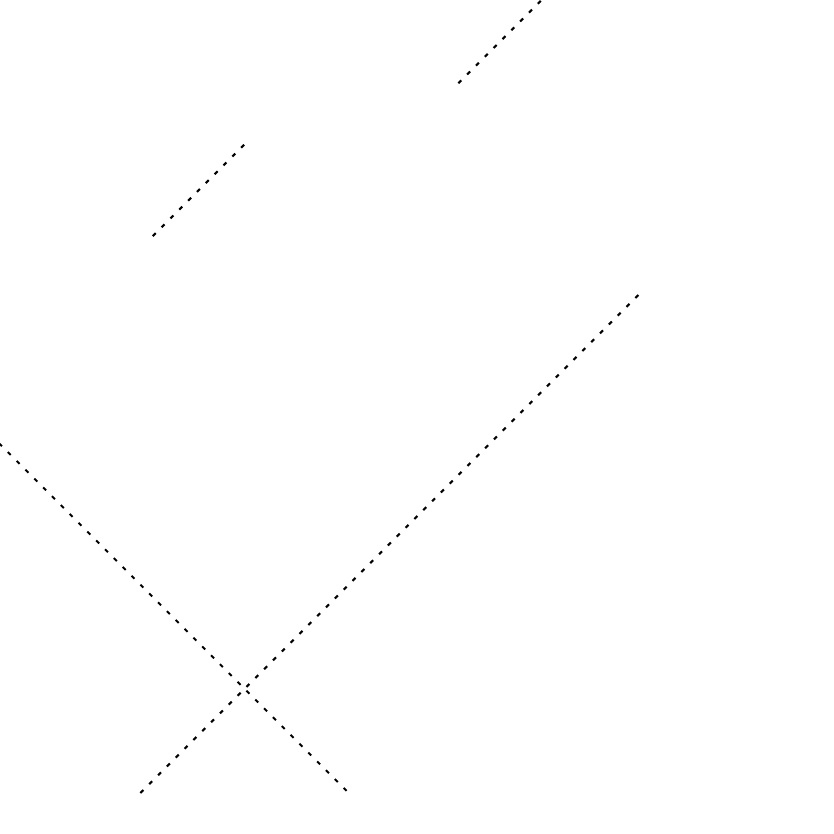 
\caption{Depicted above is a typical domain $\mathcal{U}\in\mathscr{U}_{v_{\mathcal{I}}}$. In the case when the boundary set $\zeta$ is empty, it is necessary that both $\gamma_{\mathcal{Z}}$ and $\mathcal{I}$ are unbounded (i.\,e.~$u_{\gamma_{\mathcal{Z}}}=u_{\mathcal{I}}=+\infty$).}
\end{figure}

\begin{defn}\label{def:DevelopmentSets}

For any $v_{\mathcal{I}}>0$, let $\mathscr{U}_{v_{\mathcal{I}}}$
be the set of all connected open domains $\mathcal{U}$ of the $(u,v)$-plane
with piecewise Lipschitz boundary $\partial\mathcal{U}$, having the
property that $\partial\mathcal{U}$ can be expressed as
\begin{equation}
\partial\mathcal{U}=\gamma_{\mathcal{Z}}\cup\mathcal{I}\cup\mathcal{S}_{v_{\mathcal{I}}}\cup clos(\zeta),\label{eq:BoundaryOfU}
\end{equation}
where, for some $u_{\gamma_{\mathcal{Z}}},u_{\mathcal{I}}\in(0,+\infty]$,
\begin{equation}
\gamma_{\mathcal{Z}}=\{u=v\}\cap\{0\le u<u_{\gamma_{\mathcal{Z}}}\},\label{eq:AxisForm}
\end{equation}
\begin{equation}
\mathcal{I}=\{u=v-v_{\mathcal{I}}\}\cap\{0\le u<u_{\mathcal{I}}\},\label{eq:InfinityForm}
\end{equation}
\begin{equation}
\mathcal{S}_{v_{\mathcal{I}}}=\{0\}\times[0,v_{\mathcal{I}}]
\end{equation}
 and the Lipschitz curve $\zeta\subset\mathbb{R}^{2}$ is achronal
with respect to the reference Lorentzian metric 
\begin{equation}
g_{ref}\doteq-dudv\label{eq:ComparisonUVMetric}
\end{equation}
on the $(u,v)$-plane (the closure $clos(\zeta)$ of $\zeta$ in (\ref{eq:BoundaryOfU})
is considered with respect to the standard topology of $\mathbb{R}^{2}$)
. In particular, $\zeta$ is allowed to be empty. 

\end{defn}
\begin{rem*}
It follows readily from Definition \ref{def:DevelopmentSets} that
any $\mathcal{U}\in\mathscr{U}_{v_{\mathcal{I}}}$ is necessarily
contained in the future domain of dependence of $\mathcal{S}_{v_{\mathcal{I}}}\cup\text{\textgreek{g}}_{\mathcal{Z}}\cup\mathcal{I}$
(with respect to the comparison metric (\ref{eq:ComparisonUVMetric})).
In the case when $\zeta=\emptyset$ in (\ref{eq:BoundaryOfU}), it
is necessary that both $\text{\textgreek{g}}_{\mathcal{Z}}$ and $\mathcal{I}$
extend all the way to $u+v=+\infty$. 
\end{rem*}
A development of an asymptotically AdS initial data set for (\ref{eq:RequationFinal})\textendash (\ref{NullShellFinal})
with reflecting boundary conditions on $\mathcal{I}$ can be naturally
defined as follows:
\begin{defn}
\label{def:Development} For any $v_{\mathcal{I}}>0$, let $(r_{/},\Omega_{/}^{2},\bar{f}_{/};v_{\mathcal{I}})$
be a smoothly compatible, asymptotically AdS initial data set for
the system (\ref{eq:RequationFinal})\textendash (\ref{NullShellFinal}),
according to Definition \ref{def:CompatibilityCondition}. A \underline{future development}
of $(r_{/},\Omega_{/}^{2},\bar{f}_{/};v_{\mathcal{I}})$ for (\ref{eq:RequationFinal})\textendash (\ref{NullShellFinal})
with reflecting boundary conditions on $\mathcal{I}$ consists of
an open set $\mathcal{U}\in\mathscr{U}_{v_{\mathcal{I}}}$ (see Definition
\ref{def:DevelopmentSets}) and a smooth solution $(r,\Omega^{2},f)$
of (\ref{eq:RequationFinal})\textendash (\ref{NullShellFinal}) on
$\mathcal{U}$ satisfying the following conditions:

\begin{enumerate}

\item Using the notations of Definition \ref{def:DevelopmentSets},
$(\mathcal{U};r,\Omega^{2},f)$ has smooth axis $\gamma_{\mathcal{Z}}$
and smooth conformal infinity $\mathcal{I}$, in accordance with Definitions
\ref{def:SmoothnessAxis}\textendash \ref{def:SmoothnessConformalInfinity}.

\item The solution $(r,\Omega^{2},f)$ coincides with $(r_{/},\Omega_{/}^{2},\bar{f}_{/};v_{\mathcal{I}})$
at $u=0$, i.\,e.
\begin{equation}
(r,\Omega^{2})(0,v)=(r_{/},\Omega_{/}^{2})(v)\label{eq:InitialROmegaForExistence}
\end{equation}
 and 
\begin{equation}
f(0,v;p^{u},p^{v},l)=\bar{f}_{/}(v;p^{u},l)\cdot\delta\Big(\Omega_{/}^{2}(v)p^{u}p^{v}-\frac{l^{2}}{r_{/}(v)}\Big).\label{eq:InitialFForExistence}
\end{equation}

\item The Vlasov field $f$ satisfies the reflecting boundary condition
(\ref{eq:ReflectingCondition}) on $\mathcal{I}$.

\end{enumerate}

If $\mathscr{D}=(\mathcal{U};r,\Omega^{2},f)$ and $\mathscr{D}^{\prime}=(\mathcal{U}^{\prime};r^{\prime},(\Omega^{\prime})^{2},f^{\prime})$
are two future developments of the same initial data $(r_{/},\Omega_{/}^{2},\bar{f}_{/};v_{\mathcal{I}})$,
we will say that $\mathscr{D}^{\prime}$ is an extension of $\mathscr{D}$,
writing $\mathscr{D}\subseteq\mathscr{D}^{\prime}$, if $\mathcal{U}\subseteq\mathcal{U}^{\prime}$
and the restriction of $(r^{\prime},(\Omega^{\prime})^{2},f^{\prime})$
on $\mathcal{U}$ coincides with $(r,\Omega^{2},f)$.
\end{defn}
In Section \ref{sec:Cauchy_Stability_Low_Regularity}, we will need
to perform gauge transformations on developments $(\mathcal{U};r,\Omega^{2},f)$
of smoothly compatible initial data sets $(r_{/},\Omega_{/}^{2},\bar{f}_{/};v_{\mathcal{I}})$
that normalise the initial data according to the gauge condition (\ref{def:GaugeConditionNormalise}).
The following lemma will be useful for this procedure:
\begin{lem}
\label{lem:TransformationForNormalisationDevelopment} Let $(r_{/},\Omega_{/}^{2},\bar{f}_{/};v_{\mathcal{I}})$
be a smoothly compatible, asymptotically AdS initial data set for
the system (\ref{eq:RequationFinal})\textendash (\ref{NullShellFinal})
of bounded support in phase space and let $\mathscr{D}=(\mathcal{U};r,\Omega^{2},f)$
be a development of $(r_{/},\Omega_{/}^{2},\bar{f}_{/};v_{\mathcal{I}})$,
as in Definition \ref{def:Development}. Let also $(r_{/},\Omega_{/}^{2},\bar{f}_{/};v_{\mathcal{I}})\rightarrow(r_{/}^{\prime},(\Omega_{/}^{\prime})^{2},\bar{f}_{/}^{\prime};v_{\mathcal{I}})$
be the gauge normalising transformation provided by Lemma \ref{lem:SmoothToNorm},
with associated coordinate transformation $\mathcal{T}_{/}:[0,v_{\mathcal{I}}]\rightarrow[0,v_{\mathcal{I}}]$,
$v\rightarrow\bar{V}(v)$.

There exists a unique spacetime gauge transformation $\mathcal{T}:\mathscr{D}\rightarrow\mathbb{R}^{2}$,
$(u,v)\rightarrow(u',v')=(U(u),V(v))$, $(\mathcal{U};r,\Omega^{2},f)\rightarrow(\mathcal{T}(\mathcal{U});r^{\prime},(\Omega^{\prime})^{2},f^{\prime})$
(see (\ref{eq:GeneralGaugeTransformationSpacetime})), such that

\begin{itemize}

\item The lines $\{u=v\}$ and $\{u=v-v_{\mathcal{I}}\}$ remain
invariant under $\mathcal{T}$, i.\,e.
\begin{align}
U(v) & =V(v)\text{ for all }v\in[0,\sup_{\gamma_{\mathcal{Z}}}v),\label{eq:FixingTheLines}\\
U(v-v_{\mathcal{I}}) & =V(v)-v_{\mathcal{I}}\text{ for all }v\in[0,\sup_{\mathcal{I}}v).\nonumber 
\end{align}

\item $\mathcal{T}$ is an extension of the initial data transformation
$\mathcal{T}_{/}$, i.\,e.
\begin{equation}
(U(0),V(v))=(0,\bar{V}(v))\text{ for all \ensuremath{v\in[0,v_{\mathcal{I}}).}}\label{eq:FixingTheInitialData}
\end{equation}

\end{itemize}

Furthermore, $\mathcal{T}$ is piecewise smooth on $\mathscr{D}$
and smooth on $\mathscr{D}\backslash\cup_{k=1}^{\infty}\big(\{u=kv_{\mathcal{I}}\}\cup\{v=kv_{\mathcal{I}}\}\big)$
and satisfies the Lipschitz estimate
\begin{equation}
\Big|\log\Big(\frac{dU}{du}\Big)\Big|+\Big|\log\Big(\frac{dV}{dv}\Big)\Big|\le C_{v_{\mathcal{I}}}\Big(\sup_{v\in[0,v_{\mathcal{I}})}\Big|\log\Big(\frac{\partial_{v}r_{/}}{1-\frac{1}{3}\Lambda r_{/}^{2}}\Big)-\log\big(\partial_{v}r_{/}(0)\big)\Big|+4\pi\int_{0}^{v_{\mathcal{I}}}\frac{r_{/}(T_{/})_{vv}}{(\partial_{v}r_{/})^{2}}(\bar{v})\,(\partial_{v}r_{/})d\bar{v}\Big)\label{eq:C1EstimateSpacetimeTransformation}
\end{equation}
 for some constant $C_{v_{\mathcal{I}}}>0$ depending only on $v_{\mathcal{I}}$.
\end{lem}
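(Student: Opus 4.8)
The plan is to build $\mathcal{T}$ by hand, exploiting the fact that a double-null gauge transformation is nothing but a pair of one-variable diffeomorphisms, and then to unfold the two ``line-preserving'' requirements in \ref{eq:FixingTheLines} into a recursion. The starting point is an elementary observation: if $\mathcal{T}:(u,v)\mapsto(U(u),V(v))$ preserves the axis $\gamma_{\mathcal{Z}}=\{u=v\}$, then, reading the condition along the diagonal, $U(x)=V(x)$ for every $x$ in the relevant interval, i.\,e.~$U\equiv V$; and, granted $U\equiv V$, preserving $\mathcal{I}=\{u=v-v_{\mathcal{I}}\}$ is \emph{equivalent} to the functional equation $V(v)-V(v-v_{\mathcal{I}})=v_{\mathcal{I}}$. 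Hence $\mathcal{T}$ is completely determined by the single function $V|_{[0,v_{\mathcal{I}}]}$, which \ref{eq:FixingTheInitialData} forces to equal $\bar{V}$. I would therefore \emph{define} $V$ on $[0,\infty)$ by $V\doteq\bar{V}$ on $[0,v_{\mathcal{I}}]$ and $V(v)\doteq\bar{V}(v-kv_{\mathcal{I}})+kv_{\mathcal{I}}$ for $v\in[kv_{\mathcal{I}},(k+1)v_{\mathcal{I}}]$, $k\in\mathbb{N}$, set $U\doteq V$, and then check that this is the unique $\mathcal{T}$ with the required properties.

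First I would verify that $V$ is well defined, continuous, strictly increasing and piecewise $C^{\infty}$: the two defining formulas agree at each junction $v=kv_{\mathcal{I}}$ because $\bar{V}(v_{\mathcal{I}})=v_{\mathcal{I}}=\bar{V}(0)+v_{\mathcal{I}}$ (Lemma \ref{lem:SmoothToNorm}), strict monotonicity follows blockwise from $\bar{V}'>0$, and on each open block one has $\frac{dV}{dv}(v)=\bar{V}'(v-kv_{\mathcal{I}})>0$, smooth since $\bar{V}\in C^{\infty}([0,v_{\mathcal{I}}])$. The three asserted properties are then immediate: the first line of \ref{eq:FixingTheLines} holds because $U\equiv V$, the second because the recursion has the telescoping identity $V(v)-V(v-v_{\mathcal{I}})=v_{\mathcal{I}}$ built in, and \ref{eq:FixingTheInitialData} because $V(0)=\bar{V}(0)=0$ and $V=\bar{V}$ on $[0,v_{\mathcal{I}})$. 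For uniqueness I would run the observation of the first paragraph backwards: any admissible $\mathcal{T}$ must have $U\equiv V$ on $[0,\sup_{\gamma_{\mathcal{Z}}}v)$ and $V=\bar{V}$ on $[0,v_{\mathcal{I}})$, after which the second line of \ref{eq:FixingTheLines} propagates these relations one $v_{\mathcal{I}}$-block at a time over the whole projection of $\mathcal{U}$. The regularity claim also falls out of the block structure: $\mathcal{T}$ is smooth on $\mathscr{D}\setminus\bigcup_{k\ge1}(\{u=kv_{\mathcal{I}}\}\cup\{v=kv_{\mathcal{I}}\})$, and across the exceptional lines it is merely Lipschitz, with a genuine corner in general, precisely because the normalised gauge is not smoothly compatible once $\tilde{m}_{/}\not\equiv0$, so that $\bar{V}'(0)\neq\bar{V}'(v_{\mathcal{I}})$; here I would also note that $\bar{V}'(v_{\mathcal{I}})$ is finite and positive — using \ref{eq:PositiveDvRInitially} together with bounded support in phase space — so that the Lipschitz constant is finite.

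It remains to prove the quantitative bound \ref{eq:C1EstimateSpacetimeTransformation}, and this is the step I expect to demand the most care. Since $\frac{dU}{du}$ equals $\frac{dV}{dv}$ precomposed with a shift, it suffices to bound $|\log\bar{V}'|$ on $[0,v_{\mathcal{I}}]$. From the proof of Lemma \ref{lem:SmoothToNorm} I would use the explicit formulas \ref{eq:ODEForV}, \ref{eq:B}, and the smoothly-compatible axis identity \ref{eq:SmoothlyCompatibleAxisCondition} (which gives $\frac{\Omega_{/}^{2}}{4\partial_{v}r_{/}}(0)=\partial_{v}r_{/}(0)$), to write
\[
\bar{V}'(v)=\frac{v_{\mathcal{I}}}{F(v)}\cdot\frac{\big(\partial_{v}r_{/}/(1-\tfrac{1}{3}\Lambda r_{/}^{2})\big)(v)}{\displaystyle\int_{0}^{v_{\mathcal{I}}}\big(\partial_{v}r_{/}/(1-\tfrac{1}{3}\Lambda r_{/}^{2})\big)(\bar{v})\,F(\bar{v})^{-1}\,d\bar{v}},\qquad F(v)\doteq\exp\Big(4\pi\int_{0}^{v}\frac{r_{/}(T_{/})_{vv}}{(\partial_{v}r_{/})^{2}}(\bar{v})\,(\partial_{v}r_{/})\,d\bar{v}\Big).
\]
The key auxiliary fact is that $\int_{0}^{v_{\mathcal{I}}}\big(\partial_{v}r_{/}/(1-\tfrac{1}{3}\Lambda r_{/}^{2})\big)\,dv=\sqrt{-3/\Lambda}\,\tfrac{\pi}{2}$ is independent of the data and the gauge, since it equals $\tan^{-1}(\sqrt{-\Lambda/3}\,r_{/})$ evaluated between $r_{/}=0$ and $r_{/}=\infty$ divided by $\sqrt{-\Lambda/3}$. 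Combining this with the facts that $F$ is increasing, $F(0)=1$, $\log F(v_{\mathcal{I}})=4\pi\int_{0}^{v_{\mathcal{I}}}\frac{r_{/}(T_{/})_{vv}}{(\partial_{v}r_{/})^{2}}(\partial_{v}r_{/})\,d\bar{v}$, and the elementary inequality $|\log(\text{weighted average of }g)|\le\sup|\log g|$, one sees that both $|\log\bar{V}'(v)|$ for every $v$ \emph{and} the a priori uncontrolled overall scale $\big|\log\partial_{v}r_{/}(0)-\log\big(\sqrt{-3/\Lambda}\,\tfrac{\pi}{2v_{\mathcal{I}}}\big)\big|$ are bounded by a $v_{\mathcal{I}}$-dependent multiple of $\sup_{v\in[0,v_{\mathcal{I}})}\big|\log\frac{\partial_{v}r_{/}}{1-\frac{1}{3}\Lambda r_{/}^{2}}-\log\partial_{v}r_{/}(0)\big|+4\pi\int_{0}^{v_{\mathcal{I}}}\frac{r_{/}(T_{/})_{vv}}{(\partial_{v}r_{/})^{2}}(\partial_{v}r_{/})\,d\bar{v}$, which is exactly \ref{eq:C1EstimateSpacetimeTransformation}. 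A last, purely bookkeeping, point I would address is that $\mathcal{T}(\mathcal{U})$ again lies in $\mathscr{U}_{v_{\mathcal{I}}}$ (Definition \ref{def:DevelopmentSets}), so that $(\mathcal{T}(\mathcal{U});r',(\Omega')^{2},f')$ is once more a development in the sense of Definition \ref{def:Development}; this is immediate from the fact that $\mathcal{T}$ fixes $\{u=v\}$ and $\{u=v-v_{\mathcal{I}}\}$, is monotone in each variable, and maps $g_{ref}$-achronal curves to $g_{ref}$-achronal curves.
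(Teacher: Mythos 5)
Your proposal is correct and follows essentially the same route as the paper: the conditions force $U\equiv V$ with $V$ given by the periodic extension $V(v)=\bar{V}(v-\lfloor v/v_{\mathcal{I}}\rfloor v_{\mathcal{I}})+\lfloor v/v_{\mathcal{I}}\rfloor v_{\mathcal{I}}$, continuity across the block boundaries comes from $\bar{V}(v_{\mathcal{I}})=v_{\mathcal{I}}$, and the Lipschitz bound is extracted from (\ref{eq:ODEForV}), (\ref{eq:B}) and (\ref{eq:SmoothlyCompatibleAxisCondition}) exactly as in the paper's argument. Your explicit cancellation of the overall scale $\partial_{v}r_{/}(0)$ between numerator and denominator in the formula for $\bar{V}'$ is a slightly more detailed rendering of the same estimate, and the appeal to $\int_{0}^{v_{\mathcal{I}}}\partial_{v}r_{/}/(1-\tfrac{1}{3}\Lambda r_{/}^{2})\,dv=\sqrt{-3/\Lambda}\,\tfrac{\pi}{2}$, while harmless, is not actually needed for (\ref{eq:C1EstimateSpacetimeTransformation}).
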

\begin{proof}
The conditions (\ref{eq:FixingTheLines}) and (\ref{eq:FixingTheInitialData})
provide an explicit formula for $\mathcal{T}(u,v)=\big(U(u),V(v)\big)$:
\begin{align}
U(u) & =\bar{V}\big(u-\lfloor\frac{u}{v_{\mathcal{I}}}\rfloor\cdot v_{\mathcal{I}}\big)+\lfloor\frac{u}{v_{\mathcal{I}}}\rfloor\cdot v_{\mathcal{I}}\label{eq:FormulaT}\\
V(v) & =\bar{V}\big(v-\lfloor\frac{v}{v_{\mathcal{I}}}\rfloor\cdot v_{\mathcal{I}}\big)+\lfloor\frac{v}{v_{\mathcal{I}}}\rfloor\cdot v_{\mathcal{I}},\nonumber 
\end{align}
where $\lfloor x\rfloor$ denotes the integral part of $x$. The formula
(\ref{eq:FormulaT}) and the fact that $\bar{V}\in C^{\infty}([0,+\infty))$
readily imply that $\mathcal{T}$ is piecewise smooth on $\mathscr{D}$
and smooth on $\mathscr{D}\backslash\cup_{k=1}^{\infty}\big(\{u=kv_{\mathcal{I}}\}\cup\{v=kv_{\mathcal{I}}\}\big)$
(the continuity of $\mathcal{T}$ along the lines $u=nv_{\mathcal{I}}$
and $v=nv_{\mathcal{I}}$, $n\in\mathbb{N}$, follows from the fact
that $\bar{V}(v_{\mathcal{I}})=v_{\mathcal{I}}$). Furthermore, in
view of the relation (\ref{eq:ODEForV}) for $\bar{V}$, we readily
infer that 
\begin{equation}
\Big|\log\Big(\frac{dU}{du}\Big)\Big|+\Big|\log\Big(\frac{dV}{dv}\Big)\Big|\le2\Big|\log\Big(\frac{d\bar{V}}{dv}(0)\Big)\Big|+2\Big(\sup_{v\in[0,v_{\mathcal{I}})}\Big|\log\Big(\frac{\partial_{v}r_{/}}{1-\frac{1}{3}\Lambda r_{/}^{2}}\Big)-\log\big(\frac{\Omega_{/}^{2}}{4\partial_{v}r_{/}}(0)\big)\Big|\Big)+8\pi\int_{0}^{v_{\mathcal{I}}}\frac{r_{/}(T_{/})_{vv}}{(\partial_{v}r_{/})^{2}}(\bar{v})\,(\partial_{v}r_{/})d\bar{v}.\label{eq:C1EstimateSpacetimeTransformation-1}
\end{equation}
Using the expression (\ref{eq:B}) for $b=\frac{d\bar{V}}{dv}(0)$
and the relation (\ref{eq:SmoothlyCompatibleAxisCondition}) for $\frac{\Omega_{/}^{2}}{4\partial_{v}r_{/}}$,
we readily obtain the bound (\ref{eq:C1EstimateSpacetimeTransformation})
from (\ref{eq:C1EstimateSpacetimeTransformation-1}).
\end{proof}

\subsection{\label{subsec:Well_posedness} Well-posedness results for characteristic
initial data sets and the maximal future development}

The main result of this section is the proof of the well-posedness
of the initial-boundary value problem for (\ref{eq:RequationFinal})\textendash (\ref{NullShellFinal})
with reflecting boundary conditions on $\mathcal{I}$ in the class
of smoothly compatible, asymptotically AdS initial data sets with
bounded support in phase space, introduced by Definition \ref{def:CompatibilityCondition}.
As a byproduct of the proof of the main result, we will also establish
the well-posedness of the characteristic initial value problem restricted
in a neighborhood of $r=0$, as well as of the double-characteristic
initial value problem. These well-posedness results will then allow
us to define the \emph{maximal }future development of a smooth, asymptotically
AdS initial data sets with bounded support in phase space and show
that it is unique (see Corollary \ref{cor:MaximalDevelopment}).

Let us define for any $U,\text{ }v_{\mathcal{I}}>0$ the domain
\begin{equation}
\mathcal{U}_{U;v_{\mathcal{I}}}\doteq\big\{0\le u<U\big\}\cap\big\{ u<v<u+v_{\mathcal{I}}\big\}\label{eq:GeneralDomain}
\end{equation}
and the boundary curves 
\begin{equation}
\gamma_{u_{1}}\doteq\big\{ u=v\big\}\cap\big\{0\le u<u_{1}\big\}\label{eq:AxisGeneral}
\end{equation}
and
\begin{equation}
\mathcal{I}_{u_{1}}\doteq\big\{ v=u+v_{\mathcal{I}}\big\}\cap\big\{0\le u<u_{1}\big\}.\label{eq:ConformalInfinityGeneral}
\end{equation}
The main result of this section is the following:
\begin{thm}
\label{thm:LocalExistenceUniqueness} Let $(r_{/},\Omega_{/}^{2},\bar{f}_{/};v_{\mathcal{I}})$
be any smoothly compatible asymptotically AdS initial data set for
(\ref{eq:RequationFinal})\textendash (\ref{NullShellFinal}), according
to Definition \ref{def:CompatibilityCondition}, satisfying (\ref{eq:BoundedSupportDefinition}).
Then, for some $u_{*}>0$ sufficiently small in terms of $(r_{/},\Omega_{/}^{2},\bar{f}_{/};v_{\mathcal{I}})$
, there exists a unique smooth solution $(r,\Omega^{2}f)$ of (\ref{eq:RequationFinal})\textendash (\ref{NullShellFinal})
on the domain $\mathcal{U}_{u_{*},v_{\mathcal{I}}}$ (defined by (\ref{eq:GeneralDomain}))
such that $(\mathcal{U}_{u_{*},v_{\mathcal{I}}};r,\Omega^{2}f)$ is
a future development of $(r_{/},\Omega_{/}^{2},\bar{f}_{/};v_{\mathcal{I}})$
with reflecting boundary conditions on $\mathcal{I}$ (see Definition
\ref{def:Development}). 
\end{thm}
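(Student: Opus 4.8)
The plan is to establish the well-posedness result in three regions, patched together by a finite induction and a last compatness/monotonicity argument. First I would set up a standard iteration scheme for the reduced system \eqref{eq:RequationFinal}--\eqref{NullShellFinal} in the bulk, away from the axis $\gamma_{\mathcal{Z}}$ and away from conformal infinity $\mathcal{I}$: rewrite the wave equations for $r^2$ and $\log(\Omega^2)$ in integral (Duhamel) form along the characteristics $u=\text{const}$, $v=\text{const}$, couple them to the characteristic form of the constraints \eqref{eq:DerivativeInUDirectionKappa}--\eqref{eq:DerivativeInVDirectionKappaBar} for $\partial_u r/(1-2m/r)$ and $\partial_v r /(1-2m/r)$, and express $f$ through the method of characteristics for the reduced geodesic flow \eqref{eq:NullGeodesicsSphericalSymmetry}, transporting the initial datum $\bar f_{/}$ along null geodesics. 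The energy momentum components $T_{\mu\nu}$ are then recovered from $\bar f$ by the momentum integrals \eqref{eq:SphericallySymmetricComponentsEnergyMomentum}; the assumed bounded support in phase space \eqref{eq:BoundedSupportDefinition}, which is propagated for a short time by continuity of the flow, makes these integrals manifestly finite and controls their $C^k$ norms in terms of $C^k$ norms of $\bar f$ and the metric. A contraction-mapping argument in a suitable norm (e.g.\ $C^1$ in the metric quantities and $C^0$ in $\bar f$ on a small rectangle, then bootstrapping to $C^\infty$ using the equations to trade transversal for tangential derivatives) yields a unique smooth solution on a small double-null rectangle, with the size depending only on lower-order norms of the seed data; this is essentially the content of \emph{Proposition \ref{prop:MihalisWellPosedness}} invoked in Lemma \ref{lem:SufficientForCompatibility}, so for the double-characteristic problem I may simply cite it.

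\textbf{The axis and infinity.} The two genuinely delicate local pieces are the neighborhood of the corner $(0,0)\in\gamma_{\mathcal{Z}}$ and the neighborhood of the corner $(0,v_{\mathcal{I}})\in\mathcal{I}$. Near the axis the point is that the characteristic constraint does not suffice: one must instead solve a characteristic-\emph{initial-boundary} problem on the triangle $\{u\le v\le u+\delta\}$, imposing $r|_{u=v}=0$ and propagating the boundary conditions \eqref{eq:BoundaryConditionInitialDataForAxisCompatibility}, \eqref{eq:HigherOrderInitialAxisR}--\eqref{eq:HigherOrderInitialAxisOmega} that encode smoothness of the Cartesian metric \eqref{eq:CartesianMetric} across $\sum_i (x^i)^2=0$. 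Here the smooth-compatibility hypothesis is exactly what is needed: it guarantees that the transversal Taylor coefficients $\partial_u^k r|_{u=0}$, $\partial_u^k \Omega^2|_{u=0}$, $\partial_u^k\bar f|_{u=0}$ agree with the formal solution of the system, so the corner data are consistent to all orders, and one can run the iteration in a variable adapted to the axis (or, equivalently, solve in Cartesian coordinates where the equations are a regular quasilinear system and $r=0$ is an interior point). A symmetric argument handles the corner at $\mathcal{I}$: there one works with the renormalised quantities $\widetilde\Omega^2$, $\rho$, $\tau_{\mu\nu}$ of \eqref{eq:RenormalisedQuantities} satisfying \eqref{eq:RenormalisedEquations}, which extend smoothly to $\{v=u+v_{\mathcal{I}}\}$, imposes $1/r=0$ there together with the reflecting boundary condition \eqref{eq:ReflectingCondition} on $f$ — implemented concretely by the matching \eqref{eq:ReflectionConditionSphericallySymmetric} of $r^2\dot\gamma^u$ and $r^2\dot\gamma^v$ across $\mathcal{I}$, which tells us how to continue each geodesic in the support of $f$ after it hits infinity — and again the third condition in Definition \ref{def:CompatibilityCondition} ensures the corner data are consistent to all orders. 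The reflecting condition is a linear, measure-preserving relation on the phase-space variables, so it fits cleanly into the method-of-characteristics construction of $f$ once the metric is known in a left-neighborhood of $\mathcal{I}$.

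\textbf{Patching and uniqueness.} Having produced local developments in a neighborhood of the axis corner, in a neighborhood of the infinity corner, and on small rectangles in the bulk, I would glue them. The number of bulk rectangles needed to reach from $\gamma_{\mathcal{Z}}$ to $\mathcal{I}$ along the initial cone is finite because $[0,v_{\mathcal{I}}]$ is compact and the local existence time from any point depends only on lower-order norms of the data on a slightly larger piece of the cone — and those are uniformly bounded, using Remark 1 below Definition \ref{def:AsymptoticallyAdSData} ($\inf \partial_v r_{/}>0$, $\inf\Omega_{/}^2>0$, no trapped spheres) and the bounded-support bound \eqref{eq:BoundedSupportDefinition}. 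Overlapping developments coincide on overlaps by the \emph{uniqueness} part of the local construction (domain-of-dependence for the reduced system, together with the fact that both solutions satisfy the same characteristic data); this gives a single smooth solution on $\mathcal{U}_{u_*;v_{\mathcal{I}}}$ for $u_*$ small, finishing the theorem. Uniqueness of the full development is likewise local: two developments agreeing at $u=0$ agree on $\mathcal{U}_{u_*;v_{\mathcal{I}}}$ by propagating equality rectangle by rectangle, using at each step the uniqueness of the bulk/axis/infinity local problems and the fact that $f$ is transported rigidly along (reflected) geodesics.

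\textbf{Main obstacle.} The hard part will be the corner analysis, and in particular making the method of characteristics for $f$ interact correctly with both singular boundaries simultaneously: near the axis one must verify that geodesics with $l>0$ in the support of $f$ stay away from $r=0$ (which follows from \eqref{eq:NullShellAngularMomentum}--\eqref{eq:NullGeodesicsSphericalSymmetry}) so that $\bar f$ is genuinely smooth there, while near $\mathcal{I}$ one must check that the reflected geodesics re-enter the spacetime with the regularity dictated by Definition \ref{def:SmoothnessConformalInfinity} (smoothness of $\bar f$ as a function of $\bar p^u=\Omega^2 p^u$, $\bar p^v=\Omega^2 p^v$, $l$), so that the reflected datum glues smoothly to the incoming one. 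Establishing the requisite uniform-in-$u_*$ control of these geodesic flows near $\mathcal{I}$ — i.e.\ that on a short time interval no geodesic in the support of $f$ undergoes more than one reflection, and that the map $(q,v)\mapsto(q,v_{\models})$ together with the ensuing flow is as smooth as needed — is where most of the technical work lies; the corresponding identities \eqref{eq:UsefulRelationForGeodesicWithMu-U}--\eqref{eq:UsefulRelationForGeodesicWithMu-V} for $\log(\Omega^2\dot\gamma^u)$ along geodesics, combined with the smallness of the rectangle, are the natural tools for this.
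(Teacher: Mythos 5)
Your overall strategy --- a three-region decomposition (axis corner, bulk, conformal infinity), an iteration/contraction scheme in each region, the renormalised variables $\widetilde{\Omega}^{2},\rho,\tau_{\mu\nu}$ near $\mathcal{I}$, the reflecting condition implemented through the geodesic reflection map, smooth compatibility to make the corner data consistent to all orders, and gluing plus local uniqueness --- is essentially the route taken in Section \ref{subsec:Local-existence-and} (Steps 1--3).

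There is, however, one load-bearing claim in your patching step that is false as stated: that the existence time of the bulk rectangles is uniform along the initial cone because the data norms there ``are uniformly bounded, using Remark 1''. Remark 1 below Definition \ref{def:AsymptoticallyAdSData} only supplies the \emph{lower} bounds $\inf\partial_{v}r_{/}>0$ and $\inf\Omega_{/}^{2}>0$; the relevant upper bounds fail, since $r_{/},\Omega_{/}^{2}\rightarrow\infty$ as $v\rightarrow v_{\mathcal{I}}$, and the existence time in Proposition \ref{prop:MihalisWellPosedness} (which requires $r$ bounded above and below on the seed data) degenerates as the rectangles approach $v=v_{\mathcal{I}}$. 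Consequently the finite covering only reaches $\{v\le v_{\mathcal{I}}-\eta\}$, and the entire strip $\{v_{\mathcal{I}}-\eta\le v<u+v_{\mathcal{I}}\}$ --- not merely a small neighborhood of the single point $(0,v_{\mathcal{I}})$ --- has to be absorbed into the renormalised analysis. This is precisely why the paper's Step 2 runs a single iteration on all of $[0,u_{1}]\times[u_{0},v_{\mathcal{I}})$, with the equations and the norms arranged so that only $\partial_{v}$-derivatives of the unknowns appear and the contraction constant is extracted solely from integration in the $u$ direction, the $v$-extent being neither small nor controllable in the unrenormalised variables. A second, related omission: in that strip the refined energy estimate along geodesics in the support of $f$ degenerates as $l\rightarrow0$ (nearly radial geodesics traverse the whole $v$-range within retarded time $O(u_{1})$), so the paper must split $T_{\mu\nu}=T_{\mu\nu}^{+}+T_{\mu\nu}^{0}$ at a threshold $l[u_{1}]$ and bound the small-$l$ contribution by the smallness of its phase-space measure; without some such device the iteration constant cannot be closed. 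Both points are repairs within your own framework rather than a change of strategy, but as written your compactness/uniformity argument does not go through near $v=v_{\mathcal{I}}$.
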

For the proof of Theorem \ref{thm:LocalExistenceUniqueness}, see
Section \ref{subsec:Local-existence-and}.

As a corollary of the proof of Theorem \ref{thm:LocalExistenceUniqueness},
we can also obtain the following well-posedness result for characteristic
initial data sets with a smooth axis, restricted to the region where
$r\le R<+\infty$: 
\begin{prop}
\label{prop:LocalExistenceSpecialCase} For any $v_{0}>0$, let $r_{/},\Omega_{/}:[0,v_{0}]\rightarrow(0,+\infty)$
and $\bar{f}_{/}:(0,v_{0}]\times[0,+\infty)^{2}\rightarrow[0,+\infty)$
be smooth functions satisfying Conditions 1 and 2 of Definition \ref{def:AsymptoticallyAdSData},
as well as Conditions 1 and 2 of Definition \ref{def:CompatibilityCondition}.
Assume, moreover, that the support of $\bar{f}_{/}$ satisfies the
bound (\ref{eq:BoundedSupportDefinition}) for some $C>0$. Then,
for some $u_{*}>0$ sufficiently small in terms of $(r_{/},\Omega_{/}^{2},\bar{f}_{/})$,
there exists a unique smooth solution $(r,\Omega^{2}f)$ of (\ref{eq:RequationFinal})\textendash (\ref{NullShellFinal})
on the domain 
\[
\mathcal{D}_{u_{*}}\doteq\big((0,u_{*})\times(0,v_{0})\big)\cap\{u<v\}
\]
 such that $(\mathcal{D}_{u_{*}};r,\Omega^{2}f)$ has smooth axis
$\gamma_{\mathcal{Z}}=\{u=v\}\cap\{0<u<u_{*}\}$ (see Definition \ref{def:SmoothnessAxis})
and $(r,\Omega^{2}f)$ satisfy the initial conditions (\ref{eq:InitialROmegaForExistence})\textendash (\ref{eq:InitialFForExistence})
at $u=0$.
\end{prop}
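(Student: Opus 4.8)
The plan is to deduce Proposition \ref{prop:LocalExistenceSpecialCase} from the proof of Theorem \ref{thm:LocalExistenceUniqueness}: the part of that argument which constructs the development in a neighbourhood of the axis $\gamma_{\mathcal{Z}}$ uses only Conditions 1--2 of Definitions \ref{def:AsymptoticallyAdSData} and \ref{def:CompatibilityCondition} (the conditions at conformal infinity, i.e. Condition 3 of each definition, are needed only to control the solution near $\mathcal{I}$, which plays no role here), so it applies verbatim in the region $\{r\le R\}$ considered now. Concretely, I would fix $u_{*}>0$ to be determined, build a solution on $\mathcal{D}_{u_{*}}$ by an iteration scheme, and then check smoothness across $\gamma_{\mathcal{Z}}$ and uniqueness.

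For the iteration I would work with the renormalised unknowns $(r,\ \partial_{v}r,\ \partial_{u}r,\ \tilde{m}/r^{3},\ \Omega^{2}/(4\partial_{v}r(-\partial_{u}r)))$ together with $\bar{f}$, since these are exactly the quantities that stay smooth and nondegenerate up to $\gamma_{\mathcal{Z}}$ (cf.~(\ref{eq:CartesianMetric}), (\ref{eq:BoundaryConditionRAxis}), (\ref{eq:SmoothlyCompatibleAxisCondition})). Given the $n$-th iterate, one (i) computes the energy--momentum components $(T^{(n)})_{\mu\nu}$ from (\ref{eq:SphericallySymmetricComponentsEnergyMomentum}), which are finite and extend smoothly to $\gamma_{\mathcal{Z}}$ by the bounded-support assumption (\ref{eq:BoundedSupportDefinition}); (ii) solves the wave equations (\ref{eq:EquationRForProof}), (\ref{eq:EquationOmegaForProof}) and the transport relations (\ref{eq:DerivativeTildeUMass})--(\ref{eq:DerivativeTildeVMass}), by integrating the $\partial_{u}\partial_{v}$-equations in $u$ starting from the data on $\{u=0\}$ (whose transversal derivatives are the values prescribed by $(r_{/},\Omega_{/}^{2},\bar{f}_{/})$ through the process described in Remark 3 below Definition \ref{def:AsymptoticallyAdSData}) and fixing the remaining constants of integration by the axis conditions of Definition \ref{def:CompatibilityCondition}, in particular $r^{(n+1)}|_{u=v}=0$ and (\ref{eq:BoundaryConditionRAxis}); and (iii) solves the Vlasov equation (\ref{eq:VlasovFinal}) for $\bar{f}^{(n+1)}$ by the method of characteristics, transporting $\bar{f}_{/}$ along the null geodesic flow (\ref{eq:NullGeodesicsSphericalSymmetry}) of the $n$-th metric and reflecting across $\gamma_{\mathcal{Z}}$ (the $p^{u}\leftrightarrow p^{v}$ exchange dictated by spherical symmetry) the radial geodesics that reach the axis. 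For $u_{*}$ small in terms of $(r_{/},\Omega_{/}^{2},\bar{f}_{/})$ this map should be a contraction in a weighted $C^{0}$ norm on $\mathcal{D}_{u_{*}}$ (the renormalised metric quantities confined to a compact subset of $(0,+\infty)$, and $\bar{f}$ measured in a norm adapted to its phase-space support), which produces a fixed point $(r,\Omega^{2},f)$; higher regularity then follows by differentiating the equations and iterating the same estimates, or by running the scheme in $C^{k}$ for arbitrary $k$ from the start.

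Smoothness of the resulting solution across $\gamma_{\mathcal{Z}}$ in the sense of Definition \ref{def:SmoothnessAxis} is where the axis-compatibility conditions enter: one verifies that, in the Cartesian chart (\ref{eq:CartesianCoordinates}), the metric components (\ref{eq:CartesianMetric}) and the Vlasov distribution extend smoothly across $\sum_{i}(x^{i})^{2}=0$. This holds because the initial data are by hypothesis smoothly extendible in this chart and the evolution equations propagate that extendibility --- the same mechanism, read in the opposite direction, which makes the data induced on a hypersurface $\{u=u_{*}\}$ by any smooth solution with smooth axis smoothly compatible. Uniqueness follows from a Gronwall estimate comparing two developments, using the fact that the null geodesics of the metric project to causal curves for the reference metric $g_{ref}=-dudv$ of (\ref{eq:ComparisonUVMetric}); for $u_{*}$ sufficiently small the backward domain of influence of any point of $\mathcal{D}_{u_{*}}$ then stays inside $\mathcal{D}_{u_{*}}\cup\{u=0\}\cup\gamma_{\mathcal{Z}}$, so the comparison argument closes.

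The main obstacle is the uniform control of the iteration up to the degenerate boundary $\{u=v\}$: the equations (\ref{eq:EquationRForProof}), (\ref{eq:EquationOmegaForProof}) and (\ref{eq:VlasovFinal}), as well as the energy--momentum integrals (\ref{eq:SphericallySymmetricComponentsEnergyMomentum}), all involve negative powers of $r$ and factors $l^{2}/r^{2}$. The resolution is the bounded-support hypothesis (\ref{eq:BoundedSupportDefinition}): combined with the geodesic identities (\ref{eq:NullShellAngularMomentum})--(\ref{eq:NullGeodesicsSphericalSymmetry}) and the absence of trapped spheres (so that $\partial_{v}r$ and $-\partial_{u}r$ are comparable to a positive constant and $\Omega^{2}p^{u}$ is bounded near the axis), it forces any null geodesic carrying $f$ that attains radius $r$ to have angular momentum at most a fixed multiple of $r$. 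Hence the $l$-moments appearing in $T_{\mu\nu}$ and in the singular transport terms carry compensating powers of $r$, and --- together with the passage to the renormalised quantities (\ref{eq:RenormalisedQuantities}) --- the apparently singular right-hand sides become bounded, in fact smooth, up to $\gamma_{\mathcal{Z}}$, which is precisely what the contraction estimate needs.
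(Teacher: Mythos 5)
Your proposal follows essentially the same route as the paper: the paper's own proof of this proposition is omitted precisely because it consists of re-running Steps 1--2 of the proof of Theorem \ref{thm:LocalExistenceUniqueness} (an iteration scheme near the axis in which the bounded-phase-space-support condition (\ref{eq:BoundedSupportDefinition}) forces $l\lesssim r$ on $supp(f)$ and so tames the $l^{2}/r^{2}$ weights, with axis regularity read off in the Cartesian chart (\ref{eq:CartesianCoordinates}) and uniqueness via a Gronwall-type difference estimate), which is exactly your plan. The one technical caveat is that the contraction cannot close in a plain weighted $C^{0}$ norm on your renormalised unknowns: comparing the geodesic flows of two successive iterates (needed to estimate the difference of the Vlasov fields and hence of $T_{\mu\nu}$) requires controlling differences of Christoffel symbols, i.e.\ one more derivative of the metric than such a norm records --- this is why the paper's difference functional $\mathfrak{D}_{n}$ in (\ref{eq:DifferenceIterationDefinition}) carries second derivatives of $r$ and first derivatives of $\log\Omega^{2}$ --- but your fallback of running the scheme at higher regularity from the start repairs this.
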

The proof of Proposition \ref{prop:LocalExistenceSpecialCase} follows
by exactly the same arguments as the proof of Theorem \ref{thm:LocalExistenceUniqueness}
(see Steps 1 and 2 in Section \ref{subsec:Local-existence-and});
it will thus be omitted.

The following well-posedness result for the regular, double characteristic
initial value problem for (\ref{eq:RequationFinal})\textendash (\ref{NullShellFinal})
is also obtained as a corollary of the proof of Theorem \ref{thm:LocalExistenceUniqueness}:
\begin{prop}
\label{prop:MihalisWellPosedness}For any $u_{1}<u_{2}$ and $v_{1}<v_{2}$,
let $r_{\backslash},\Omega_{\backslash}:[u_{1},u_{2}]\rightarrow(0,+\infty)$,
$\bar{f}_{\backslash}:[u_{1},u_{2}]\times[0,+\infty)^{2}\rightarrow[0,+\infty)$,
$r_{/},\Omega_{/}:[v_{1},v_{2}]\rightarrow(0,+\infty)$ and $\bar{f}_{/}:[v_{1},v_{2}]\times[0,+\infty)^{2}\rightarrow[0,+\infty)$
be smooth functions, such that $(r_{\backslash},\Omega_{\backslash}^{2},\bar{f}_{\backslash})$
and $(r_{/},\Omega_{/}^{2},\bar{f}_{/})$ satisfy the constraint equations
(\ref{eq:ConstraintUFinal}) and (\ref{eq:ConstraintVFinal}), respectively,
as well as the compatibility conditions 
\begin{align}
r_{\backslash}(u_{1}) & =r_{/}(v_{1}),\\
\Omega_{\backslash}^{2}(u_{1}) & =\Omega_{/}^{2}(v_{1}),\nonumber \\
\bar{f}_{\backslash}(u_{1},p^{v},l) & =\bar{f}_{/}(v_{1},\frac{l^{2}}{r_{/}^{2}\Omega_{/}^{2}\big|_{v=v_{1}}p^{u}},l).\nonumber 
\end{align}
Assume, moreover, that the supports of $\bar{f}_{\backslash}$, $\bar{f}_{/}$
satisfy the bound (\ref{eq:BoundedSupportDefinition}). Then, for
some $\delta>0$ sufficiently small in terms of$(r_{\backslash},\Omega_{\backslash}^{2},\bar{f}_{\backslash})$
and $(r_{/},\Omega_{/}^{2},\bar{f}_{/})$, there exists a unique smooth
solution $(r,\Omega^{2}f)$ of (\ref{eq:RequationFinal})\textendash (\ref{NullShellFinal})
on the domain $[u_{1},u_{1}+\delta]\times[v_{1},v_{2}]$, such that
\begin{align}
(r,\Omega^{2})(u,v_{1}) & =(r_{\backslash},\Omega_{\backslash}^{2})(u),\label{eq:InitialROmegaForExistence-1}\\
(r,\Omega^{2})(u_{1},v) & =(r_{/},\Omega_{/}^{2})(v)\nonumber 
\end{align}
 and 
\begin{align}
f(u,v_{1};p^{u},p^{v},l) & =\bar{f}_{\backslash}(u;p^{v},l)\cdot\delta\Big(\Omega_{\backslash}^{2}(u)p^{u}p^{v}-\frac{l^{2}}{r_{\backslash}(u)}\Big),\label{eq:InitialFForExistence-1}\\
f(u_{1},v;p^{u},p^{v},l) & =\bar{f}_{/}(v;p^{u},l)\cdot\delta\Big(\Omega_{/}^{2}(v)p^{u}p^{v}-\frac{l^{2}}{r_{/}(v)}\Big).\nonumber 
\end{align}
The analogous statement for the domain $[u_{1},u_{2}]\times[v_{1},v_{1}+\delta]$
also holds.
\end{prop}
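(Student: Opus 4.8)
The plan is to prove Proposition \ref{prop:MihalisWellPosedness} by a contraction--mapping (Picard) argument adapted to the characteristic setting, exploiting the structural hierarchy of the system: the metric components $r,\Omega^{2}$ solve wave--type equations whose sources are the energy--momentum components, which are \emph{zeroth order} in the Vlasov field, whereas the Vlasov field itself is merely transported along the null geodesic flow. First I would reformulate the system as a fixed point problem on the rectangle $\mathcal{R}_{\delta}\doteq[u_{1},u_{1}+\delta]\times[v_{1},v_{2}]$. Given a candidate pair $(r,\Omega^{2})$ on $\mathcal{R}_{\delta}$ that stays bounded above and below together with $\partial_{u}r,\partial_{v}r$, one integrates the reduced null geodesic flow (\ref{eq:NullGeodesicsSphericalSymmetry}) (equivalently, the transport equation (\ref{eq:VlasovEquationSphericalSymmetry})) to produce $\bar f$ from the prescribed characteristic data $\bar f_{\backslash}$ and $\bar f_{/}$; since these agree at the corner $(u_{1},v_{1})$ by the stated compatibility condition, $\bar f$ is well defined and continuous on $\mathcal{R}_{\delta}$. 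One then forms $T_{uu},T_{uv},T_{vv}$ via the momentum integrals (\ref{eq:SphericallySymmetricComponentsEnergyMomentum}) and updates $(r,\Omega^{2})$ by integrating the evolution equations (\ref{eq:RequationFinal}) and (\ref{eq:OmegaEquationFinal}) once in $u$ from $\{u=u_{1}\}$ and once in $v$ from $\{v=v_{1}\}$, using the prescribed characteristic data and the transversal derivatives at the corner determined through the constraints (\ref{eq:ConstraintUFinal})--(\ref{eq:ConstraintVFinal}). This defines a map $\Phi$ whose fixed points are exactly the desired solutions.

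The a priori estimates making $\Phi$ preserve a suitable closed ball split into two parts. The metric part is routine: away from the axis, $r_{\backslash},r_{/}$ and $\Omega_{\backslash}^{2},\Omega_{/}^{2}$ are bounded above and below, and integrating (\ref{eq:RequationFinal})--(\ref{eq:OmegaEquationFinal}) over $\mathcal{R}_{\delta}$ keeps $r,\Omega^{2}$ and their first derivatives within a fixed neighborhood of the initial values once $\delta$ is small, the sources being controlled using the bounded--support hypothesis (\ref{eq:BoundedSupportDefinition}). The Vlasov part is the delicate point: one must show the phase--space support of $f$ remains bounded on $\mathcal{R}_{\delta}$, i.e.\ that no geodesic carried in $\mathrm{supp}(f)$ develops unbounded $\Omega^{2}p^{u}$ or $\Omega^{2}p^{v}$ during the small parameter interval it spends inside $\mathcal{R}_{\delta}$. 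This follows from the integrated geodesic identities (\ref{eq:UsefulRelationForGeodesicWithMu-U}) and (\ref{eq:UsefulRelationForGeodesicWithMu-V}): their right--hand sides are integrals over regions of area $O(\delta)$ of quantities bounded by the already--established metric estimates, so $\log(\Omega^{2}\dot\gamma^{u})$ and $\log(\Omega^{2}\dot\gamma^{v})$ change by $O(\delta)$ and the support is propagated with a multiplicative constant close to $1$; a Grönwall argument closes this self-consistently. Once the zeroth--order bounds hold, higher derivatives are estimated inductively by differentiating (\ref{eq:RequationFinal})--(\ref{eq:OmegaEquationFinal}) and the transport equation (\ref{eq:VlasovEquationSphericalSymmetry}); because the wave equations gain two derivatives upon integration while their sources are zeroth order in $\bar f$, and transport along a $C^{k}$ flow preserves $C^{k}$ regularity, no derivative loss occurs and one obtains uniform $C^{\infty}$ bounds on $\mathcal{R}_{\delta}$.

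With the uniform bounds in hand, $\Phi$ is shown to be a contraction in a low--regularity norm (say the $C^{0}$ norm of $(r,\Omega^{2},\partial r)$ together with a uniform norm on $\bar f$ over its bounded support): the difference of two geodesic flows is controlled by the difference of the metrics via Grönwall, the difference of the energy--momentum tensors by the difference of the $\bar f$'s, and the difference of the updated metrics by integrating the difference of the sources over $\mathcal{R}_{\delta}$, each step producing a factor $O(\delta)$. Shrinking $\delta$ makes $\Phi$ a strict contraction, giving existence and uniqueness of the fixed point, which the uniform higher--order bounds upgrade to a smooth solution; that solution is a future development (restricted to the rectangle) in the required sense, and $f$ has the form (\ref{eq:FAsASmoothDeltaFunction}) with smooth $\bar f$. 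The analogous statement on $[u_{1},u_{2}]\times[v_{1},v_{1}+\delta]$ follows by interchanging the roles of $u$ and $v$. The main obstacle is precisely the simultaneous control of the phase--space support of $f$ and of the regularity of the characteristic flow that feeds back into $\bar f$; the resolution is the structural observation above — the metric solves wave equations with sources of the same regularity as $\bar f$, $\bar f$ is transported rather than differentiated against the top order of the metric, and the bounded--support assumption keeps all momentum integrals finite and stable throughout the iteration.
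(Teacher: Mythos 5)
Your proposal follows essentially the same route as the paper: Proposition \ref{prop:MihalisWellPosedness} is proved there by the same Picard-type scheme used in Step 2 of the proof of Theorem \ref{thm:LocalExistenceUniqueness} — the metric is updated by integrating the evolution equations in the thin ($u$-)direction of the rectangle to gain the factor $\delta$, the Vlasov field is transported along the characteristics of the previous iterate via the integrated identities (\ref{eq:UsefulRelationForGeodesicWithMu-U})--(\ref{eq:UsefulRelationForGeodesicWithMu-V}), and convergence follows from a difference (contraction) estimate in a low-regularity norm, with higher regularity recovered inductively; phrasing this as a fixed-point map rather than an explicit iteration is an immaterial difference. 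One quantitative overstatement is worth flagging: the boundary terms in (\ref{eq:UsefulRelationForGeodesicWithMu-U})--(\ref{eq:UsefulRelationForGeodesicWithMu-V}) are single integrals along the \emph{long} initial hypersurface $\{u=u_{1}\}$ (respectively $\{v=v_{1}\}$), so for a geodesic traversing the full $v$-range the quantities $\log(\Omega^{2}\dot{\gamma}^{u})$, $\log(\Omega^{2}\dot{\gamma}^{v})$ change by an $O(1)$ data-dependent amount, not by $O(\delta)$, and the phase-space support is propagated with a data-dependent rather than near-unit multiplicative constant. This is harmless for the proposition — only boundedness of the support is needed (here $r$ is bounded away from $0$ and $\infty$, so the data integrand $\partial_{v}\log\Omega^{2}-2\partial_{v}r/r$ is bounded), and the $O(\delta)$ smallness required to close the contraction comes from the difference estimates between consecutive iterates, in which these common boundary terms cancel.
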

The proof of Proposition \ref{prop:MihalisWellPosedness} follows
by the same arguments as the proof of Theorem \ref{thm:LocalExistenceUniqueness}
(see Step 2 in Section \ref{subsec:Local-existence-and}); it will
thus be omitted. For a similar result in the case of the Einstein\textendash massive
Vlasov system on domains of the form $[u_{1},u_{1}+\delta]\times[v_{1},v_{1}+\delta]$,
see \cite{DafermosRendall2005}.

As a straightforward corollary of the local uniqueness statements
of Propositions \ref{prop:LocalExistenceSpecialCase}\textendash \ref{prop:MihalisWellPosedness},
we infer the following global uniqueness result
\begin{cor}
\label{cor:UniquenessDevelopments} If $\mathscr{D}=(\mathcal{U};r,\Omega^{2},f)$
and $\mathscr{D}^{\prime}=(\mathcal{U}^{\prime};r^{\prime},(\Omega^{\prime})^{2},f^{\prime})$
are two future developments with reflecting boundary conditions on
$\mathcal{I}$ of the same smoothly compatible, asymptotically AdS
initial data set $(r_{/},\Omega_{/}^{2},\bar{f}_{/};v_{\mathcal{I}})$
with bounded support in phase space, then 
\begin{equation}
(r,\Omega^{2},f)|_{\mathcal{U}\cap\mathcal{U}^{\prime}}=(r^{\prime},(\Omega^{\prime})^{2},f^{\prime})|_{\mathcal{U}\cap\mathcal{U}^{\prime}}.
\end{equation}
\end{cor}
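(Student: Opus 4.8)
The plan is to run a connectedness (``continuous induction'') argument on $\mathcal{W}\doteq\mathcal{U}\cap\mathcal{U}'$. First observe that, since $\mathcal{U},\mathcal{U}'\in\mathscr{U}_{v_{\mathcal{I}}}$ both contain the initial segment $\mathcal{S}_{v_{\mathcal{I}}}=\{0\}\times[0,v_{\mathcal{I}}]$ and share the candidate axis $\{u=v\}$ and conformal infinity $\{u=v-v_{\mathcal{I}}\}$, the intersection $\mathcal{W}$ is again a member of $\mathscr{U}_{v_{\mathcal{I}}}$; in particular it is connected and is contained in the future domain of dependence of $\mathcal{S}_{v_{\mathcal{I}}}\cup\gamma_{\mathcal{Z}}\cup\mathcal{I}$ with respect to the reference metric $g_{ref}=-du\,dv$ of (\ref{eq:ComparisonUVMetric}). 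Set
\[
\mathcal{A}\doteq\Big\{p\in\mathcal{W}\,:\,(r,\Omega^{2},f)=(r',(\Omega')^{2},f')\text{ on }J^{-}(p)\cap\mathcal{W}\Big\},
\]
where $J^{-}(p)$ denotes the $g_{ref}$-causal past of $p$. It suffices to prove $\mathcal{A}=\mathcal{W}$, since every $p\in\mathcal{A}$ lies in its own past.

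Non-emptiness is immediate: by (\ref{eq:InitialROmegaForExistence})--(\ref{eq:InitialFForExistence}), for $p\in\mathcal{S}_{v_{\mathcal{I}}}$ one has $J^{-}(p)\cap\mathcal{W}\subseteq\mathcal{S}_{v_{\mathcal{I}}}$ and the two developments restrict there to the same data, so $\mathcal{S}_{v_{\mathcal{I}}}\subseteq\mathcal{A}$. Closedness of $\mathcal{A}$ in $\mathcal{W}$ follows from smoothness of the two solutions (recalling that $f$, $f'$ are encoded by the smooth functions $\bar f$, $\bar f'$): if $p_{n}\to p$ in $\mathcal{W}$ with $p_{n}\in\mathcal{A}$, then every $q\in J^{-}(p)\cap\mathcal{W}$ with $q\neq p$ lies in the interior of $J^{-}(p)$, hence in $J^{-}(p_{n})\cap\mathcal{W}$ for $n$ large, so the solutions agree on $J^{-}(p)\cap\mathcal{W}\setminus\{p\}$ and, by continuity, at $p$ as well.

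The substance is the openness of $\mathcal{A}$ in $\mathcal{W}$, which is where the local well-posedness statements enter. Given $p=(u_{0},v_{0})\in\mathcal{A}$, one must show the solutions still agree on $J^{-}(q)\cap\mathcal{W}$ for all $q$ near $p$; since $J^{-}(q)\cap\mathcal{W}$ differs from $J^{-}(p)\cap\mathcal{W}\subseteq\mathcal{A}$ only by a region whose past boundary lies in $\mathcal{A}$, it is enough to produce, for each type of boundary configuration of $J^{-}(p)\cap\mathcal{W}$, a local existence--uniqueness statement with data on that past boundary. (a) If $p$ is at positive $g_{ref}$-distance from both $\gamma_{\mathcal{Z}}$ and $\mathcal{I}$, take a small characteristic rectangle to the past of $p$ with faces $\{u=u_{0}-\delta\}$ and $\{v=v_{0}-\delta\}$ and apply the uniqueness half of Proposition \ref{prop:MihalisWellPosedness}. (b) If $p\in\gamma_{\mathcal{Z}}$, invoke the uniqueness half of Proposition \ref{prop:LocalExistenceSpecialCase}, whose hypotheses (Conditions 1 and 2 of Definitions \ref{def:AsymptoticallyAdSData} and \ref{def:CompatibilityCondition}) are precisely the axis regularity built into the notion of a development. (c) If $p\in\mathcal{I}$, first use the reflecting boundary condition (\ref{eq:ReflectingCondition}) together with Condition 3 of Definition \ref{def:CompatibilityCondition} to reflect the development across $\mathcal{I}$ via the involution $(u,v)\mapsto(v-v_{\mathcal{I}},u+v_{\mathcal{I}})$: smoothness of $1/r$, $\Omega^{2}/r^{2}$ and $\bar f$ up to $\mathcal{I}$ guarantees that the reflected triple glues smoothly and solves the same system (\ref{eq:RequationFinal})--(\ref{NullShellFinal}) on a two-sided neighbourhood of $\mathcal{I}$, with $\mathcal{I}$ now an interior ($g_{ref}$-timelike) line, so that $p$ becomes an interior point and case (a) applies. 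The reflection is canonically determined by $(r,\Omega^{2},f)|_{\mathcal{W}}$ and (\ref{eq:ReflectingCondition}) --- the angular momentum is preserved and the reflected geodesic data is fixed by Definition \ref{def:Reflection} --- so the two developments have identical reflections and case (c) reduces to case (a).

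I expect case (c) --- reflecting across conformal infinity and verifying that the glued object is a genuine smooth solution of the reduced system to which Proposition \ref{prop:MihalisWellPosedness} applies --- to be the only non-routine point; cases (a) and (b) are direct citations of the already established local uniqueness, and the connectedness bookkeeping (that $\mathcal{W}\in\mathscr{U}_{v_{\mathcal{I}}}$, that $\mathcal{A}$ is a past set, etc.) is standard. With $\mathcal{A}$ open, closed and non-empty in the connected set $\mathcal{W}$, we conclude $\mathcal{A}=\mathcal{W}$, which is exactly the asserted equality $(r,\Omega^{2},f)|_{\mathcal{U}\cap\mathcal{U}'}=(r',(\Omega')^{2},f')|_{\mathcal{U}\cap\mathcal{U}'}$.
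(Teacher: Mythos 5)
Your open--closed--connected scaffolding on $\mathcal{W}=\mathcal{U}\cap\mathcal{U}'$ is exactly the ``standard argument'' the paper has in mind, and cases (a) and (b) are correct direct citations of the local uniqueness in Propositions \ref{prop:MihalisWellPosedness} and \ref{prop:LocalExistenceSpecialCase} (after translating the data cone). The problem is case (c), which you yourself identify as the crux but then dispatch with a reflection that does not work as described. If you define the doubled solution on $\{v>u+v_{\mathcal{I}}\}$ by pulling back through the involution $\Phi(u,v)=(v-v_{\mathcal{I}},u+v_{\mathcal{I}})$, the glued function $1/r$ is \emph{not} smooth across $\mathcal{I}$: its transversal derivative has one-sided limits $\partial_{v}(1/r)|_{\mathcal{I}}$ and $\partial_{u}(1/r)|_{\mathcal{I}}=-\partial_{v}(1/r)|_{\mathcal{I}}$ by (\ref{eq:BoundaryConditionRInfinity}), and these are nonzero and of opposite sign, so the glued $1/r$ (equivalently $\rho=\tan^{-1}(\sqrt{-\Lambda/3}\,r)$) has a corner along $\mathcal{I}$ and is merely Lipschitz. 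Smoothness of $1/r$ and $\Omega^{2}/r^{2}$ \emph{up to} the boundary does not give smoothness of the even reflection \emph{across} it. Moreover, even granting a smooth doubling, $r\to\infty$ at the gluing interface, so the doubled triple is not smooth in the sense of Definition \ref{def:SmoothnessGenerally} and Proposition \ref{prop:MihalisWellPosedness} (which presupposes $r$ finite and bounded away from $0,\infty$ on a compact rectangle) cannot be invoked there.

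To repair case (c) you can either (i) perform the doubling correctly, i.e.\ set $\hat{\rho}\doteq\pi-\rho\circ\Phi$ so that $\rho$ is continued smoothly past $\pi/2$ (equivalently $r$ passes through $\infty$ to negative values, as in the analytic continuation of pure AdS), verify that the renormalised system (\ref{eq:RenormalisedEquations}) and the reflected Vlasov field are preserved under this continuation, and only then run a characteristic uniqueness argument in the renormalised variables --- which amounts to re-proving Step 3 of the proof of Theorem \ref{thm:LocalExistenceUniqueness}; or, much more economically, (ii) simply cite the local existence-and-uniqueness statement near conformal infinity with reflecting boundary conditions that is already established in Step 3 of the proof of Theorem \ref{thm:LocalExistenceUniqueness} (characteristic data on an ingoing cone $\{v=\mathrm{const}\}$ terminating on $\mathcal{I}$, solved in the future triangle bounded by $\mathcal{I}$, in the renormalised variables). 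Option (ii) is the intended route and closes the gap with no new analysis.
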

\begin{rem*}
Our choice of gauge (fixing the axis $\gamma_{\mathcal{Z}}$ and conformal
infinity $\mathcal{I}$ of any development to be straight, vertical
lines) does not provide any freedom in performing gauge transformations
on the development that fix the gauge of the initial data at $u=0$.
This is the reason why Corollary \ref{cor:UniquenessDevelopments}
can be stated as a uniqueness statement without appealing to equivalence
classes of developments under gauge transformations.
\end{rem*}
Using standard arguments, the well-posedness results of Theorem \ref{thm:LocalExistenceUniqueness}
and Propositions \ref{prop:LocalExistenceSpecialCase}\textendash \ref{prop:MihalisWellPosedness}
also allow us to assign to each smoothly compatible, asymptotically
AdS initial data set for (\ref{eq:RequationFinal})\textendash (\ref{NullShellFinal})
a unique \emph{maximal }future development : 
\begin{cor}
\label{cor:MaximalDevelopment} Let $(r_{/},\Omega_{/}^{2},\bar{f}_{/};v_{\mathcal{I}})$
be any smoothly compatible, asymptotically AdS initial data set for
(\ref{eq:RequationFinal})\textendash (\ref{NullShellFinal}) with
bounded support in phase space. Then there exists a unique future
development $(\mathcal{U}_{max};r,\Omega^{2},f)$ of $(r_{/},\Omega_{/}^{2},\bar{f}_{/};v_{\mathcal{I}})$
with reflecting boundary conditions on $\mathcal{I}$ having the following
property: If $(\mathcal{U}_{*};r_{*},\Omega_{*}^{2},f_{*})$ is any
other future development of $(r_{/},\Omega_{/}^{2},\bar{f}_{/};v_{\mathcal{I}})$
with reflecting boundary conditions on $\mathcal{I}$, then 
\begin{equation}
\mathcal{U}_{*}\subseteq\mathcal{U}_{max}
\end{equation}
 and 
\begin{equation}
(r,\Omega^{2},f)|_{\mathcal{U}_{*}}=(r_{*},\Omega_{*}^{2},f_{*}).
\end{equation}
We will call $(\mathcal{U}_{max};r,\Omega^{2},f)$ the maximal future
development of $(r_{/},\Omega_{/}^{2},\bar{f}_{/};v_{\mathcal{I}})$
with reflecting boundary conditions on $\mathcal{I}$.
\end{cor}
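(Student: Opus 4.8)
The plan is to carry out the standard maximal-development construction, drawing on Theorem \ref{thm:LocalExistenceUniqueness} for non-emptiness and Corollary \ref{cor:UniquenessDevelopments} (together with Propositions \ref{prop:LocalExistenceSpecialCase}--\ref{prop:MihalisWellPosedness}) for the patching. Let $\mathscr{S}$ denote the set of all future developments $\mathscr{D}=(\mathcal{U};r,\Omega^{2},f)$ of $(r_{/},\Omega_{/}^{2},\bar{f}_{/};v_{\mathcal{I}})$ with reflecting boundary conditions on $\mathcal{I}$, in the sense of Definition \ref{def:Development}. By Theorem \ref{thm:LocalExistenceUniqueness}, $\mathscr{S}\neq\emptyset$, and I fix a reference element $\mathscr{D}_{0}=(\mathcal{U}_{u_{*};v_{\mathcal{I}}};r_{0},\Omega_{0}^{2},f_{0})\in\mathscr{S}$. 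Set $\mathcal{U}_{max}\doteq\bigcup_{\mathscr{D}\in\mathscr{S}}\mathcal{U}$. For any two $\mathscr{D},\mathscr{D}'\in\mathscr{S}$, Corollary \ref{cor:UniquenessDevelopments} gives that the two triplets agree on $\mathcal{U}\cap\mathcal{U}'$, so they patch to a single smooth $(r,\Omega^{2},f)$ on $\mathcal{U}_{max}$.

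The first thing to check is that $\mathcal{U}_{max}\in\mathscr{U}_{v_{\mathcal{I}}}$ (Definition \ref{def:DevelopmentSets}). Openness is immediate. For connectedness, note that since $\mathcal{S}_{v_{\mathcal{I}}}=\{0\}\times[0,v_{\mathcal{I}}]\subset\partial\mathcal{U}$ and $\mathcal{U}\subset\{u<v<u+v_{\mathcal{I}}\}$ for every $\mathscr{D}\in\mathscr{S}$, each such $\mathcal{U}$ contains points $(u,v)$ with $u$ arbitrarily small and $v$ near $\frac{1}{2}v_{\mathcal{I}}$, hence $\mathcal{U}\cap\mathcal{U}_{u_{*};v_{\mathcal{I}}}\neq\emptyset$; thus each $\mathcal{U}\cup\mathcal{U}_{u_{*};v_{\mathcal{I}}}$ is connected, and so is $\mathcal{U}_{max}$. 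For the boundary structure, one uses that each $\mathcal{U}$ is contained in the future domain of dependence of $\mathcal{S}_{v_{\mathcal{I}}}\cup\gamma_{\mathcal{Z}}\cup\mathcal{I}$ with respect to $g_{ref}=-dudv$ and is a past set within that region; a union of past sets is a past set, so the topological boundary of $\mathcal{U}_{max}$ within that region is an achronal, hence locally Lipschitz, curve $\zeta$, exactly as in the analogous analysis of \cite{MoschidisMaximalDevelopment,MoschidisNullDust}. The remaining boundary portions are $\gamma_{\mathcal{Z}}=\{u=v\}\cap\{0\le u<u_{\gamma_{\mathcal{Z}}}\}$, $\mathcal{I}=\{u=v-v_{\mathcal{I}}\}\cap\{0\le u<u_{\mathcal{I}}\}$ and $\mathcal{S}_{v_{\mathcal{I}}}$, where $u_{\gamma_{\mathcal{Z}}}$, $u_{\mathcal{I}}$ are the suprema over $\mathscr{D}\in\mathscr{S}$ of the lengths of the corresponding segments, so that $\partial\mathcal{U}_{max}$ has the form (\ref{eq:BoundaryOfU}).

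Next I would verify that $(\mathcal{U}_{max};r,\Omega^{2},f)$ is itself a development, i.e.\ that it lies in $\mathscr{S}$. Smoothness and positivity of $(r,\Omega^{2})$ together with the $\delta$-function form of $f$, the smooth-axis conditions of Definition \ref{def:SmoothnessAxis} along $\gamma_{\mathcal{Z}}$, and the smooth-conformal-infinity conditions of Definition \ref{def:SmoothnessConformalInfinity} along $\mathcal{I}$ are all local statements, and each point of $\mathcal{U}_{max}\cup\gamma_{\mathcal{Z}}\cup\mathcal{I}$ has a neighborhood contained in $\mathcal{U}\cup\gamma_{\mathcal{Z}}\cup\mathcal{I}$ for some $\mathscr{D}\in\mathscr{S}$, on which the conditions hold. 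The initial conditions (\ref{eq:InitialROmegaForExistence})--(\ref{eq:InitialFForExistence}) hold because they hold for $\mathscr{D}_{0}$. For the reflecting boundary condition (\ref{eq:ReflectingCondition}): if $\gamma$ is a future directed, future inextendible null geodesic and $\gamma_{\models}$ its reflection off $\mathcal{I}^{(2+1)}$, both have a regular limit at the same point $q\in\mathcal{I}^{(2+1)}$; a one-sided neighborhood of the corresponding point of $\mathcal{I}$ in $\mathcal{U}_{max}$ lies in some $\mathcal{U}$, on which (\ref{eq:ReflectingCondition}) holds for the portions of $(\gamma,\dot{\gamma})$ and $(\gamma_{\models},\dot{\gamma}_{\models})$ near $q$, and since $f$ is constant along each of these curves in $T\mathcal{M}$, (\ref{eq:ReflectingCondition}) holds in full. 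Hence $(\mathcal{U}_{max};r,\Omega^{2},f)\in\mathscr{S}$.

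Finally, maximality and uniqueness are automatic: by construction $\mathcal{U}_{*}\subseteq\mathcal{U}_{max}$ for every $(\mathcal{U}_{*};r_{*},\Omega_{*}^{2},f_{*})\in\mathscr{S}$, and $(r,\Omega^{2},f)|_{\mathcal{U}_{*}}=(r_{*},\Omega_{*}^{2},f_{*})$ by Corollary \ref{cor:UniquenessDevelopments}; and if a second development has the same dominance property, then it contains $\mathcal{U}_{max}$ and is contained in it, so the two coincide. The only step requiring genuine care is the verification that $\mathcal{U}_{max}\in\mathscr{U}_{v_{\mathcal{I}}}$ --- specifically, that the future boundary $\zeta$ of the union is again an achronal Lipschitz curve --- which rests on the past-set structure of developments and the $(1+1)$-dimensional causal-structure facts already exploited in \cite{MoschidisMaximalDevelopment,MoschidisNullDust}; everything else follows directly from the local well-posedness and local uniqueness statements established above.
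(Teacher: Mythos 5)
Your proposal is correct and is precisely the ``standard argument'' the paper invokes without writing out: non-emptiness from Theorem \ref{thm:LocalExistenceUniqueness}, patching the union of all developments via the global uniqueness statement of Corollary \ref{cor:UniquenessDevelopments}, verification that the union again lies in $\mathscr{U}_{v_{\mathcal{I}}}$ (with the achronal future boundary handled as in \cite{MoschidisMaximalDevelopment,MoschidisNullDust}), and the local verification that the patched triplet is itself a development satisfying the reflecting condition. No gaps; this matches the paper's intended route.
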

The maximal future development of an initial data set $(r_{/},\Omega_{/}^{2},\bar{f}_{/};v_{\mathcal{I}})$
is the main object of study related to questions on the global dynamics
of (\ref{eq:RequationFinal})\textendash (\ref{NullShellFinal}),
such that the ones addressed in our companion paper \cite{MoschidisVlasov}. 

\subsection{\label{subsec:Local-existence-and} Local existence and uniqueness:
Proof of Theorem \ref{thm:LocalExistenceUniqueness}}

In this section, we will establish Theorem \ref{thm:LocalExistenceUniqueness}.
To this end, let $(r_{/},\Omega_{/}^{2},\bar{f}_{/};v_{\mathcal{I}})$
be as in the statement of Theorem \ref{thm:LocalExistenceUniqueness},
and let $C_{0}>0$ be a constant for which the bounded support condition
(\ref{eq:BoundedSupportDefinition}) is satisfied by $(r_{/},\Omega_{/}^{2},\bar{f}_{/};v_{\mathcal{I}})$
with $C_{0}$ in place of $C$. The construction of a (unique) smooth
development $(\mathcal{U}_{u_{*};v_{\mathcal{I}}};r,\Omega^{2},f)$
of $(r_{/},\Omega_{/}^{2},\bar{f}_{/};v_{\mathcal{I}})$ will be separated
into three steps, covering, successively, the region near the axis,
the intermediate region and the region near conformal infinity. 

\paragraph*{Step 1: The region $\mathcal{D}_{0}^{u_{0}}$}

Let us fix a constant $R_{0}$ satisfying 
\begin{equation}
\sqrt{-\Lambda}R_{0}\ll1,
\end{equation}
and let $V_{*}\in(0,v_{\mathcal{I}})$ be such that 
\begin{equation}
r_{/}(V_{*})=R_{0}.
\end{equation}
We will assume that $R_{0}$ is sufficiently small, so that 
\begin{equation}
\sup_{v\in(0,V_{*})}\partial_{u}r_{/}<0\label{eq:NegativeDurNearAxis}
\end{equation}
(this is possible, in view of (\ref{eq:EqualDuRDvRInitially}) and
the fact that $\tilde{m}_{/}=O(r_{/}^{3})$). We will show that, for
some $u_{0}>0$ sufficiently small in terms of $R_{0}$ and $V_{*}$,
there exists a unique smooth solution $(r,\Omega^{2};f)$ of (\ref{eq:RequationFinal})\textendash (\ref{NullShellFinal})
on the domain 
\begin{equation}
\mathcal{D}_{0}^{u_{0}}\doteq\big([0,u_{0}]\times[0,u_{0}]\big)\cap\big\{ u<v\big\}\subset\mathbb{R}^{2}\label{eq:DomainNearAxis-1}
\end{equation}
with smooth axis
\begin{equation}
\gamma_{0}^{u_{0}}\doteq\big([0,u_{0}]\times[0,u_{0}]\big)\cap\big\{ u=v\big\}\subset\partial\mathcal{D}_{u_{1}}^{u_{2}},\label{eq:SmallPartOfAxis-1}
\end{equation}
 satisfying (\ref{eq:InitialROmegaForExistence})\textendash (\ref{eq:InitialFForExistence}). 

\begin{figure}[h] 
\centering 
\scriptsize
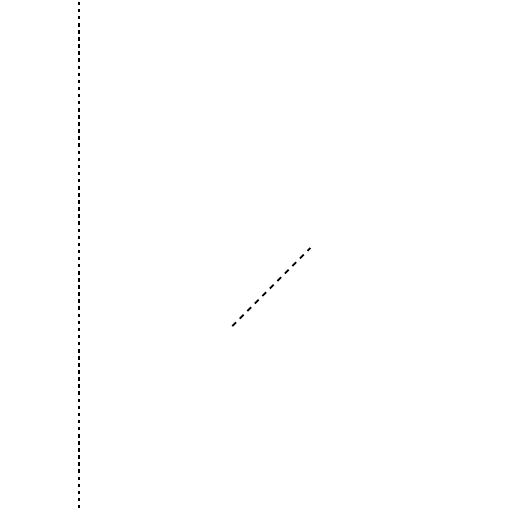 
\caption{Schematic depiction of the domain $\mathcal{D}_{0}^{u_0}$. \label{fig:AxisExistenceDomain}}
\end{figure}

\noindent \emph{Existence.} The existence of a smooth solution $(r,\Omega^{2};f)$
to (\ref{eq:RequationFinal})\textendash (\ref{NullShellFinal}) on
$\mathcal{D}_{0}^{u_{0}}$ will follow by applying a standard iteration
procedure. For any integer $n\in\mathbb{N}$, let us define the functions
$r_{n}:\mathcal{D}_{0}^{u_{0}}\rightarrow[0,+\infty)$, $\Omega_{n}^{2}:\mathcal{D}_{0}^{u_{0}}\rightarrow(0,+\infty)$
and $\bar{f}_{n}^{\prime}:\mathcal{D}_{0}^{u_{0}}\times[0,+\infty)^{2}\rightarrow[0,+\infty)$
by solving recursively (for a definition of the relevant parameters
and functions, see below):
\begin{align}
\partial_{u}\partial_{v}r_{n+1} & =-\frac{1}{2}\frac{\tilde{m}_{n}-\frac{1}{3}\Lambda r_{n}^{3}}{r_{n}^{2}}\Omega_{n}^{2}+4\pi r_{n}(T_{uv})_{n},\label{eq:RecursiveR}\\
\partial_{u}\partial_{v}\log(\Omega_{n+1}^{2}) & =\Big(\frac{\tilde{m}_{n}}{r_{n}^{3}}+\frac{\Lambda}{6}\Big)\Omega_{n}^{2}-16\pi(T_{uv})_{n},\label{eq:RecursiveOmega}\\
\Big[p^{a}\partial_{x^{a}}-(\text{\textgreek{G}}_{\alpha\beta}^{\gamma})_{n} & p^{\alpha}p^{\beta}\partial_{p^{\gamma}}\Big]f_{n+1}=0,\label{eq:RecursiveF}
\end{align}
with initial conditions on $u=0$: 
\begin{equation}
(r_{n+1},\Omega_{n+1}^{2};\bar{f}_{n+1}^{\prime})|_{u=0}=(r_{/},\Omega_{/}^{2};\bar{f}_{/})\label{eq:initialconditionsiteration}
\end{equation}
and boundary conditions on the axis $\gamma_{0}^{u_{0}}$: 
\begin{align}
r_{n+1}(u,u) & =0,\label{eq:BoundaryConditionsIteration}\\
\partial_{v}\Omega_{n+1}^{2}(u,u) & =\partial_{u}\Omega_{n+1}^{2}(u,u).\nonumber 
\end{align}
In the above, we have adopted the following notational conventions
for $n\ge1$: 

\begin{itemize}

\item The Vlasov field $f_{n}$ in (\ref{eq:RecursiveF}) is defined
in terms of $\bar{f}_{n}^{\prime}$ for any $l>0$ by
\begin{equation}
f_{n+1}(u,v;p^{u},p^{v},l)\doteq\bar{f}_{n+1}^{\prime}(u,v;p^{u},l)\cdot\delta\Big(\Omega_{n+1}^{2}(u,v)p^{u}p^{v}-\frac{l^{2}}{r_{n+1}(u,v)}\Big),\label{eq:DefinitionVlasovIteration}
\end{equation}
while its extension to $l=0$ is uniquely fixed by the condition that
$f_{n+1}$ is a continuous multiple of \\ $\delta\Big(\Omega_{n+1}^{2}(u,v)p^{u}p^{v}-\frac{l^{2}}{r_{n+1}(u,v)}\Big).$

\item The terms $\tilde{m}_{n}$ in (\ref{eq:RecursiveR})\textendash (\ref{eq:RecursiveOmega})
are defined by 
\begin{equation}
\tilde{m}_{n}(u,v)\doteq8\pi\int_{u}^{v}r_{n}^{2}\Omega_{n}^{-2}\Big(-\partial_{u}r_{n}\cdot(T_{vv})_{n}+\partial_{v}r_{n}\cdot(T_{uv})_{n}\Big)(u,\bar{v})\,d\bar{v},\label{eq:DefinitionMassIteration}
\end{equation}
where $(T_{\alpha\beta})_{n}$ are defined in terms of $r_{n}$, $\Omega_{n}^{2}$
and $f_{n}$ by (\ref{eq:SphericallySymmetricComponentsEnergyMomentum}).

\item  The terms $(\text{\textgreek{G}}_{\alpha\beta}^{\gamma})_{n}$
in (\ref{eq:RecursiveF}) are the Christoffel symbols of the metric
$g_{n}$ on $(\mathcal{D}_{0}^{u_{0}}\backslash\gamma_{0}^{u_{0}})\times\mathbb{S}^{2}$
in the $(u,v,\theta,\varphi)$ coordinate chart, where
\begin{equation}
g_{n}\doteq-\Omega_{n}^{2}dudv+r_{n}^{2}g_{\mathbb{S}^{2}}.\label{eq:MetricIteration}
\end{equation}

\end{itemize}

When $n=0$, we will set 
\begin{equation}
(r_{0},\Omega_{0}^{2};f_{0})\doteq(r_{AdS},\Omega_{AdS}^{2};0).
\end{equation}
\begin{rem*}
Note that the boundary condition (\ref{eq:BoundaryConditionsIteration})
implies that 
\begin{equation}
\partial_{v}r_{n+1}(u,u)=-\partial_{u}r_{n+1}(u,u).\label{eq:DerivativeRAxisIteration}
\end{equation}
\end{rem*}
Let $C_{1}\gg1$ be a large constant depending only on the initial
data $(r_{/},\Omega_{/}^{2},\bar{f}_{/})$. We will establish the
following inductive bounds for $(r_{n},\Omega_{n}^{2};f_{n})$: Assuming
that, for any $0\le k\le n$: 
\begin{align}
\sup_{\mathcal{D}_{0}^{u_{0}}}\Big(\sum_{j=0}^{2}\sum_{j_{1}+j_{2}=j}\big\{ R_{0}^{j}\big(\big|\partial_{u}^{j_{1}}\partial_{v}^{j_{2}}\log(\Omega_{k}^{2})\big|+\big|\partial_{u}^{j_{1}}\partial_{v}^{j_{2}}\log(\partial_{v}r_{k})\big|+\big|\partial_{u}^{j_{1}}\partial_{v}^{j_{2}}\log(-\partial_{u}r_{k})\big|\big)\big\}+\label{eq:UniformBoundPrevious}\\
+\sum_{j=0}^{1}\sum_{j_{1}+j_{2}=j}\big\{ R_{0}^{2+j}\big(|\partial_{u}^{j_{1}}\partial_{v}^{j_{2}}(T_{uu})_{k}|+|\partial_{u}^{j_{1}}\partial_{v}^{j_{2}}(T_{uv})_{k}|+|\partial_{u}^{j_{1}}\partial_{v}^{j_{2}}( & T_{vv})_{k}|\big)\big\}\Big)<C_{1},\nonumber 
\end{align}
we will show that (\ref{eq:UniformBoundPrevious}) also holds for
$k=n+1$ and, moreover (in the case $n\ge2$): 
\begin{equation}
\mathfrak{D}_{n+1}\le\frac{1}{2}\mathfrak{D}_{n},\label{eq:DifferenceBoundToShow}
\end{equation}
where 
\begin{align}
\mathfrak{D}_{k}\doteq\sup_{\mathcal{D}_{0}^{u_{0}}}\Bigg\{ & \sum_{j=1}^{2}\sum_{j_{1}+j_{2}=j}R_{0}^{j-1}\big|\partial_{u}^{j_{1}}\partial_{v}^{j_{2}}r_{k}-\partial_{u}^{j_{1}}\partial_{v}^{j_{2}}r_{k-1}\big|+\sum_{j=0}^{1}\sum_{j_{1}+j_{2}=j}R_{0}^{j}\big|\partial_{u}^{j_{1}}\partial_{v}^{j_{2}}\log\Omega_{k}^{2}-\partial_{u}^{j_{1}}\partial_{v}^{j_{2}}\log\Omega_{k-1}^{2}\big|\label{eq:DifferenceIterationDefinition}\\
 & +R_{0}^{2}r^{-2}\int_{0}^{+\infty}\int_{0}^{+\infty}\big(\Omega_{k-1}^{2}(p^{u}+p^{v})\big)^{2}\big|\bar{f}_{k}-\bar{f}_{k-1}\big|(\cdot;p^{u},l)\,\frac{dp^{u}}{p^{u}}ldl\Bigg\}.\nonumber 
\end{align}
\begin{rem*}
The fact that the left hand side of (\ref{eq:UniformBoundPrevious})
is indeed bounded on $\{0\}\times[0,u_{0})$ when $u_{0}$ is sufficiently
small follows from (\ref{eq:NonTrappingForAsymptoticallyAdSInitialData})
and (\ref{eq:NegativeDurNearAxis}). 
\end{rem*}
Let us first infer from the inductive bound (\ref{eq:UniformBoundPrevious})
a useful estimate for $\frac{\tilde{m}_{k}}{r_{k}^{3}}$. For any
$0\le k\le n$ and any $(u,v)\in\mathcal{D}_{0}^{u_{0}}$, the definition
(\ref{eq:DefinitionMassIteration}) of $\tilde{m}_{k}$ yields
\begin{equation}
\frac{\tilde{m}_{k}}{r_{k}^{3}}(u,v)=8\pi\frac{\int_{u}^{v}r_{k}^{2}\Omega_{k}^{-2}\Big(-\partial_{u}r_{k}\cdot(T_{vv})_{k}+\partial_{v}r_{k}\cdot(T_{uv})_{k}\Big)(u,\bar{v})\,d\bar{v}}{\big(r_{k}(u,v)\big)^{3}}.\label{eq:FirstFormulaToDifferentiate}
\end{equation}
Differentiating (\ref{eq:FirstFormulaToDifferentiate}) with respect
to $\partial_{v}$, we readily calculate that: 
\begin{align}
\partial_{v}\Big(\frac{\tilde{m}_{k}}{r_{k}^{3}}\Big)(u,v) & =8\pi\partial_{v}\Bigg\{ r_{k}^{-3}(u,v)\int_{u}^{v}r_{k}^{2}\Omega_{k}^{-2}\Big(-\partial_{u}r_{k}\cdot(T_{vv})_{k}+\partial_{v}r_{k}\cdot(T_{uv})_{k}\Big)(u,\bar{v})\,d\bar{v}\Bigg\}=\label{eq:DvOfMr^3}\\
 & =8\pi\Bigg\{-3\big(r_{k}^{-4}\partial_{v}r\big)(u,v)\cdot\int_{u}^{v}r_{k}^{2}\Omega_{k}^{-2}\Big(-\partial_{u}r_{k}\cdot(T_{vv})_{k}+\partial_{v}r_{k}\cdot(T_{uv})_{k}\Big)(u,\bar{v})\,d\bar{v}+\nonumber \\
 & \hphantom{=8\pi\Bigg\{}+r_{k}^{-1}\Omega_{k}^{-2}\Big(-\partial_{u}r_{k}\cdot(T_{vv})_{k}+\partial_{v}r_{k}\cdot(T_{uv})_{k}\Big)(u,v)\Bigg\}=\nonumber \\
 & =8\pi\Bigg\{-3\big(r_{k}^{-4}\partial_{v}r\big)(u,v)\cdot\int_{u}^{v}\frac{1}{3}\partial_{v}(r_{k}^{3})\Omega_{k}^{-2}\Big(-\frac{\partial_{u}r_{k}}{\partial_{v}r_{k}}\cdot(T_{vv})_{k}+(T_{uv})_{k}\Big)(u,\bar{v})\,d\bar{v}+\nonumber \\
 & \hphantom{=8\pi\Bigg\{}+r_{k}^{-1}\Omega_{k}^{-2}\Big(-\partial_{u}r_{k}\cdot(T_{vv})_{k}+\partial_{v}r_{k}\cdot(T_{uv})_{k}\Big)(u,v)\Bigg\}=\nonumber \\
 & =8\pi\big(r_{k}^{-4}\partial_{v}r\big)(u,v)\cdot\Bigg\{\int_{u}^{v}r_{k}^{3}\Big[\partial_{v}\Big(\Omega_{k}^{-2}\frac{-\partial_{u}r_{k}}{\partial_{v}r_{k}}\Big)\cdot(T_{vv})_{k}+(\partial_{v}\Omega_{k}^{-2})\cdot(T_{uv})_{k}\Big](u,\bar{v})\,d\bar{v}+\nonumber \\
 & \hphantom{=8\pi\big(r_{k}^{-4}\partial_{v}r\big)(u,v)\cdot\Bigg\{}+\int_{u}^{v}r_{k}^{3}\Big[\Omega_{k}^{-2}\frac{-\partial_{u}r_{k}}{\partial_{v}r_{k}}\cdot\partial_{v}(T_{vv})_{k}+\Omega_{k}^{-2}\cdot\partial_{v}(T_{uv})_{k}\Big](u,\bar{v})\,d\bar{v}\Bigg\}.\nonumber 
\end{align}
where, in passing from the fifth to the sixth line of (\ref{eq:DvOfMr^3}),
we integrated by parts once. Differentiating (\ref{eq:FirstFormulaToDifferentiate})
(and using the fact that $r_{k}(u,u)=0$), we also calculate that:
\begin{align}
\partial_{u}\Big(\frac{\tilde{m}_{k}}{r_{k}^{3}}\Big)(u,v) & =8\pi\Bigg\{-3\big(r_{k}^{-4}\partial_{u}r\big)(u,v)\cdot\int_{u}^{v}r_{k}^{2}\Omega_{k}^{-2}\Big(-\partial_{u}r_{k}\cdot(T_{vv})_{k}+\partial_{v}r_{k}\cdot(T_{uv})_{k}\Big)(u,\bar{v})\,d\bar{v}+\label{eq:DuOfMr^3Almost}\\
 & \hphantom{=8\pi\Bigg\{}+r_{k}^{-3}(u,v)\int_{u}^{v}\partial_{u}\Big[r_{k}^{2}\Omega_{k}^{-2}\Big(-\partial_{u}r_{k}\cdot(T_{vv})_{k}+\partial_{v}r_{k}\cdot(T_{uv})_{k}\Big)\Big](u,\bar{v})\,d\bar{v}\Bigg\}.\nonumber 
\end{align}
Using the relations 
\begin{align}
\big(r_{k}^{-4}\partial_{u}r & \big)(u,v)\cdot\int_{u}^{v}r_{k}^{2}\Omega_{k}^{-2}\Big(-\partial_{u}r_{k}\cdot(T_{vv})_{k}+\partial_{v}r_{k}\cdot(T_{uv})_{k}\Big)(u,\bar{v})\,d\bar{v}=\label{eq:Aaaah}\\
= & \frac{1}{3}\big(r_{k}^{-4}\partial_{u}r\big)(u,v)\cdot\int_{u}^{v}(\partial_{v}r_{k}^{3})\Omega_{k}^{-2}\Big(\frac{-\partial_{u}r_{k}}{\partial_{v}r_{k}}\cdot(T_{vv})_{k}+(T_{uv})_{k}\Big)(u,\bar{v})\,d\bar{v}=\nonumber \\
= & \frac{1}{3}\big(r_{k}^{-4}\partial_{u}r\big)(u,v)\cdot\Bigg\{ r_{k}^{3}\Omega_{k}^{-2}\Big(\frac{-\partial_{u}r_{k}}{\partial_{v}r_{k}}\cdot(T_{vv})_{k}+(T_{uv})_{k}\Big)(u,v)-\nonumber \\
 & \hphantom{\frac{1}{3}\big(r_{k}^{-4}\partial_{u}r\big)(u,v)\cdot}-\int_{u}^{v}r_{k}^{3}\Big[\partial_{v}\Big(\Omega_{k}^{-2}\frac{-\partial_{u}r_{k}}{\partial_{v}r_{k}}\Big)\cdot(T_{vv})_{k}+(\partial_{v}\Omega_{k}^{-2})\cdot(T_{uv})_{k}\Big](u,\bar{v})\,d\bar{v}-\nonumber \\
 & \hphantom{\frac{1}{3}\big(r_{k}^{-4}\partial_{u}r\big)(u,v)\cdot}-\int_{u}^{v}r_{k}^{3}\Big[\Omega_{k}^{-2}\frac{-\partial_{u}r_{k}}{\partial_{v}r_{k}}\cdot\partial_{v}(T_{vv})_{k}+\Omega_{k}^{-2}\cdot\partial_{v}(T_{uv})_{k}\Big](u,\bar{v})\,d\bar{v}\Bigg\}\nonumber 
\end{align}
and 
\begin{align}
\int_{u}^{v}\partial_{u}\Big[r_{k}^{2}\Omega_{k}^{-2} & \Big(-\partial_{u}r_{k}\cdot(T_{vv})_{k}+\partial_{v}r_{k}\cdot(T_{uv})_{k}\Big)\Big](u,\bar{v})\,d\bar{v}=\label{eq:Aaaaah}\\
= & 2\int_{u}^{v}r_{k}\partial_{u}r_{k}\Omega_{k}^{-2}\Big(-\partial_{u}r_{k}\cdot(T_{vv})_{k}+\partial_{v}r_{k}\cdot(T_{uv})_{k}\Big)(u,\bar{v})\,d\bar{v}-\nonumber \\
 & \hphantom{bla}-\int_{u}^{v}r_{k}^{2}\Big[-\partial_{u}\Big(\Omega_{k}^{-2}\partial_{u}r_{k}\Big)\cdot(T_{vv})_{k}+\partial_{u}\Big(\Omega_{k}^{-2}\partial_{v}r_{k}\Big)\cdot(T_{uv})_{k}\Big](u,\bar{v})\,d\bar{v}-\nonumber \\
 & \hphantom{bla}-\int_{u}^{v}r_{k}^{2}\Big[-\Omega_{k}^{-2}\partial_{u}r_{k}\cdot\partial_{u}(T_{vv})_{k}+\Omega_{k}^{-2}\partial_{v}r_{k}\cdot\partial_{u}(T_{uv})_{k}\Big](u,\bar{v})\,d\bar{v}\Bigg\}=\nonumber \\
= & \int_{u}^{v}\partial_{v}(r_{k}^{2})\cdot\partial_{u}r_{k}\Omega_{k}^{-2}\Big(\frac{-\partial_{u}r_{k}}{\partial_{v}r_{k}}\cdot(T_{vv})_{k}+(T_{uv})_{k}\Big)(u,\bar{v})\,d\bar{v}-\nonumber \\
 & \hphantom{bla}-\int_{u}^{v}r_{k}^{2}\Big[-\partial_{u}\Big(\Omega_{k}^{-2}\partial_{u}r_{k}\Big)\cdot(T_{vv})_{k}+\partial_{u}\Big(\Omega_{k}^{-2}\partial_{v}r_{k}\Big)\cdot(T_{uv})_{k}\Big](u,\bar{v})\,d\bar{v}-\nonumber \\
 & \hphantom{bla}-\int_{u}^{v}r_{k}^{2}\Big[-\Omega_{k}^{-2}\partial_{u}r_{k}\cdot\partial_{u}(T_{vv})_{k}+\Omega_{k}^{-2}\partial_{v}r_{k}\cdot\partial_{u}(T_{uv})_{k}\Big](u,\bar{v})\,d\bar{v}\Bigg\}=\nonumber \\
= & r_{k}^{2}\partial_{u}r_{k}\Omega_{k}^{-2}\Big(\frac{-\partial_{u}r_{k}}{\partial_{v}r_{k}}\cdot(T_{vv})_{k}+(T_{uv})_{k}\Big)(u,v)-\nonumber \\
 & \hphantom{bla}-\int_{u}^{v}r_{k}^{2}\Big[-\Big\{\partial_{u}\Big(\Omega_{k}^{-2}\partial_{u}r_{k}\Big)+\partial_{v}\Big(\Omega_{k}^{-2}\frac{(\partial_{u}r_{k})^{2}}{\partial_{v}r_{k}}\Big)\Big\}\cdot(T_{vv})_{k}+\nonumber \\
 & \hphantom{\hphantom{bla}-\int_{u}^{v}r_{k}^{2}\Big[-}+\Big\{\partial_{u}\Big(\Omega_{k}^{-2}\partial_{v}r_{k}\Big)+\partial_{v}\Big(\Omega_{k}^{-2}\partial_{u}r_{k}\Big)\Big\}\cdot(T_{uv})_{k}\Big](u,\bar{v})\,d\bar{v}-\nonumber\\
 & \hphantom{bla}-\int_{u}^{v}r_{k}^{2}\partial_{u}r_{k}\Omega_{k}^{-2}\Big(\frac{-\partial_{u}r_{k}}{\partial_{v}r_{k}}\cdot\partial_{v}(T_{vv})_{k}+\partial_{v}(T_{uv})_{k}\Big)(u,\bar{v})\,d\bar{v}-\nonumber \\
 & \hphantom{bla}-\int_{u}^{v}r_{k}^{2}\Big[-\Omega_{k}^{-2}\partial_{u}r_{k}\cdot\partial_{u}(T_{vv})_{k}+\Omega_{k}^{-2}\partial_{v}r_{k}\cdot\partial_{u}(T_{uv})_{k}\Big](u,\bar{v})\,d\bar{v}\Bigg\},\nonumber 
\end{align}
we deduce from (\ref{eq:DuOfMr^3Almost}) that 
\begin{align}
\partial_{u}\Big(\frac{\tilde{m}_{k}}{r_{k}^{3}}\Big)(u,v) & =-8\pi\Bigg\{\frac{1}{r_{k}^{3}(u,v)}\cdot\Big(\int_{u}^{v}r^{2}\Big(\mathcal{A}_{1}^{(v)}(T_{vv})_{k}+\mathcal{A}_{1}^{(u)}(T_{uv})_{k}+\mathcal{A}_{2}^{(v)}\partial_{v}(T_{vv})_{k}+\label{eq:DuOfMr^3}\\
 & \hphantom{=-8\pi\Bigg\{\frac{1}{r_{k}^{3}(u,v)}\cdot\Big(\int_{u}^{v}r^{2}\Big(}+\mathcal{A}_{2}^{(u)}\partial_{v}(T_{uv})_{k}+\mathcal{A}_{3}^{(v)}\partial_{u}(T_{vv})_{k}+\mathcal{A}_{3}^{(u)}\partial_{u}(T_{uv})_{k}\Big)(u,\bar{v})\,d\bar{v}\Big)+\nonumber\\
 & \hphantom{=8\pi\Bigg\{}+\partial_{u}r(u,v)\frac{1}{r_{k}^{4}(u,v)}\cdot\Big(\int_{u}^{v}r^{3}\Big(\mathcal{B}_{1}^{(v)}(T_{vv})_{k}+\mathcal{B}_{1}^{(u)}(T_{uv})_{k}+\nonumber \\
 & \hphantom{\hphantom{=8\pi\Bigg\{}+\partial_{u}r(u,v)\frac{1}{r_{k}^{4}(u,v)}\cdot\Big(\int_{u}^{v}}+\mathcal{B}_{2}^{(v)}\partial_{v}(T_{vv})_{k}+\mathcal{B}_{2}^{(u)}\partial_{v}(T_{uv})_{k}\Big)(u,\bar{v})\,d\bar{v}\Big)\Bigg\},\nonumber
\end{align}
where 
\begin{align*}
\mathcal{A}_{1}^{(v)}\doteq & -\partial_{u}\Big(\Omega_{k}^{-2}\partial_{u}r_{k}\Big)-\partial_{v}\Big(\Omega_{k}^{-2}\frac{(\partial_{u}r_{k})^{2}}{\partial_{v}r_{k}}\Big),\\
\mathcal{A}_{1}^{(u)}\doteq & \partial_{u}\Big(\Omega_{k}^{-2}\partial_{v}r_{k}\Big)+\partial_{v}\Big(\Omega_{k}^{-2}\partial_{u}r_{k}\Big),\\
\mathcal{A}_{2}^{(v)}\doteq & -\Omega_{k}^{-2}\frac{(\partial_{u}r_{k})^{2}}{\partial_{v}r_{k}},\\
\mathcal{A}_{2}^{(u)}\doteq & \Omega_{k}^{-2}\partial_{u}r_{k},\\
\mathcal{A}_{3}^{(v)}\doteq & -\Omega_{k}^{-2}\partial_{u}r_{k},\\
\mathcal{A}_{3}^{(u)}\doteq & \Omega_{k}^{-2}\partial_{v}r_{k}
\end{align*}
and 
\begin{align*}
\mathcal{B}_{1}^{(v)}\doteq & \partial_{v}\Big(\Omega_{k}^{-2}\frac{\partial_{u}r_{k}}{\partial_{v}r_{k}}\Big),\\
\mathcal{B}_{1}^{(u)}\doteq & -(\partial_{v}\Omega_{k}^{-2}),\\
\mathcal{B}_{2}^{(v)}\doteq & \Omega_{k}^{-2}\frac{\partial_{u}r_{k}}{\partial_{v}r_{k}},\\
\mathcal{B}_{2}^{(u)}\doteq & -\Omega_{k}^{-2}
\end{align*}
(note the cancellation of the ``bare'' terms $r_{k}^{2}\partial_{u}r_{k}\Omega_{k}^{-2}\Big(\frac{-\partial_{u}r_{k}}{\partial_{v}r_{k}}\cdot(T_{vv})_{k}+(T_{uv})_{k}\Big)(u,v)$
from (\ref{eq:Aaaah}) and (\ref{eq:Aaaaah}) in (\ref{eq:DuOfMr^3})). 

Using the inductive bound (\ref{eq:UniformBoundPrevious}) (and the
fact that $v-u\le e^{C_{1}}r_{k}(u,v)$, as a consequence of (\ref{eq:UniformBoundPrevious})),
we immediately infer from the relations (\ref{eq:DvOfMr^3}) and (\ref{eq:DuOfMr^3})
that there exists some constant $C(C_{1})>0$ depending only on $C_{1}$
such that, for any $0\le k\le n$, we can estimate: 
\begin{equation}
\sup_{\mathcal{D}_{0}^{u_{0}}}\sum_{j=0}^{1}\sum_{j_{1}+j_{2}=j}\Big|R_{0}^{2+j}\partial_{u}^{j_{1}}\partial_{v}^{j_{2}}(\frac{\tilde{m}_{k}}{r_{k}^{3}})\Big|\le C(C_{1}).\label{eq:InductiveM/R}
\end{equation}

Differentiating the boundary condition (\ref{eq:BoundaryConditionsIteration})
on $\gamma_{0}^{u_{0}}$ in the direction $\partial_{u}+\partial_{v}$
(which is tangential to $\gamma_{0}^{u_{0}}$) and using the relations
(\ref{eq:DefinitionMassIteration}) and (\ref{eq:RecursiveR}) (combined
with the bounds (\ref{eq:UniformBoundPrevious}) and (\ref{eq:InductiveM/R})),
we infer the following higher order relations for $r_{n+1}$ and $\Omega_{n+1}^{2}$
on $\gamma_{0}^{u_{0}}$: 
\begin{align}
\partial_{u}^{2}r_{n+1}(u,u)+\partial_{v}^{2}r_{n+1}(u,u) & =0,\label{eq:HigherOrderBoundaryConditionsRAxisIteration}\\
\partial_{u}^{3}r_{n+1}(u,u)+\partial_{v}^{3}r_{n+1}(u,u) & =0\nonumber 
\end{align}
and 
\begin{equation}
\partial_{v}^{2}\Omega_{n+1}^{2}(u,u)=\partial_{u}^{2}\Omega_{n+1}^{2}(u,u).\label{eq:HigherOrderBoundaryConditionsOmegaAxisIteration}
\end{equation}

Integrating (\ref{eq:RecursiveR})\textendash (\ref{eq:RecursiveOmega})
in $u,v$ and using the initial condition (\ref{eq:initialconditionsiteration})
at $u=0$, the boundary conditions (\ref{eq:BoundaryConditionsIteration})
(note also (\ref{eq:DerivativeRAxisIteration})) on the axis $\gamma_{0}^{u_{0}}$,
as well as the bounds (\ref{eq:UniformBoundPrevious}) and (\ref{eq:InductiveM/R})
for $k=n$, we can readily bound: 
\begin{align}
\sup_{\mathcal{D}_{0}^{u_{0}}}\Big(\big|\log(-\partial_{u}r_{n+1})\big| & +\big|\log\partial_{v}r_{n+1}\big|+\log\Omega_{n+1}^{2}+R_{0}|\partial_{u}\log\Omega_{n+1}^{2}|+R_{0}|\partial_{v}\log\Omega_{n+1}^{2}|\Big)\label{eq:BoundNextSepIteration}\\
 & \le C(C_{1})R_{0}^{-1}u_{0}+10\sup_{v\in[0,u_{0})}\Big(\big|\log\partial_{v}r_{/}\big|(v)+R_{0}|\partial_{v}\log\Omega_{/}^{2}|(v)+|\log\Omega_{/}^{2}|(v)\Big)\nonumber 
\end{align}
for some $C(C_{1})>0$ depending only on $C_{1}$. Repeating the same
procedure after commuting (\ref{eq:RecursiveR})\textendash (\ref{eq:RecursiveOmega})
with $\partial_{u},\partial_{v}$ and using (\ref{eq:HigherOrderBoundaryConditionsRAxisIteration})\textendash (\ref{eq:HigherOrderBoundaryConditionsOmegaAxisIteration}),
we also infer:
\begin{align}
\sup_{\mathcal{D}_{0}^{u_{0}}}\Big(\sum_{j_{1}+j_{2}=1}^{2} & R_{0}^{-1+j_{1}+j_{2}}|\partial_{u}^{j_{1}}\partial_{v}^{j_{2}}r_{n+1}\big|+\sum_{j_{1}+j_{2}=1}^{2}R_{0}^{j_{1}+j_{2}}\big|\partial_{u}^{j_{1}}\partial_{v}^{j_{2}}\Omega_{k}^{2}|\Big)\label{eq:BoundNextSepIterationHigher}\\
 & \le C(C_{1})R_{0}^{-1}u_{0}+10\sup_{v\in[0,u_{0})}\Big(\sum_{j=0}^{1}R_{0}^{j}\big|\partial_{v}^{j}\log\partial_{v}r_{/}\big|(v)+\sum_{j=0}^{2}R_{0}^{j}|\partial_{v}^{j}\log\Omega_{/}^{2}|(v)\Big).\nonumber 
\end{align}
On the other hand, after subtracting from (\ref{eq:RecursiveR})\textendash (\ref{eq:RecursiveOmega})
the same equations with $n-1$ in place of $n$ (assuming that $n\ge2$),
and similarly integrating in $u,v$ using (\ref{eq:initialconditionsiteration}),
(\ref{eq:BoundaryConditionsIteration}) and the bounds (\ref{eq:UniformBoundPrevious})
and (\ref{eq:InductiveM/R}) for $k=n,n-1$, we infer (for a possibly
larger $C(C_{1})$): 
\begin{equation}
\overline{\mathfrak{D}}_{n+1}\le C(C_{1})\frac{u_{0}}{R_{0}}\overline{\mathfrak{D}}_{n},\label{eq:EstimateForDifferenceEasy}
\end{equation}
wher we have set: 
\begin{align}
\overline{\mathfrak{D}}_{k}\doteq\sup_{\mathcal{D}_{0}^{u_{0}}}\Big( & \big|\log(-\partial_{u}r_{k})-\log(-\partial_{u}r_{k-1})\big|+\big|\log\partial_{v}r_{k}-\log\partial_{v}r_{k-1}\big|+\big|\log\Omega_{k}^{2}-\log\Omega_{k-1}^{2}\big|+\\
 & +R_{0}|\partial_{u}\log\Omega_{k}^{2}-\partial_{u}\log\Omega_{k-1}^{2}|+R_{0}|\partial_{v}\log\Omega_{k}^{2}-\partial_{v}\log\Omega_{k-1}^{2}|\Big).\nonumber 
\end{align}

Since $f_{n}$ solves the Vlasov equation (\ref{eq:DefinitionVlasovIteration}),
the conservation of the energy momentum tensor $(T_{\alpha\beta})_{n}$
of $f_{n}$ (i.\,e.~(\ref{eq:ConservationEnergyMomentum})) implies
that 
\begin{align}
\partial_{u}(r_{n}(T_{uv})_{n}) & =-\partial_{v}(r_{n}(T_{uu})_{n})-(\partial_{v}r_{n})(T_{uu})_{n}-(\partial_{u}r_{n})(T_{uv})_{n}+\big(r_{n}\partial_{u}\log\Omega_{n}^{2}-2\partial_{u}r_{n}\big)(T_{uv})_{n},\label{eq:DuTFromConservation}\\
\partial_{v}(r_{n}(T_{uv})_{n}) & =-\partial_{u}(r_{n}(T_{vv})_{n})-(\partial_{u}r_{n})(T_{vv})_{n}-(\partial_{v}r_{n})(T_{uv})_{n}+\big(r_{n}\partial_{v}\log\Omega_{n}^{2}-2\partial_{v}r_{n}\big)(T_{uv})_{n}.\label{eq:DvTFromConservation}
\end{align}
After commuting (\ref{eq:RecursiveR}) with $\partial_{v}^{2}$ and
$\partial_{u}\partial_{v}$, replacing the derivatives of $\partial_{v}(rT_{uv})$
with the corresponding derivatives of the expression (\ref{eq:DvTFromConservation})
and integrating in $u$, we infer, using (\ref{eq:UniformBoundPrevious}),
(\ref{eq:InductiveM/R}) and the trivial bounds
\[
\sup_{\mathcal{D}_{0}^{u_{0}}}r_{n}\le\sup_{u\in[0,u_{0}]}\int_{u}^{u_{0}}\partial_{v}r_{n}(u,v)\,dv\le C(C_{1})u_{0},
\]
(following from (\ref{eq:UniformBoundPrevious})), noting also that
$\partial_{u}[r_{n}\partial(T_{vv})_{n}]$ gives only boundary terms
after integration in $u$, we can readily bound: 
\begin{equation}
\sup_{\mathcal{D}_{0}^{u_{0}}}\Big(\sum_{j_{1}+j_{2}=2}R_{0}^{j_{1}+j_{2}}\big|\partial_{u}^{j_{1}}\partial_{v}^{j_{2}+1}r_{n+1}\big|\Big)\le C(C_{1})R_{0}^{-1}u_{0}+10\sup_{v\in[0,u_{0})}\sum_{j=0}^{2}R_{0}^{j}\Big(\big|\partial_{v}^{j}\log\partial_{v}r_{/}\big|(v)+|\partial_{v}^{j}\log\Omega_{/}^{2}|(v)\Big).\label{eq:FirstHigherBound}
\end{equation}
Performing the same procedure after commuting (\ref{eq:RecursiveR})
with $\partial_{u}^{2}$ and $\partial_{u}\partial_{v}$ and integrating
in $v$ starting from the axis $\gamma_{0}^{u_{0}}$ (using the boundary
conditions (\ref{eq:HigherOrderBoundaryConditionsRAxisIteration})\textendash (\ref{eq:HigherOrderBoundaryConditionsOmegaAxisIteration})
and the bounds (\ref{eq:BoundNextSepIterationHigher})\textendash (\ref{eq:FirstHigherBound})),
adding the resulting estimate to (\ref{eq:FirstHigherBound}), we
finally obtain:

\begin{equation}
\sup_{\mathcal{D}_{0}^{u_{0}}}\Big(\sum_{j_{1}+j_{2}=3}R_{0}^{j_{1}+j_{2}-1}\big|\partial_{u}^{j_{1}}\partial_{v}^{j_{2}}r_{n+1}\big|\Big)\le C(C_{1})R_{0}^{-1}u_{0}+10\sup_{v\in[0,u_{0})}\sum_{j=0}^{2}R_{0}^{j}\Big(\big|\partial_{v}^{j}\log\partial_{v}r_{/}\big|(v)+|\partial_{v}^{j}\log\Omega_{/}^{2}|(v)\Big).\label{eq:BoundNextSepIterationHigher-1}
\end{equation}

Similarly, subtracting from (\ref{eq:RecursiveR}) the same equation
with $n-1$ in place of $n$ (assuming that $n\ge2$) after commuting
once with $\partial_{u}$, $\partial_{v}$, we infer after integrating
in $u,v$ (using the bound (\ref{eq:UniformBoundPrevious}) and the
boundary condition (\ref{eq:HigherOrderBoundaryConditionsRAxisIteration})):
\begin{equation}
\sup_{\mathcal{D}_{0}^{u_{0}}}\sum_{j_{1}+j_{2}=2}R_{0}^{j_{1}+j_{2}-1}\big|\partial_{u}^{j_{1}}\partial_{v}^{j_{2}}r_{n+1}-\partial_{u}^{j_{1}}\partial_{v}^{j_{2}}r_{n}\big|\le C(C_{1})R_{0}^{-1}u_{0}\mathfrak{D}_{n}.\label{eq:DifferenceHigherOrderDerivativesInR}
\end{equation}

Let $\gamma:[0,a)\rightarrow\mathcal{D}_{0}^{u_{0}}$, $\gamma(0)\in\{u=0\}$,
be a future directed, null geodesic of the metric $g_{n}$ (defined
by \ref{eq:MetricIteration}), such that $(\gamma;\dot{\gamma})$
lies in the support of the Vlasov field $f_{n+1}$ and has angular
momentum $l>0$. The relations (\ref{eq:NullGeodesicsSphericalSymmetry})
for $\gamma$ yield that, with respect to the affine parameter $s$:
\begin{equation}
\frac{d}{ds}\Big(\Omega_{n}^{2}(\gamma(s))(\dot{\gamma}^{u}(s)+\dot{\gamma}^{v}(s))\Big)=\Big((\partial_{u}+\partial_{v})\log\Omega_{n}^{2}-2\frac{\partial_{v}r_{n}+\partial_{u}r_{n}}{r_{n}}\Big)\frac{l^{2}}{r_{n}^{2}}\Bigg|_{\gamma(s)}.\label{eq:NullGeodesicForEnergy}
\end{equation}
In view of the relation (\ref{eq:NullShellAngularMomentum}) for $\dot{\gamma}$
, we can bound: 
\begin{equation}
\frac{l^{2}}{r_{n}^{2}(\gamma(s))}=\Omega_{n}^{2}(\gamma(s))\dot{\gamma}^{u}(s)\dot{\gamma}^{v}(s)\le\frac{1}{2}\Omega_{n}^{-2}(\gamma(s))\cdot\Big(\Omega_{n}^{2}(\gamma(s))(\dot{\gamma}^{u}(s)+\dot{\gamma}^{v}(s))\Big)^{2}.\label{eq:BoundFromNullness}
\end{equation}
Moreover, in view of the boundary condition (\ref{eq:DerivativeRAxisIteration})
on $\gamma_{0}^{u_{0}}$, we can bound for some constant $C$ depending
only on $V_{*}$
\begin{equation}
\sup_{\mathcal{D}_{0}^{u_{0}}}\Big(\Big|\frac{\partial_{v}r_{n}+\partial_{u}r_{n}}{r_{n}}\Big|\Big)\le C\Big|\frac{\sup_{\mathcal{D}_{0}^{u_{0}}}(\partial_{v}-\partial_{u})(\partial_{v}r_{n}+\partial_{u}r_{n})}{\inf_{\mathcal{D}_{0}^{u_{0}}}(\partial_{v}-\partial_{u})r_{n}}\Big|.\label{eq:BoundOFd_r/r}
\end{equation}
Therefore, using (\ref{eq:UniformBoundPrevious}), (\ref{eq:BoundFromNullness})
and (\ref{eq:BoundOFd_r/r}), the relation (\ref{eq:NullGeodesicForEnergy})
yields the estimate
\begin{equation}
\frac{d}{ds}\Big(\Omega_{n}^{2}(\gamma(s))(\dot{\gamma}^{u}(s)+\dot{\gamma}^{v}(s))\Big)\le C(C_{1})\Big(\Omega_{n}^{2}(\gamma(s))(\dot{\gamma}^{u}(s)+\dot{\gamma}^{v}(s))\Big)^{2}.\label{eq:BeforeGronwalIteration}
\end{equation}
Using the fact that, along $\gamma$, we have 
\[
(\dot{\gamma}_{u}(s)+\dot{\gamma}_{v}(s))ds\sim(du+dv)|_{\gamma},
\]
we deduce from (\ref{eq:BeforeGronwalIteration}), after applying
Gronwall's inequality and using (\ref{eq:UniformBoundPrevious}),
that:
\begin{equation}
\sup_{\gamma}\Big(\Omega_{n}^{2}(\dot{\gamma}^{u}+\dot{\gamma}^{v})\Big)\le\Omega_{n}^{2}(\dot{\gamma}^{u}+\dot{\gamma}^{v})(0)\cdot\exp\Big(C(C_{1})R_{0}^{-1}u_{0}\Big).\label{eq:BoundForEnergy}
\end{equation}

The bound (\ref{eq:BoundForEnergy}) implies, in view of the bound
(\ref{eq:BoundedSupportDefinition}) (with $C_{0}$ in place of $C$)
for the support of the initial data $\bar{f}_{/}$ (using also (\ref{eq:NullShellAngularMomentum}))
that 
\begin{equation}
supp(f_{n+1})\subset\Big\{\Omega_{n}^{2}(p^{u}+p^{v})\le C_{0}\exp\Big(C(C_{1})R_{0}^{-1}u_{0}\Big)\Big\}\cap\Big\{\frac{l}{r_{n}}\le C_{0}\exp\Big(C(C_{1})R_{0}^{-1}u_{0}\Big)\Big\}.\label{eq:BoundSuppF_n}
\end{equation}
Thus, from (\ref{eq:SphericallySymmetricComponentsEnergyMomentum})
and (\ref{eq:BoundSuppF_n}) (using also the fact that $||\bar{f}_{n+1}||_{\infty}=||\bar{f}_{/}||_{\infty}$)
we deduce that, for some constant $C>0$ depending only on $V_{*}$:
\begin{equation}
\sup_{\mathcal{D}_{0}^{u_{0}}}\Big((T_{uu})_{n+1}+(T_{uv})_{n+1}+(T_{vv})_{n+1}\Big)\le C\cdot C_{0}^{5}||\bar{f}_{/}||_{\infty}\exp\Big(C(C_{1})R_{0}^{-1}u_{0}\Big).\label{eq:BoundEnergyMomentumTensorsIteration}
\end{equation}

Let $(x_{n}^{0},\ldots,x_{n}^{3})$ be the Cartesian coordinates associated
to the metric $g_{n}$, defined by the relations (\ref{eq:CartesianCoordinates}).
The bound (\ref{eq:UniformBoundPrevious}) implies that the coordinate
transformation $(u,v,\theta,\varphi)\rightarrow(x_{n}^{0},\ldots,x_{n}^{3})$
is of $C^{2,1}$ regularity. Furthermore, in view of the expression
(\ref{eq:CartesianMetric}) of the metric $g_{n}$ in the Cartesian
coordinates, the bound (\ref{eq:UniformBoundPrevious}) implies that
the Cartesian Christoffel symbols of $g_{n}$ satisfy 
\begin{equation}
||(\text{\textgreek{G}}_{n}^{Cart})_{\alpha\beta}^{\gamma}||_{C^{0,1}}\le C(C_{1}).\label{eq:BoundCartesianChristoffelSymbols}
\end{equation}
Differentiating the Vlasov equation (\ref{eq:RecursiveF}) for $f_{n+1}$
with respect to the Cartesian coordinates and using (\ref{eq:BoundCartesianChristoffelSymbols})
and (\ref{eq:BoundSuppF_n}), we can therefore readily estimate: 
\begin{equation}
||\partial_{x_{n}^{a}}\bar{f}_{n+1}||_{\infty}\le||\partial_{x_{n}^{a}}\bar{f}_{/}||_{\infty}+C(C_{1})R_{0}^{-2}u_{0}.\label{eq:BoundDerivativeF_n}
\end{equation}
Using (\ref{eq:BoundSuppF_n}) and (\ref{eq:BoundDerivativeF_n}),
as well as (\ref{eq:BoundNextSepIterationHigher}) and (\ref{eq:BoundEnergyMomentumTensorsIteration})for
$n$ in place of $n+1$, we can estimate for the Cartesian components
of the energy momentum tensor of $f_{n+1}$ for some constant $C>0$
depending only on $\Lambda$: 
\begin{align}
R_{0}||\partial_{x_{n}^{a}}(T_{\beta\gamma})_{n+1}||_{\infty}\le CC_{0}^{5}\Big(1+\Big\{\sum_{j=0}^{1}R_{0}^{j}||\partial_{v}^{j}\log\partial_{v}r_{/}||_{\infty}+ & \sum_{j=0}^{2}R_{0}^{j}||\partial_{v}^{j}\log\Omega_{/}^{2}||_{\infty}\Big\}+C(C_{1})R_{0}^{-1}u_{0}\Big)\label{eq:BoundCartesianDerivativesT}\\
 & \times\Big(||\bar{f}_{/}||_{\infty}+R_{0}||\partial_{x_{n}^{a}}\bar{f}_{/}||_{\infty}+C(C_{1})R_{0}^{-1}u_{0}\Big)\exp\Big(C(C_{1})R_{0}^{-1}u_{0}\Big).\nonumber 
\end{align}
Using and (\ref{eq:BoundEnergyMomentumTensorsIteration})for $n$
in place of $n+1$, as well as the relations (\ref{eq:CartesianCoordinates})
defining the Cartesian coordinates, from (\ref{eq:BoundCartesianDerivativesT})
we obtain the folowing bound for the first order derivatives of $T_{n+1}$
in the double null coordinate system (for a possibly larger constant
$C$):
\begin{align}
\sup_{\mathcal{D}_{0}^{u_{0}}}\sum_{j=0}^{1}\sum_{j_{1}+j_{2}=j}R_{0}^{j+2} & \big(|\partial_{u}^{j_{1}}\partial_{v}^{j_{2}}(T_{uu})_{n+1}|+|\partial_{u}^{j_{1}}\partial_{v}^{j_{2}}(T_{uv})_{n+1}|+|\partial_{u}^{j_{1}}\partial_{v}^{j_{2}}(T_{vv})_{n+1}|\big)\le\label{eq:BoundUVDerivativesT}\\
\le & CC_{0}^{5}\Big(1+\Big\{\sum_{j=0}^{1}R_{0}^{j}||\partial_{v}^{j}\log\partial_{v}r_{/}||_{\infty}+\sum_{j=0}^{2}R_{0}^{j}||\partial_{v}^{j}\log\Omega_{/}^{2}||_{\infty}\Big\}+C(C_{1})R_{0}^{-1}u_{0}\Big)\times\nonumber \\
 & \hphantom{CC_{0}^{5}\Big(}\times\Big(||\bar{f}_{/}||_{\infty}+R_{0}||\partial_{x_{n}^{a}}\bar{f}_{/}||_{\infty}+C(C_{1})R_{0}^{-1}u_{0}\Big)\exp\Big(C(C_{1})R_{0}^{-1}u_{0}\Big).\nonumber
\end{align}

For $n\ge2$, let $\gamma_{(n)},\gamma_{(n-1)}:(-s_{1},s_{1})\rightarrow\mathcal{D}_{0}^{u_{0}}$
be two future directed curves such that: 

\begin{itemize}

\item $\gamma_{(i)}$ is a null geodesic for $g_{i}$, $i=n,n-1$,

\item $\gamma_{(n)}(0)=\gamma_{(n-1)}(0)$ and $\dot{\gamma}_{(n)}(0)=\dot{\gamma}_{(n-1)}(0)$,

\item $(\gamma_{(i)};\dot{\gamma}_{(i)})$ lies in the support of
the Vlasov field $f_{i+1}$, $i=n,n-1$.

\end{itemize}

By subtracting from equation (\ref{eq:GeodesicFlow}) for $\gamma_{(n)}$
the same equation for $\gamma_{(n-1)}$,working in the Cartesian coordinate
systems $(x_{n}^{0},\ldots,x_{n}^{3})$, $(x_{n-1}^{0},\ldots,x_{n-1}^{3})$
for $g_{n}$, $g_{n-1}$, we can estimate for some absolute constant
$C>0$: 
\begin{align}
\frac{d}{ds}\big|\gamma_{(n)}-\gamma_{(n-1)}\big|(s) & \le\big|\dot{\gamma}_{(n)}-\dot{\gamma}_{(n-1)}\big|(s),\label{eq:DifferenceGamma}\\
\frac{d}{ds}\big|\dot{\gamma}_{(n)}-\dot{\gamma}_{(n-1)}\big|(s)\le & CC\big(1+||\dot{\gamma}_{(n)}||_{\infty}+||\dot{\gamma}_{(n-1)}||_{\infty}\big)^{2}||(\text{\textgreek{G}}_{n}^{Cart})_{\beta\gamma}^{\delta}-(\text{\textgreek{G}}_{n-1}^{Cart})_{\beta\gamma}^{\delta}||_{\infty}+\label{eq:DifferenceGammaDot}\\
 & +C\big(1+||\dot{\gamma}_{(n)}||_{\infty}+||\dot{\gamma}_{(n-1)}||_{\infty}\big)^{2}||\partial_{x^{\alpha}}(\text{\textgreek{G}}_{n}^{Cart})_{\beta\gamma}^{\delta}||_{\infty}\cdot|\gamma_{(n)}-\gamma_{(n-1)}|(s)+\\
 & +C\big(1+||\dot{\gamma}_{(n)}||_{\infty}+||\dot{\gamma}_{(n-1)}||_{\infty}\big)\big(||(\text{\textgreek{G}}_{n}^{Cart})_{\beta\gamma}^{\delta}||_{\infty}+||(\text{\textgreek{G}}_{n-1}^{Cart})_{\beta\gamma}^{\delta}||_{\infty}\big)\cdot|\dot{\gamma}_{(n)}-\dot{\gamma}_{(n-1)}|(s),\nonumber 
\end{align}
where $|\cdot|$ denotes the Euclidean norm in the Cartesian coordinates.
Using (\ref{eq:UniformBoundPrevious}) and (\ref{eq:BoundSuppF_n})
for $n,n-1$ in place of $n+1$, as well as (\ref{eq:BoundForEnergy})
and the expression (\ref{eq:CartesianMetric}) for $g_{n},g_{n-1}$
in Cartesian coordinates, from (\ref{eq:DifferenceGamma})\textendash (\ref{eq:DifferenceGammaDot})
(and the definition (\ref{eq:DifferenceIterationDefinition}) of $\mathfrak{D}_{n}$)
we infer that: 
\begin{equation}
\frac{||\dot{\gamma}_{(n)}-\dot{\gamma}_{(n-1)}||_{\infty}+R_{0}^{-1}||\gamma_{(n)}-\gamma_{(n-1)}||_{\infty}}{|\dot{\gamma}_{n}(0)|+|\dot{\gamma}_{n-1}(0)|}\le C(C_{1})R_{0}^{-1}u_{0}\mathfrak{D}_{n}.\label{eq:BoundForDifferenceGeodesicFlow}
\end{equation}
The bound (\ref{eq:BoundForDifferenceGeodesicFlow}) readily implies
the following estimate for the Vlasov fields $f_{n+1},f_{n}$ (using
also the mean value theorem for $\bar{f}_{/}$, as well as the condition
(\ref{eq:BoundedSupportDefinition})):
\begin{equation}
R_{0}^{2}\sup_{\mathcal{D}_{0}^{u_{0}}}r^{-2}\int_{0}^{+\infty}\int_{0}^{+\infty}\big(\Omega_{n}^{2}(p^{u}+p^{v})\big)^{2}\big|\bar{f}_{n+1}-\bar{f}_{n}\big|(\cdot;p^{u},l)\,\frac{dp^{u}}{p^{u}}ldl\le C(C_{1})R_{0}^{-1}u_{0}\mathfrak{D}_{n}\cdot\big(||\partial_{x^{a}}\bar{f}_{/}||_{\infty}+||\partial_{p^{a}}\bar{f}_{/}||_{\infty}\big).\label{eq:BoundDifferenceVlasovFields}
\end{equation}
Note that the boundedness of the right hand side of \ref{eq:BoundDifferenceVlasovFields}
follows from the assumption that $(r_{/},\Omega_{/}^{2},\bar{f}_{/})$
satisfy Condition 2 of Definition \ref{def:CompatibilityCondition}.

Provided $C_{1}>0$ has been fixed large in terms of the initial data
$(r_{/},\Omega_{/}^{2},\bar{f}_{/})$ and $u_{0}$ has been chosen
small enough in terms of $C_{1}$, the bounds (\ref{eq:BoundNextSepIteration}),
(\ref{eq:BoundNextSepIterationHigher}), (\ref{eq:BoundNextSepIterationHigher-1}),
(\ref{eq:BoundEnergyMomentumTensorsIteration}) and (\ref{eq:BoundUVDerivativesT})
readily yield (\ref{eq:UniformBoundPrevious}) for $k=n+1$. Similarly,
the bounds (\ref{eq:EstimateForDifferenceEasy}), (\ref{eq:DifferenceHigherOrderDerivativesInR})
and (\ref{eq:BoundDifferenceVlasovFields}) yield (\ref{eq:DifferenceBoundToShow}).

Having established that (\ref{eq:UniformBoundPrevious}) and (\ref{eq:DifferenceBoundToShow})
hold for all $n\ge2$, it readily follows (using standard arguments)
that $(r_{n},\Omega_{n}^{2};\bar{f}_{n})$ converges in the $C^{1}(\mathcal{D}_{0}^{u_{0}})\times C^{1}(\mathcal{D}_{0}^{u_{0}})\times C^{1}(\mathcal{D}_{0}^{u_{0}},L_{p,l}^{1})$
topology (where $L_{p,l}^{1}$ is a weighted $L^{1}$ norm in the
momentum variables for $\bar{f}$) to a solution $(r,\Omega^{2};f)$
of (\ref{eq:RequationFinal})\textendash (\ref{NullShellFinal}) on
$\mathcal{D}_{0}^{u_{0}}$ (the fact that the constraint equations
(\ref{eq:ConstraintVFinal})\textendash (\ref{eq:ConstraintUFinal})
are also satisfied follows readily from the fact that $(r_{/},\Omega_{/}^{2},\bar{f}_{/})$
was assumed to satisfy (\ref{eq:ConstraintVFinal})). In view of (\ref{eq:UniformBoundPrevious}),
$(r,\Omega^{2},f)$ satisfies the estimate 
\begin{equation}
||\log\Omega^{2}||_{C^{1,1}(\mathcal{D}_{0}^{u_{0}})}+||\log(\partial_{v}r)||_{C^{1,1}(\mathcal{D}_{0}^{u_{0}})}+||\log(-\partial_{u}r)||_{C^{1,1}(\mathcal{D}_{0}^{u_{0}})}+||T_{\alpha\beta}||_{C^{0,1}(\mathcal{D}_{0}^{u_{0}})}\le C_{1}.\label{eq:UniformBoundPrevious-1}
\end{equation}
Let us also remark that the bound (\ref{eq:BoundSuppF_n}) also implies
the following bound for the support of the Vlasov field $f$: 
\begin{equation}
supp(f)\subset\Big\{\Omega^{2}(p^{u}+p^{v})\le C_{0}\exp\Big(C(C_{1})R_{0}^{-1}u_{0}\Big)\Big\}\cap\Big\{\frac{l}{r}\le C_{0}\exp\Big(C(C_{1})R_{0}^{-1}u_{0}\Big)\Big\}.\label{eq:BoundSuppFNearAxis}
\end{equation}

Using the relations (\ref{eq:DerivativeTildeUMass}) and (\ref{eq:DerivativeTildeVMass})
for $\partial_{u}\tilde{m}$ and $\partial_{v}\tilde{m}$, respectively,
as well as the bound (\ref{eq:UniformBoundPrevious-1}), we can readily
estimate for any $(u,v)\in\mathcal{D}_{0}^{u_{0}}$: 
\begin{align}
\Big|\partial_{v}(\frac{\tilde{m}}{r^{3}})\Big|(u,v) & =8\pi\Big|\partial_{v}\Big(\frac{\int_{u}^{v}r^{2}\Omega^{-2}\big((-\partial_{u}r)T_{vv}+\partial_{v}rT_{uv}\big)(u,\bar{v})\,d\bar{v}}{r^{3}(u,v)}\Big)\Big|=\label{eq:DvDerivativeM.r^3-1}\\
 & =8\pi\Bigg|\frac{r^{2}\Omega^{-2}\big((-\partial_{u}r)T_{vv}+\partial_{v}rT_{uv}\big)(u,v)}{r^{3}(u,v)}-\nonumber \\
 & \hphantom{=8\pi\Bigg|}-3\frac{\int_{u}^{v}r^{2}\Omega^{-2}\big((-\partial_{u}r)T_{vv}+\partial_{v}rT_{uv}\big)(u,\bar{v})\,d\bar{v}}{r^{4}(u,v)}\partial_{v}r(u,v)\Bigg|=\nonumber \\
 & =8\pi\Bigg|\frac{r^{2}\Omega^{-2}\big((-\partial_{u}r)T_{vv}+\partial_{v}rT_{uv}\big)(u,v)}{r^{3}(u,v)}-\nonumber \\
 & \hphantom{=8\pi\Bigg|}-\frac{\int_{u}^{v}\partial_{v}(r^{3})\Omega^{-2}\big(\frac{(-\partial_{u}r)}{\partial_{v}r}T_{vv}+T_{uv}\big)(u,\bar{v})\,d\bar{v}}{r^{4}(u,v)}\partial_{v}r(u,v)\Bigg|=\nonumber \\
 & =8\pi\Bigg|\frac{\int_{u}^{v}r^{3}\partial_{v}\Big[\Omega^{-2}\big(\frac{(-\partial_{u}r)}{\partial_{v}r}T_{vv}+T_{uv}\big)\Big](u,\bar{v})\,d\bar{v}}{r^{4}(u,v)}\partial_{v}r(u,v)\Bigg|\le\nonumber \\
 & \le C(C_{1})\nonumber 
\end{align}
Similarly, 
\begin{equation}
\Big|\partial_{u}(\frac{\tilde{m}}{r^{3}})\Big|(u,v)\le C(C_{1}).\label{eq:DuDerivativeM.r^3-1}
\end{equation}

The boundary condition $r|_{\gamma_{0}^{u_{0}}}=0$ on the axis $\gamma_{0}^{u_{0}}$
implies that
\begin{equation}
(\partial_{u}+\partial_{v})r|_{\gamma_{0}^{u_{0}}}=0.\label{eq:ZeroTangentialDerivativeR-1}
\end{equation}
In view of (\ref{eq:ZeroTangentialDerivativeR-1}), the bound (\ref{eq:UniformBoundPrevious-1})
implies, through an application of the mean value theorem, that, for
any $(u,v)\in\mathcal{D}_{0}^{u_{0}}$: 
\begin{equation}
\Big|\Big|\frac{\partial_{u}r+\partial_{v}r}{r}\Big|\Big|_{C^{0,1}(\mathcal{D}_{0}^{u_{0}})}\le C(C_{1})\big(1+\sup_{\mathcal{D}_{0}^{u_{0}}}\big|\partial_{u,v}^{3}r|(u,v)\big)\le C(C_{1})\label{eq:FirstBoundForAxisTerm-1}
\end{equation}
and 
\begin{equation}
\sup_{\mathcal{D}_{0}^{u_{0}}}\Big|r^{-2}\Big(\frac{-4\partial_{u}r\partial_{v}r}{(\partial_{v}r-\partial_{u}r)^{2}}-1\Big)\Big|=\sup_{\mathcal{D}_{0}^{u_{0}}}\Bigg|\frac{1}{\big(\partial_{v}r-\partial_{u}r\big)^{2}}\Bigg(\frac{\partial_{v}r+\partial_{u}r}{r}\Bigg)^{2}\Bigg|\le C(C_{1}).\label{eq:SecondBoundAxisTerm-1}
\end{equation}
Combining (\ref{eq:UniformBoundPrevious-1}), (\ref{eq:DvDerivativeM.r^3-1}),
(\ref{eq:DuDerivativeM.r^3-1}), (\ref{eq:FirstBoundForAxisTerm-1})
and (\ref{eq:SecondBoundAxisTerm-1}), we infer that:
\begin{align}
||\log\Omega^{2}||_{C^{1,1}(\mathcal{D}_{0}^{u_{0}})}+||\log(\partial_{v}r)||_{C^{1,1}(\mathcal{D}_{0}^{u_{0}})}+||\log(-\partial_{u}r)||_{C^{1,1}(\mathcal{D}_{0}^{u_{0}})}+||T_{\alpha\beta}||_{C^{0,1}(\mathcal{D}_{0}^{u_{0}})} & +\label{eq:UniformBoundPreviousFinal}\\
+||\frac{\tilde{m}}{r^{3}}||_{C^{0,1}(\mathcal{D}_{0}^{u_{0}})}+\Big|\Big|\frac{\partial_{u}r+\partial_{v}r}{r}\Big|\Big|_{C^{0,1}(\mathcal{D}_{0}^{u_{0}})}+\Big|\Big|r^{-2}\Big(\frac{-4\partial_{u}r\partial_{v}r}{(\partial_{v}r-\partial_{u}r)^{2}}-1\Big)\Big|\Big|_{C^{0}(\mathcal{D}_{0}^{u_{0}})} & \le C(C_{1}).\nonumber 
\end{align}

Let us switch to the Cartesian coordinate system (\ref{eq:CartesianCoordinates})
on $\mathcal{N}_{u_{0}}=(\mathcal{D}_{0}^{u_{0}}\times\mathbb{S}^{2})\cup\mathcal{Z}$.
The expression (\ref{eq:CartesianMetric}) for the Cartesian components
$g_{\alpha\beta}$ of $g$ implies, in view of (\ref{eq:UniformBoundPreviousFinal}),
that 
\begin{equation}
||g_{\alpha\beta}||_{C^{1,1}(\mathcal{N}_{u_{0}})}<+\infty.\label{eq:LowerOrderRegularitycartesian}
\end{equation}
Commuting (\ref{eq:RequationFinal})\textendash (\ref{NullShellFinal})
with $\partial_{u}$, $\partial_{v}$ and arguing inductively in the
number of commutations, using also the fact that the initial data
set $(r_{/},\Omega_{/}^{2},\bar{f}_{/})$ satisfies Condition 1 of
Definition \ref{def:CompatibilityCondition}, we can readily infer
that, for any $k\in\mathbb{N}$, the Cartesian components $g_{\alpha\beta}$
in fact satisfy 
\[
||g_{\alpha\beta}||_{C^{k,1}(\mathcal{N}_{u_{0}})}<+\infty.
\]
Thus, using the fact that $f$ satisfies the Vlasov equation on the
background $(\mathcal{N}_{u_{0}},g)$, we readily infer the smoothness
of $(r,\Omega^{2},f)$ in accordance with Definition \ref{def:SmoothnessAxis}.

\medskip{}

\noindent \emph{Uniqueness.} We will establish that a smooth solution
$(r,\Omega^{2};f)$ of (\ref{eq:RequationFinal})\textendash (\ref{NullShellFinal})
on $\mathcal{D}_{0}^{u_{0}}$ with initial data (\ref{eq:InitialROmegaForExistence})\textendash (\ref{eq:InitialFForExistence})
on $u=0$ is unique through a contradiction argument. Let us assume
that $(r_{*},\Omega_{*}^{2};f_{*})$ is another smooth solution of
(\ref{eq:RequationFinal})\textendash (\ref{NullShellFinal}) on $\mathcal{D}_{0}^{u_{0}}$
with the same initial data. Since $(r,\Omega^{2};f)$, $(r_{*},\Omega_{*}^{2};f_{*})$
are smooth, there exists some $C_{1}>0$ so that the bound (\ref{eq:UniformBoundPrevious})
is satisfied with $(r,\Omega^{2};f)$, $(r_{*},\Omega_{*}^{2};f_{*})$
in place of $(r_{k},\Omega_{k}^{2};f_{k})$. Then, by repeating exactly
the same arguments that led to the proof of (\ref{eq:DifferenceBoundToShow})
(with equations (\ref{eq:EquationRForProof}) and (\ref{eq:EquationOmegaForProof})
in place of (\ref{eq:RecursiveR}) and (\ref{eq:RecursiveOmega})),
we infer that, provided $u_{0}$ is sufficiently small with respect
to $C_{1}$:
\begin{equation}
\mathfrak{D}\le\frac{1}{2}\mathfrak{D},\label{eq:DifferenceFOrUniqueness}
\end{equation}
where 
\begin{align}
\mathfrak{D}\doteq\sup_{\mathcal{D}_{0}^{u_{0}}}\Bigg\{ & \sum_{j=1}^{2}\sum_{j_{1}+j_{2}=j}R_{0}^{j-1}\partial_{u}^{j_{1}}\partial_{v}^{j_{2}}r-\partial_{u}^{j_{1}}\partial_{v}^{j_{2}}r_{*}\big|+\sum_{j=0}^{1}\sum_{j_{1}+j_{2}=j}R_{0}^{j}\big|\partial_{u}^{j_{1}}\partial_{v}^{j_{2}}\log\Omega^{2}-\partial_{u}^{j_{1}}\partial_{v}^{j_{2}}\log\Omega_{*}^{2}\big|\label{eq:DifferenceIterationDefinition-1}\\
 & +r^{-2}\int_{0}^{+\infty}\int_{0}^{+\infty}\big(\Omega^{2}(p^{u}+p^{v})\big)^{2}\big|\bar{f}-\bar{f}_{*}\big|(\cdot;p^{u},l)\,\frac{dp^{u}}{p^{u}}ldl\Bigg\}.\nonumber 
\end{align}
The bound (\ref{eq:DifferenceFOrUniqueness}) implies that $\mathfrak{D}=0$,
and thus $(r,\Omega^{2};f)$ and $(r_{*},\Omega_{*}^{2};f_{*})$ coincide.

\paragraph*{Step 2: The region $[0,u_{1}]\times[u_{0},v_{\mathcal{I}})$.}

For some $u_{1}\in(0,\frac{1}{2}u_{0}]$ to be determined later, let
$(r_{\backslash},\Omega_{\backslash}^{2},\bar{f}_{\backslash})$ be
the data induced by the solution $(\mathcal{D}_{0}^{u_{0}};r,\Omega^{2}f)$
(constructed in the previous step) on $\{v=u_{0}\}\cap\{0\le u\le u_{1}\}$,
i.\,e., for any $u\in[0,u_{1}]$: 
\begin{equation}
(r_{\backslash},\Omega_{\backslash}^{2})(u)=(r,\Omega^{2})(u,u_{0})\label{eq:InitialROmegaForExistence-2}
\end{equation}
 and 
\begin{equation}
\bar{f}_{\backslash}(u;p^{v},l)\cdot\delta\Big(\Omega_{\backslash}^{2}(u)p^{u}p^{v}-\frac{l^{2}}{r_{\backslash}(u)}\Big)=f(u,u_{0};p^{u},p^{v},l).\label{eq:InitialFForExistence-2}
\end{equation}
We will show that, provided $u_{1}$ is sufficiently small, there
exists a unique smooth solution $(r,\Omega^{2};f)$ of (\ref{eq:RequationFinal})\textendash (\ref{NullShellFinal})
on 
\[
\mathcal{W}\doteq[0,u_{1}]\times[u_{0},v_{\mathcal{I}})
\]
 satisfying (\ref{eq:InitialROmegaForExistence})\textendash (\ref{eq:InitialFForExistence})
on $\{0\}\times[u_{0},v_{\mathcal{I}})$ and (\ref{eq:InitialROmegaForExistence-2})\textendash (\ref{eq:InitialFForExistence-2})
on $[0,u_{1}]\times\{u_{0}\}$. 

\begin{figure}[h] 
\centering 
\scriptsize
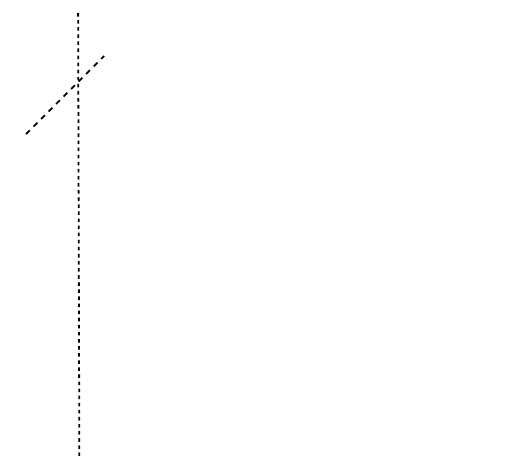 
\caption{Schematic depiction of the domain $\mathcal{W}$. \label{fig:MiddleExistenceDomain}}
\end{figure}
\begin{rem*}
As a consequence of the bound (\ref{eq:BoundSuppFNearAxis}), we infer
that 
\begin{equation}
supp(\bar{f}_{\backslash})\subset\Big\{\Omega_{\backslash}^{2}p^{v}+\frac{l^{2}}{r_{\backslash}^{2}p^{v}}\le2C_{0}\exp\Big(C(C_{1})R_{0}^{-1}u_{0}\Big)\Big\}\cap\Big\{\frac{l}{r_{\backslash}}\le C_{0}\exp\Big(C(C_{1})R_{0}^{-1}u_{0}\Big)\Big\}.\label{eq:BoundSuppFLeft}
\end{equation}
\end{rem*}
Our proof will be very similar to the one carried out in the previous
step. In particular, for any integer $n\in\mathbb{N}$, we will define
the functions $r_{n}:\mathcal{W}\rightarrow[0,+\infty)$, $\Omega_{n}^{2}:\mathcal{W}\rightarrow(0,+\infty)$
and $\bar{f}_{n}^{\prime}:\mathcal{W}\times[0,+\infty)^{2}\rightarrow[0,+\infty)$
recursively by the following conditions:

\begin{enumerate}

\item The functions $\Omega_{n}^{2}$, $r_{n}$ satisfy the following
system, which is a recursive analogue of the renormalised equations
(\ref{eq:RenormalisedEquations}) (see (\ref{eq:MuIterationU}) for
the definition of $\tilde{m}_{n}$)
\begin{align}
\partial_{u}\partial_{v}\log\Big(\frac{\Omega_{n+1}^{2}}{1-\frac{1}{3}\Lambda r_{n+1}^{2}}\Big) & =\frac{\tilde{m}_{n}}{r_{n}}\Big(\frac{1}{r_{n}^{2}}+\frac{1}{3}\Lambda\frac{\Lambda r_{n}^{2}-1}{1-\frac{1}{3}\Lambda r_{n}^{2}}\Big)\frac{\Omega_{n}^{2}}{1-\frac{1}{3}\Lambda r_{n}^{2}}-16\pi\frac{1-\frac{1}{2}\Lambda r_{n}^{2}}{1-\frac{1}{3}\Lambda r_{n}^{2}}(T_{uv})_{n},\label{eq:RenormalisedRecursiveOmegaEquation}\\
\partial_{u}\partial_{v}\Big(\tan^{-1}\big(\sqrt{-\frac{\Lambda}{3}}r_{n+1}\big)\Big) & =-\frac{1}{2}\sqrt{-\frac{\Lambda}{3}}\frac{\tilde{m}_{n}}{r_{n}^{2}}\frac{1-\frac{2}{3}\Lambda r_{n}^{2}}{1-\frac{1}{3}\Lambda r_{n}^{2}}\frac{\Omega_{n}^{2}}{1-\frac{1}{3}\Lambda r_{n}^{2}}+4\pi\sqrt{-\frac{\Lambda}{3}}\frac{r_{n}^{2}}{r_{n}-\frac{1}{3}\Lambda r_{n}^{3}}(T_{uv})_{n},\label{eq:RenormalisedRecursiveREquation}
\end{align}
with characteristic initial conditions 
\begin{equation}
(r_{n+1},\Omega_{n+1}^{2})|_{[0,u_{1}]\times\{u_{0}\}}=(r_{\backslash},\Omega_{\backslash}^{2})\label{eq:initialconditionsiterationLeft}
\end{equation}
and 
\begin{equation}
(r_{n+1},\Omega_{n+1}^{2})|_{\{0\}\times[u_{0},v_{\mathcal{I}})}=(r_{/},\Omega_{/}^{2}).\label{eq:initialconditionsiterationRight}
\end{equation}

\item   For any $l>0$, the function $\bar{f}^{\prime}$ satisfies
the following relations:

\begin{enumerate}

\item For any future directed, causal (with respect to the reference
metric (\ref{eq:ReferenceMetric})) curve $\gamma:[0,a)\rightarrow\mathcal{W}$
satisfying 
\begin{equation}
\Omega_{n}^{2}\dot{\gamma}^{u}\dot{\gamma}^{v}=\frac{l^{2}}{r_{n}^{2}}\label{eq:RecursiveNullShell}
\end{equation}
with $\gamma(0)\in\{0\}\times[u_{0},v_{\mathcal{I}})$ and solving
\begin{align}
\log\big(\Omega_{n}^{2}\dot{\gamma}^{u}\big)(s)-\log\big(\Omega_{n}^{2}\dot{\gamma}^{u}\big)(0)= & \int_{v(\gamma(0))}^{v(\gamma(s))}\int_{0}^{u(\gamma(s_{v}))}\Big(\frac{1}{2}\frac{\frac{6\tilde{m}_{n}}{r_{n}}-1}{r_{n}^{2}}\Omega_{n}^{2}-24\pi(T_{uv})_{n}\Big)\,dudv+\label{eq:RecursiveGammaFromRight}\\
 & +\int_{v(\gamma(0))}^{v(\gamma(s))}\big(\partial_{v}\log(\Omega_{/}^{2})-2\frac{\partial_{v}r_{/}}{r_{/}}\big)(v)\,dv\nonumber 
\end{align}
(where $s_{v}$ is defined by (\ref{eq:s_v})), $\bar{f}_{n+1}^{\prime}$
satisfies
\begin{equation}
\bar{f}_{n+1}^{\prime}(u(\gamma(s)),v(\gamma(s));\dot{\gamma}^{u}(s),l)=\bar{f}_{/}(v(\gamma(0));\dot{\gamma}^{u}(0),l)\label{eq:VlasovInTermsOfCharacteristicsRecursiveFromRight}
\end{equation}

\item For any future directed, causal (with respect to the reference
metric (\ref{eq:ReferenceMetric})) curve $\gamma:[0,a)\rightarrow\mathcal{W}$
satisfying (\ref{eq:RecursiveNullShell}) with $\gamma(0)\in[0,u_{1})\times\{u_{0}\}$
and solving
\begin{align}
\log\big(\Omega_{n}^{2}\dot{\gamma}^{v}\big)(s)-\log\big(\Omega_{n}^{2}\dot{\gamma}^{v}\big)(0)= & \int_{u(\gamma(0))}^{u(\gamma(s))}\int_{0}^{v(\gamma(s_{u}))}\Big(\frac{1}{2}\frac{\frac{6\tilde{m}_{n}}{r_{n}}-1}{r_{n}^{2}}\Omega_{n}^{2}-24\pi(T_{uv})_{n}\Big)\,dvdu+\label{eq:RecursiveGammaFromLeft}\\
 & +\int_{u(\gamma(0))}^{u(\gamma(s))}\big(\partial_{u}\log(\Omega_{\backslash}^{2})-2\frac{\partial_{u}r_{\backslash}}{r_{\backslash}}\big)(u)\,du\nonumber 
\end{align}
(where $s_{u}$ is defined by (\ref{eq:s_u})), the function $\bar{f}_{n+1}^{\prime}$
satisfies
\begin{equation}
\bar{f}_{n+1}^{\prime}(u(\gamma(s)),v(\gamma(s));\dot{\gamma}^{u}(s),l)=\bar{f}_{\backslash}(u(\gamma(0));\dot{\gamma}^{v}(0),l)\label{eq:VlasovInTermsOfCharacteristicsRecursiveFromLeft}
\end{equation}

\end{enumerate}

\end{enumerate}

In the above, the function $\tilde{m}_{n}$ is defined in terms of
$\Omega_{n}^{2}$, $r_{n}$ and $\bar{f}_{n}^{\prime}$ by the implicit
relation 
\begin{equation}
\tilde{m}_{n}(u,v)\doteq\tilde{m}_{/}(v)-8\pi\int_{0}^{u}r_{n-1}^{2}\Big(\frac{1-\frac{2\tilde{m}_{n}}{r_{n-1}}-\frac{1}{3}\Lambda r_{n-1}^{2}}{4\partial_{v}r_{n-1}}(T_{uv})_{n-1}+\Omega_{n-1}^{-2}\partial_{v}r_{n-1}\cdot(T_{uu})_{n-1}\Big)(\bar{u},v)\,du,\label{eq:MuIterationU}
\end{equation}
where $(T_{\mu\nu})_{n}$ is defined in terms of $\Omega_{n}^{2}$,
$r_{n}$ and $\bar{f}_{n}^{\prime}$ as in the previous step of the
proof. When $n=0$, we will adopt the convention that
\begin{equation}
(r_{0},\Omega_{0}^{2};f_{0})\doteq\big(r_{AdS}^{(v_{\mathcal{I}})}(0,\cdot),(\Omega_{/AdS}^{(v_{\mathcal{I}})})^{2}(0,\cdot);0\big)
\end{equation}
and 
\begin{equation}
\tilde{m}_{0}=0,
\end{equation}
where the rescaled AdS metric coefficients $r_{AdS}^{(v_{\mathcal{I}})},(\Omega_{AdS}^{(v_{\mathcal{I}})})^{2}$
are given by 
\begin{align}
r_{AdS}^{(v_{\mathcal{I}})}(u,v) & =r_{AdS}\Big(\sqrt{-\frac{3}{\Lambda}}\pi\frac{u}{v_{\mathcal{I}}},\sqrt{-\frac{3}{\Lambda}}\pi\frac{v}{v_{\mathcal{I}}}\Big),\label{eq:AdSMetricValuesRescaled-1}\\
\big(\Omega_{AdS}^{(v_{\mathcal{I}})}\big)^{2}(u,v) & =-\frac{3}{\Lambda}\frac{\pi^{2}}{v_{\mathcal{I}}^{2}}\Omega_{AdS}^{2}\Big(\sqrt{-\frac{3}{\Lambda}}\pi\frac{u}{v_{\mathcal{I}}},\sqrt{-\frac{3}{\Lambda}}\pi\frac{v}{v_{\mathcal{I}}}\Big).\nonumber 
\end{align}

Let $C_{1}\gg1$ be a large constant depending only on the initial
data $(r_{\backslash},\Omega_{\backslash}^{2},\bar{f}_{\backslash})$
and $(r_{/},\Omega_{/}^{2},\bar{f}_{/})$. We will establish the following
inductive bounds for $(r_{n},\Omega_{n}^{2};f_{n})$: Assuming that,
for any $0\le k\le n$,
\begin{align}
\sup_{\mathcal{W}}\Big(v_{\mathcal{I}}\Big|\partial_{v}\log\Big(\frac{\Omega_{k}^{2}}{1-\frac{1}{3}\Lambda r_{k}^{2}}\Big)\Big|+\Big|\log\Big(\frac{\Omega_{k}^{2}}{1-\frac{1}{3}\Lambda r_{k}^{2}}\Big)\Big|+\Big|\log\Big(\frac{\partial_{v}r_{k}}{1-\frac{1}{3}\Lambda r_{k}^{2}}\Big)\Big|+\label{eq:UniformBoundPrevious-2}\\
+\sqrt{-\Lambda}|\tilde{m}_{k}|++v_{\mathcal{I}}^{2}\big((T_{uu})_{k}+(T_{uv})_{k}+(T_{vv})_{k}\big) & \Big)<C_{1},\nonumber 
\end{align}
we will show that (\ref{eq:UniformBoundPrevious-2}) also holds for
$k=n+1$ and, moreover (in the case $n\ge2$): 
\begin{equation}
\mathfrak{D}_{n+1}^{\prime}\le\frac{1}{2}\mathfrak{D}_{n}^{\prime},\label{eq:DifferenceBoundToShow-1}
\end{equation}
where 
\begin{align}
\mathfrak{D}_{k}^{\prime}\doteq\sup_{\mathcal{W}}\Bigg\{ & v_{\mathcal{I}}\Big|\partial_{v}\log\Big(\frac{\Omega_{k}^{2}}{1-\frac{1}{3}\Lambda r_{k}^{2}}\Big)-\partial_{v}\log\Big(\frac{\Omega_{k-1}^{2}}{1-\frac{1}{3}\Lambda r_{k-1}^{2}}\Big)\Big|+\Big|\log\Big(\frac{\Omega_{k}^{2}}{1-\frac{1}{3}\Lambda r_{k}^{2}}\Big)-\log\Big(\frac{\Omega_{k-1}^{2}}{1-\frac{1}{3}\Lambda r_{k-1}^{2}}\Big)\Big|+\label{eq:DifferenceIterationDefinition-2}\\
 & +\Big|\log\Big(\frac{\partial_{v}r_{k}}{1-\frac{1}{3}\Lambda r_{k}^{2}}\Big)-\log\Big(\frac{\partial_{v}r_{k-1}}{1-\frac{1}{3}\Lambda r_{k-1}^{2}}\Big)\Big|+\sqrt{-\Lambda}\big|\tilde{m}_{k}-\tilde{m}_{k-1}\big|+\nonumber \\
 & +(-\Lambda)^{-2}r^{-2}\int_{0}^{+\infty}\int_{0}^{+\infty}\big(\Omega_{k-1}^{2}(p^{u}+p^{v})\big)^{2}\big|\bar{f}_{k}^{\prime}-\bar{f}_{k-1}^{\prime}\big|(\cdot;p^{u},l)\,\frac{dp^{u}}{p^{u}}ldl\Bigg\}.\nonumber 
\end{align}
\begin{rem*}
Notice that, in the definition (\ref{eq:MuIterationU}) of $\tilde{m}_{k}$
and the left hand sides of (\ref{eq:UniformBoundPrevious-2}), (\ref{eq:DifferenceIterationDefinition-2})
(\ref{eq:RecursiveGammaFromRight}) and (\ref{eq:RecursiveGammaFromLeft}),
only $\partial_{v}$ derivatives of $\Omega_{k}^{2}$, $r_{k}$ appear.
The reason that we took special care to arrange those expressions
in this way is a small parameter (necessary for the iteration procedure
to be successful) can only be obtained from the equations (\ref{eq:RenormalisedRecursiveOmegaEquation})\textendash (\ref{eq:RenormalisedRecursiveREquation})
by integrating in the $u$ direction.
\end{rem*}
Let $r_{min}$ be defined in terms of the initial data as 
\[
r_{min}\doteq r_{\backslash}(u_{1})>0.
\]
Notice that, because $\partial_{u}r_{\backslash}<0$ (following from
the properties of the solution $(r,\Omega^{2};f)$ on $\mathcal{D}_{0}^{u_{*}}$
constructed in the previous step), for any $k\in\mathbb{N}$ for which
$\inf_{\mathcal{W}}\partial_{v}r_{k}\ge0$ (which is necessarily true
if (\ref{eq:UniformBoundPrevious-2}) holds), then 
\begin{equation}
\inf_{\mathcal{W}}r_{k}=r_{min}>0.\label{eq:PositiveROnW}
\end{equation}
We will assume that $C_{1}$ in (\ref{eq:UniformBoundPrevious-2})
has been chosen large enough so that 
\begin{equation}
v_{\mathcal{I}}r_{min}^{-1}\ll C_{1}.\label{eq:LargeC1InTermsOfRmin}
\end{equation}

For any $n\ge0$, using the bound (\ref{eq:UniformBoundPrevious-2})
and (\ref{eq:PositiveROnW})\textendash (\ref{eq:LargeC1InTermsOfRmin})
for $k=n$ and integrating equations (\ref{eq:RenormalisedRecursiveOmegaEquation})\textendash (\ref{eq:RenormalisedRecursiveREquation})
in the $u$ direction, we immediately infer that 
\begin{align}
\sup_{\mathcal{W}}\Bigg\{ v_{\mathcal{I}}\Big|\partial_{v}\log\Big( & \frac{\Omega_{n+1}^{2}}{1-\frac{1}{3}\Lambda r_{n+1}^{2}}\Big)\Big|+\Big|\log\Big(\frac{\partial_{v}r_{n+1}}{1-\frac{1}{3}\Lambda r_{n+1}^{2}}\Big)\Big|\Bigg\}\le\label{eq:FirstBoundSecondIteration}\\
 & \le C(C_{1})u_{1}+\sup_{v\in[u_{0},v_{\mathcal{I}})}\Bigg\{ v_{\mathcal{I}}\Big|\partial_{v}\log\Big(\frac{\Omega_{/}^{2}}{1-\frac{1}{3}\Lambda r_{/}^{2}}\Big)\Big|+\Big|\log\Big(\frac{\partial_{v}r_{/}}{1-\frac{1}{3}\Lambda r_{/}^{2}}\Big)\Big|\Bigg\}.\nonumber 
\end{align}
Integrating in $v$ the bound for $\partial_{v}\log\Big(\frac{\Omega_{n+1}^{2}}{1-\frac{1}{3}\Lambda r_{n+1}^{2}}\Big)$
provided by (\ref{eq:FirstBoundSecondIteration}) and adding the resulting
estimate for $\log\Big(\frac{\Omega_{n+1}^{2}}{1-\frac{1}{3}\Lambda r_{n+1}^{2}}\Big)$
to (\ref{eq:FirstBoundSecondIteration}), we therefore infer (provided
$C_{1}$ has been chosen large enough in terms of $(r_{\backslash},\Omega_{\backslash}^{2})$
and $(r_{/},\Omega_{/}^{2})$): 
\begin{equation}
\sup_{\mathcal{W}}\Bigg\{ v_{\mathcal{I}}\Big|\partial_{v}\log\Big(\frac{\Omega_{n+1}^{2}}{1-\frac{1}{3}\Lambda r_{n+1}^{2}}\Big)\Big|+\Big|\log\Big(\frac{\Omega_{n+1}^{2}}{1-\frac{1}{3}\Lambda r_{n+1}^{2}}\Big)\Big|+\Big|\log\Big(\frac{\partial_{v}r_{n+1}}{1-\frac{1}{3}\Lambda r_{n+1}^{2}}\Big)\Big|\Bigg\}\le C(C_{1})v_{\mathcal{I}}^{-1}u_{1}+\frac{1}{10}C_{1}.\label{eq:SecondBoundSecondIteration}
\end{equation}

Similarly, for $n\ge1$, subtracting from (\ref{eq:RenormalisedRecursiveOmegaEquation})\textendash (\ref{eq:RenormalisedRecursiveREquation})
the same equations with $n-1$ in place of $n$ and using (\ref{eq:UniformBoundPrevious-2})
and (\ref{eq:PositiveROnW})\textendash (\ref{eq:LargeC1InTermsOfRmin})
for $k=n,n-1$, we can readily bound: 
\begin{align}
\sup_{\mathcal{W}}\Bigg\{ v_{\mathcal{I}}\Big|\partial_{v}\log\Big(\frac{\Omega_{n+1}^{2}}{1-\frac{1}{3}\Lambda r_{n+1}^{2}}\Big)-\partial_{v}\log\Big( & \frac{\Omega_{n}^{2}}{1-\frac{1}{3}\Lambda r_{n}^{2}}\Big)\Big|+\Big|\log\Big(\frac{\Omega_{n+1}^{2}}{1-\frac{1}{3}\Lambda r_{n+1}^{2}}\Big)-\log\Big(\frac{\Omega_{n}^{2}}{1-\frac{1}{3}\Lambda r_{n}^{2}}\Big)\Big|+\label{eq:FirstBoundSecondDifference}\\
 & +\Big|\log\Big(\frac{\partial_{v}r_{n+1}}{1-\frac{1}{3}\Lambda r_{n+1}^{2}}\Big)-\log\Big(\frac{\partial_{v}r_{n}}{1-\frac{1}{3}\Lambda r_{n}^{2}}\Big)\Big|\Bigg\}\le C(C_{1})v_{\mathcal{I}}^{-1}u_{1}\mathfrak{D}_{n}^{\prime}.\nonumber 
\end{align}

Integrating (\ref{eq:RenormalisedRecursiveOmegaEquation})\textendash (\ref{eq:RenormalisedRecursiveREquation})
in the $v$ direction, using the bounds (\ref{eq:UniformBoundPrevious-2})
and (\ref{eq:PositiveROnW})\textendash (\ref{eq:LargeC1InTermsOfRmin})
for $k=n$, we also infer the folowing useful estimates for the $\partial_{u}$
derivatives of $\Omega_{n+1}^{2}$, $r_{n+1}$: 
\begin{equation}
\sup_{\mathcal{W}}\Bigg\{ v_{\mathcal{I}}\Big|\partial_{u}\log\Big(\frac{\Omega_{n+1}^{2}}{1-\frac{1}{3}\Lambda r_{n+1}^{2}}\Big)\Big|+\Big|\frac{\partial_{u}r_{n+1}}{1-\frac{1}{3}\Lambda r_{n+1}^{2}}\Big|\Bigg\}\le C(C_{1}).\label{eq:BoundDUDerivativesRecursive}
\end{equation}

Using the bound (\ref{eq:UniformBoundPrevious-2}) for $k=n$, we
can readily infer from the relation (\ref{eq:MuIterationU}) for $\tilde{m}_{n+1}$
(using a simple application of Gronwall's inequality) that
\begin{equation}
\sup_{\mathcal{W}}\sqrt{-\Lambda}|\tilde{m}_{n+1}|\le\exp\big(C(C_{1})v_{\mathcal{I}}^{-1}u_{1}\big)\cdot\Big(C(C_{1})v_{\mathcal{I}}^{-1}u_{*}^{\prime}+\sup_{v\in[u_{0},v_{\mathcal{I}})}\sqrt{-\Lambda}\tilde{m}_{/}(v)\Big).\label{eq:BoundMuSecondIteration}
\end{equation}
Similarly, subtracting from (\ref{eq:MuIterationU}) for $\tilde{m}_{n+1}$
the same relation for $\tilde{m}_{n}$ and using (\ref{eq:UniformBoundPrevious-2})
for $k=n,n-1$, we can similarly estimate: 
\begin{equation}
\sup_{\mathcal{W}}\sqrt{-\Lambda}|\tilde{m}_{n+1}-\tilde{m}_{n}|\le C(C_{1})v_{\mathcal{I}}^{-1}u_{1}\mathfrak{D}_{n}^{\prime}.\label{BoundMuSecondDifference}
\end{equation}

Let $\gamma:[0,a)\rightarrow\mathcal{W}$ be a future directed, causal
curve (with respect to the reference metric (\ref{eq:ReferenceMetric}))
satisfying (\ref{eq:RecursiveNullShell}) and (\ref{eq:RecursiveGammaFromRight})
for some $l>0$, such that initially 
\begin{equation}
\gamma(0)\in\{0\}\times[u_{0},v_{\mathcal{I}}).\label{eq:GammaInitiallyRight}
\end{equation}
Using the bound (\ref{eq:UniformBoundPrevious-2}) (for $k=n$) for
the right hand side of (\ref{eq:RecursiveGammaFromRight}), as well
as the lower bound (\ref{eq:PositiveROnW}) for $r_{n}$, we trivially
infer from (\ref{eq:RecursiveNullShell}) and (\ref{eq:RecursiveGammaFromRight})
that (provided $C_{1}$ is sufficiently large in terms of $(r_{/},\Omega_{/}^{2}$)):
\begin{equation}
\frac{1}{C(C_{1})}\le\frac{\sup_{s\in[0,a)}\Big(\Omega_{n}^{2}(\gamma(s))\big(\dot{\gamma}^{u}+\dot{\gamma}^{v}\big)(s)\Big)}{\Omega_{n}^{2}(\gamma(0))\big(\dot{\gamma}^{u}+\dot{\gamma}^{v}\big)(0)}\le C(C_{1}).\label{eq:TrivialBoundEnergyCurveRecursive}
\end{equation}
Arguing in exactly the same way, we also infer that the bound (\ref{eq:TrivialBoundEnergyCurveRecursive})
also holds for future directed, causal curve $\gamma:[0,a)\rightarrow\mathcal{W}$
satisfying (\ref{eq:RecursiveNullShell}) and (\ref{eq:RecursiveGammaFromLeft})
for some $l>0$, such that initially 
\begin{equation}
\gamma(0)\in[0,u_{1})\times\{u_{0}\}.\label{eq:GammaInitiallyLeft}
\end{equation}

We will now proceed to obtain a more refined energy bound for curves
$\gamma$ as above which moreover satisfy a quantitative lower bound
on their angular momentum $l$. In particular, let $\gamma:[0,a)\rightarrow\mathcal{W}$
be a future directed, causal curve satisfying (\ref{eq:RecursiveNullShell}),
(\ref{eq:RecursiveGammaFromRight}) and (\ref{eq:GammaInitiallyRight}),
for some $l>0$ such that 
\begin{equation}
\frac{l}{\Omega_{n}^{2}(\gamma(0))\big(\dot{\gamma}^{u}+\dot{\gamma}^{v}\big)(0)}>v_{\mathcal{I}}\big(\frac{u_{1}}{v_{\mathcal{I}}}\big)^{\frac{1}{3}}.\label{eq:LowerBoundAngularMomentumIterarion}
\end{equation}
The relation (\ref{eq:GammaInitiallyRight}), combined with the bounds
(\ref{eq:UniformBoundPrevious-2}) and (\ref{eq:PositiveROnW}) (for
$k=n$) imply that 
\begin{equation}
\frac{\Omega_{n}^{2}(\gamma(0))\dot{\gamma}^{u}(0)\cdot\Omega_{n}^{2}(\gamma(0))\dot{\gamma}^{v}(0)}{\big(\Omega_{n}^{2}(\gamma(0))\big(\dot{\gamma}^{u}+\dot{\gamma}^{v}\big)(0)\big)^{2}}\ge c(C_{1})\big(\frac{u_{1}}{v_{\mathcal{I}}}\big)^{\frac{2}{3}},
\end{equation}
from which we readily infer that
\begin{equation}
\max\Big\{\frac{\dot{\gamma}^{u}}{\dot{\gamma}^{v}}(0),\frac{\dot{\gamma}^{v}}{\dot{\gamma}^{u}}(0)\Big\}\le C(C_{1})\big(\frac{u_{1}}{v_{\mathcal{I}}}\big)^{-\frac{1}{3}}.\label{eq:LowerBoundNotNullRecursive}
\end{equation}
We will show that:
\begin{equation}
\exp\big(-C(C_{1})\big(\frac{u_{1}}{v_{\mathcal{I}}}\big)^{\frac{2}{3}}\big)\le\frac{\sup_{s\in[0,a)}\Big(\Omega_{n}^{2}(\gamma(s))\big(\dot{\gamma}^{u}+\dot{\gamma}^{v}\big)(s)\Big)}{\Omega_{n}^{2}(\gamma(0))\big(\dot{\gamma}^{u}+\dot{\gamma}^{v}\big)(0)}\le\exp\big(C(C_{1})\big(\frac{u_{1}}{v_{\mathcal{I}}}\big)^{\frac{2}{3}}\big)\label{eq:RefinedEnergyBoundRecursiveGamma}
\end{equation}
and 
\begin{equation}
\sup_{s\in[0,a)}\Big(\max\Big\{\frac{\dot{\gamma}^{u}}{\dot{\gamma}^{v}}(s),\frac{\dot{\gamma}^{v}}{\dot{\gamma}^{u}}(s)\Big\}\Big)\le C(C_{1})\big(\frac{u_{1}}{v_{\mathcal{I}}}\big)^{-\frac{1}{3}}.\label{eq:NotTooNullForContinuity}
\end{equation}

\medskip{}

\noindent \emph{Proof of (\ref{eq:RefinedEnergyBoundRecursiveGamma})\textendash (\ref{eq:NotTooNullForContinuity}).
}We will establish (\ref{eq:RefinedEnergyBoundRecursiveGamma})\textendash (\ref{eq:NotTooNullForContinuity})
by continuity. Let $s_{c}\in(0,a]$ be defined as 
\begin{equation}
s_{c}\doteq\sup\Big\{ s\in[0,a):\text{ }\Big|\max\Big\{\frac{\dot{\gamma}^{u}}{\dot{\gamma}^{v}}(\bar{s}),\frac{\dot{\gamma}^{v}}{\dot{\gamma}^{u}}(\bar{s})\Big\}-\max\Big\{\frac{\dot{\gamma}^{u}}{\dot{\gamma}^{v}}(0),\frac{\dot{\gamma}^{v}}{\dot{\gamma}^{u}}(0)\Big\}\Big|\le1\text{ for all }\bar{s}\in[0,s]\Big\}.\label{eq:S_cRecursive}
\end{equation}
Note that $s_{c}>0$, in view of (\ref{eq:LowerBoundNotNullRecursive}).
We will show that (\ref{eq:RefinedEnergyBoundRecursiveGamma})\textendash (\ref{eq:NotTooNullForContinuity})
hold when the $\sup$ in (\ref{eq:RefinedEnergyBoundRecursiveGamma})
and (\ref{eq:NotTooNullForContinuity}) is considerd over $[0,s_{c})$
and that, moreover, 
\begin{equation}
\text{ }\sup_{s\in[0,s_{c})}\Big|\max\Big\{\frac{\dot{\gamma}^{u}}{\dot{\gamma}^{v}}(s),\frac{\dot{\gamma}^{v}}{\dot{\gamma}^{u}}(s)\Big\}-\max\Big\{\frac{\dot{\gamma}^{u}}{\dot{\gamma}^{v}}(0),\frac{\dot{\gamma}^{v}}{\dot{\gamma}^{u}}(0)\Big\}\Big|\le\frac{1}{2}.\label{eq:ToShowForSmallBootstrap}
\end{equation}
 In particular, (\ref{eq:ToShowForSmallBootstrap}) will imply by
continuity that 
\[
\Big|\max\Big\{\frac{\dot{\gamma}^{u}}{\dot{\gamma}^{v}}(s),\frac{\dot{\gamma}^{v}}{\dot{\gamma}^{u}}(s)\Big\}-\max\Big\{\frac{\dot{\gamma}^{u}}{\dot{\gamma}^{v}}(0),\frac{\dot{\gamma}^{v}}{\dot{\gamma}^{u}}(0)\Big\}\Big|\le1.
\]
for all $s\in[0,\min\{a,s_{c}+\delta\})$ for some small $\delta>0$,
which yields a contradiction in view of the definition of $s_{c}$,
unless $s_{c}=a$.

The relation (\ref{eq:RecursiveGammaFromRight}) for $\dot{\gamma}^{u}$
yields, in view of (\ref{eq:UniformBoundPrevious-2}) and (\ref{eq:PositiveROnW})
for $k=n$, that for any $s\in[0,s_{c})$: 
\begin{align}
\Big|\log\big(\Omega_{n}^{2}\dot{\gamma}^{u}\big)(s)-\log\big(\Omega_{n}^{2}\dot{\gamma}^{u}\big)(0)\Big| & \le C(C_{1})v_{\mathcal{I}}^{-1}u_{1}+C(C_{1})v_{\mathcal{I}}^{-1}\big|v(\gamma(s))-v(\gamma(0))\big|\le\label{eq:OneMoreUsefulBound}\\
 & \le C(C_{1})v_{\mathcal{I}}^{-1}u_{1}+C(C_{1})v_{\mathcal{I}}^{-1}\int_{0}^{s_{c}}\dot{\gamma}^{v}(\bar{s})\,d\bar{s}\le\nonumber \\
 & \le C(C_{1})v_{\mathcal{I}}^{-1}u_{1}+C(C_{1})v_{\mathcal{I}}^{-1}\int_{u(\gamma(0))}^{u(\gamma(s_{c}))}\frac{\dot{\gamma}^{v}(s_{u})}{\dot{\gamma}^{u}(s_{u})}\,du,\nonumber 
\end{align}
where $s_{u}$ is defined by (\ref{eq:s_u}). Using the upper bound
for $\frac{\dot{\gamma}^{v}(s_{u})}{\dot{\gamma}^{u}(s_{u})}$ provided
by the definition (\ref{eq:S_cRecursive}) of $s_{c}$, we therefore
infer from (\ref{eq:OneMoreUsefulBound}) that:
\begin{align}
\sup_{s\in[0,s_{c})}\Big|\log\big(\Omega_{n}^{2}\dot{\gamma}^{u}\big)(s)-\log\big(\Omega_{n}^{2}\dot{\gamma}^{u}\big)(0)\Big| & \le C(C_{1})v_{\mathcal{I}}^{-1}u_{1}+C(C_{1})v_{\mathcal{I}}^{-1}u_{*}^{\prime}\cdot\big(\frac{u_{1}}{v_{\mathcal{I}}}\big)^{-\frac{1}{3}}\le\label{eq:UsefulBoundChangeOfLogU}\\
 & \le C(C_{1})\big(\frac{u_{1}}{v_{\mathcal{I}}}\big)^{\frac{2}{3}}.\nonumber 
\end{align}
From (\ref{eq:UsefulBoundChangeOfLogU}), the relation (\ref{eq:RecursiveNullShell})
between $\dot{\gamma}^{v}$ and $\dot{\gamma}^{u}$, the bound (\ref{eq:UniformBoundPrevious-2})
for $k=n$, as well as the bound (\ref{eq:BoundDUDerivativesRecursive})
(for $n$ in place of $n+1$) on the $\partial_{u}$ derivatives of
$\Omega_{n}^{2}$, $r_{n}$ and the upper bound for $\frac{\dot{\gamma}^{v}(s_{u})}{\dot{\gamma}^{u}(s_{u})}$
provided by the definition (\ref{eq:S_cRecursive}) of $s_{c}$, we
also obtain that 
\begin{align}
\sup_{s\in[0,s_{c})}\Big| & \log\big(\Omega_{n}^{2}\dot{\gamma}^{v}\big)(s)-\log\big(\Omega_{n}^{2}\dot{\gamma}^{v}\big)(0)\Big|\le\label{eq:UsefulBoundChangeOfLogV}\\
 & \le\sup_{s\in[0,s_{c})}\Big|\log\big(\Omega_{n}^{2}\dot{\gamma}^{u}\big)(s)-\log\big(\Omega_{n}^{2}\dot{\gamma}^{u}\big)(0)\Big|+\sup_{s\in[0,s_{c})}\Big|\log\Big(\frac{l^{2}\Omega_{n}^{2}}{r_{n}^{2}}\Big)\big|_{\gamma(s)}-\log\Big(\frac{l^{2}\Omega_{n}^{2}}{r_{n}^{2}}\Big)\big|_{\gamma(0)}\Big|\le\nonumber \\
 & \le C(C_{1})\big(\frac{u_{1}}{v_{\mathcal{I}}}\big)^{\frac{2}{3}}+C(C_{1})\frac{|v(\gamma(s))-v(\gamma(0))|+|u(\gamma(s))-u(\gamma(0))|}{v_{\mathcal{I}}}\le\nonumber \\
 & \le C(C_{1})\big(\frac{u_{1}}{v_{\mathcal{I}}}\big)^{\frac{2}{3}}+C(C_{1})v_{\mathcal{I}}^{-1}\Big(\int_{u(\gamma(0))}^{u(\gamma(s))}\frac{\dot{\gamma}^{v}(s_{u})}{\dot{\gamma}^{u}(s_{u})}\,du+u_{1}\Big)\le\nonumber \\
 & \le C(C_{1})\big(\frac{u_{1}}{v_{\mathcal{I}}}\big)^{\frac{2}{3}}.\nonumber 
\end{align}

Combining the bounds (\ref{eq:UsefulBoundChangeOfLogU}) and (\ref{eq:UsefulBoundChangeOfLogV}),
we readily infer that 
\begin{equation}
\sup_{s\in[0,s_{c})}\Big|\log\Big(\frac{\Omega_{n}^{2}\dot{\gamma}^{u}(s)+\Omega_{n}^{2}\dot{\gamma}^{v}(s)}{\Omega_{n}^{2}\dot{\gamma}^{u}(0)+\Omega_{n}^{2}\dot{\gamma}^{v}(0)}\Big)\Big|\le C(C_{1})\big(\frac{u_{1}}{v_{\mathcal{I}}}\big)^{\frac{2}{3}}
\end{equation}
and 
\begin{equation}
\sup_{s\in[0,s_{c})}\Big|\log\Big(\frac{\Omega_{n}^{2}\dot{\gamma}^{u}(s)}{\Omega_{n}^{2}\dot{\gamma}^{v}(s)}\Big)-\log\Big(\frac{\Omega_{n}^{2}\dot{\gamma}^{u}(0)}{\Omega_{n}^{2}\dot{\gamma}^{v}(0)}\Big)\Big|\le C(C_{1})\big(\frac{u_{1}}{v_{\mathcal{I}}}\big)^{\frac{2}{3}},
\end{equation}
from which it follows that (\ref{eq:RefinedEnergyBoundRecursiveGamma})\textendash (\ref{eq:NotTooNullForContinuity})
(with the $\sup$ considerd over $[0,s_{c})$) and (\ref{eq:ToShowForSmallBootstrap})
hold. 

\medskip{}

In exactly the same way, it can be shown that the bounds (\ref{eq:RefinedEnergyBoundRecursiveGamma})
and (\ref{eq:NotTooNullForContinuity}) also hold for any future directed,
causal curve $\gamma:[0,a)\rightarrow\mathcal{W}$ satisfying (\ref{eq:RecursiveNullShell}),
(\ref{eq:RecursiveGammaFromLeft}) and (\ref{eq:GammaInitiallyLeft}),
for some $l>0$ such that (\ref{eq:LowerBoundAngularMomentumIterarion})
holds. 

At any point $(u,v)\in\mathcal{W}$, we can decompose the components
$(T_{\mu\nu})_{n+1}$ of the energy momentum tensor $T_{n+1}$ as
\[
(T_{\mu\nu})_{n+1}(u,v)=(T_{\mu\nu})_{n+1}^{+}(u,v)+(T_{\mu\nu})_{n+1}^{0}(u,v)
\]
with
\begin{equation}
(T_{\mu\nu})_{n+1}^{+}(u,v)\doteq2\pi r_{n}^{-2}\int_{l[u_{1}]}^{+\infty}\int_{0}^{+\infty}p_{\mu}p_{\nu}\bar{f}_{n+1}^{\prime}(u,v;p^{u},p^{v},l)\,\frac{dp^{u}}{p^{u}}ldl\label{eq:MainContributionEnergyMomentum}
\end{equation}
and 
\begin{equation}
(T_{\mu\nu})_{n+1}^{0}(u,v)\doteq2\pi r_{n}^{-2}\int_{0}^{l[u_{1}]}\int_{0}^{+\infty}p_{\mu}p_{\nu}\bar{f}_{n+1}^{\prime}(u,v;p^{u},l)\,\frac{dp^{u}}{p^{u}}ldl,\label{eq:AlmostNullContributionEnergyMomentum}
\end{equation}
where $l[u_{1}]$ is defined in terms of $u_{1}$ by 
\begin{equation}
l[u_{1}]\doteq2v_{\mathcal{I}}\big(\frac{u_{1}}{v_{\mathcal{I}}}\big)^{\frac{1}{6}}C_{0}\label{eq:LowerBoundAngularMomentumCutOff}
\end{equation}
with $C_{0}$ being the constant appearing in the right hand side
of (\ref{eq:BoundSuppFLeft}) (which is also the constant for which
$(r_{/},\Omega_{/}^{2},\bar{f}_{/})$ were assumed to satisfy (\ref{eq:BoundedSupportDefinition}))
and 
\begin{equation}
p_{\mu}=(g_{\mu\nu})_{n}p^{\nu}.
\end{equation}

As a consequence of the transport relations for (\ref{eq:VlasovInTermsOfCharacteristicsRecursiveFromRight})
and (\ref{eq:VlasovInTermsOfCharacteristicsRecursiveFromLeft}) for
$f_{n+1}$, the bound (\ref{eq:TrivialBoundEnergyCurveRecursive})
for the characteristic curves $\gamma$ on which $f_{n+1}$ is conserved,
combined with the bounds (\ref{eq:BoundSuppFLeft}) and (\ref{eq:BoundedSupportDefinition})
(with $C_{0}$ in plaace of $C$) on the initial data, readily imply
that
\begin{equation}
(T_{uu})_{n+1}^{0}+(T_{uv})_{n+1}^{0}+(T_{vv})_{n+1}^{0}\le C(C_{1})C_{0}^{2}\big(||f_{/}||_{\infty}+||f_{\backslash}||_{\infty}\big)\cdot(l[u_{1}])^{2}.\label{eq:T_0BoundSecondIteration}
\end{equation}
On the other hand, the more refined estimate (\ref{eq:RefinedEnergyBoundRecursiveGamma})
for the characteristic curves $\gamma$ which satisfy the additional
assumption (\ref{eq:LowerBoundAngularMomentumIterarion}) (which implies
that $l>l[u_{1}]$ when $u_{1}$ is sufficiently small in terms of
$C_{1}$, $C_{0}$) yields: 
\begin{equation}
(T_{uu})_{n+1}^{+}+(T_{uv})_{n+1}^{+}+(T_{vv})_{n+1}^{+}\le3||T_{data}||_{\infty}\exp\big(C(C_{1})\big(\frac{u_{1}}{v_{\mathcal{I}}}\big)^{\frac{2}{3}}\big),\label{eq:UpperBoundEnergyMomentumRecursive}
\end{equation}
where 
\begin{align*}
||T_{data}||_{\infty}\doteq & ||(T_{uu})_{/}||_{\infty}+||(T_{uv})_{/}||_{\infty}+||(T_{vv})_{/}||_{\infty}+\\
 & +||(T_{uu})_{\backslash}||_{\infty}+||(T_{uv})_{\backslash}||_{\infty}+||(T_{vv})_{\backslash}||_{\infty}.
\end{align*}

Let $\gamma_{n+1},\gamma_{n}:[0,a)\rightarrow\mathcal{W}$ be two
future directed, causal curves such that $\gamma_{n+1}$ satisfies
(\ref{eq:RecursiveGammaFromRight}) and (\ref{eq:RecursiveNullShell}),
$\gamma_{n}$ satisfies (\ref{eq:RecursiveGammaFromRight}) and (\ref{eq:RecursiveNullShell})
with $n-1$ in place of $n$, and 
\begin{align}
\gamma_{n+1}(0) & =\gamma_{n}(0)\in\{0\}\times[u_{0},v_{\mathcal{I}}),\label{eq:SameInitialDataCurves}\\
\dot{\gamma}_{n+1}(0) & =\dot{\gamma}_{n}(0).\nonumber 
\end{align}
Let us also consider the parametrization of $\gamma_{n+1},\gamma_{n}$
by 
\[
\tau=u+v-u(\gamma_{n+1}(0))-v(\gamma_{n+1}(0)),
\]
with corresponding parameter domains $[0,\tau_{n+1})$ and $[0,\tau_{n})$.
\begin{rem*}
We will only denote by $\dot{}$ differentiation with respect to $s$.
\end{rem*}
Subtracting from (\ref{eq:RecursiveGammaFromRight}) for $\gamma_{n+1}$
the same equation (with $n-1$ in place of $n$) for $\gamma_{n+1}$
and similarly for (\ref{eq:RecursiveNullShell}), we can readily estimate
using (\ref{eq:UniformBoundPrevious-2}) (and (\ref{eq:PositiveROnW}))
for $k=n,n-1$ as well as the initial conditions (\ref{eq:SameInitialDataCurves})
that, for any $\tau\in[0,\min\{\tau_{n+1},\tau_{n}\})$: 
\begin{align}
|u(\gamma_{n+1}(\tau))-u(\gamma_{n}(\tau))|+|v(\gamma_{n+1}(\tau))-v(\gamma_{n}(\tau))| & \le\label{eq:ForGronwallDifferenceGmmaRecursive}\\
\le\int_{0}^{\tau}\Big(|\Omega_{n}^{2}\dot{\gamma}_{n+1}^{u}(\bar{\tau})-\Omega_{n-1}^{2}\dot{\gamma}_{n}^{u}(\bar{\tau})| & +|\Omega_{n}^{2}\dot{\gamma}_{n+1}^{v}(\bar{\tau})-\Omega_{n-1}^{2}\dot{\gamma}_{n}^{v}(\bar{\tau})|\Big)\,d\bar{\tau},\nonumber \\
|\Omega_{n}^{2}\dot{\gamma}_{n+1}^{v}(\tau)-\Omega_{n-1}^{2}\dot{\gamma}_{n}^{v}(\tau)|+|\Omega_{n}^{2}\dot{\gamma}_{n+1}^{u}(\tau)-\Omega_{n-1}^{2}\dot{\gamma}_{n}^{u}(\tau)| & \le\nonumber \\
\le C(C_{1})v_{\mathcal{I}}^{-1}u_{1}+C(C_{1})v_{\mathcal{I}}^{-2}\int_{0}^{\tau}\Big(|u( & \gamma_{n+1}(\tau))-u(\gamma_{n}(\tau))|+|v(\gamma_{n+1}(\tau))-v(\gamma_{n}(\tau))|\Big)\,d\bar{\tau}.\nonumber 
\end{align}
Applying Gronwall's inequality on (\ref{eq:ForGronwallDifferenceGmmaRecursive}),
we therefore infer that, for any $\tau\in[0,\min\{\tau_{n+1},\tau_{n}\})$:
\begin{align}
v_{\mathcal{I}}^{-1}|u(\gamma_{n+1}(\tau))-u(\gamma_{n}(\tau))|+v_{\mathcal{I}}^{-1}|v(\gamma_{n+1}(\tau))-v(\gamma_{n}(\tau))|+\label{eq:DifferenceGeodesicsControl}\\
+|\Omega_{n}^{2}\dot{\gamma}_{n+1}^{v}(\tau)-\Omega_{n-1}^{2}\dot{\gamma}_{n}^{v}(\tau)|+|\Omega_{n}^{2} & \dot{\gamma}_{n+1}^{u}(\tau)-\Omega_{n-1}^{2}\dot{\gamma}_{n}^{u}(\tau)|\le C(C_{1})v_{\mathcal{I}}^{-1}u_{1}.\nonumber 
\end{align}
In exactly the same way, the estimate (\ref{eq:DifferenceGeodesicsControl})
can be also established in the case when $\gamma_{n+1}$ and $\gamma_{n}$
satisfy (\ref{eq:RecursiveGammaFromLeft}) in place of (\ref{eq:RecursiveGammaFromRight})
(with $n-1$ in place of $n$ in the case of $\gamma_{n}$) and 
\begin{align}
\gamma_{n+1}(0) & =\gamma_{n}(0)\in[0,u_{1})\times\{u_{0}\},\label{eq:SameInitialDataCurves-1}\\
\dot{\gamma}_{n+1}(0) & =\dot{\gamma}_{n}(0).\nonumber 
\end{align}

As a consequence of the estimate (\ref{eq:ForGronwallDifferenceGmmaRecursive})
and the bounds (\ref{eq:UniformBoundPrevious-2}) (for $k=n,n-1$),
(\ref{eq:BoundedSupportDefinition}) and (\ref{eq:BoundSuppFLeft}),
the transport relations (\ref{eq:VlasovInTermsOfCharacteristicsRecursiveFromRight})
and (\ref{eq:VlasovInTermsOfCharacteristicsRecursiveFromLeft}) for
$\bar{f}_{n+1}^{\prime}$, $\bar{f}_{n}^{\prime}$ readily imply that,
provided $C_{1}$ has been chosen large enough in terms of the initial
data (and in particular in terms of $\partial_{x^{a}}f_{/}$, $\partial_{p^{a}}f_{/}$
and $\partial_{x^{a}}f_{\backslash}$, $\partial_{p^{a}}f_{\backslash}$):
\begin{equation}
(-\Lambda)^{-2}r^{-2}\int_{0}^{+\infty}\int_{0}^{+\infty}\big(\Omega_{k-1}^{2}(p^{u}+p^{v})\big)^{2}\big|\bar{f}_{n+1}^{\prime}-\bar{f}_{n}^{\prime}\big|(\cdot;p^{u},l)\,\frac{dp^{u}}{p^{u}}ldl\le C(C_{1})()v_{\mathcal{I}}^{-1}u_{1}\mathfrak{D}_{n}^{\prime}.\label{eq:DifferenceFRecursive}
\end{equation}

The bound (\ref{eq:UniformBoundPrevious-2}) for $k=n+1$ now readily
follows from (\ref{eq:SecondBoundSecondIteration}), (\ref{eq:BoundMuSecondIteration}),
(\ref{eq:T_0BoundSecondIteration}) and (\ref{eq:UpperBoundEnergyMomentumRecursive}),
provided $u_{1}$ has been chosen small enough and $C_{1}$ large
enough, both depending only on the initial data $(r_{\backslash},\Omega_{\backslash}^{2},\bar{f}_{\backslash})$
and $(r_{/},\Omega_{/}^{2},\bar{f}_{/})$. The bound (\ref{eq:DifferenceBoundToShow-1})
for $k=n+1$ follows from (\ref{eq:FirstBoundSecondDifference}),
(\ref{BoundMuSecondDifference}) and (\ref{eq:DifferenceFRecursive}).
Therefore, by induction, (\ref{eq:UniformBoundPrevious-2}) and (\ref{eq:DifferenceBoundToShow-1})
hold for all $k\in\mathbb{N}$.

Having established tha the bounds (\ref{eq:UniformBoundPrevious-2})
and (\ref{eq:DifferenceBoundToShow-1}) hold for all $k\in\mathbb{N}$,
we infer that $(r_{k},\Omega_{k}^{2},\bar{f}_{k})$ converge in the
topology defined by the right hand side of (\ref{eq:DifferenceIterationDefinition-2})
to some limit functions $(r,\Omega^{2},\bar{f})$, for which the associated
norm defined by the left hand side of (\ref{eq:UniformBoundPrevious-2})
is finite. In particular, (\ref{eq:RecursiveNullShell}), (\ref{eq:RecursiveGammaFromRight}),
(\ref{eq:VlasovInTermsOfCharacteristicsRecursiveFromRight}), (\ref{eq:RecursiveGammaFromLeft})
and (\ref{eq:VlasovInTermsOfCharacteristicsRecursiveFromLeft}) imply
that 
\[
f(u,v;p^{u},p^{v},l)\doteq\bar{f}(u,v;p^{u},l)\cdot\delta\Big(\Omega^{2}(u,v)p^{u}p^{v}-\frac{l^{2}}{r^{2}(u,v)}\Big)
\]
 is a massless Vlasov field for the metric 
\[
g=-\Omega^{2}dudv+r^{2}g_{\mathbb{S}^{2}}
\]
 on $\mathcal{W}\times\mathbb{S}^{2}$, since it is transported along
its geodesic flow. As a consequence of (\ref{eq:RenormalisedRecursiveOmegaEquation})\textendash (\ref{eq:RenormalisedRecursiveREquation})
and the fact that (\ref{eq:DifferenceIterationDefinition-2}) controls
the $C^{0}$ norm of $r$, $\Omega^{2}$ and $T_{\mu\nu}$, we infer
that $(r,\Omega^{2},f)$ is a $C^{0}$ distributional solution of
(\ref{eq:RequationFinal})\textendash (\ref{NullShellFinal}). Commuting
(\ref{eq:RequationFinal})\textendash (\ref{NullShellFinal}) with
$\partial_{u}$, $\partial_{v}$ and arguing inductively treating
the equations as linear in the highest order terms (using the estimates
provided by (\ref{eq:UniformBoundPrevious-2}) for the lower order
terms at the first step), we infer the higher order regularity of
$(r,\Omega^{2};f)$; we will omit the details of this standard procedure. 

The uniqueness of the solution $(r,\Omega^{2},f)$ on $\mathcal{W}$
follows as in the previous step, by repeating the arguments leading
to the proof of the difference estimate (\ref{eq:DifferenceBoundToShow-1});
we will omit the details.

\paragraph*{Step 3: The region $\{u\le u_{2}\}\cap\{v_{\mathcal{I}}\le v<u+v_{\mathcal{I}}\}$}

By a slight abuse of notation, let us denote at this step by $(r_{\backslash},\Omega_{\backslash}^{2};\bar{f}_{\backslash})$
the characteristic initial data induced by the solution $(r,\Omega^{2};f)$
on $\mathcal{W}$ (constructed in the previous step) on $\{v=v_{\mathcal{I}}\}\cap\{0<u<u_{1}\}$.
We will show that, for some $u_{2}\in(0,u_{1}]$ sufficiently small
in terms of $(r_{\backslash},\Omega_{\backslash}^{2};\bar{f}_{\backslash})$,
there exists a unique smooth solution $(r,\Omega^{2};f)$ of (\ref{eq:RequationFinal})\textendash (\ref{NullShellFinal})
on 
\[
\mathcal{V}\doteq\{u\le u_{2}\}\cap\{v_{\mathcal{I}}\le v<u+v_{\mathcal{I}}\}
\]
(see Figure \ref{fig:InfinityExistenceDomain}) with characteristic
initial data $(r_{\backslash},\Omega_{\backslash}^{2};\bar{f}_{\backslash})$
on $(0,u_{2})\times\{v_{\mathcal{I}}\}$ such that $r$ satisfies
the gauge condition 
\[
\frac{1}{r}\Big|_{u=v-v_{\mathcal{I}}}=0
\]
 and $f$ satisfies the reflecting boundary condition stated in Definition
\ref{def:ReflectingBoundaryCondition}.

\begin{figure}[h] 
\centering 
\scriptsize
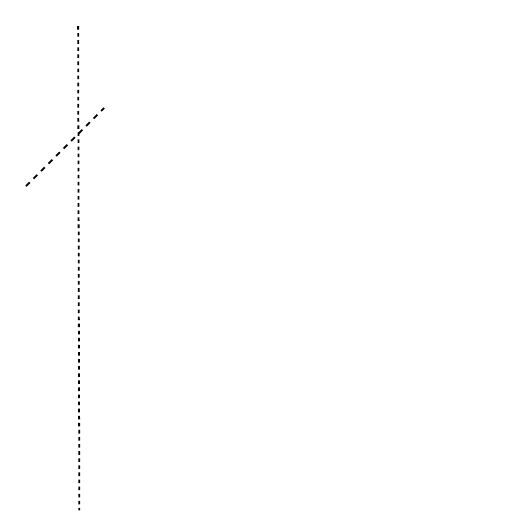 
\caption{Schematic depiction of the domain $\mathcal{V}$. \label{fig:InfinityExistenceDomain}}
\end{figure}

The proof proceeds by repeating essentially the same arguments as
in the previous step (using again an iteration scheme for the renormalised
equations \ref{eq:RenormalisedEquations} for $\Omega^{2}/(1-\frac{1}{3}\Lambda r^{2})$
and $\tan^{-1}\Big(\sqrt{-\frac{\Lambda}{3}}r\Big)$ instead of the
standard equations \ref{eq:RequationFinal}\textendash \ref{eq:OmegaEquationFinal};
the propagation of the constraint equations (\ref{eq:ConstraintVFinal})\textendash (\ref{eq:ConstraintUFinal})
under the reflecting boundary condition for $f$ on $\mathcal{I}$
is also inferred readily, using the bound on the support of $f$ in
phase space). We will therefore omit the relevant details.

\medskip{}

Let us define the solution $(r,\Omega^{2},f)$ of (\ref{eq:RequationFinal})\textendash (\ref{NullShellFinal})
on the domain $\mathcal{U}_{u_{*},v_{\mathcal{I}}}$ for $u_{*}=u_{2}$,
so that $(r,\Omega^{2},f)$ coincides with the solutions constructed
in the previous three steps on $\mathcal{D}_{0}^{u_{0}}\cap\mathcal{U}_{u_{2},v_{\mathcal{I}}}$,
$\mathcal{W}\cap\mathcal{U}_{u_{2},v_{\mathcal{I}}}$ and $\mathcal{V}$,
respectively. In order to conclude the proof of Theorem \ref{thm:LocalExistenceUniqueness},
it only remains to verify that $(r,\Omega^{2},f)$ is smooth across
the ``gluing'' boundaries $\{v=u_{0}\}\cap\mathcal{U}_{u_{*},v_{\mathcal{I}}}$
and $\{v=v_{\mathcal{I}}\}\cap\mathcal{U}_{u_{*},v_{\mathcal{I}}}$;
since $f$ solves the Vlasov equation in terms of $r,\Omega^{2}$,
it suffices to establish the smoothness of $r,\Omega^{2}$.

\begin{itemize}

\item Along $\{v=u_{0}\}\cap\mathcal{U}_{u_{*},v_{\mathcal{I}}}$,
the fact that 
\[
\lim_{\bar{v}\rightarrow u_{0}^{-}}\partial_{v}^{k}r|_{v=\bar{v}}=\lim_{\bar{v}\rightarrow u_{0}^{+}}\partial_{v}^{k}r|_{v=\bar{v}}\text{ and }\lim_{\bar{v}\rightarrow u_{0}^{-}}\partial_{v}^{k}\Omega^{2}|_{v=\bar{v}}=\lim_{\bar{v}\rightarrow u_{0}^{+}}\partial_{v}^{k}\Omega^{2}|_{v=\bar{v}}
\]
 follows by arguing inductively on $k$ and integrating equations
(\ref{eq:RequationFinal}) and (\ref{eq:OmegaEquationFinal}) in $u$
(after differentiating sufficiently many times with respect to $\partial_{v}$),
using also the smoothness of the initial data set $(r_{/},\Omega_{/}^{2},\bar{f}_{/};v_{\mathcal{I}})$
at $v=u_{0}$.

\item Similarly, along $\{v=v_{\mathcal{I}}\}\cap\mathcal{U}_{u_{*},v_{\mathcal{I}}}$,
the fact that 
\[
\lim_{\bar{v}\rightarrow v_{\mathcal{I}}^{-}}\partial_{v}^{k}r|_{v=\bar{v}}=\lim_{\bar{v}\rightarrow v_{\mathcal{I}}^{+}}\partial_{v}^{k}r|_{v=\bar{v}}\text{ and }\lim_{\bar{v}\rightarrow v_{\mathcal{I}}^{-}}\partial_{v}^{k}\Omega^{2}|_{v=\bar{v}}=\lim_{\bar{v}\rightarrow v_{\mathcal{I}}^{+}}\partial_{v}^{k}\Omega^{2}|_{v=\bar{v}}
\]
 follows by integrating the renormalised equations (\ref{eq:RenormalisedEquations})
in $u$ (again after differentiating sufficiently many times with
respect to $\partial_{v}$) and using the condition that the initial
data set $(r_{/},\Omega_{/}^{2},\bar{f}_{/};v_{\mathcal{I}})$ is
smoothly compatible, in accordance with Definition \ref{def:CompatibilityCondition}.

\end{itemize}

Thus, the proof of Theorem \ref{thm:LocalExistenceUniqueness} is
completed. \qed

\section{\label{sec:Extension-principles-for}Extension principles for smooth
solutions of (\ref{eq:RequationFinal})\textendash (\ref{NullShellFinal}) }

In this section, we will establish a number of sufficient conditions
for smooth solutions of (\ref{eq:RequationFinal})\textendash (\ref{NullShellFinal})
to admit a smooth extension beyond their original domain of definition;
in this discussion, we will adopt the notions of smoothness for solutions
to (\ref{eq:RequationFinal})\textendash (\ref{NullShellFinal}) introduced
in Section \ref{subsec:SmoothSolutions}. The extension principles
established in this section will be used in obtaining a long-time
existence resul in Section \ref{sec:Cauchy_Stability_Low_Regularity}.
The results of this section will also be useful for the proof of the
main theorem of our companion paper \cite{MoschidisVlasov}. 

\subsection{\label{subsec:ExtensionPrincipleAt_r=00003D0} Smooth extension along
$r=0$ when $\frac{2m}{r}\ll1$}

For any $u_{1}<u_{2}\in\mathbb{R}$, let us define the domain
\begin{equation}
\mathcal{D}_{u_{1}}^{u_{2}}\doteq\big([u_{1},u_{2}]\times[u_{1},u_{2}]\big)\cap\big\{ u\le v\big\}\subset\mathbb{R}^{2}\label{eq:DomainNearAxis}
\end{equation}
and the axis component $\gamma_{u_{1}}^{u_{2}}$ of the boundary of
$\mathcal{D}_{u_{1}}^{u_{2}}$:
\begin{equation}
\gamma_{u_{1}}^{u_{2}}\doteq\big([u_{1},u_{2}]\times[u_{1},u_{2}]\big)\cap\big\{ u=v\big\}\subset\partial\mathcal{D}_{u_{1}}^{u_{2}}.\label{eq:SmallPartOfAxis}
\end{equation}
Our first (and most technically involved) extension principle concerns
the smooth extendibility of solutions to (\ref{eq:RequationFinal})\textendash (\ref{NullShellFinal})
in neihgborhoods of the axis $r=0$ of the form $\mathcal{D}_{u_{1}}^{u_{2}}$:
\begin{thm}
\label{thm:ExtensionPrinciple}There exists a constant $\delta_{0}\in(0,\frac{1}{3}]$
with the following property: For any $u_{1}<u_{2}$ and any $\Lambda\in\mathbb{R}$,
let $(r,\Omega^{2},f)$ be any solution of (\ref{eq:RequationFinal})\textendash (\ref{NullShellFinal})
on $\mathcal{D}_{*}\cap\{u<v\}$, where $\mathcal{D}_{*}$ is an open
neighborhood of $\mathcal{D}_{u_{1}}^{u_{2}}\backslash\{(u_{2},u_{2})\}$,
such that $(r,\Omega^{2},f)$ is smooth with smooth axis $\gamma_{u_{1}}^{u_{2}}\backslash\{(u_{2},u_{2})\}$,
in accordance with Definition \ref{def:SmoothnessAxis} (see Figure
\ref{fig:AxisExtensionDomain}). Assume, moreover, that $(r,\Omega^{2},f)$
satisfies the following conditions:

\begin{enumerate}

\item The function $r$ satisfies at $u=u_{1}$ the one sided bound
\begin{equation}
\partial_{u}r|_{u=u_{1}}<0.\label{eq:MonotonicityInitialData}
\end{equation}

\item There exist some $C>0$, so that, at $u=u_{1}$, the support
of $f$ in the $p^{u}$ variable is bounded from above:
\begin{equation}
supp\Big(f(u_{1},\cdot;\cdot)\Big)\subseteq\big\{\partial_{v}r\cdot p^{v}-\partial_{u}r\cdot p^{u}\le C\big\}.\label{eq:BoundednessInitially}
\end{equation}

\item The solution $(r,\Omega^{2},f)$ satisfies 
\begin{equation}
\limsup_{(u,v)\rightarrow(u_{2},u_{2})}\frac{2\tilde{m}}{r}<\delta_{0}\label{eq:BoundMuLimSup}
\end{equation}
and, in the case $\Lambda>0$: 
\begin{equation}
\limsup_{(u,v)\rightarrow(u_{2},u_{2})}\frac{2m}{r}<\delta_{0}.\label{eq:BoundMassLimSup}
\end{equation}

\end{enumerate}

Then $(r,\Omega^{2},f)$ extends on a neighborhood of $\{(u_{1},u_{2})\}$
as a smooth solution with smooth axis $\gamma_{u_{1}}^{u_{2}}$, according
to Definition \ref{def:SmoothnessAxis}.

In the case when the closure of the support of $f$ does not contain
geodesics of vanishing angular momentum,i.\,e.~when $f$ satisfies
\begin{equation}
\inf_{supp\Big(f(u_{1},\cdot;\cdot)\Big)}l>0,\label{eq:NonVanishingAngularMomentum}
\end{equation}
the constant $\delta_{0}$ can be chosen to be equal to $1/3$. 
\end{thm}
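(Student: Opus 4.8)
The plan is to reduce the statement to a set of uniform a priori bounds near the corner point of $\mathcal{D}_{u_1}^{u_2}$ lying on the axis, and then to re-solve the characteristic initial-boundary value problem there. Concretely, it suffices to show that $(r,\Omega^2,f)$, expressed in the Cartesian chart \eqref{eq:CartesianCoordinates}, obeys $C^{k,1}$ bounds on $(\mathcal{D}_{u_1}^{u_2}\setminus\{(u_2,u_2)\})\times\mathbb{S}^2\cup\mathcal{Z}$ for every $k$, with constants that do not degenerate as one approaches the corner. Granting such bounds, the Cartesian metric components and $\bar f$ extend continuously, hence (being solutions of the Einstein--Vlasov system away from the corner) $C^\infty$, across the fibre over the axis endpoint; this gives a smooth extension with smooth axis on the \emph{closed} triangle, and in particular smooth characteristic data of bounded phase-space support on the outgoing cone $\{v=u_2\}$, which emanates from the axis endpoint. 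Applying the local well-posedness result for data on a cone emanating from the axis (the $\{v=v_0\}$-analogue of Proposition \ref{prop:LocalExistenceSpecialCase}, proved by the argument of Steps 1--2 of Section \ref{subsec:Local-existence-and}) then produces a smooth solution to the future of $\{v=u_2\}$ in a neighbourhood of the corner, which by the uniqueness statements of Section \ref{subsec:Well_posedness} agrees with the original solution on the overlap and glues to it smoothly across $\{v=u_2\}$, exactly as in the gluing argument at the end of the proof of Theorem \ref{thm:LocalExistenceUniqueness}.

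The a priori bounds are obtained by a continuity/bootstrap argument in the parameter $\tau=u+v$ (equivalently, by proving them on $\mathcal{D}_{u_1}^{u_2-\varepsilon}$ with $\varepsilon$-independent constants, using smoothness there together with the near-corner hypotheses \eqref{eq:BoundMuLimSup}--\eqref{eq:BoundMassLimSup}), in the following order. \textbf{Step 1 (monotonicity):} using the constraints \eqref{eq:ConstraintVFinal}--\eqref{eq:ConstraintUFinal} with $T_{uu},T_{vv}\ge0$, the boundary relation $\partial_v r=-\partial_u r$ on the axis, the one-sided bound \eqref{eq:MonotonicityInitialData}, the monotonicity of $\tilde m$ from \eqref{eq:DerivativeTildeUMass}--\eqref{eq:DerivativeTildeVMass}, and the fact that $\tilde m\to0$ along a smooth axis, one propagates $\partial_v r>0$, $\partial_u r<0$ and $2m/r<1$, $2\tilde m/r<\delta_0$ on the relevant neighbourhood. \textbf{Step 2 (geometry):} introduce $\kappa=\partial_v r/(1-\tfrac{2m}{r})$ and $\bar\kappa=-\partial_u r/(1-\tfrac{2m}{r})$; equations \eqref{eq:DerivativeInUDirectionKappa}--\eqref{eq:DerivativeInVDirectionKappaBar} show $\kappa$ is non-increasing in $u$, $\bar\kappa$ non-decreasing in $v$, and $\kappa=\bar\kappa$ on the axis, so both are two-sided bounded in terms of the data; since $\Omega^2=4\kappa\bar\kappa(1-\tfrac{2m}{r})$ this gives $c\le\Omega^2\le C$ and control of the first derivatives of $\log\Omega^2,\log\partial_v r,\log(-\partial_u r)$ via \eqref{eq:EquationOmegaForProof} and the mass equations.

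\textbf{Step 3 (Vlasov support and energy--momentum, the crux):} this is where the smallness enters decisively. Since $f$ is conserved along the null geodesic flow it suffices to bound $\Omega^2\dot\gamma^u$ and $\Omega^2\dot\gamma^v$ along every geodesic $\gamma$ in $\mathrm{supp}(f)$, for which one uses the integrated identities \eqref{eq:UsefulRelationForGeodesicWithMu-U}--\eqref{eq:UsefulRelationForGeodesicWithMu-V}. The bulk integrand there is $\tfrac12\frac{6\tilde m/r-1}{r^2}\Omega^2-24\pi T_{uv}$; when $2\tilde m/r<\tfrac13$ the first term is $\le0$, and since $T_{uv}\ge0$ for a Vlasov field so is the second, so the bulk contribution only \emph{decreases} $\log(\Omega^2\dot\gamma^u)$ --- it acts as a damping --- while the boundary term is controlled by the data bound \eqref{eq:BoundednessInitially} and Step 2. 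Together with $\Omega^2\sim1$ and $\partial_v r,-\partial_u r\lesssim1$ this yields $\partial_v r\,p^v-\partial_u r\,p^u\lesssim1$ on $\mathrm{supp}(f)$, hence $l^2/r^2=\Omega^2p^up^v\lesssim1$ by \eqref{eq:NullShellAngularMomentum}; inserting these into \eqref{eq:SphericallySymmetricComponentsEnergyMomentum} (and using that $\|\bar f\|_\infty$ is conserved) bounds $T_{\mu\nu}$, closing the bootstrap back into Steps 1--2 through $\tilde m$. The delicate point --- and the main obstacle --- is that near the axis $\Omega^2/r^2\sim(v-u)^{-2}$ is not integrable, so for geodesics of small angular momentum, which may approach $r=0$, the damping term and the singular boundary factor $\partial_v\log(\Omega^2/r^2)$ in \eqref{eq:UsefulRelationForGeodesicWithMu-U} must be handled together, passing to the Cartesian chart near the corner where no coordinate singularity is present; it is this interplay that forces $\delta_0<\tfrac13$ in general. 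When instead $\inf_{\mathrm{supp}(f|_{u=u_1})}l>0$, conservation of $l$ keeps every supported geodesic in $\{r\ge c_0>0\}$ (once $\Omega^2p^up^v$ is bounded, which closes in the bootstrap), the factor $r^{-2}$ is then harmless, and the only smallness used is $6\tilde m/r<1$; hence one may take $\delta_0=\tfrac13$.

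Finally, \textbf{Step 4 (higher regularity and Cartesian smoothness)} proceeds exactly as in the closing part of Step 1 of Section \ref{subsec:Local-existence-and}: from the $C^{0,1}$ control of $\log\Omega^2,\log\partial_v r,\log(-\partial_u r),T_{\mu\nu}$ one derives bounds on $\tilde m/r^3$ and its first derivatives by the integration-by-parts computation of \eqref{eq:DvDerivativeM.r^3-1}--\eqref{eq:DuDerivativeM.r^3-1}, and on $(\partial_u r+\partial_v r)/r$ and on $r^{-2}(\lambda-1)$ via the axis condition and the mean value theorem; commuting \eqref{eq:RequationFinal}--\eqref{NullShellFinal} with $\partial_u,\partial_v$ and inducting on the number of commutations (treating the equations as linear in the top-order terms) then yields $C^{k,1}$ control of $g_{\alpha\beta}$ in the Cartesian chart for all $k$, while differentiating the Vlasov equation in Cartesian coordinates together with the support bound of Step 3 controls $\bar f$. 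This verifies the conditions of Definition \ref{def:SmoothnessAxis} up to the corner, completing the reduction of the first paragraph; the case $\Lambda>0$ uses \eqref{eq:BoundMassLimSup} in Step 1 to keep $1-\tfrac{2m}{r}$ uniformly positive, and otherwise requires no change.
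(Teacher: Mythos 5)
Your overall architecture (reduce to uniform Cartesian $C^{k,1}$ bounds near the corner, then extend/re-solve and glue) matches the paper, as do your treatment of the easy case $\inf l>0$ and the higher-order induction. But your Step 3 — which you correctly identify as the crux — contains a genuine error and omits the mechanism that actually makes the theorem true. First, the one-sided damping bound obtained from $6\tilde m/r<1$ in (\ref{eq:UsefulRelationForGeodesicWithMu-U}) is an \emph{upper} bound on $\Omega^{2}\dot\gamma^{u}$ only; via the mass-shell relation $\Omega^{2}p^{u}p^{v}=l^{2}/r^{2}$ this gives a \emph{lower} bound on $\partial_{v}r\cdot\dot\gamma^{v}$, not the upper bound $\partial_{v}r\,p^{v}-\partial_{u}r\,p^{u}\lesssim1$ you claim. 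An $O(1)$ upper bound on $\dot\gamma^{v}$ is precisely what may fail if energy concentrates at the corner, and in the paper it is only recovered at the very end; the intermediate bound one can actually prove degenerates like a negative power of $l$ as $l\to0$. Relatedly, your Step 2 over-claims: monotonicity of $\kappa=\partial_{v}r/(1-\tfrac{2m}{r})$ in $u$ together with $\kappa=\bar\kappa$ on the axis gives an upper bound on $\kappa$ but no non-degenerate lower bound (the axis value of $\kappa$ is itself uncontrolled from below); integrating (\ref{eq:DerivativeInUDirectionKappa}) against $\partial_{u}\tilde m$ yields only $\kappa\gtrsim(r/R_{*})^{\delta_{0}/(1-\delta_{0})}$, which vanishes at the corner. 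Hence $c\le\Omega^{2}$ is not available at this stage of your bootstrap, and the bootstrap as you have ordered it does not close.

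The missing idea is the quantitative propagation of \emph{smallness of the mass towards the corner}, which the paper obtains in two rounds: (i) the lower bound $\inf_{\gamma}r\gtrsim l^{1+O(\delta_{0})}$ for every supported geodesic of angular momentum $l$ (so the $r^{-2}$ factors are controlled by $l^{-2}$ on the support); (ii) conservation of the $l$-resolved particle current $N^{(l)}_{\mu}$ integrated over outgoing cones, combined with the localization estimate that a supported geodesic starting at $\{u=u_{1}^{*}\}$ with $v(\gamma(0))\le v_{*}$ cannot approach $\{u=v_{*}\}$ closer than $c(v_{*}-v(\gamma(0)))$. Together with (\ref{eq:VDerivativeMassfromReducedParticleCurrent}) these yield $\tilde m\lesssim r^{2-O(\delta_{0})}$ and then $\tilde m\lesssim r^{3-O(\delta_{0})}$ near the corner, which is what breaks the circularity, upgrades the degenerate geometric bounds to $c\le\partial_{v}r,-\partial_{u}r,\Omega^{2}\le C$, and ultimately gives $\tilde m/r^{3}\in L^{\infty}$ — the quantity controlling $\partial^{2}g$ and the curvature in the Cartesian chart. ``Passing to the Cartesian chart'' does not substitute for this: the obstruction is not a coordinate artifact but possible concentration of Vlasov energy at $(u_{2},u_{2})$, and the smallness of $\delta_{0}$ enters quantitatively through the exponents $\delta_{0}/(1-\delta_{0})$, $7\delta_{0}/(1-2\delta_{0})^{2}$, etc., which must be small for the $l$-integrals and the mass improvement to close. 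A smaller point: in your Step 4, differentiating the Vlasov equation directly to control $\partial_{x}T_{\mu\nu}$ requires $\partial\Gamma\in L^{\infty}$, i.e.\ the very $C^{1,1}$ bound on $g$ you are trying to prove; the paper avoids this circularity by differentiating along the geodesic flow via Jacobi fields, which needs only the already-established $L^{\infty}$ curvature bounds.
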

\begin{figure}[h] 
\centering 
\scriptsize
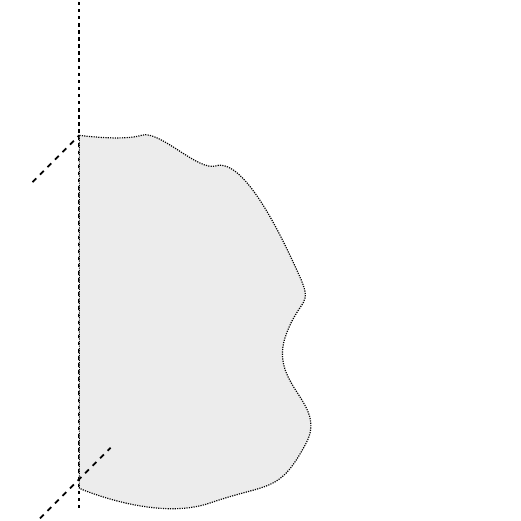 
\caption{Schematic depiction of the domains $\mathcal{D}_{u_{1}}^{u_{2}}$ and $\mathcal{D}_* \cap \{u<v \}$ appearing in the statement of Theorem~\ref{thm:ExtensionPrinciple}. \label{fig:AxisExtensionDomain}}
\end{figure}
\begin{proof}
The proof of Theorem \ref{thm:ExtensionPrinciple} will be obtained
in a number of steps, with each successive step improving the regularity
estimates for $r,\Omega^{2},f$ in a neighborhood of the point $(u_{2},u_{2})$
which were obtained in the previous step. At the final step, we will
show that
\begin{equation}
\limsup_{(u,v)\rightarrow(u_{2},u_{2})}\frac{\tilde{m}}{r^{3}}<+\infty,\label{eq:FinalBoundInMass}
\end{equation}
from which the smooth extension of $(r,\Omega^{2},f)$ on $(u_{2},u_{2})$
will then follow by a simple argument. 

\medskip{}

\noindent \emph{Gauge fixing and some initial bounds.} In view of
the initial condition (\ref{eq:MonotonicityInitialData}) and the
one sided bound 
\begin{equation}
\partial_{u}\big(\Omega^{-2}\partial_{u}r)\le0\label{eq:NegativityDuConstraint}
\end{equation}
(following from (\ref{eq:ConstraintUFinal})), we readily infer that
\begin{equation}
\partial_{u}r<0\text{ on }\mathcal{D}_{u_{1}}^{u_{2}}\backslash\{(u_{2},u_{2})\}.\label{eq:NegativeDuRInD}
\end{equation}

In view of (\ref{eq:BoundMuLimSup}), we deduce that there exists
some $u_{1}^{*}\in[u_{1},u_{2})$ such that, on
\begin{equation}
\mathcal{D}_{u_{1}^{*}}^{u_{2}}\doteq\big([u_{1}^{*},u_{2}]\times[u_{1}^{*},u_{2}]\big)\cap\big\{ u\le v\big\}\label{eq:NewDomain}
\end{equation}
we have: 
\begin{equation}
\sup_{\mathcal{D}_{u_{1}^{*}}^{u_{2}}}\frac{2\tilde{m}}{r}\le\delta_{0}.\label{eq:BoundMuTilde}
\end{equation}
In the case $\Lambda\le0$, in view of the relation (\ref{eq:RenormalisedHawkingMass})
for $\tilde{m},m$, the bound (\ref{eq:BoundMuTilde}) immediately
implies that
\begin{equation}
\sup_{\mathcal{D}_{u_{1}^{*}}^{u_{2}}}\frac{2m}{r}\le\delta_{0}.\label{eq:BoundMass}
\end{equation}
Provided $u_{1}^{*}$ is sufficiently close to $u_{2}$, the relation
(\ref{eq:BoundMass}) also holds in the case $\Lambda>0$, in view
of the assumption (\ref{eq:BoundMassLimSup}).

\medskip{}

\noindent \emph{Remark.} From now on, we will not need to distinguish
between the cases $\Lambda\le0$ and $\Lambda>0$.

\medskip{}

\noindent Since $\Omega$ is smooth on $\mathcal{D}_{u_{1}^{*}}^{u_{2}}\backslash\{(u_{2},u_{2})\}$,
from (\ref{eq:NegativeDuRInD}), (\ref{eq:DefinitionHawkingMass})
and (\ref{eq:BoundMass}) we infer that
\begin{equation}
\partial_{v}r>0\text{ on }\mathcal{D}_{u_{1}^{*}}^{u_{2}}\backslash\{(u_{2},u_{2})\}.\label{eq:PositiveDvRInD}
\end{equation}
Moreover, the smoothness of $(r,\Omega^{2},f)$ on $\mathcal{D}_{u_{1}}^{u_{2}}\cap\{u\le u_{1}^{*}\}$,
combined with the initial bound (\ref{eq:BoundednessInitially}) on
the support of $f$ imply that, for some $C_{*}>0$: 
\begin{equation}
supp\Big(f(u_{1}^{*},\cdot;\cdot)\Big)\subseteq\big\{\partial_{v}r\cdot p^{v}-\partial_{u}r\cdot p^{u}\le C_{*}\big\}.\label{eq:NewBoundPu}
\end{equation}

Let us define 
\begin{equation}
R_{*}\doteq\sup_{\mathcal{D}_{u_{1}^{*}}^{u_{2}}}r.
\end{equation}
In view of (\ref{eq:NegativeDuRInD}) and the fact that $r$ is bounded
on $\{u_{1}\}\times[u_{1},u_{2}]$, we can readily infer that 
\begin{equation}
R_{*}=\sup_{\{u_{1}\}\times[u_{1},u_{2}]}r<+\infty.
\end{equation}
Furthermore, the constraint equation (\ref{eq:DerivativeInUDirectionKappa})
readily implies that
\begin{equation}
\partial_{u}\Big(\frac{\partial_{v}r}{1-\frac{2m}{r}}\Big)\le0.\label{eq:AgainMonotonicityKappa}
\end{equation}
By integrating (\ref{eq:AgainMonotonicityKappa}) on rectangles of
the form $[u_{1}^{*},u]\times[u,v]$ for any $u\in[u_{1}^{*},u_{2})$
and $v\in[u,u_{2}]$ and using (\ref{eq:BoundMass}), (\ref{eq:PositiveDvRInD})
and the fact that 
\[
r(u,u)=0,
\]
we readily infer that 
\begin{equation}
r(u,v)\le\frac{1+\frac{1}{3}|\Lambda|R_{*}^{2}}{1-\delta_{0}}\Big(r(u_{1},u_{2})-r(u_{1},v)\Big).\label{eq:ForZeroRLimit}
\end{equation}
In view of the smoothness of $r$ on $\{u_{1}\}\times[u_{1},u_{2}]$,
from (\ref{eq:ForZeroRLimit}) we obtain that 
\begin{equation}
\lim_{(u,v)\rightarrow(u_{2},u_{2})}r(u,v)=0.\label{eq:ZeroLimitR}
\end{equation}
In view of (\ref{eq:ZeroLimitR}), we can assume without loss of generality
that $u_{1}^{*}$ has been fixed sufficiently close to $u_{2}$, so
that

\begin{equation}
\sqrt{|\Lambda|}\sup_{\mathcal{D}_{u_{1}^{*}}^{u_{2}}}r=\sqrt{|\Lambda|}R_{*}\le\delta_{0}.\label{eq:BoundR}
\end{equation}

By possibly applying a smooth coordinate transformation of the form
(\ref{eq:GeneralCoordinateTransformation}), we will also assume without
loss of generality that 
\begin{equation}
\Omega^{2}\big|_{u=u_{1}^{*}}=1.\label{eq:OmegaInitially}
\end{equation}
Note that such a tranformation does not affect the relations (\ref{eq:NegativeDuRInD}),
(\ref{eq:PositiveDvRInD}) and (\ref{eq:NewBoundPu}), which are gauge
invariant.

In view of (\ref{eq:NegativeDuRInD}), (\ref{eq:NegativityDuConstraint})
and the fact that $\partial_{u}r$ is continuous on $\{u_{1}^{*}\}\times[u_{1}^{*},u_{2}]$,
we readily obtain that, for some $c_{0}>0$ sufficiently small 
\begin{equation}
\inf_{\sup_{\mathcal{D}_{u_{1}^{*}}^{u_{2}}\backslash\{(u_{2},u_{2})\}}}\Omega^{-2}(-\partial_{u}r)\ge c_{0}>0.\label{eq:BoundForKappaD}
\end{equation}
From (\ref{eq:DefinitionHawkingMass}), (\ref{eq:BoundMass}) and
(\ref{eq:BoundForKappaD}), we therefore deduce that, for some $C_{0}>0$:
\begin{equation}
\sup_{\sup_{\mathcal{D}_{u_{1}}^{u_{2}}\backslash\{(u_{2},u_{2})\}}}\partial_{v}r\le C_{0}.\label{eq:UpperBoundDvR}
\end{equation}

\medskip{}

\noindent \emph{A first bound for $r$ on geodesics in the support
of $f$.} We will show that there exists some $c_{1}>0$ such that,
for any null geodesic $\gamma$ lying in the support of the Vlasov
field $f$ and having angular momentum $l>0$, 
\begin{equation}
\frac{\inf_{\gamma}r}{l}\ge c_{1}\min\Big\{\big(\frac{l}{R_{*}}\big)^{\frac{\delta_{0}}{1-2\delta_{0}}},1\Big\}.\label{eq:LowerBoundForR}
\end{equation}

\medskip{}

\noindent \emph{Remark.} In the case when (\ref{eq:NonVanishingAngularMomentum})
is satisfied, i.\,e.~when there exists some $l_{0}>0$ such that
$f$ is supported on $\{l\ge l_{0}\}$, the bound (\ref{eq:LowerBoundForR})
would immediately imply the statement of Theorem \ref{thm:ExtensionPrinciple}:
In view of (\ref{eq:UpperBoundDvR}), the bound (\ref{eq:LowerBoundForR})
would yield in this case that there exists some $v_{0}>0$, so that
no geodesic in the support of $f$ reaches the region $\{v\ge u_{2}-v_{0}\}\cap\mathcal{D}_{u_{1}}^{u_{2}}\backslash\{(u_{2},u_{2})\}$
and, therefore, $(r,\Omega^{2},f)$ is isometric to the trivial solution
on $\{v\ge u_{2}-v_{0}\}\cap\mathcal{D}_{u_{1}}^{u_{2}}\backslash\{(u_{2},u_{2})\}$.
The smooth extension of $(r,\Omega^{2},f)$ on $\{(u_{2},u_{2})\}$
would then follow trivially. The argument for the proof of (\ref{eq:LowerBoundForR})
only requires that $\delta_{0}\le\frac{1}{3}$, and thus one can choose
$\delta_{0}=\frac{1}{3}$ in the case when (\ref{eq:NonVanishingAngularMomentum})
holds.

\medskip{}

\noindent Let $\gamma:[0,b)\rightarrow\mathcal{D}_{u_{1}^{*}}^{u_{2}}\backslash\{(u_{2},u_{2})\}$
be a future directed, null geodesic with $(\gamma,\dot{\gamma})$
lying in the support of $f$, with its parametrization normalised
so that 
\begin{equation}
\gamma(0)\in\{u=u_{1}^{*}\}.\label{eq:InitialPointGamma}
\end{equation}
Since 
\[
\delta_{0}\le\frac{1}{3},
\]
in view of (\ref{eq:BoundMuTilde}), the non-negativity of $T_{uv}$
and the initial condition (\ref{eq:OmegaInitially}), the relation
(\ref{eq:UsefulRelationForGeodesicWithMu-U}) (with $u_{1}(v)=u_{1}^{*}$)
implies the following one-sided bound for the $\partial_{u}$ component
of $\gamma$: 
\begin{equation}
\sup_{s\in[0,b)}\log\big(\Omega^{2}\dot{\gamma}^{u}\big)(s)\le\log\big(\Omega^{2}\dot{\gamma}^{u}\big)(0).\label{eq:OneSidedBoundP}
\end{equation}
Thus, in view of (\ref{eq:NewBoundPu}), the bound (\ref{eq:OneSidedBoundP})
implies (using also (\ref{eq:DefinitionHawkingMass}), (\ref{eq:NegativeDuRInD})
and (\ref{eq:PositiveDvRInD})): 
\begin{equation}
-\partial_{u}r\cdot\dot{\gamma}^{u}\le\frac{\big(1-\frac{2m}{r}\big)}{4\partial_{v}r}C_{1}\label{eq:OneSidedBetterU}
\end{equation}
for some constant $C_{1}$ depending only on $C_{*}$ in $C_{0}$
in (\ref{eq:UpperBoundDvR}). Using (\ref{eq:OneSidedBetterU}), the
null shell relation (\ref{eq:NullShellAngularMomentum}) and the formula
(\ref{eq:DefinitionHawkingMass}), we also deduce that 
\begin{equation}
\partial_{v}r\cdot\dot{\gamma}^{v}\ge\frac{l^{2}}{r^{2}}\partial_{v}r\cdot C_{1}^{-1}.\label{eq:OneSidedBetterV}
\end{equation}

Integrating (\ref{eq:DerivativeInUDirectionKappa}) in $u$ and using
(\ref{eq:DerivativeTildeUMass}) and the bounds (\ref{eq:BoundMuTilde}),
(\ref{eq:BoundMass}), we infer that, for any $(u,v)\in\mathcal{D}_{u_{1}^{*}}^{u_{2}}\backslash\{(u_{2},u_{2})\}$:
\begin{align}
\Big|\log\Big(\frac{\partial_{v}r}{1-\frac{2m}{r}}\Big)\Big|_{(u,v)}-\log\Big(\frac{\partial_{v}r}{1-\frac{2m}{r}}\Big)\Big|_{(u_{1}^{*},v)}\Big| & \le\int_{u_{1}^{*}}^{u}\frac{2}{r}\frac{(-\partial_{u}\tilde{m})}{1-\frac{2m}{r}}(\bar{u},v)\,d\bar{u}\le\label{eq:FirstBoundKappa}\\
 & \le\frac{2}{1-\delta_{0}}\int_{u_{1}^{*}}^{u}\frac{-\partial_{u}\tilde{m}}{r}(\bar{u},v)\,d\bar{u}=\nonumber \\
 & =\frac{2}{1-\delta_{0}}\Big(-\int_{u_{1}^{*}}^{u}\frac{m}{r^{2}}(\bar{u},v)\,(-\partial_{u}r)d\bar{u}+\big[\frac{m}{r}\big]\big|_{\bar{u}=u}^{u_{1}^{*}}\Big)\le\nonumber \\
 & \le\frac{\delta_{0}}{1-\delta_{0}}\log\frac{r(u_{1}^{*},v)}{r(u,v)}+1.\nonumber 
\end{align}
The bound (\ref{eq:FirstBoundKappa}) implies that for some $C_{2}>0$
depending only on $\inf_{u=u_{1}^{*}}\partial_{v}r$: 
\begin{equation}
\frac{\partial_{v}r}{1-\frac{2m}{r}}\ge C_{2}\big(\frac{r}{R_{*}}\big)^{\frac{\delta_{0}}{1-\delta_{0}}}.\label{eq:LowerBoundDvR}
\end{equation}
Thus, in view of (\ref{eq:BoundMass}), the bounds (\ref{eq:OneSidedBetterU})
and (\ref{eq:OneSidedBetterV}) yield 
\begin{equation}
\frac{-\partial_{u}r\cdot\dot{\gamma}^{u}}{\partial_{v}r\cdot\dot{\gamma}^{v}}\le C_{1}^{\prime}\frac{r^{2}}{l^{2}}\big(\frac{r}{R_{*}}\big)^{-\frac{2\delta_{0}}{1-\delta_{0}}}.\label{eq:BoundForSlope}
\end{equation}
for some $C_{1}^{\prime}>0$ depending only on $C_{1},C_{2}$.

Along the geodesic $\gamma$ we calculate 
\begin{equation}
\dot{r}=\partial_{v}r\cdot\dot{\gamma}^{v}+\partial_{u}r\cdot\dot{\gamma}^{u}.\label{eq:RDerivativeAlongGeodesic}
\end{equation}
As a consequence, the upper bound (\ref{eq:BoundForSlope}) implies
that $\dot{r}\le0$ can only be achieved in the region where 
\begin{equation}
C_{1}^{\prime}\frac{r^{2}}{l^{2}}\big(\frac{r}{R_{*}}\big)^{-\frac{2\delta_{0}}{1-\delta_{0}}}\ge1.
\end{equation}
Therefore, (\ref{eq:LowerBoundForR}) holds (for some $c_{1}$ possibly
depending on $R_{*}$).

\medskip{}

\noindent \emph{Bounds for $\dot{\gamma}$ for geodesics $\gamma$
in the support of $f$.} Let $\gamma:[0,b)\rightarrow\mathcal{D}_{u_{1}^{*}}^{u_{2}}\backslash\{(u_{2},u_{2})\}$
be a future directed, null geodesic in the support of $f$ as before,
satisfying (\ref{eq:InitialPointGamma}). For any $s\in[0,b)$ such
that 
\begin{equation}
\partial_{v}r\cdot\dot{\gamma}^{v}(s)\le4\delta_{0}^{-1}(-\partial_{u}r)\cdot\dot{\gamma}^{u}(s),\label{eq:NotAlmostoutgoing}
\end{equation}
we can estimate, in view of (\ref{eq:OneSidedBetterU}) and (\ref{eq:LowerBoundDvR})
(using also (\ref{eq:BoundMass})), that:
\begin{equation}
\partial_{v}r\cdot\dot{\gamma}^{v}(s)\le4\delta_{0}^{-1}(-\partial_{u}r)\cdot\dot{\gamma}^{u}(s)\le C_{1}\delta_{0}^{-1}\frac{1-\frac{2m}{r}}{\partial_{v}r}\le C_{3}\delta_{0}^{-1}\big(\frac{r}{R_{*}}\big)^{-\frac{\delta_{0}}{1-\delta_{0}}}\label{eq:EasyBoundGammaV}
\end{equation}
for some $C_{3}>0$ depending only on $C_{1},C_{2}$. 

We will now establish a bound for $\dot{\gamma}^{v}$ in the region
where (\ref{eq:NotAlmostoutgoing}) does not hold. Let $s_{1}\le s_{2}\in[0,b)$
be such that, for any $s\in[s_{1},s_{2}]$, we can bound: 
\begin{equation}
\partial_{v}r\cdot\dot{\gamma}^{v}(s)\ge2\delta_{0}^{-1}(-\partial_{u}r)\cdot\dot{\gamma}^{u}(s).\label{eq:AlmostOutgoingRelation}
\end{equation}
Note that, in view of the relation (\ref{eq:RDerivativeAlongGeodesic}),
(\ref{eq:AlmostOutgoingRelation}) implies that, along $\gamma([s_{1},s_{2}])$:
\begin{equation}
dv|_{\gamma([s_{1},s_{2}])}=\dot{\gamma}^{v}ds|_{[s_{1},s_{2}]}\le(1-\frac{1}{2}\delta_{0})^{-1}(\partial_{v}r)^{-1}dr|_{\gamma([s_{1},s_{2}])}.\label{eq:ComparisonVolumeForms}
\end{equation}
Equation (\ref{eq:NullGeodesicsSphericalSymmetry}) for $\dot{\gamma}^{u}$,
in view of (\ref{eq:NullShellAngularMomentum}), (\ref{eq:EquationOmegaForProof}),
(\ref{eq:OmegaInitially}) and (\ref{eq:AlmostOutgoingRelation}),
yields the following bound for any $s\in[s_{1},s_{2}]$: 
\begin{align}
-\frac{1}{\dot{\gamma}^{v}}\frac{d}{ds}\log\big(r^{2}\Omega^{2}\dot{\gamma}^{u}\big)(s) & =-\partial_{v}\log\Omega^{2}|_{\gamma(s)}+2\frac{\dot{\gamma}^{u}}{\dot{\gamma}^{v}}\frac{(-\partial_{u}r)}{r}|_{\gamma(s)}\le\label{eq:BoundLogarithmicDerivative}\\
 & \le-\partial_{v}\log\Omega^{2}|_{\gamma(s)}+\delta_{0}\frac{\partial_{v}r}{r}|_{\gamma(s)}=\nonumber \\
 & =-\int_{u_{1}^{*}}^{u(\gamma(s))}\partial_{u}\partial_{v}\log\Omega^{2}(u,v(\gamma(s)))\,du-\partial_{v}\log\Omega^{2}(u_{1}^{*},v(\gamma(s))+\delta_{0}\frac{\partial_{v}r}{r}|_{\gamma(s)}=\nonumber \\
 & =\int_{u_{1}^{*}}^{u(\gamma(s))}\Big(-4\big(\frac{\tilde{m}}{r^{3}}+\frac{\Lambda}{6}\big)\frac{(-\partial_{u}r)\partial_{v}r}{1-\frac{2m}{r}}+16\pi T_{uv}\Big)(u,v(\gamma(s)))\,du+\delta_{0}\frac{\partial_{v}r}{r}|_{\gamma(s)}.\nonumber 
\end{align}

Using (\ref{eq:DerivativeTildeUMass}), (\ref{eq:BoundMuTilde}),
(\ref{eq:BoundR}), (\ref{eq:BoundMass}) and the fact that the right
hand side of (\ref{eq:DerivativeInUDirectionKappa}) is non-positive,
we can estimate
\begin{align}
\int_{u_{1}^{*}}^{u(\gamma(s))}\Big(-4\big( & \frac{\tilde{m}}{r^{3}}+\frac{\Lambda}{6}\big)\frac{(-\partial_{u}r)\partial_{v}r}{1-\frac{2m}{r}}+16\pi T_{uv}\Big)(u,v)\,du\le\label{eq:BoundTuv}\\
\le & \int_{u_{1}^{*}}^{u(\gamma(s))}\Big(-4\big(\frac{\tilde{m}}{r^{3}}+\frac{\Lambda}{6}\big)\frac{(-\partial_{u}r)\partial_{v}r}{1-\frac{2m}{r}}+8\frac{\partial_{v}r}{1-\frac{2m}{r}}\frac{-\partial_{u}\tilde{m}}{r^{2}}\Big)(u,v)\,du=\nonumber \\
= & \int_{u_{1}^{*}}^{u(\gamma(s))}\Big\{-4\big(\frac{\tilde{m}}{r^{3}}+\frac{\Lambda}{6}\big)\frac{(-\partial_{u}r)\partial_{v}r}{1-\frac{2m}{r}}+8\frac{\partial_{v}r}{1-\frac{2m}{r}}\frac{\tilde{m}}{r^{2}}\Big(\partial_{u}\log\big(\frac{\partial_{v}r}{1-\frac{2m}{r}}\big)+2\frac{-\partial_{u}r}{r}\Big)\Big\}(u,v)\,du+\nonumber \\
 & \hphantom{=*}+8\frac{\partial_{v}r}{1-\frac{2m}{r}}\frac{\tilde{m}}{r^{2}}(u_{1}^{*},v)-8\frac{\partial_{v}r}{1-\frac{2m}{r}}\frac{\tilde{m}}{r^{2}}(u(\gamma(\sigma)),v)\nonumber \\
\le & \int_{u_{1}^{*}}^{u(\gamma(s))}\Big\{-4\big(\frac{\tilde{m}}{r^{3}}+\frac{\Lambda r^{2}}{6r^{2}}\big)\frac{(-\partial_{u}r)\partial_{v}r}{1-\frac{2m}{r}}+16\frac{\tilde{m}}{r^{3}}\frac{(-\partial_{u}r)\partial_{v}r}{1-\frac{2m}{r}}\Big\}(u,v)\,du+\nonumber \\
 & \phantom{=*}+8\frac{\partial_{v}r}{1-\frac{2m}{r}}\frac{\tilde{m}}{r^{2}}(u_{1}^{*},v)\le\nonumber \\
\le & 6\delta_{0}\int_{u_{1}^{*}}^{u(\gamma(s))}\frac{-\partial_{u}r}{r^{2}}\frac{\partial_{v}r}{1-\frac{2m}{r}}(u,v)\,du+\frac{4\delta_{0}}{1-\frac{2m}{r}}\frac{\partial_{v}r}{r}(u_{1}^{*},v)=\nonumber \\
= & 6\delta_{0}\int_{u_{1}^{*}}^{u(\gamma(s))}\frac{1}{r}\partial_{u}\Big(\frac{\partial_{v}r}{1-\frac{2m}{r}}\Big)(u,v)\,du+\frac{6\delta_{0}}{1-\frac{2m}{r}}\frac{\partial_{v}r}{r}(u(\gamma(s)),v)-\nonumber \\
 & \phantom{=*}-\frac{6\delta_{0}}{1-\frac{2m}{r}}\frac{\partial_{v}r}{r}(u_{1}^{*},v)+\frac{4\delta_{0}}{1-\frac{2m}{r}}\frac{\partial_{v}r}{r}(u_{1}^{*},v)\le\nonumber\\
\le & \frac{6\delta_{0}}{1-\delta_{0}}\frac{\partial_{v}r}{r}(u(\gamma(s)),v)-\delta_{0}\frac{\partial_{v}r}{r}(u_{1}^{*},v).\nonumber 
\end{align}
Substituting in (\ref{eq:BoundLogarithmicDerivative}), we therefore
obtain for any $s\in[s_{1},s_{2}]$: 
\begin{equation}
-\frac{1}{\dot{\gamma}^{v}}\frac{d}{ds}\log\big(r^{2}\Omega^{2}\dot{\gamma}^{u}\big)(s)\le\frac{6\delta_{0}}{1-\delta_{0}}\frac{\partial_{v}r}{r}\Big|_{\gamma(s)}.\label{eq:UpperBoundForPvAlmost}
\end{equation}
Integrating (\ref{eq:UpperBoundForPvAlmost}) in $s$ and using the
relation 
\[
\dot{\gamma}^{v}=\frac{l^{2}}{\big(r^{2}\Omega^{2}\dot{\gamma}^{u}\big)}
\]
(following from (\ref{eq:NullShellAngularMomentum})) and the bound
(\ref{eq:ComparisonVolumeForms}), we can finally estimate for any
$s_{1}\le s_{2}$ for which (\ref{eq:AlmostOutgoingRelation}) holds
on $[s_{1},s_{2}]$:
\begin{equation}
\frac{\dot{\gamma}^{v}(s_{2})}{\dot{\gamma}^{v}(s_{1})}\le\Big(\frac{r|_{\gamma(s_{2})}}{r|_{\gamma(s_{1})}}\Big)^{\frac{6\delta_{0}}{(1-\delta_{0})^{2}}}.\label{eq:BoundPv}
\end{equation}

Using the fact that (\ref{eq:EasyBoundGammaV}) holds for all $s$
for which (\ref{eq:NotAlmostoutgoing}) is true, while (\ref{eq:BoundPv})
holds for all $s_{1}\le s_{2}$ for which (\ref{eq:AlmostOutgoingRelation})
is true on $[s_{1},s_{2}]$, we can estimate for all $s\in[0,b)$,
in view of the bounds (\ref{eq:UpperBoundDvR}) and (\ref{eq:LowerBoundDvR})
for $\partial_{v}r$, the bound (\ref{eq:BoundMass}) for $\frac{2m}{r}$
and the lower bound (\ref{eq:LowerBoundForR}) for $r$ along $\gamma$:
\begin{equation}
\partial_{v}r\cdot\dot{\gamma}^{v}(s)\le C_{4}\delta_{0}^{-1}\max\Big\{\big(\frac{l}{R_{*}}\big)^{-\frac{7\delta_{0}}{(1-2\delta_{0})^{2}}},1\Big\}\label{eq:BoundPvFinal}
\end{equation}
for some $C_{4}>0$ depending on $c_{1},C_{1},C_{2}$. 

From (\ref{eq:OneSidedBetterU}), (\ref{eq:LowerBoundDvR}) and (\ref{eq:LowerBoundForR}),
we also obtain (using the formula (\ref{eq:DefinitionHawkingMass})
for $\Omega^{2}$ in $\Omega^{2}\dot{\gamma}^{u}$) 
\begin{equation}
-\partial_{u}r\cdot\dot{\gamma}^{u}(s)\le C_{4}\max\Big\{\big(\frac{l}{R_{*}}\big)^{-\frac{\delta_{0}}{1-2\delta_{0}}},1\Big\}.\label{eq:BoundPuFinal}
\end{equation}

\medskip{}

\noindent \emph{First improved bound for $\frac{2\tilde{m}}{r}$.}
We will now proceed to establish an improved bound for $\frac{2\tilde{m}}{r}$
in terms of $r$. 

In view of the fact that $f$ satisfies the transport equation (\ref{eq:Vlasov}),
we can trivially bound for $\bar{f}|_{supp(f)}$ (defined by the relation
(\ref{eq:FAsASmoothDeltaFunction}))
\begin{equation}
\sup_{supp(f)}\bar{f}\le\sup_{\{u=u_{1}^{*}\}\cap supp(f)}\bar{f}<+\infty.\label{eq:UpperBoundF}
\end{equation}
In view of (\ref{eq:NewBoundPu}), the definitions (\ref{eq:ReducedParticleCurrent})
and (\ref{eq:TotalReducedParticleCurrent}) imply that there exists
some $C_{5}>0$ such that:
\begin{equation}
\sup_{l\ge0,v\in[u_{1}^{*},u_{2}]}r^{2}N_{v}^{(l)}(u_{1}^{*},v)\le C_{5}.\label{eq:BoundReducedParticleCurrent}
\end{equation}
Integrating (\ref{eq:ReducedParticleConservation}) and using (\ref{eq:BoundReducedParticleCurrent}),
we can therefore bound for any $l\ge0$:
\begin{equation}
\sup_{u_{1}^{*}\le u<u_{2}}\int_{u}^{u_{2}}r^{2}N_{v}^{(l)}(u,v)\,dv\le\int_{u_{1}^{*}}^{u_{2}}r^{2}N_{v}^{(l)}(u_{1}^{*},v)\,dv\le C_{5}(u_{2}-u_{1}^{*}).\label{eq:ParitcleCurrentIntegratedBound}
\end{equation}

Integrating equation (\ref{eq:DerivativeTildeVMass}) in $v$ (using
the fact that $\tilde{m}|_{\gamma}=0$), the bound (\ref{eq:VDerivativeMassfromReducedParticleCurrent})
implies that, for any $u\ge u_{1}^{*}$:
\begin{align}
\tilde{m}(u,v) & =2\pi\int_{u}^{v}\Big(\frac{1-\frac{2m}{r}}{\partial_{v}r}r^{2}T_{vv}(u,\bar{v})+\frac{1-\frac{2m}{r}}{-\partial_{u}r}r^{2}T_{uv}(u,\bar{v})\Big)\,d\bar{v}=\label{eq:ExpressionMassImprovedR}\\
 & =2\pi\int_{0}^{+\infty}\int_{u}^{v}\Big(\frac{1-\frac{2m}{r}}{\partial_{v}r}r^{2}T_{vv}^{(l)}(u,\bar{v})+\frac{1-\frac{2m}{r}}{-\partial_{u}r}r^{2}T_{uv}^{(l)}(u,\bar{v})\Big)\,d\bar{v}ldl\le\nonumber \\
 & \le4\pi\int_{0}^{+\infty}\int_{u}^{v}\sup_{supp\big(f(u,v;\cdot,\cdot,l)\big)}\Big(\partial_{v}r(u,\bar{v})p^{v}-\partial_{u}r(u,\bar{v})p^{u}\Big)\cdot r^{2}N_{v}^{(l)}(u,\bar{v})\,d\bar{v}ldl.\nonumber 
\end{align}
 In view of the bounds (\ref{eq:BoundPvFinal}) and (\ref{eq:BoundPuFinal})
for geodesics in the support of $f$, as well as the particle current
bound (\ref{eq:ParitcleCurrentIntegratedBound}), the estimate (\ref{eq:ExpressionMassImprovedR})
yields for any $u\ge u_{1}^{*}$: 
\begin{align}
\tilde{m}(u,v) & \le4\pi C_{4}\delta_{0}^{-1}\int_{0}^{+\infty}\max\Big\{\big(\frac{l}{R_{*}}\big)^{-\frac{7\delta_{0}}{(1-2\delta_{0})^{2}}},1\Big\}\Big(\int_{u}^{v}r^{2}N_{v}^{(l)}(u,\bar{v})\,d\bar{v}\Big)\,ldl\le\label{eq:AlmostImprovedMassBound}\\
 & \le C_{\delta_{0}}\int_{0}^{l^{+}(u,v)}l\cdot\max\Big\{\big(\frac{l}{R_{*}}\big)^{-\frac{7\delta_{0}}{(1-2\delta_{0})^{2}}},1\Big\}\,dl\le\nonumber \\
 & \le C_{\delta_{0}}(l^{+}(u,v))^{2}\max\Big\{\big(\frac{l^{+}(u,v)}{R_{*}}\big)^{-\frac{7\delta_{0}}{(1-2\delta_{0})^{2}}},1\Big\},\nonumber 
\end{align}
where 
\begin{equation}
l^{+}(u,v)=\sup_{\bar{v}\in[u,v]}\Big(\sup_{supp\big(f(u,\bar{v};\cdot,\cdot,\cdot)\big)}l\Big).
\end{equation}
Notice that (\ref{eq:LowerBoundForR}) (in view also of (\ref{eq:PositiveDvRInD}))
implies that 
\begin{equation}
l^{+}(u,v)\le Cr(u,v)\max\Big\{\big(\frac{r(u,v)}{R_{*}}\big)^{-\frac{\delta_{0}}{1-\delta_{0}}},1\Big\}.\label{eq:UpperBoundL+}
\end{equation}
Hence, from (\ref{eq:AlmostImprovedMassBound}) and (\ref{eq:UpperBoundL+})
we obtain: 
\begin{equation}
\sup_{(u,v)\in\mathcal{D}_{u_{1}^{*}}^{u_{2}}\backslash\{(u_{2},u_{2})\}}\frac{\tilde{m}}{r^{2-100\delta_{0}}}<+\infty.\label{eq:FirstImprovedBoundMass}
\end{equation}

\medskip{}

\noindent \emph{Improved bounds for the geometry.} Assuming that 
\begin{equation}
\delta_{0}\le10^{-3},
\end{equation}
returning to the proof of (\ref{eq:LowerBoundForR}) and (\ref{eq:LowerBoundDvR})
and using the stronger bound (\ref{eq:FirstImprovedBoundMass}) in
place of the weaker initial bound (\ref{eq:BoundMuTilde}), we can
readily improve (\ref{eq:LowerBoundForR}) and (\ref{eq:LowerBoundDvR})
as follows: There exists some $c>0$, such that:

\begin{itemize}

\item For any $(u,v)\in\mathcal{D}_{u_{1}^{*}}^{u_{2}}\backslash\{(u_{2},u_{2})\}$
\begin{equation}
\partial_{v}r(u,v)\ge c.\label{eq:ImprovedLowerBoundDvR}
\end{equation}

\item For any future directed null geodesic $\gamma:[0,b)\rightarrow\mathcal{D}_{u_{1}^{*}}^{u_{2}}\backslash\{(u_{2},u_{2})\}$
in the support of $f$:
\begin{equation}
\inf_{\gamma}r\le c\cdot l\label{eq:ImprovedLowerBoundForR}
\end{equation}
and (in view of (\ref{eq:OneSidedBetterU}) and (\ref{eq:ImprovedLowerBoundDvR}))
\begin{equation}
\sup_{s\in[0,b)}\big(-\partial_{u}r\cdot\dot{\gamma}^{u}(s)\big)\le c^{-1}.\label{eq:ImprovedBoundGammaU}
\end{equation}

\end{itemize}

Integrating (\ref{eq:DerivativeInVDirectionKappaBar}) in $v$ and
using the boundary condition 
\begin{equation}
\partial_{v}r|_{\gamma}=-\partial_{u}r|_{\gamma},
\end{equation}
the bounds (\ref{eq:BoundMass}), (\ref{eq:FirstImprovedBoundMass}),
(\ref{eq:UpperBoundDvR}) and (\ref{eq:ImprovedLowerBoundDvR}) imply
that there exists some $C>0$ such that, for any $(u,v)\in\mathcal{D}_{u_{1}^{*}}^{u_{2}}\backslash\{(u_{2},u_{2})\}$:
\begin{equation}
C^{-1}\le-\partial_{u}r(u,v)\le C.\label{eq:ImprovedUpperBoundDuR}
\end{equation}
Note also that (\ref{eq:UpperBoundDvR}), (\ref{eq:ImprovedLowerBoundDvR})
and (\ref{eq:ImprovedUpperBoundDuR}) imply (in view of (\ref{eq:DefinitionHawkingMass}))
\begin{equation}
\sup_{\mathcal{D}_{u_{1}^{*}}^{u_{2}}\backslash\{(u_{2},u_{2})\}}\big|\log\Omega^{2}\big|<+\infty.\label{eq:BoundsForOmega}
\end{equation}

\begin{figure}[h] 
\centering 
\scriptsize
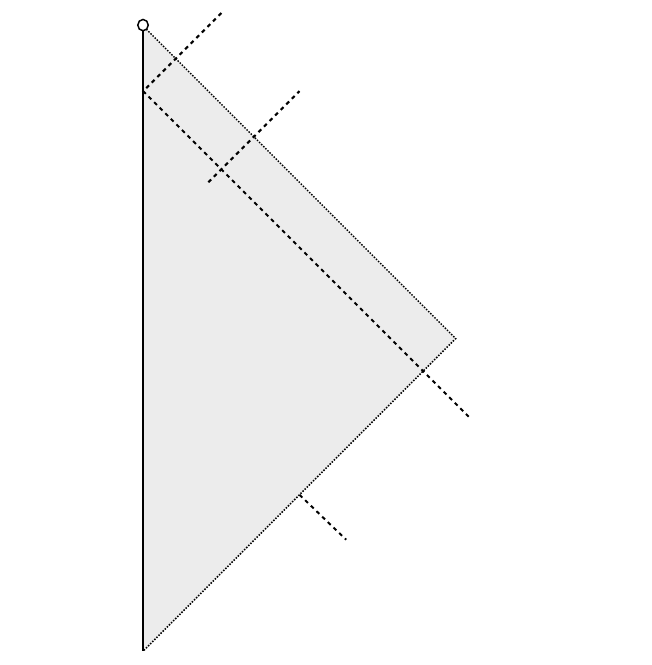 
\caption{The bound (\ref{eq:BoundedAwayFromTop}) implies that a null geodesic $\gamma$ cannot remain in the region close to the axis for a long period of retarded time compared to its initial separation $u_2-v(\gamma(0))$ from $v=u_2$. \label{fig:Gamma_Proximity_Axis}}
\end{figure}

We will now show that there exists some $0<c<1$ such that, for any
$v_{*}\in(u_{1}^{*},u_{2}]$ and any future directed null geodesic
$\gamma:[0,b)\rightarrow\mathcal{D}_{u_{1}^{*}}^{u_{2}}\backslash\{(u_{2},u_{2})\}$
in the support of $f$ satisfying $u(\gamma(0))=u_{1}^{*}$ and $v(\gamma(0))\le v_{*}$,
we have 
\begin{equation}
v_{*}-\sup_{\gamma\cap\{v\le v_{*}\}}u\ge c\big(v_{*}-v(\gamma(0))\big)\label{eq:BoundedAwayFromTop}
\end{equation}
(see also Figure \ref{fig:Gamma_Proximity_Axis}).

\medskip{}

\noindent \emph{Proof of (\ref{eq:BoundedAwayFromTop}).} By possibly
restricting the domain of $\gamma$ to a subdomain of the form $[0,b_{*})\subset[0,b)$,
if necessary, we will assume that, without loss of generality,
\begin{equation}
\sup_{s\in[0,b)}v(\gamma(s))\le v_{*}.\label{eq:supvgammas}
\end{equation}
Differentiating (\ref{eq:UsefulRelationForGeodesicWithMu-U}) (with
$u_{1}(v)=u_{1}^{*}$) and using (\ref{eq:OmegaInitially}), (\ref{eq:BoundMuTilde}),
(\ref{eq:ImprovedUpperBoundDuR}) and (\ref{eq:BoundsForOmega}),
we obtain for any $s\in[0,b)$:
\begin{align}
\frac{1}{\dot{\gamma}^{v}}\frac{d}{ds}\log\big(\Omega^{2}\dot{\gamma}^{u}\big)(s) & =\Bigg(\int_{u_{1}^{*}}^{u(\gamma(s))}\Big(\frac{1}{2}\frac{\frac{6\tilde{m}}{r}-1}{r^{2}}\Omega^{2}-24\pi T_{uv}\Big)(u,v(\gamma(s))\,du\Bigg)-2\frac{\partial_{v}r}{r}(u_{1}^{*},v(\gamma(s)))\le\label{eq:OnTheWayToImprovedBounds}\\
 & \le-c\Big(\frac{1}{r}(u(\gamma(s)),v(\gamma(s)))-\frac{1}{r}(u_{1}^{*},v(\gamma(s)))\Big).\nonumber 
\end{align}

Let $0<\delta_{1}\ll1$ be a sufficiently small parameter that will
be fixed later, and let $s_{0}\in[0,b)$ be the minimum value of $s$
for which 
\begin{equation}
v_{*}-u(\gamma(s_{0}))=\delta_{1}\cdot\big(v_{*}-v(\gamma(0))\big)\label{eq:DefinitionS_0}
\end{equation}
(note that if (\ref{eq:DefinitionS_0}) does not hold for any $s_{0}\in[0,b)$,
then (\ref{eq:BoundedAwayFromTop}) trivially holds for $c=\delta_{1}$);
see Figure \ref{fig:Gamma_Proximity_Axis_Proof}. Since $\gamma$
is future directed and null, $u(\gamma(s))$ is increasing in $s$
and, hence, (\ref{eq:DefinitionS_0}) implies that, for any $s\in[s_{0},b)$:
\begin{equation}
v_{*}-u(\gamma(s))\le\delta_{1}\cdot\big(v_{*}-v(\gamma(0))\big).\label{eq:IfClosedEnoughToTop}
\end{equation}

\begin{figure}[h] 
\centering 
\scriptsize
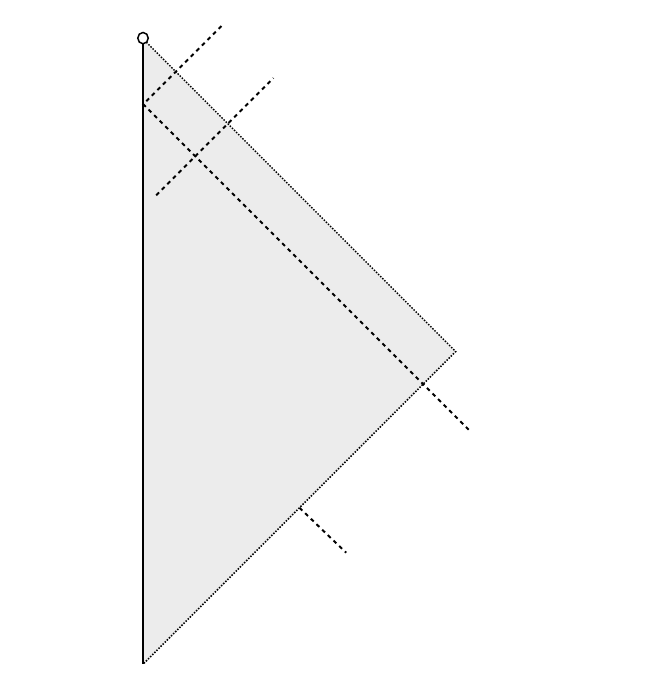 
\caption{Schematic depiction of the image of $\gamma([0,s_0])$, where $s_0$ is defined by (\ref{eq:DefinitionS_0}). \label{fig:Gamma_Proximity_Axis_Proof}}
\end{figure}

Using the bound (\ref{eq:IfClosedEnoughToTop}) for $s\ge s_{0}$
and the fact that $u\le v$ on $\mathcal{D}_{u_{1}^{*}}^{u_{2}}$,
we infer that, for any $s\in[s_{0},b)$:
\begin{equation}
v(\gamma(s))-u_{1}^{*}\ge(1-\delta_{1})(v_{*}-u_{1}^{*}).\label{eq:LowerBoundVGammaS}
\end{equation}
Hence, from (\ref{eq:ImprovedLowerBoundDvR}) and (\ref{eq:LowerBoundVGammaS})
(in view also of the fact that $r|_{\{u=v\}}=0$) we obtain that,
for any $s\in[s_{0},b)$:
\begin{equation}
r(u_{1}^{*},v(\gamma(s)))\ge c(1-\delta_{1})(v_{*}-u_{1}^{*}).\label{eq:LowerBoundRBehind}
\end{equation}
Similarly, in view of (\ref{eq:ImprovedUpperBoundDuR}) and the bounds
(\ref{eq:IfClosedEnoughToTop}) and (\ref{eq:supvgammas}), we can
readily estimate for any $s\in[s_{0},b)$:
\begin{equation}
r(u(\gamma(s)),v(\gamma(s)))\le C(v(\gamma(s))-u(\gamma(s)))\le C\delta_{1}\cdot\big(v_{*}-v(\gamma(0))\big).\label{eq:UpperboundRAbove}
\end{equation}
Note that, since $v(\gamma(0))\ge u(\gamma(0))\ge u_{1}^{*}$, the
bounds (\ref{eq:LowerBoundRBehind}) and (\ref{eq:UpperboundRAbove})
imply (provided $\delta_{1}$ is small depending on $C,c$): 
\begin{equation}
\frac{1}{r\big(u(\gamma(s)),v(\gamma(s))\big)}\ge\frac{1}{C}\delta_{1}^{-1}\frac{1}{r\big(u_{1}^{*},v(\gamma(s))\big)}.\label{eq:ComparisonOfRs}
\end{equation}

Plugging (\ref{eq:LowerBoundRBehind}) and (\ref{eq:UpperboundRAbove})
in (\ref{eq:OnTheWayToImprovedBounds}) (using also (\ref{eq:ComparisonOfRs}))
and integrating the resulting expression in $s$ starting from $s=s_{0}$,
we infer that, for any $s\in[s_{0},b)$ (provided $\delta_{1}$ is
small depending on $C,c$): 
\begin{equation}
\log\big(\Omega^{2}\dot{\gamma}^{u}\big)(s)-\log\big(\Omega^{2}\dot{\gamma}^{u}\big)(s_{0})\le-\delta_{1}^{-1/2}\big(v_{*}-v(\gamma(0))\big)^{-1}.\label{eq:FirstBoundGammaUFinal}
\end{equation}
Using (\ref{eq:BoundsForOmega}), (\ref{eq:OneSidedBoundP}) and (\ref{eq:NewBoundPu}),
and provided $\delta_{1}$ is small depending on $C,c,u_{2}-u_{1}^{*}$,
we infer that, for any $s\in[s_{0},b)$: 
\begin{equation}
\dot{\gamma}^{u}(s)\le C\exp\Big(-c\delta_{1}^{-1/2}\big(v_{*}-v(\gamma(0))\big)^{-1}\Big).\label{eq:OneMoreGammaUBound}
\end{equation}

In view of the relation (\ref{eq:NullShell}) for $\dot{\gamma}^{u},\dot{\gamma}^{v}$
and the bounds (\ref{eq:ImprovedLowerBoundForR}) and (\ref{eq:BoundsForOmega}),
we infer from (\ref{eq:OneMoreGammaUBound}) that, for any $s\in[s_{0},b)$
(provided $\delta_{1}$ is small depending on $C,c,u_{2}-u_{1}^{*}$):
\begin{equation}
\frac{\dot{\gamma}^{u}(s)}{\dot{\gamma}^{v}(s)}\le\exp\Big(-\delta_{1}^{-1/4}\big(v_{*}-v(\gamma(0))\big)^{-1}\Big)
\end{equation}
and, thus:
\begin{equation}
\dot{\gamma}^{u}(s)\le\exp\Big(-\delta_{1}^{-1/4}\big(v_{*}-v(\gamma(0))\big)^{-1}\Big)\dot{\gamma}^{v}(s).\label{eq:BoundForSlopeFinal}
\end{equation}
Integrating (\ref{eq:BoundForSlopeFinal}) in $s$ for $s\ge s_{0}$,
we therefore obtain for any $s\in[s_{0},b)$: 
\begin{align}
u(\gamma(s))-u(\gamma(s_{0})) & \le\int_{s_{0}}^{s}\exp\Big(-\delta_{1}^{-1/4}\big(v_{*}-v(\gamma(0))\big)^{-1}\Big)\dot{\gamma}^{v}(\bar{s})\,d\bar{s}=\label{eq:FinalBoundUDifference}\\
 & =\big(v(\gamma(s))-v(\gamma(s_{0}))\big)\exp\Big(-\delta_{1}^{-1/4}\big(v_{*}-v(\gamma(0))\big)^{-1}\Big)\le\nonumber \\
 & \le\big(v_{*}-v(\gamma(0))\big)\exp\Big(-\delta_{1}^{-1/4}\big(v_{*}-u_{1}^{*}\big)^{-1}\Big).\nonumber 
\end{align}

In view of (\ref{eq:DefinitionS_0}), we obtain from (\ref{eq:FinalBoundUDifference}),
provided $\delta_{1}$ is small enough depending on $C,c$ and $u_{2}-u_{1}^{*}$:
\begin{equation}
u_{2}-\sup_{s\in[s_{0},b)}u(\gamma(s))\ge\frac{1}{2}\big(v_{*}-v(\gamma(0))\big).\label{eq:1BoundU}
\end{equation}
In view of the fact that the definition (\ref{eq:DefinitionS_0})
of $s_{0}$ and the fact that $u_{2}\ge v_{*}$ imply that 
\begin{equation}
u_{2}-\sup_{s\in[0,s_{0}]}u(\gamma(s))\ge\delta_{1}\big(v_{*}-v(\gamma(0))\big),\label{eq:2BoundU}
\end{equation}
from (\ref{eq:1BoundU}) and (\ref{eq:2BoundU}) we finally infer
(\ref{eq:BoundedAwayFromTop}).

\medskip{}

\noindent \emph{Further improved bounds for $\frac{\tilde{m}}{r}$.}
For any $\bar{u}\in[u_{1}^{*},u_{2})$ and any $\bar{v}\in(\bar{u},u_{2})$,
let us define
\[
\mathcal{V}_{\bar{u}\bar{v}}^{*}=\Big\{(u,v;p^{u},p^{v},l)\in supp(f)\bigcap\big\{\big(\{\bar{u}\le u\le v\}\cap\{v\le\bar{v}\}\big)\times[0,+\infty)^{3}\big\}\Big\}
\]
and let 
\begin{equation}
\mathcal{V}_{\bar{u}\bar{v}}=\big(\cup_{s\in\mathbb{R}}\Phi_{s}(\mathcal{V}_{\bar{u}\bar{v}}^{*})\big)\cap\big(\big(\{\bar{u}\le u\le v\}\cap\{v\le\bar{v}\}\big)\times[0,+\infty)^{3}\big),
\end{equation}
 where $\Phi_{s}$ denotes the image of the geodesic flow (\ref{eq:NullGeodesicsSphericalSymmetry})
after time $s$. Note that $\mathcal{V}_{\bar{u}\bar{v}}$ is invariant
under the geodesic flow and consists exactly to the region in phase
space traced out by those geodesics in the support of $f$ that intersect
the physical space domain $\{u\ge\bar{u}\}\cap\{v\le\bar{v}\}$.

The estimate (\ref{eq:BoundedAwayFromTop}) is equivalent to the following
statement: There exists some $C>0$ such that, for any $\bar{u}\in[u_{1}^{*},u_{2})$
and any $\bar{v}\in(\bar{u},u_{2})$, every future directed null geodesic
$\gamma$ in the support of $f$ with $\gamma(0)\in\{u=u_{1}^{*}\}$
that intersects the region $\{u\ge\bar{u}\}\cap\{v\le\bar{v}\}$ satisfies
\begin{equation}
\gamma(0)\in\mathcal{C}_{\bar{u}\bar{v}}\doteq\big\{ u=u_{1}^{*}\big\}\cap\big\{\max\{u_{1}^{*},\bar{v}-C(\bar{v}-\bar{u})\}\le v\le\bar{v}\big\}.
\end{equation}
Note that, if $v(\gamma(0))>\bar{v}$, then $v(\gamma(s))>\bar{v}$
for all $s$, since $\gamma$ is causal. In view of the conservation
law (\ref{eq:ReducedParticleConservation}), the above statement implies
that, for any $\bar{u}\in[u_{1}^{*},u_{2})$, any $\bar{v}\in(\bar{u},u_{2})$
and any $l\ge0$: 
\begin{align}
\int_{\bar{u}}^{\bar{v}}r^{2}N_{v}^{(l)}(\bar{u},v)\,dv & =\int_{\bar{u}}^{\bar{v}}r^{2}N_{v}^{(\mathcal{V}_{\bar{u}\bar{v}};l)}(\bar{u},v)\,dv\le\label{eq:FirstBoundN}\\
 & \le\int_{u_{1}^{*}}^{\bar{v}}r^{2}N_{v}^{(\mathcal{V}_{\bar{u}\bar{v}};l)}(u_{1}^{*},v)\,dv=\nonumber \\
 & =\int_{\max\{u_{1}^{*},\bar{v}-C(\bar{v}-\bar{u})\}}^{\bar{v}}r^{2}N_{v}^{(\mathcal{V}_{\bar{u}\bar{v}};l)}(u_{1}^{*},v)\,dv.\nonumber 
\end{align}

Using (\ref{eq:ImprovedUpperBoundDuR}), we can estimate
\begin{equation}
\sup_{\mathcal{D}_{u_{1}^{*}}^{u_{2}}\cap\{u\ge\bar{u}\}\cap\{v\le\bar{v}\}}r\le C(\bar{v}-\bar{u}).\label{eq:UpperBoudRSmallerRegion}
\end{equation}
Therefore, in view of (\ref{eq:ImprovedLowerBoundForR}), the bound
(\ref{eq:UpperBoudRSmallerRegion}) implies that any geodesic $\gamma$
in the support of $f$ intersecting the region $\{u\ge\bar{u}\}\cap\{v\le\bar{v}\}$
must necessarily have angular momentum $l$ bounded as follows (for
a possibly larger constant $C$): 
\begin{equation}
l\le C(\bar{v}-\bar{u}).
\end{equation}
Hence, 
\begin{equation}
\sup_{\mathcal{V}_{\bar{u}\bar{v}}}l\le C(\bar{v}-\bar{u}).\label{eq:BoundForAngularMomentum}
\end{equation}

In view of (\ref{eq:ExpressionMassImprovedR}) and the bounds (\ref{eq:BoundPvFinal})
and (\ref{eq:ImprovedBoundGammaU}), we can estimate for any $\bar{u}\in[u_{1}^{*},u_{2})$,
$\bar{v}\in(\bar{u},u_{2})$: 
\begin{align}
\tilde{m}(\bar{u},\bar{v}) & \le4\pi\int_{0}^{+\infty}\int_{\bar{u}}^{\bar{v}}\sup_{supp\big(f(\bar{u},v;\cdot,\cdot,l)\big)}\Big(\partial_{v}r(\bar{u},v)p^{v}-\partial_{u}r(\bar{u},v)p^{u}\Big)\cdot r^{2}N_{v}^{(l)}(\bar{u},v)\,dvldl\le\label{eq:ExpressionMassImprovedR-1}\\
 & \le C_{\delta_{0}}\int_{0}^{+\infty}\max\Big\{\big(\frac{l}{R_{*}}\big)^{-\frac{7\delta_{0}}{(1-2\delta_{0})^{2}}},1\Big\}\Big(\int_{\bar{u}}^{\bar{v}}r^{2}N_{v}^{(l)}(\bar{u},v)\,dv\Big)\,ldl.\nonumber 
\end{align}
In view of (\ref{eq:BoundReducedParticleCurrent}), (\ref{eq:UpperBoundDvR}),
(\ref{eq:ImprovedUpperBoundDuR}) (\ref{eq:FirstBoundN}) and (\ref{eq:BoundForAngularMomentum}),
we readily deduce from (\ref{eq:ExpressionMassImprovedR-1}) that,
for any $\bar{u}\in[u_{1}^{*},u_{2})$, $\bar{v}\in(\bar{u},u_{2})$:
\begin{align}
\tilde{m}(\bar{u},\bar{v}) & \le C_{\delta_{0}}\int_{0}^{\sup_{\mathcal{V}_{\bar{u}\bar{v}}}l}\max\Big\{\big(\frac{l}{R_{*}}\big)^{-\frac{7\delta_{0}}{(1-2\delta_{0})^{2}}},1\Big\}\Big(\int_{\max\{u_{1}^{*},\bar{v}-C(\bar{v}-\bar{u})\}}^{\bar{v}}r^{2}N_{v}^{(\mathcal{V}_{\bar{u}\bar{v}};l)}(u_{1}^{*},v)\,dv\Big)\,ldl\le\label{eq:FinalBoundSmallerParticleCurrent}\\
 & \le C_{\delta_{0}}(\bar{v}-\bar{u})\int_{0}^{C(\bar{v}-\bar{u})}\max\Big\{\big(\frac{l}{R_{*}}\big)^{-\frac{7\delta_{0}}{(1-2\delta_{0})^{2}}},1\Big\}\,ldl\le\nonumber \\
 & \le C_{\delta_{0}}(\bar{v}-\bar{u})^{3}\Big(\frac{\bar{v}-\bar{u}}{R_{*}}\Big)^{-\frac{7\delta_{0}}{(1-2\delta_{0})^{2}}}.\nonumber 
\end{align}
The estimate (\ref{eq:FinalBoundSmallerParticleCurrent}) yields,
in view of (\ref{eq:UpperBoundDvR}) and (\ref{eq:ImprovedLowerBoundDvR}),
the following improvement of (\ref{eq:FirstImprovedBoundMass}): 
\begin{equation}
\sup_{\mathcal{D}_{u_{1}^{*}}^{u_{2}}\backslash(u_{2},u_{2})}\frac{\tilde{m}}{r^{3-100\delta_{0}}}<+\infty.\label{eq:AlmostFinalImprovementMass}
\end{equation}

\medskip{}

\noindent \emph{Bounds in $L^{\infty}$ for $\frac{\tilde{m}}{r^{3}}$
and $T_{\mu\nu}$.} In view of (\ref{eq:AlmostFinalImprovementMass}),
we can estimate using the formula (\ref{eq:DerivativeTildeUMass}),
the bounds (\ref{eq:UpperBoundDvR}), (\ref{eq:ImprovedLowerBoundDvR})
and (\ref{eq:ImprovedUpperBoundDuR}): 
\begin{align}
\sup_{v\in[u_{1}^{*},u_{2}]}\int_{u_{1}^{*}}^{v}T_{uv}(u,v)\,du & \le C\sup_{v\in[u_{1}^{*},u_{2}]}\int_{u_{1}^{*}}^{v}\frac{-\partial_{u}\tilde{m}}{r^{2}}(u,v)\,du\le\label{eq:BoundTuvIntU}\\
 & \le C\sup_{v\in[u_{1}^{*},u_{2}]}\Big(\int_{u_{1}^{*}}^{v}\frac{\tilde{m}}{r^{3}}(u,v)\,du+\frac{\tilde{m}}{r^{2}}(u_{1}^{*},v)\Big)<\nonumber \\
 & <+\infty.\nonumber 
\end{align}
Similarly, using (\ref{eq:DerivativeTildeVMass}) and the bounds (\ref{eq:UpperBoundDvR}),
(\ref{eq:ImprovedLowerBoundDvR}), as well as the condition 
\[
\frac{\tilde{m}}{r^{2}}\Big|_{\gamma_{u_{1}}^{u_{2}}\backslash\{(u_{2},u_{2})\}}=0
\]
 (following from the smoothness of $(r,\Omega^{2},f)$ on $\mathcal{D}_{u_{1}^{*}}^{u_{2}}\backslash(u_{2},u_{2})$),
we can estimate:
\begin{equation}
\sup_{u\in[u_{1}^{*},u_{2})}\int_{u}^{u_{2}}T_{uv}(u,v)\,dv<+\infty.\label{eq:BoundTuvIntV}
\end{equation}

In view of (\ref{eq:ConstraintVFinal}), (\ref{eq:ConstraintUFinal}),
(\ref{eq:EquationRForProof}) and the fact that $r|_{\gamma_{u_{1}}^{u_{2}}}=0$
(and that $(r,\Omega^{2},f)$ is smooth on $\mathcal{D}_{u_{1}^{*}}^{u_{2}}\backslash\{(u_{2},u_{2})\}$),
we calculate on $\gamma_{u_{1}}^{u_{2}}\backslash\{(u_{2},u_{2})\}$:
\begin{gather*}
\partial_{v}^{2}r|_{\gamma_{u_{1}}^{u_{2}}\backslash\{(u_{2},u_{2})\}}=-\partial_{v}\Omega^{-2}|_{\gamma_{u_{1}}^{u_{2}}\backslash\{(u_{2},u_{2})\}}\partial_{v}r|_{\gamma_{u_{1}}^{u_{2}}\backslash\{(u_{2},u_{2})\}},\\
\partial_{u}^{2}r|_{\gamma_{u_{1}}^{u_{2}}\backslash\{(u_{2},u_{2})\}}=-\partial_{u}\Omega^{-2}|_{\gamma_{u_{1}}^{u_{2}}\backslash\{(u_{2},u_{2})\}}\partial_{u}r|_{\gamma_{u_{1}}^{u_{2}}\backslash\{(u_{2},u_{2})\}},\\
\partial_{u}\partial_{v}r|_{\gamma_{u_{1}}^{u_{2}}\backslash\{(u_{2},u_{2})\}}=0.
\end{gather*}
Thus, in view of the fact that $r|_{\gamma_{u_{1}}^{u_{2}}}=0$, we
readily infer that 
\begin{equation}
\partial_{u}\Omega^{2}|_{\gamma_{u_{1}}^{u_{2}}\backslash\{(u_{2},u_{2})\}}=\partial_{v}\Omega^{2}|_{\gamma_{u_{1}}^{u_{2}}\backslash\{(u_{2},u_{2})\}}.\label{eq:BoundaryConditionOmegaAxis}
\end{equation}
 By plugging (\ref{eq:AlmostFinalImprovementMass}), (\ref{eq:BoundTuvIntU})
and (\ref{eq:BoundTuvIntV}) in (\ref{eq:EquationOmegaForProof})
and using (\ref{eq:UpperBoundDvR}), (\ref{eq:ImprovedLowerBoundDvR}),
(\ref{eq:ImprovedUpperBoundDuR}) and the boundary condition (\ref{eq:BoundaryConditionOmegaAxis}),we
therefore obtain after integration in $u,v$, respectively: 
\begin{equation}
\sup_{\mathcal{D}_{u_{1}^{*}}^{u_{2}}\backslash(u_{2},u_{2})}\Big(\big|\partial_{v}\log\Omega^{2}\big|+\big|\partial_{u}\log\Omega^{2}\big|\Big)<+\infty.\label{eq:BoundForOmegaDerivatives}
\end{equation}

In view of the bounds (\ref{eq:ImprovedLowerBoundDvR}) and (\ref{eq:ImprovedUpperBoundDuR}),
there exists some $\bar{C}>0$ such that, for any $(u,v)\in\mathcal{D}_{u_{1}^{*}}^{u_{2}}\backslash(u_{2},u_{2})$:
\begin{equation}
\bar{C}\partial_{v}r+\partial_{u}r>0.\label{eq:GloballyOutgoingDirection}
\end{equation}

Let $\gamma:[0,b)\rightarrow\mathcal{D}_{u_{1}^{*}}^{u_{2}}\backslash(u_{2},u_{2})$
be a future directed null geodesic in the support of $f$. We will
establish that there exists some $C>0$ independent of $\gamma$,
so that 
\begin{equation}
\sup_{s\in[0,b)}\dot{\gamma}^{v}\le C.\label{eq:FinalBoundGammaV}
\end{equation}
It suffices to establish that, for any $s_{1}<s_{2}\in[0,b)$ such
that 
\begin{equation}
\dot{\gamma}^{v}(s)\ge1\text{ for all }s\in[s_{1},s_{2}],\label{eq:BigGamma}
\end{equation}
(\ref{eq:FinalBoundGammaV}) holds on $[s_{1},s_{2}]$, i.\,e.
\begin{equation}
\sup_{s\in[s_{1},s_{2}]}\dot{\gamma}^{v}\le C.\label{eq:FinalBoundGammaV-1}
\end{equation}
(note that if no such $s_{1},s_{2}$ exist, then (\ref{eq:FinalBoundGammaV})
automatically holds). Note that (\ref{eq:BigGamma}) implies the bound
\begin{equation}
s_{2}-s_{1}\le\int_{s_{1}}^{s_{2}}\dot{\gamma}^{v}\,ds\le v(s_{2})-v(s_{1})\le u_{2}-u_{1}^{*}.\label{eq:BoundParameters}
\end{equation}

Using equations (\ref{eq:NullGeodesicsSphericalSymmetry}), we calculate
for $\bar{C}$ as in (\ref{eq:GloballyOutgoingDirection}):
\begin{equation}
\frac{d}{ds}\Big(\bar{C}\Omega^{2}\dot{\gamma}^{u}+\Omega^{2}\dot{\gamma}^{v}\Big)(s)=\Big(\bar{C}\partial_{v}\log(\Omega^{2})+\partial_{u}\log(\Omega^{2})-2\frac{\bar{C}\partial_{v}r+\partial_{u}r}{r}\Big)\frac{l^{2}}{r^{2}}\Big|_{\gamma(s)}.\label{eq:AlmostThere...}
\end{equation}
Using (\ref{eq:ImprovedLowerBoundForR}), (\ref{eq:BoundsForOmega})
and (\ref{eq:GloballyOutgoingDirection}), we deduce that there exists
some $C'>0$ independent of $\gamma$ such that 
\begin{equation}
\frac{d}{ds}\Big(\bar{C}\Omega^{2}\dot{\gamma}^{u}+\Omega^{2}\dot{\gamma}^{v}\Big)(s)\le C'.\label{eq:Onemore....}
\end{equation}
Integrating (\ref{eq:Onemore....}) and using (\ref{eq:BoundsForOmega})
and (\ref{eq:BoundParameters}), we readily infer (\ref{eq:FinalBoundGammaV-1}).

It therefore follows from (\ref{eq:UpperBoundDvR}), (\ref{eq:ImprovedUpperBoundDuR}),
(\ref{eq:ImprovedBoundGammaU}) and (\ref{eq:Onemore....}) that there
exists some $C_{b}>0$ such that, for any $(u,v)\in\mathcal{D}_{u_{1}^{*}}^{u_{2}}\backslash(u_{2},u_{2})$:
\begin{equation}
supp\Big(f(u,v;\cdot,\cdot,\cdot)\Big)\subseteq\big\{-\partial_{u}r\cdot p^{u}+\partial_{v}r\cdot p^{v}\le C_{b}\big\}.\label{eq:FirstFinalBoundSupport}
\end{equation}
Since $f$ is supported on (\ref{eq:NullShellAngularMomentum}) and
any geodesic $\gamma$ in the support of $f$ satisfies (\ref{eq:ImprovedLowerBoundForR}),
the bound (\ref{eq:FirstFinalBoundSupport}) yields in view of (\ref{eq:DefinitionHawkingMass})
and (\ref{eq:BoundMass}): 
\begin{equation}
supp\Big(f(u,v;\cdot,\cdot,\cdot)\Big)\subseteq\big\{-\partial_{u}r\cdot p^{u}+\partial_{v}r\cdot p^{v}\le C_{b}\big\}\cap\big\{\frac{l}{r}\le4C_{b}\big\}.\label{eq:SecondFinalBoundSupport}
\end{equation}

It can be readily deduced from the estimate (\ref{eq:UpperBoundF})
combined with the bound (\ref{eq:SecondFinalBoundSupport}) for the
support of $f$ that 
\begin{equation}
\sup_{\mathcal{D}_{u_{1}^{*}}^{u_{2}}\backslash(u_{2},u_{2})}\Big(T_{uu}+T_{uv}+T_{vv}\Big)<+\infty.\label{eq:LInftyT}
\end{equation}
In view of (\ref{eq:DerivativeTildeVMass}), from (\ref{eq:LInftyT})
we also infer that 
\begin{equation}
\sup_{\mathcal{D}_{u_{1}^{*}}^{u_{2}}\backslash(u_{2},u_{2})}\frac{\tilde{m}}{r^{3}}<+\infty.\label{eq:LInftyMass}
\end{equation}
Note also that, in view of the relations (\ref{eq:ConstraintVFinal})\textendash (\ref{eq:ConstraintUFinal})
and (\ref{eq:EquationRForProof}), the bounds (\ref{eq:UpperBoundDvR}),
(\ref{eq:ImprovedLowerBoundDvR}), (\ref{eq:ImprovedUpperBoundDuR}),
(\ref{eq:BoundsForOmega}), (\ref{eq:BoundForOmegaDerivatives}),
(\ref{eq:LInftyT}) and (\ref{eq:LInftyMass}) imply that 
\begin{equation}
\sup_{\mathcal{D}_{u_{1}^{*}}^{u_{2}}\backslash(u_{2},u_{2})}\Big(|\partial_{v}^{2}r|+|\partial_{u}\partial_{v}r|+|\partial_{u}^{2}r|\Big)<+\infty.\label{eq:BoundsSecondOrderDerivativesR}
\end{equation}

\noindent \medskip{}
\emph{Proof of the smooth extendibility of $(r,\Omega^{2},f)$.} Let
\[
x:(\mathcal{D}_{u_{1}^{*}}^{u_{2}}\backslash\gamma_{u_{1}^{*}}^{u_{2}})\times\mathbb{S}^{2}\rightarrow\mathcal{V}\subset\mathbb{R}\times(\mathbb{R}^{3}\backslash\{0\})
\]
 be the diffeomorphism associated to the Cartesian coordinate chart
$(x^{0},\ldots,x^{3})$ defined by (\ref{eq:CartesianCoordinates}),
and let $\mathcal{N}$ denote the closure of $\mathcal{V}$ in $\mathbb{R}\times\mathbb{R}^{3}$.
Note that, in view of (\ref{eq:ZeroLimitR}), (\ref{eq:UpperBoundDvR}),
(\ref{eq:ImprovedLowerBoundDvR}), (\ref{eq:ImprovedUpperBoundDuR}),
(\ref{eq:BoundsForOmega}), (\ref{eq:BoundsSecondOrderDerivativesR})
and the assumption that $(r,\Omega^{2},f)$ is a smooth solution of
(\ref{eq:RequationFinal})\textendash (\ref{NullShellFinal}) on $\mathcal{D}_{u_{1}}^{u_{2}}\backslash\{(u_{2},u_{2})\}$
with smooth axis $\gamma_{u_{1}}^{u_{2}}\backslash\{(u_{2},u_{2})\}$,
we infer that, for any $\sigma\in\mathbb{S}^{2}$, $x(\cdot,\sigma)$
extends as a $C^{\infty}$ embedding on $\gamma_{u_{1}^{*}}^{u_{2}}\backslash\{(u_{2},u_{2})\}$
and as a $C^{1}$ embedding on $\gamma_{u_{1}^{*}}^{u_{2}}$, with
\[
x(\gamma_{u_{1}^{*}}^{u_{2}},\sigma)=(\mathbb{R}\times\{0\})\cap\mathcal{N},
\]
and the matrix $\mathcal{T}$ of the frame transformation 
\[
\mathcal{T}:\{\partial_{u},\partial_{v},r^{-1}\partial_{\theta},r^{-1}(\sin\theta)^{-1}\partial_{\varphi}\}\rightarrow\{\partial_{x^{0}},\partial_{x^{1}},\partial_{x^{2}},\partial_{x^{3}}\}
\]
satisfies 
\begin{equation}
\sup_{\mathcal{V}\backslash\{\theta=0,\pi\}}(||\mathcal{T}||+||\mathcal{T}^{-1}||)<+\infty,\label{eq:BoundedFrameTransformation}
\end{equation}
while the matrix $\mathcal{T}_{r}$ of the frame transformation 
\[
\mathcal{T}_{r}:\{\partial_{u},\partial_{v}\}\rightarrow\{\partial_{x^{0}},\frac{x^{i}}{\sqrt{\delta_{ij}x^{i}x^{j}}}\partial_{i}\}
\]
 satisfies 
\begin{equation}
||\mathcal{T}_{r}||_{C^{1}(\mathcal{N})}+||\mathcal{T}_{r}^{-1}||_{C^{1}(\mathcal{N})}<+\infty.\label{eq:RestrictedFrameTransformationBound}
\end{equation}
By a slight abuse of notation, we will also denote $(\mathbb{R}\times\{0\})\cap\mathcal{N}$
by $\gamma_{u_{1}^{*}}^{u_{2}}$. Let also define $q\in\mathcal{N}$
to be the point corresponding to $\{(u_{2},u_{2})\}$.

Let $g$ be the metric (\ref{eq:SphericallySymmetricMetric}) on $\mathcal{N}\backslash\gamma_{u_{1}^{*}}^{u_{2}}$.
Our assumption that $(r,\Omega^{2},f)$ is a smooth solution of (\ref{eq:RequationFinal})\textendash (\ref{NullShellFinal})
on $\mathcal{D}_{u_{1}}^{u_{2}}\backslash\{(u_{2},u_{2})\}$ with
smooth axis $\gamma_{u_{1}}^{u_{2}}\backslash\{(u_{2},u_{2})\}$ implies
that $g$ extends as a smooth metric on $\mathcal{N}\backslash q$,
i.\,e. in the Cartesian coordinates
\[
g_{\alpha\beta}\in C^{\infty}(\mathcal{N}\backslash p)
\]
 Furthermore, the bounds (\ref{eq:UpperBoundDvR}), (\ref{eq:ImprovedLowerBoundDvR}),
(\ref{eq:ImprovedUpperBoundDuR}), (\ref{eq:BoundsForOmega}), (\ref{eq:BoundForOmegaDerivatives}),
(\ref{eq:LInftyT}), (\ref{eq:LInftyMass}) and (\ref{eq:BoundsSecondOrderDerivativesR})
imply, after integrating equations (\ref{eq:EquationRForProof}) and
(\ref{eq:EquationOmegaForProof}) and using the boundary condition
(\ref{eq:BoundaryConditionOmegaAxis}) (as well as the relations (\ref{eq:ConstraintVFinal}),
(\ref{eq:ConstraintUFinal}) to express $\partial_{v}^{2}r$, $\partial_{u}^{2}r$
in terms of $\partial\Omega^{2}$), that the Cartesian components
of $g$ extend as $C^{1}$ functions in a neighborhood of $q$, i.\,e.
\begin{equation}
||g_{\alpha\beta}||_{C^{1}(\mathcal{N})}<+\infty.\label{eq:C1CartesianComponents}
\end{equation}
In view of the fact that $r,\Omega^{2}$ satisfy (\ref{eq:RequationFinal})\textendash (\ref{eq:ConstraintUFinal})
and 
\[
g^{\mu\nu}T_{\mu\nu}=0
\]
(since $f$ is supported on (\ref{eq:NullShellAngularMomentum})),
we readily calculate that for the Riemann curvature tensor of $g$
in the $(u,v,\theta,\varphi)$ coordinate chart:
\begin{align}
R_{uAuB} & =4\pi g_{AB}T_{uu},\label{eq:CurvatureComponent1}\\
R_{vAvB} & =4\pi g_{AB}T_{vv},\\
R_{uvuv} & =\Omega^{2}\partial_{u}\partial_{v}\log(\Omega^{2})=\big(\frac{\tilde{m}}{r^{3}}+\frac{1}{6}\Lambda\big)\Omega^{4}-16\pi\Omega^{2}T_{uv},\\
R_{uAvB} & =\Big(-\partial_{u}\partial_{v}\log(\Omega^{2})+4\pi T_{uv}\Big)g_{AB}=\\
 & =\Big(\big(\frac{\tilde{m}}{r^{3}}+\frac{1}{6}\Lambda\big)\Omega^{2}+20\pi T_{uv}\Big)g_{AB},\nonumber \\
R_{ABCD} & =2\Omega^{-1}\partial_{u}\partial_{v}\log(\Omega^{2})\big(g_{AC}g_{BD}-g_{AD}g_{BC}\big)=\label{eq:CurvatureComponent2}\\
 & =\Big(\big(\frac{2\tilde{m}}{r^{3}}+\frac{1}{3}\Lambda\big)\Omega^{4}-32\pi\Omega^{2}T_{uv}\Big)\big(g_{AC}g_{BD}-g_{AD}g_{BC}\big),\nonumber 
\end{align}
while all the other components are either identically $0$ or can
be expressed as a suitable linear combination of (\ref{eq:CurvatureComponent1})\textendash (\ref{eq:CurvatureComponent2}).
Thus, in view of the bounds (\ref{eq:BoundsForOmega}) and (\ref{eq:LInftyT})\textendash (\ref{eq:LInftyMass}),
we infer, after switching to the Cartesian coordinates $x^{\alpha}$
and using (\ref{eq:BoundedFrameTransformation}), that the components
$R_{\alpha\beta\gamma\delta}$ of the Riemann curvature tensor satisfy
\begin{equation}
\sup_{\mathcal{N}\backslash q}|R_{\alpha\beta\gamma\delta}|<+\infty.\label{eq:C0BoundsCurvature-1}
\end{equation}

Let $\mathcal{S}\subset\mathcal{N}\backslash q$ be the smooth spacelike
hypersurface defined by 
\[
\mathcal{S}\doteq\{x^{0}=\frac{1}{2}(u_{1}^{*}+u_{2})\}\cap\mathcal{N}.
\]
Note that $\mathcal{S}$ is a Cauchy hypersurface of $(\mathcal{N},g)$.
Let us also define the map $\mathcal{P}:\mathcal{C}_{null}^{+}(\mathcal{N}\backslash q)\times\mathbb{R}^{3+1}\rightarrow\mathcal{S}\times(\mathbb{R}^{3+1})^{3}$
(where $\mathcal{C}_{null}^{+}(\mathcal{N}\backslash q)\subset T(\mathcal{N}\backslash q)$
is the subset of future directed null vectors), so that, for each
triad $(y,u^{\alpha},z^{\alpha})$ with $y\in\mathcal{N}\backslash q$,
$U=u^{\alpha}\partial_{x^{\alpha}}$ being a future directed null
vector in $T_{y}\mathcal{N}$ and $Z=z^{\alpha}\partial_{x^{\alpha}}$,
we have 
\begin{equation}
\mathcal{P}(y,u^{\alpha},z^{\alpha})=(s,\bar{u}^{\alpha},v^{\alpha},w^{\alpha}),\label{eq:PMap}
\end{equation}
where, in the standard coordinates $(x^{\alpha},p^{\alpha})$ on $T\mathcal{N}$
corresponding to the Cartesian coordinates $x^{a}$:

\begin{itemize}

\item $s\doteq\mathcal{S}\cap\gamma_{U}$, where $\gamma_{U}$ is
the unique inextendible null geodesic emanating from $y$ in the direction
of $U$.

\item $\bar{u}^{\alpha}=\dot{\gamma}_{U}^{\alpha}(t_{f})$, where,
after normalising the affine parametrization of $\gamma_{U}$ so that
$\gamma_{U}(0)=y$ and $\dot{\gamma}_{U}^{\alpha}(0)=-u^{\alpha}$,
$t_{f}$ is defined so that $\gamma_{U}(t_{f})=s$.

\item $V=v^{\alpha}\partial_{x^{\alpha}}+w^{\alpha}\partial_{p^{\alpha}}$
is the unique vector in the fiber of $TT\mathcal{N}$ over $s$ such
that 
\[
(v^{\alpha},w^{\alpha}+\Gamma_{\beta\gamma}^{\alpha}v^{\beta}\dot{\gamma}_{U}^{\gamma})=(J^{a}(t_{f}),\frac{\nabla}{dt}J^{a}(t_{f})),
\]
where the vector field $J:[0,t_{f}]\rightarrow T_{\gamma_{U}}\mathcal{N}$
is a Jacobi field along $\gamma_{U}$, i.\,e.~satisfies 
\begin{equation}
\frac{\nabla^{2}}{dt^{2}}J^{\alpha}+R_{\hphantom{\alpha}\beta\gamma\delta}^{\alpha}\dot{\gamma}_{U}^{\gamma}\dot{\gamma}_{U}^{\delta}J^{\alpha}=0,\label{eq:JacobiEquation}
\end{equation}
 with initial conditions at $s$: 
\begin{gather}
J^{\alpha}(0)=z^{\alpha},\label{eq:InitialConditionsJacobi}\\
\frac{\nabla}{dt}J^{a}(0)=0.\nonumber 
\end{gather}

\end{itemize}

The Jacobi field $J$ with initial conditions (\ref{eq:InitialConditionsJacobi})
appearing in the definition (\ref{eq:PMap}) of $\mathcal{P}$ corresponds
to the infinitesimal variation of the geodesic $\gamma_{U}$ obtained
by varying the basepoint $\gamma_{U}(0)$ in the direction of $Z$,
while parallel tanslating the initial direction vector $\dot{\gamma}_{U}(0)$.
Thus, since the Vlasov field $f$ is conserved along the geodesic
flow, the following relation holds whenever $\mathcal{P}(y,u^{\alpha},z^{\alpha})=(s,v^{\alpha},w^{\alpha})$:
\begin{equation}
\big(z^{a}\partial_{x^{a}}+\Gamma_{\beta\gamma}^{\alpha}z^{\beta}u^{\gamma}\partial_{p^{\alpha}}\big)f(y,u^{a})=\big(v^{\alpha}\partial_{x^{\alpha}}+w^{\alpha}\partial_{p^{\alpha}}\big)f(s,\bar{u}^{\alpha}).\label{eq:DerivativePropagatedForF}
\end{equation}

In view of (\ref{eq:C1CartesianComponents}) and (\ref{eq:C0BoundsCurvature-1}),
by integrating the Jacobi field equation (\ref{eq:JacobiEquation})
and using the initial conditions (\ref{eq:InitialConditionsJacobi}),
we infer that there exists some $C>0$ such that the map (\ref{eq:PMap})
satisfies for any $y\in\mathcal{N}\backslash q$:
\begin{align}
||\bar{u}|| & \le C||u||,\label{eq:BoundednessP}\\
\Big(||v||+||w||\Big) & \le C||z||\cdot||u||,\nonumber 
\end{align}
where $||\cdot||$ denotes the standard Euclidean norm in $\mathbb{R}^{3+1}$.

The relation (\ref{eq:DerivativePropagatedForF}), combined with (\ref{eq:C1CartesianComponents}),
the bounds (\ref{eq:BoundednessP}), the form (\ref{eq:FAsASmoothDeltaFunction})
of the Vlasov field $f$ and the bound (\ref{eq:SecondFinalBoundSupport})
for the support of $f$, implies that there exists some $C>0$ such
that, for any $y\in\mathcal{N}\backslash q$, we can estimate in the
$(x^{\alpha},p^{\alpha})$ coordinate chart: 
\begin{equation}
\sum_{\alpha=0}^{3}\Big|\partial_{x^{\alpha}}\Big(\int_{\pi^{-1}(x)}||p||^{2}f(x,p)\,\sqrt{-\det g}dp\Big)\big|_{x=y}\Big|\le C\sup_{\bar{x}\in\mathcal{S}}\sum_{\alpha=0}^{3}\int_{\pi^{-1}(\bar{x})}\big(|\partial_{x^{a}}\bar{f}|+|\partial_{p^{\alpha}}\bar{f}|\big)(\bar{x},p)\cdot\delta(g_{\gamma\delta}(\bar{x})p^{\gamma}p^{\delta})\,\sqrt{-\det g}dp,\label{eq:BoundForDerivativeOfF}
\end{equation}
where $\pi:T\mathcal{N}\rightarrow\mathcal{N}$ is the natural projection.
Thus, (\ref{eq:BoundForDerivativeOfF}) and the fact that $(\mathcal{N},g)$
and $\bar{f}$ are smooth across $\mathcal{S}$ yields: 
\begin{equation}
\sup_{y\in\mathcal{N}\backslash q}\Big(\sum_{\alpha,\beta,\gamma=0}^{3}\big|\partial_{x^{\alpha}}T_{\beta\gamma}(y)\big|\Big)<+\infty.\label{eq:BoundDerivativeEnergyMomentumTensor}
\end{equation}

In view of the expression (\ref{eq:CartesianMetric}) for the Cartesian
components of $g$, using (\ref{eq:DerivativeTildeUMass}), (\ref{eq:DerivativeTildeVMass}),
(\ref{eq:UpperBoundDvR}), (\ref{eq:ImprovedLowerBoundDvR}), (\ref{eq:ImprovedUpperBoundDuR}),
(\ref{eq:BoundsForOmega}), (\ref{eq:BoundForOmegaDerivatives}) and
(\ref{eq:RestrictedFrameTransformationBound}), we can readily bound
for some $C>0$ and any $y\in\mathcal{V}$:
\begin{align}
\sum_{\alpha,\beta,\gamma,\delta=0}^{3}|\partial_{x^{\alpha}}\partial_{x^{\beta}}g_{\gamma\delta}|(y) & \le C\sum_{\alpha,\beta,\gamma=0}^{3}\Big(|\partial_{x^{\alpha}}\partial_{x^{\beta}}\log\Omega^{2}|+|\partial_{x^{\alpha}}\partial_{x^{\beta}}\partial_{u}r|+|\partial_{x^{\alpha}}\partial_{x^{\beta}}\partial_{v}r|+\label{eq:TrivialBoundSecondOrderTerms}\\
 & \hphantom{C\sum_{\alpha,\beta,\gamma=0}^{3}\Big(|\partial_{x^{\alpha}}}+\Big|\partial_{x^{\alpha}}(\frac{\partial_{u}r+\partial_{v}r}{r})\Big|+\Big|r^{-2}\Big(\frac{-4\partial_{u}r\partial_{v}r}{(\partial_{v}r-\partial_{u}r)^{2}}-1\Big)\Big|+\nonumber \\
 & \hphantom{C\sum_{\alpha,\beta,\gamma=0}^{3}\Big(|\partial_{x^{\alpha}}}+\frac{\tilde{m}}{r^{3}}+|T_{\alpha\beta}|+|\partial_{x^{\gamma}}T_{\alpha\beta}|+1\Big)(y).\nonumber 
\end{align}
In view of (\ref{eq:RestrictedFrameTransformationBound}), we infer
from (\ref{eq:TrivialBoundSecondOrderTerms}) for a possibly different
constant $C>0$: 
\begin{align}
\sum_{\alpha,\beta,\gamma,\delta=0}^{3}|\partial_{x^{\alpha}}\partial_{x^{\beta}}g_{\gamma\delta}|(y) & \le C\Big(|\partial_{u,v}^{2}\Omega^{2}|+|\partial_{u,v}^{3}r|+\Big|\partial_{u,v}^{1}(\frac{\partial_{u}r+\partial_{v}r}{r})\Big|+\Big|r^{-2}\Big(\frac{-4\partial_{u}r\partial_{v}r}{(\partial_{v}r-\partial_{u}r)^{2}}-1\Big)\Big|+\label{eq:TrivialBoundSecondOrderTerms-1}\\
 & \hphantom{\le C\Big(}+\frac{\tilde{m}}{r^{3}}+\sum_{\alpha,\beta,\gamma=0}^{3}\big(|T_{\alpha\beta}|+|\partial_{x^{\gamma}}T_{\alpha\beta}|\big)+1\Big)(y)\nonumber 
\end{align}
(where $\partial_{u,v}^{k}$ denotes a sum over all combinations of
$k$ derivatives of the form $\partial_{u}$ or $\partial_{v}$). 

Using the boundary condition (\ref{eq:BoundaryConditionRAxis}) for
$r$ on the axis $\gamma_{u_{1}^{*}}^{u_{2}}$, we readily infer that
\begin{equation}
(\partial_{u}+\partial_{v})r|_{\gamma_{u_{1}^{*}}^{u_{2}}\backslash\{(u_{2},u_{2})\}}=0.\label{eq:ZeroTangentialDerivativeR}
\end{equation}
In view of (\ref{eq:ZeroTangentialDerivativeR}), the bounds (\ref{eq:UpperBoundDvR}),
(\ref{eq:ImprovedLowerBoundDvR}) for $\partial_{v}r$ and (\ref{eq:BoundsSecondOrderDerivativesR})
for $\partial^{2}r$ imply, through an application of the mean value
theorem, that there exists some $C>0$ such that, for any $(u,v)\in\mathcal{D}_{u_{1}^{*}}^{u_{2}}\backslash\{(u_{2},u_{2})\}$:
\begin{equation}
\Big|\partial_{u,v}^{1}(\frac{\partial_{u}r+\partial_{v}r}{r})\Big|(u,v)\le C\big(1+\sup_{u<\bar{v}\le v}\big|\partial_{u,v}^{3}r|(u,\bar{v})\big)\label{eq:FirstBoundForAxisTerm}
\end{equation}
and 
\begin{equation}
\Big|r^{-2}\Big(\frac{-4\partial_{u}r\partial_{v}r}{(\partial_{v}r-\partial_{u}r)^{2}}-1\Big)\Big|(u,v)=\frac{1}{\big(\partial_{v}r-\partial_{u}r\big)^{2}}\Bigg|\frac{\partial_{v}r+\partial_{u}r}{r}\Bigg|^{2}(u,v)\le C(1+\sup_{u<\bar{v}\le v}\big|\partial_{u,v}^{2}r|^{2}(u,\bar{v}))\le C.\label{eq:SecondBoundAxisTerm}
\end{equation}
In view of (\ref{eq:FirstBoundForAxisTerm}) and (\ref{eq:SecondBoundAxisTerm}),
the bound (\ref{eq:TrivialBoundSecondOrderTerms-1}) can be simplified
as follows for any $y\in\mathcal{N}\backslash q$:
\begin{equation}
\sum_{\alpha,\beta,\gamma,\delta=0}^{3}|\partial_{x^{\alpha}}\partial_{x^{\beta}}g_{\gamma\delta}|(y)\le C\sup_{\bar{y}\in J^{-}(y)}\Big(|\partial_{u,v}^{2}\Omega^{2}|+|\partial_{u,v}^{3}r|+\frac{\tilde{m}}{r^{3}}+\sum_{\alpha,\beta,\gamma=0}^{3}\big(|T_{\alpha\beta}|+|\partial_{x^{\gamma}}T_{\alpha\beta}|\big)+1\Big)(\bar{y})\label{eq:TrivialBoundSecondOrderTerms-1-1}
\end{equation}

By differentiating equations (\ref{eq:ConstraintVFinal}), (\ref{eq:ConstraintUFinal})
and (\ref{eq:EquationRForProof}) with respect to $v,u$, we infer
that: 
\begin{align}
\partial_{v}^{3}r & =\Big(-4\pi\partial_{v}(rT_{vv})+\partial_{v}\log\Omega^{2}\partial_{v}^{2}r+\partial_{v}r\partial_{v}^{2}\log\Omega^{2}\Big),\label{eq:RelationsD2Omega}\\
\partial_{u}\partial_{v}^{2}r & =\frac{\tilde{m}}{r^{3}}\partial_{v}r\Omega^{2}-\frac{1}{4}\pi\big(T_{vv}\partial_{u}r+T_{uv}\partial_{v}r\big)+\frac{1}{6}\Lambda\partial_{v}r\Omega^{2}-\frac{1}{2}\frac{\tilde{m}-\frac{1}{3}\Lambda r^{3}}{r^{2}}\partial_{v}\Omega^{2}+4\pi\partial_{v}(rT_{uv})\nonumber 
\end{align}
and similarly with $u\leftrightarrow v$. Hence, in view of the bounds
(\ref{eq:UpperBoundDvR}), (\ref{eq:ImprovedLowerBoundDvR}), (\ref{eq:ImprovedUpperBoundDuR}),
(\ref{eq:BoundsForOmega}), (\ref{eq:BoundForOmegaDerivatives}) and
(\ref{eq:RestrictedFrameTransformationBound}) and the relations (\ref{eq:RelationsD2Omega}),
the estimate (\ref{eq:TrivialBoundSecondOrderTerms-1-1}) yields for
any $y\in\mathcal{N}\backslash q$: 
\begin{equation}
\sum_{\alpha,\beta,\gamma,\delta=0}^{3}|\partial_{x^{\alpha}}\partial_{x^{\beta}}g_{\gamma\delta}|(y)\le C\sup_{\bar{y}\in J^{-}(y)}\Big(|\partial_{u,v}^{2}\Omega^{2}|+\frac{\tilde{m}}{r^{3}}+\sum_{\alpha,\beta,\gamma=0}^{3}\big(|T_{\alpha\beta}|+|\partial_{x^{\gamma}}T_{\alpha\beta}|\big)+1\Big)(\bar{y}).\label{eq:SimplerBoundSecondOrder}
\end{equation}

Using the relations (\ref{eq:DerivativeTildeUMass}) and (\ref{eq:DerivativeTildeVMass})
for $\partial_{u}\tilde{m}$ and $\partial_{v}\tilde{m}$, respectively,
as well as the bounds (\ref{eq:UpperBoundDvR}), (\ref{eq:ImprovedLowerBoundDvR}),
(\ref{eq:ImprovedUpperBoundDuR}), (\ref{eq:BoundsForOmega}) and
(\ref{eq:BoundForOmegaDerivatives}), we can readily estimate for
any $(u,v)\in\mathcal{D}_{u_{1}^{*}}^{u_{2}}\backslash\{(u_{2},u_{2})\}$:
\begin{align}
\Big|\partial_{v}(\frac{\tilde{m}}{r^{3}})\Big|(u,v) & =8\pi\Big|\partial_{v}\Big(\frac{\int_{u}^{v}r^{2}\Omega^{-2}\big((-\partial_{u}r)T_{vv}+\partial_{v}rT_{uv}\big)(u,\bar{v})\,d\bar{v}}{r^{3}(u,v)}\Big)\Big|=\label{eq:DvDerivativeM.r^3}\\
 & =8\pi\Bigg|\frac{r^{2}\Omega^{-2}\big((-\partial_{u}r)T_{vv}+\partial_{v}rT_{uv}\big)(u,v)}{r^{3}(u,v)}-\nonumber \\
 & \hphantom{=8\pi\Bigg|}-3\frac{\int_{u}^{v}r^{2}\Omega^{-2}\big((-\partial_{u}r)T_{vv}+\partial_{v}rT_{uv}\big)(u,\bar{v})\,d\bar{v}}{r^{4}(u,v)}\partial_{v}r(u,v)\Bigg|=\nonumber \\
 & =8\pi\Bigg|\frac{r^{2}\Omega^{-2}\big((-\partial_{u}r)T_{vv}+\partial_{v}rT_{uv}\big)(u,v)}{r^{3}(u,v)}-\nonumber \\
 & \hphantom{=8\pi\Bigg|}-\frac{\int_{u}^{v}\partial_{v}(r^{3})\Omega^{-2}\big(\frac{(-\partial_{u}r)}{\partial_{v}r}T_{vv}+T_{uv}\big)(u,\bar{v})\,d\bar{v}}{r^{4}(u,v)}\partial_{v}r(u,v)\Bigg|=\nonumber \\
 & =8\pi\Bigg|\frac{\int_{u}^{v}r^{3}\partial_{v}\Big[\Omega^{-2}\big(\frac{(-\partial_{u}r)}{\partial_{v}r}T_{vv}+T_{uv}\big)\Big](u,\bar{v})\,d\bar{v}}{r^{4}(u,v)}\partial_{v}r(u,v)\Bigg|\le\nonumber \\
 & \le C\sup_{u\le\bar{v}\le v}\big(|\partial_{v}T_{vv}|+|\partial_{v}T_{uv}|+1\big).\nonumber 
\end{align}
Similarly, 
\begin{equation}
\Big|\partial_{u}(\frac{\tilde{m}}{r^{3}})\Big|(u,v)\le C\sup_{u\le\bar{u}\le v}\big(|\partial_{u}T_{uv}|+|\partial_{u}T_{uu}|+1\big).\label{eq:DuDerivativeM.r^3}
\end{equation}

By differentiating (\ref{eq:EquationOmegaForProof}) in $u,v$ and
then integrating the resulting relation in $v,u$, using the boundary
relation 
\begin{equation}
\partial_{v}^{2}\Omega^{2}|_{\gamma_{u_{1}^{*}}^{u_{2}}\backslash\{(u_{2},u_{2})\}}=\partial_{v}^{2}\Omega^{2}|_{\gamma_{u_{1}^{*}}^{u_{2}}\backslash\{(u_{2},u_{2})\}}
\end{equation}
(following by differentiating (\ref{eq:BoundaryConditionOmegaAxis})
in the direction tangent to $\gamma_{u_{1}^{*}}^{u_{2}}\backslash\{(u_{2},u_{2})\}$),
as well as the estimates (\ref{eq:DvDerivativeM.r^3})\textendash (\ref{eq:DuDerivativeM.r^3})
and the bounds (\ref{eq:UpperBoundDvR}), (\ref{eq:ImprovedLowerBoundDvR}),
(\ref{eq:ImprovedUpperBoundDuR}), (\ref{eq:BoundsForOmega}), (\ref{eq:BoundForOmegaDerivatives}),
(\ref{eq:LInftyT}) and (\ref{eq:LInftyMass}), we obtain from (\ref{eq:SimplerBoundSecondOrder}):
\begin{equation}
\sup_{y\in\mathcal{N}\backslash q}\Big(\sum_{\alpha,\beta,\gamma,\delta=0}^{3}|\partial_{x^{\alpha}}\partial_{x^{\beta}}g_{\gamma\delta}|(y)\Big)<+\infty.\label{eq:AtLAstSecondOrderBoundsG}
\end{equation}
In particular, in the Cartesian coordinates $x^{\alpha}$ on $\mathcal{N}\backslash q$,
$g$ extends as a $C^{1,1}$ metric in a neighborhood of $q$, i.\,e.
the following improvement of (\ref{eq:C1CartesianComponents}) holds:
\begin{equation}
||g_{\alpha\beta}||_{C^{1,1}(\mathcal{N})}<+\infty.\label{eq:C1,1extension}
\end{equation}

Arguing inductively, we can similarly show that, for any $k\ge2$,
if 
\begin{equation}
\sup_{\mathcal{D}_{u_{1}^{*}}^{u_{2}}\backslash\{(u_{2},u_{2})\}}\sum_{j=1}^{k-1}\big(|\partial_{u,v}^{j+3}r|+|\partial_{u,v}^{j+1}\Omega^{2}|+|\partial_{u,v}^{j}T_{vv}|+|\partial_{u,v}^{j}T_{uv}|+|\partial_{u,v}^{j}T_{uu}|\big)<+\infty\label{eq:InductionCoefficients}
\end{equation}
and 
\begin{equation}
||g_{\alpha\beta}||_{C^{k-1,1}(\mathcal{N})}<+\infty,\label{eq:InductionMetric}
\end{equation}
then 
\begin{equation}
\sup_{\mathcal{D}_{u_{1}^{*}}^{u_{2}}\backslash\{(u_{2},u_{2})\}}\big(|\partial_{u,v}^{k+3}r|+|\partial_{u,v}^{k+1}\Omega^{2}|+|\partial_{u,v}^{k}T_{vv}|+|\partial_{u,v}^{k}T_{uv}|+|\partial_{u,v}^{k}T_{uu}|\big)<+\infty\label{eq:InductionCoefficients-1}
\end{equation}
and 
\begin{equation}
||g_{\alpha\beta}||_{C^{k,1}(\mathcal{N})}<+\infty\label{eq:InductionMetric-1}
\end{equation}
(note that, for $k=2$, (\ref{eq:InductionCoefficients})\textendash (\ref{eq:InductionMetric})
follow immediately from the (\ref{eq:C1,1extension}) and our proof
that the right hand side of (\ref{eq:TrivialBoundSecondOrderTerms})
is bounded). The proof of (\ref{eq:InductionCoefficients-1})\textendash (\ref{eq:InductionMetric-1})
can be achieved as follows: 

\begin{enumerate}

\item From the expressions (\ref{eq:CurvatureComponent1})\textendash (\ref{eq:CurvatureComponent2})
for the Riemann curvature components, the bounds (\ref{eq:UpperBoundDvR}),
(\ref{eq:ImprovedLowerBoundDvR}), (\ref{eq:ImprovedUpperBoundDuR}),
(\ref{eq:BoundsForOmega}), (\ref{eq:BoundForOmegaDerivatives}),
(\ref{eq:LInftyT}) and (\ref{eq:LInftyMass}), the inductive bounds
(\ref{eq:InductionCoefficients})\textendash (\ref{eq:InductionMetric})
and the estimate 
\begin{equation}
\sup_{\mathcal{D}_{u_{1}^{*}}^{u_{2}}\backslash\{(u_{2},u_{2})\}}\sum_{j=0}^{k-1}\Big|\partial_{u,v}^{j}(\frac{\tilde{m}}{r^{3}})\Big|\le C_{k}\sup_{\mathcal{D}_{u_{1}^{*}}^{u_{2}}\backslash\{(u_{2},u_{2})\}}\sum_{j=0}^{k-1}\big(|\partial_{u,v}^{j}T_{vv}|+|\partial_{u,v}^{j}T_{vu}|+|\partial_{u,v}^{j}T_{uu}|+1\big)\label{eq:Derivativem/r^3-1}
\end{equation}
(which is the higher order analogue of (\ref{eq:DvDerivativeM.r^3})\textendash (\ref{eq:DuDerivativeM.r^3}),
following similarly from (\ref{eq:DerivativeTildeUMass}), (\ref{eq:DerivativeTildeVMass}),
(\ref{eq:UpperBoundDvR}), (\ref{eq:ImprovedLowerBoundDvR}), (\ref{eq:ImprovedUpperBoundDuR}),
(\ref{eq:BoundsForOmega}), (\ref{eq:BoundForOmegaDerivatives}),
(\ref{eq:LInftyT}), (\ref{eq:LInftyMass}), (\ref{eq:InductionCoefficients}),
(\ref{eq:InductionMetric}) and the Taylor expansion formula), we
can readily bound in the Cartesian coordinates $x^{\alpha}$:
\begin{equation}
\sup_{\mathcal{N}\backslash q}\sum_{j=0}^{k-1}\sum_{|\lambda|=j}|\partial_{x}^{\lambda}R_{\alpha\beta\gamma\delta}|<+\infty.\label{eq:HigherOrderCurvature}
\end{equation}

\item By differentiating the Jacobi field equation (\ref{eq:JacobiEquation})
and using (\ref{eq:InductionMetric}) and (\ref{eq:HigherOrderCurvature}),
we can readily obtain the following higher order analogue of (\ref{eq:BoundDerivativeEnergyMomentumTensor})
(inferred by a similar argument):
\begin{equation}
\sup_{\mathcal{N}\backslash q}\Big(\sum_{|\lambda|=k}\big|\partial_{x}^{\lambda}T_{\alpha\beta}\big|\Big)<+\infty.\label{eq:BoundDerivativeEnergyMomentumTensorHigher}
\end{equation}

\item By differentiating equations (\ref{eq:ConstraintVFinal}),
(\ref{eq:ConstraintUFinal}), (\ref{eq:EquationRForProof}) and (\ref{eq:EquationOmegaForProof})
and arguing similarly as for the proof of (\ref{eq:AtLAstSecondOrderBoundsG})
(using (\ref{eq:InductionCoefficients}), (\ref{eq:InductionMetric}),
(\ref{eq:Derivativem/r^3-1}) and (\ref{eq:BoundDerivativeEnergyMomentumTensorHigher})),
we readily infer (\ref{eq:InductionCoefficients-1})\textendash (\ref{eq:InductionMetric-1}).

\end{enumerate}

We therefore deduce that the metric $g$ on $\mathcal{N}\backslash q$
admits a $C^{\infty}$ extension on $q$. The smooth extendibility
of the whole solution $(\mathcal{N}\backslash q,g;f)$ in a neighborhood
of $q$ then follows readily.
\end{proof}

\subsection{\label{subsec:Extension-principles-away}Extension principles away
from $r=0$}

In this section, we will establish two extension principles for smooth
solutions of (\ref{eq:RequationFinal})\textendash (\ref{NullShellFinal}):
One which is valid in the region where $r$ is bounded away from $0,\infty$,
and one along $\mathcal{I}$.

The next extension principle is a straightforward modification of
a more general extension principle for solutions to the Einstein\textendash \emph{massive
}Vlasov system obtained in \cite{DafermosRendall} (see Proposition
3.1 in \cite{DafermosRendall}):

\begin{customprop}{5.1}[Smooth extension away from $r=0,\infty$]\label{prop:ExtensionPrincipleMihalis}

For any $u_{1}<u_{2}$, $v_{1}<v_{2}$ and $\Lambda\in\mathbb{R}$,
let $(r,\Omega^{2},f)$ be any solution of (\ref{eq:RequationFinal})\textendash (\ref{NullShellFinal})
on an open neighborhood $\mathcal{V}$ of 
\[
\mathcal{R}\doteq[u_{1},u_{2}]\times[v_{1},v_{2}]\backslash\{(u_{2},v_{2})\}
\]
(see Figure \ref{fig:GeneralExtensionDomain}) which is smooth according
to Definition \ref{def:SmoothnessGenerally} and satisfies 
\begin{equation}
\inf_{\mathcal{V}}r>0,\label{eq:LowerBoundRGiven}
\end{equation}
\begin{equation}
\sup_{\mathcal{V}}r<+\infty,\label{eq:UpperBoundRGiven}
\end{equation}
\begin{equation}
\sup_{\mathcal{V}}\tilde{m}<+\infty,\label{eq:UpperBoundMass}
\end{equation}
\begin{equation}
\sup_{(\{u_{1}\}\times[v_{1},v_{2}])\cup([u_{1},u_{2}]\times\{v_{1}\})}\partial_{u}r<0,\label{eq:InitialNegativeDerivativeInU}
\end{equation}
and, for some $C<+\infty$:
\begin{equation}
supp\Big(f(u_{1},\cdot;\cdot)\Big),supp\Big(f(\cdot,v_{1};\cdot)\Big)\subseteq\big\{\Omega^{2}(p^{v}+p^{u})\le C\big\}.\label{eq:CompactSupportF}
\end{equation}
Then, $(r,\Omega^{2},f)$ extends smoothly in a neighborhood of $\{(u_{2},v_{2})\}$.

\end{customprop}

\begin{figure}[h] 
\centering 
\scriptsize
\begingroup%
  \makeatletter%
  \providecommand\color[2][]{%
    \errmessage{(Inkscape) Color is used for the text in Inkscape, but the package 'color.sty' is not loaded}%
    \renewcommand\color[2][]{}%
  }%
  \providecommand\transparent[1]{%
    \errmessage{(Inkscape) Transparency is used (non-zero) for the text in Inkscape, but the package 'transparent.sty' is not loaded}%
    \renewcommand\transparent[1]{}%
  }%
  \providecommand\rotatebox[2]{#2}%
  \newcommand*\fsize{\dimexpr\f@size pt\relax}%
  \newcommand*\lineheight[1]{\fontsize{\fsize}{#1\fsize}\selectfont}%
  \ifx\svgwidth\undefined%
    \setlength{\unitlength}{150bp}%
    \ifx\svgscale\undefined%
      \relax%
    \else%
      \setlength{\unitlength}{\unitlength * \real{\svgscale}}%
    \fi%
  \else%
    \setlength{\unitlength}{\svgwidth}%
  \fi%
  \global\let\svgwidth\undefined%
  \global\let\svgscale\undefined%
  \makeatother%
  \begin{picture}(1,0.8)%
    \lineheight{1}%
    \setlength\tabcolsep{0pt}%
    \put(0,0){\includegraphics[width=\unitlength,page=1]{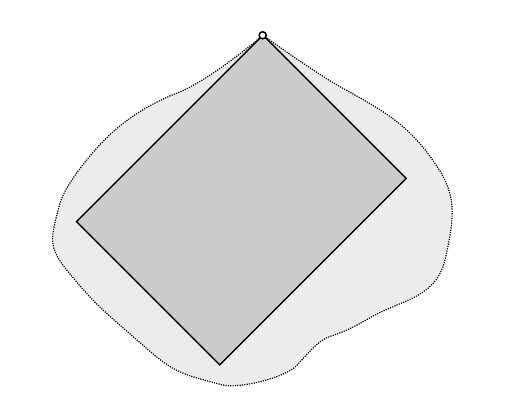}}%
    \put(0.42698015,0.40346552){\color[rgb]{0,0,0}\makebox(0,0)[lt]{\lineheight{1.25}\smash{\begin{tabular}[t]{l}$\mathcal{R}$\end{tabular}}}}%
    \put(0.74504945,0.27227665){\color[rgb]{0,0,0}\makebox(0,0)[lt]{\lineheight{1.25}\smash{\begin{tabular}[t]{l}$\mathcal{V}$\end{tabular}}}}%
    \put(0.36138611,0.76608874){\color[rgb]{0,0,0}\makebox(0,0)[lt]{\lineheight{1.25}\smash{\begin{tabular}[t]{l}$(u_2,v_2)$\end{tabular}}}}%
    \put(0.3072989,0.45186546){\color[rgb]{0,0,0}\rotatebox{45}{\makebox(0,0)[lt]{\lineheight{1.25}\smash{\begin{tabular}[t]{l}$u=u_2$\end{tabular}}}}}%
    \put(0.56843757,0.16844973){\color[rgb]{0,0,0}\rotatebox{45}{\makebox(0,0)[lt]{\lineheight{1.25}\smash{\begin{tabular}[t]{l}$u=u_1$\end{tabular}}}}}%
    \put(0.19334163,0.25170876){\color[rgb]{0,0,0}\rotatebox{-45}{\makebox(0,0)[lt]{\lineheight{1.25}\smash{\begin{tabular}[t]{l}$v=v_1$\end{tabular}}}}}%
    \put(0.5510147,0.59948097){\color[rgb]{0,0,0}\rotatebox{-45}{\makebox(0,0)[lt]{\lineheight{1.25}\smash{\begin{tabular}[t]{l}$v=v_2$\end{tabular}}}}}%
  \end{picture}%
\endgroup%
 
\caption{Schematic depiction of the domains $\mathcal{R}$ and $\mathcal{V}$ appearing in the statement of Proposition~\ref{prop:ExtensionPrincipleMihalis}. \label{fig:GeneralExtensionDomain}}
\end{figure}
\begin{proof}
In view of the smoothness of $r,\Omega^{2}$ in a neighborhood of
$\{u_{1}\}\times[v_{1},v_{2}]$, by integrating in the $u$ direction
the following inequality: 
\[
\partial_{u}\Big(\frac{\Omega^{2}}{-\partial_{u}r}\Big)\le0
\]
(which follows readily from (\ref{eq:ConstraintUFinal}) and the fact
that $T_{uu}\ge0$), we can bound 
\begin{equation}
\sup_{\mathcal{R}}\frac{\Omega^{2}}{-\partial_{u}r}<+\infty.\label{eq:TrivialBoundKappa}
\end{equation}
Note that, in particular, in view of the smoothness of $r,\Omega^{2}$
in $\mathcal{R}$ and the initial sign condition (\ref{eq:NegativeDuRInD}),
the bound (\ref{eq:TrivialBoundKappa}) implies that 
\begin{equation}
\partial_{u}r<0\text{ on }\mathcal{R}.
\end{equation}
Moreover, in view of the smoothness of $r,\Omega^{2}$ in a neighborhood
of $[u_{1},u_{2}]\times\{v_{1}\}$, the initial sign condition (\ref{eq:InitialNegativeDerivativeInU})
and the given bounds (\ref{eq:LowerBoundRGiven})\textendash (\ref{eq:UpperBoundMass})
on $\mathcal{R}$, by integrating in the $v$ direction the following
inequality: 
\begin{equation}
\partial_{v}\log(-\partial_{u}r)\le\frac{\tilde{m}-\frac{1}{3}\Lambda r^{3}}{2r^{2}}\frac{\Omega^{2}}{-\partial_{u}r}
\end{equation}
(which follows readily from (\ref{eq:RequationFinal}), (\ref{eq:DefinitionHawkingMass}),
(\ref{eq:RenormalisedHawkingMass}) and the fact that $T_{uv}\ge0$),
using also the upper bound (\ref{eq:TrivialBoundKappa}), we estimate:
\begin{equation}
\sup_{\mathcal{R}}(-\partial_{u}r)<+\infty.\label{eq:TrivialBoundDuR}
\end{equation}
From (\ref{eq:TrivialBoundKappa}) and (\ref{eq:TrivialBoundDuR}),
we therefore deduce that 
\begin{equation}
\sup_{\mathcal{R}}\Omega^{2}<+\infty.\label{eq:TrivialBoundOmega}
\end{equation}

The bounds (\ref{eq:LowerBoundRGiven}), (\ref{eq:UpperBoundRGiven}),
(\ref{eq:CompactSupportF}) and (\ref{eq:TrivialBoundOmega}) allow
us to apply the proof of Proposition 3.1 in \cite{DafermosRendall}
without any change (except for replacing the massive mass shell relation
(26) in \cite{DafermosRendall} with its massless analogue (\ref{eq:NullShellAngularMomentum}),
which does not affect the proof). As a result, we obtain the required
smooth extendibility of $(r,\Omega^{2},f)$ in a neighborhood of $\{(u_{2},v_{2})\}$
(see also the comment at the end of the proof of Proposition 3.1 in
\cite{DafermosRendall} on how to obtain upgrade $C^{2}$ bounds on
the extension into $C^{\infty}$ bounds). 
\end{proof}
For any $v_{1}<v_{2}\in\mathbb{R}$, let us set
\begin{equation}
\mathcal{V}_{v_{1}}^{v_{2}}\doteq\big([v_{1},v_{2}]\times[v_{1},v_{2}]\big)\cap\big\{ u\ge v\big\}\subset\mathbb{R}^{2}\label{eq:DomainNearInfinity}
\end{equation}
and 
\begin{equation}
\mathcal{I}_{v_{1}}^{v_{2}}\doteq\big([v_{1},v_{2}]\times[v_{1},v_{2}]\big)\cap\big\{ u=v\big\}\subset\partial\mathcal{V}_{v_{1}}^{v_{2}}.
\end{equation}
The next extension principle will concern the smooth extendibility
of solutions to (\ref{eq:RequationFinal})\textendash (\ref{NullShellFinal})
in neighborhoods of conformal infinity $\mathcal{I}$:

\begin{customprop}{5.2}[Smooth extension along $r=\infty$]\label{prop:ExtensionPrincipleInfinity}

For any $v_{1}<v_{2}$ and $\Lambda<0$, let $(r,\Omega^{2},f)$ be
any solution of (\ref{eq:RequationFinal})\textendash (\ref{NullShellFinal})
on $\mathcal{V}\cap\{u>v\}$, where $\mathcal{V}$ is an open neighborhood
of $\mathcal{V}_{v_{1}}^{v_{2}}\backslash\{(v_{2},v_{2})\}$ (see
Figure \ref{fig:InfinityExtensionDomain}), such that $(r,\Omega^{2},f)$
is smooth with smooth conformal infinity $\mathcal{I}_{v_{1}}^{v_{2}}\backslash\{(v_{2},v_{2})\}$,
according to Definition \ref{def:SmoothnessConformalInfinity}, and
in addition, $f$ satisfies the reflecting boundary condition on $\mathcal{I}_{v_{1}}^{v_{2}}\backslash\{(v_{2},v_{2})\}$,
according to Definition \ref{def:ReflectingBoundaryCondition}. Assume,
moreover, that 
\begin{equation}
\sup_{\mathcal{V}_{v_{1}}^{v_{2}}\backslash\mathcal{I}_{v_{1}}^{v_{2}}}\frac{2m}{r}<1\label{eq:AssumptionNoTrapped}
\end{equation}
and, for some $C<+\infty$,
\begin{equation}
supp\Big(f(\cdot,v_{1};\cdot)\Big)\subseteq\big\{\Omega^{2}(p^{v}+p^{u})\le C\big\}.\label{eq:CompactSupportF-1}
\end{equation}
Then, $(r,\Omega^{2},f)$ extends as a smooth solution of (\ref{eq:RequationFinal})\textendash (\ref{NullShellFinal})
with smooth conformal infinity to a larger open set $\widetilde{\mathcal{V}}\cap\{u>v\}$
such that $(v_{2},v_{2})\in\widetilde{\mathcal{V}}$. 

\end{customprop}

\begin{figure}[h] 
\centering 
\scriptsize
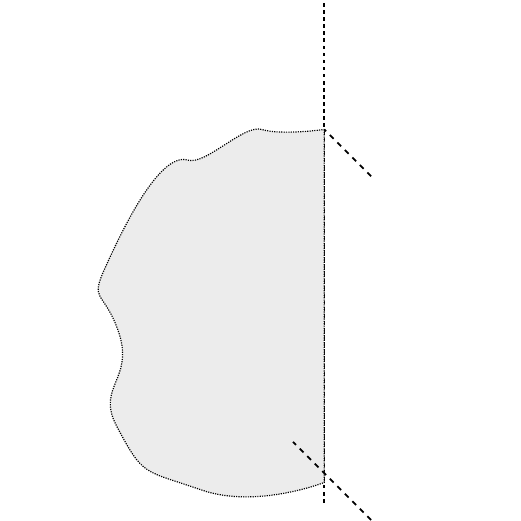 
\caption{Schematic depiction of the domains $\mathcal{V}_{u_{1}}^{u_{2}}$ and $\mathcal{V}\cap \{u>v\}$ appearing in the statement of Proposition~\ref{prop:ExtensionPrincipleInfinity}. \label{fig:InfinityExtensionDomain}}
\end{figure}
\begin{proof}
Since $(r,\Omega^{2},f)$ is smooth with smooth conformal infinity
$\mathcal{I}_{v_{1}}^{v_{2}}\backslash\{(v_{2},v_{2})\}$, it is necessary
that 
\begin{equation}
\partial_{v}(\frac{1}{r})\Big|_{\mathcal{I}_{v_{1}}^{v_{2}}\backslash\{(v_{2},v_{2})\}}<0\text{ and }\partial_{u}(\frac{1}{r})\Big|_{\mathcal{I}_{v_{1}}^{v_{2}}\backslash\{(v_{2},v_{2})\}}>0.\label{eq:SignsInfinity}
\end{equation}
Since $\Omega^{2}$ is smooth on $\mathcal{V}_{v_{1}}^{v_{2}}\backslash\mathcal{I}_{v_{1}}^{v_{2}}$,
the relation (\ref{eq:DefinitionHawkingMass}), assumption (\ref{eq:AssumptionNoTrapped})
and the inequalities (\ref{eq:SignsInfinity}) on $\mathcal{I}_{v_{1}}^{v_{2}}\backslash\{(v_{2},v_{2})\}$
imply that, everywhere on $\mathcal{V}_{v_{1}}^{v_{2}}\backslash\mathcal{I}_{v_{1}}^{v_{2}}$:
\begin{equation}
\partial_{u}r<0<\partial_{v}r.\label{eq:SignR}
\end{equation}
The sign condition (\ref{eq:SignR}) implies that $r$ is strictly
positive in the interior of $\mathcal{V}_{v_{1}}^{v_{2}}\backslash\mathcal{I}_{v_{1}}^{v_{2}}$.
By possibly choosing a slightly larger $v_{1}$, we will assume without
loss of generality that 
\begin{equation}
\inf_{\mathcal{V}_{v_{1}}^{v_{2}}\backslash\mathcal{I}_{v_{1}}^{v_{2}}}r>0.\label{eq:LowerBoundR}
\end{equation}

The bound (\ref{eq:CompactSupportF-1}) implies, in view of the relation
(\ref{eq:NullShellAngularMomentum}), the fact that $r^{-2}\Omega^{2}$
extens smoothly on $\mathcal{I}_{v_{1}}^{v_{2}}\backslash\{(v_{2},v_{2})\}$
and the conservation of angular momentum for null geodesics, that
there exists some $L_{0}>0$ such that, for any $(u,v)\in\mathcal{V}_{v_{1}}^{v_{2}}\backslash\mathcal{I}_{v_{1}}^{v_{2}}$
\begin{equation}
supp\Big(f(u,v;\cdot)\Big)\subset\big\{ l\le L_{0}\big\}.\label{eq:SupportBoundedAngularMomentum}
\end{equation}
In view of (\ref{eq:CompactSupportF-1}) and (\ref{eq:SupportBoundedAngularMomentum}),
the relations (\ref{eq:SphericallySymmetricComponentsEnergyMomentum})
imply that 
\begin{equation}
\sup_{[v_{1},v_{2}]\times\{v_{1}\}}\big(r^{2}(T_{uu}+T_{uv}+T_{vv})\big)<+\infty.\label{eq:InitialBoundEnergyMomentum}
\end{equation}
Hence, in view of (\ref{eq:DerivativeTildeUMass}), (\ref{eq:SignR})
and (\ref{eq:AssumptionNoTrapped}), we infer that 
\begin{equation}
\limsup_{u\rightarrow v_{1}^{+}}\tilde{m}(u,v_{1})<+\infty.\label{eq:UpperBoundMassInitially}
\end{equation}
The relations (\ref{eq:SignR}) and (\ref{eq:AssumptionNoTrapped})
imply, in view of (\ref{eq:DerivativeTildeUMass})\textendash (\ref{eq:DerivativeTildeVMass}),
that 
\[
\partial_{u}\tilde{m}\le0\le\partial_{v}\tilde{m}
\]
 and hence, in view of (\ref{eq:ConservedMassI}), (\ref{eq:UpperBoundMassInitially})
and the fact that $r,\Omega^{2}$ are smooth on $[v_{1},v_{2}]\times\{v_{1}\}$,
we infer that
\begin{align}
\sup_{\mathcal{V}_{v_{1}}^{v_{2}}\backslash\mathcal{I}_{v_{1}}^{v_{2}}}\tilde{m} & <+\infty,\label{eq:BoundedMass}\\
\inf_{\mathcal{V}_{v_{1}}^{v_{2}}\backslash\mathcal{I}_{v_{1}}^{v_{2}}}\tilde{m} & >-\infty.\nonumber 
\end{align}
Furthermore, in view of (\ref{eq:RenormalisedHawkingMass}), (\ref{eq:AssumptionNoTrapped})
and (\ref{eq:BoundedMass}), there exists some $C>0$ such that 
\begin{equation}
\sup_{\mathcal{V}_{v_{1}}^{v_{2}}\backslash\mathcal{I}_{v_{1}}^{v_{2}}}\Big|\log\Big(\frac{1-\frac{1}{3}\Lambda r^{2}}{1-\frac{2m}{r}}\Big)\Big|<+\infty.\label{eq:BoundForComparisonm/r}
\end{equation}

We can readily estimate in view of (\ref{eq:DerivativeTildeUMass})\textendash (\ref{eq:DerivativeTildeVMass}),
(\ref{eq:AssumptionNoTrapped}) , (\ref{eq:SignR}), (\ref{eq:LowerBoundR})
and (\ref{eq:BoundedMass}):
\begin{align}
\sup_{\bar{u}\in[v_{1},v_{2})}\int_{\{u=\bar{u}\}\cap\mathcal{V}_{v_{1}}^{v_{2}}\backslash\mathcal{I}_{v_{1}}^{v_{2}}}\frac{4\pi rT_{vv}}{\partial_{v}r}\,dv & \le\sup_{\bar{u}\in[v_{1},v_{2})}\int_{\{u=\bar{u}\}\cap\mathcal{V}_{v_{1}}^{v_{2}}\backslash\mathcal{I}_{v_{1}}^{v_{2}}}\frac{2\partial_{v}\tilde{m}}{r\cdot(1-\frac{2m}{r})}\,dv\le\label{eq:BoundForConstraintVRightHandSide}\\
 & \le C\sup_{\bar{u}\in[v_{1},v_{2})}\int_{\{u=\bar{u}\}\cap\mathcal{V}_{v_{1}}^{v_{2}}\backslash\mathcal{I}_{v_{1}}^{v_{2}}}\partial_{v}\tilde{m}\,dv<\nonumber \\
 & <+\infty\nonumber 
\end{align}
and, similarly 
\begin{equation}
\sup_{\bar{v}\in[v_{1},v_{2})}\int_{\{v=\bar{v}\}\cap\mathcal{V}_{v_{1}}^{v_{2}}\backslash\mathcal{I}_{v_{1}}^{v_{2}}}\frac{4\pi rT_{uu}}{-\partial_{u}r}\,du<+\infty.\label{eq:BoundForConstraintURightHandSide}
\end{equation}
Integrating (\ref{eq:DerivativeInUDirectionKappa}), (\ref{eq:DerivativeInVDirectionKappaBar})
in $u,v$ respectively, and using (\ref{eq:BoundForConstraintVRightHandSide})\textendash (\ref{eq:BoundForConstraintURightHandSide}),
as well as (\ref{eq:RenormalisedHawkingMass}), (\ref{eq:BoundMass})
and the boundary condition (\ref{eq:BoundaryConditionRInfinity})
on $\mathcal{I}_{v_{1}}^{v_{2}}\backslash\{(v_{2},v_{2})\}$, we infer
that 
\begin{equation}
\sup_{\mathcal{V}_{v_{1}}^{v_{2}}\backslash\mathcal{I}_{v_{1}}^{v_{2}}}\Big(\Big|\log\big(\frac{\partial_{v}r}{1-\frac{2m}{r}}\big)\Big|+\Big|\log\big(\frac{-\partial_{u}r}{1-\frac{2m}{r}}\big)\Big|\Big)<+\infty.\label{eq:BoundForKappas}
\end{equation}
The bound (\ref{eq:BoundForKappas}) implies, in view of (\ref{eq:DerivativeTildeUMass})\textendash (\ref{eq:DerivativeTildeVMass}),
that there exists some $C>0$ such that for any $(u,v)\in\mathcal{V}_{v_{1}}^{v_{2}}\backslash\mathcal{I}_{v_{1}}^{v_{2}}$:
\begin{equation}
r^{2}T_{uv}(u,v)\le C\min\big\{\partial_{v}\tilde{m},-\partial_{u}\tilde{m}\big\}
\end{equation}
and hence, after integrating over curves of the form $u=const$, $v=const$
and using (\ref{eq:BoundedMass}):
\begin{equation}
\sup_{\bar{u}\in[v_{1},v_{2})}\int_{u=\bar{u}}r^{2}T_{uv}\,dv+\sup_{\bar{v}\in[v_{1},v_{2})}\int_{v=\bar{v}}r^{2}T_{uv}\,du<+\infty.\label{eq:BoundForTuv}
\end{equation}

Let us consider the renormalised quantities $(\rho,\widetilde{\Omega}^{2},\tau_{\mu\nu})$,
defined by (\ref{eq:RenormalisedQuantities}) and satisfying (\ref{eq:RenormalisedEquations}).
In view of (\ref{eq:CompactSupportF-1}) and the fact that $(r,\Omega^{2},f)$
is smooth on $\mathcal{V}_{v_{1}}^{v_{2}}\backslash\mathcal{I}_{v_{1}}^{v_{2}}$
with smooth conformal infinity $\mathcal{I}_{v_{1}}^{v_{2}}\backslash\{(v_{2},v_{2})\}$,
the quantities $(\rho,\widetilde{\Omega}^{2},\tau_{\mu\nu})$ extend
smoothly on $\mathcal{I}_{v_{1}}^{v_{2}}\backslash\{(v_{2},v_{2})\}$,
with $\widetilde{\Omega}^{2}\Big|_{\mathcal{I}_{v_{1}}^{v_{2}}\backslash\{(v_{2},v_{2})\}}>0$.
The proof of Proposition \ref{prop:ExtensionPrincipleInfinity} will
thus follow by showing that the renormalised quantitiesalso extend
smoothly on $(v_{2},v_{2})$, with 
\begin{equation}
\widetilde{\Omega}^{2}(v_{2},v_{2})>0.\label{eq:PositiveOmegaTilde}
\end{equation}

In view of (\ref{eq:DefinitionHawkingMass}), (\ref{eq:BoundForComparisonm/r})
and (\ref{eq:BoundForKappas}), we immediately infer that 
\begin{equation}
\sup_{\mathcal{V}_{v_{1}}^{v_{2}}\backslash\{(v_{2},v_{2})\}}\Big|\log(\widetilde{\Omega}^{2})\Big|<+\infty,\label{eq:BoundOmegaTilde}
\end{equation}
 as well as 
\begin{equation}
\sup_{\mathcal{V}_{v_{1}}^{v_{2}}\backslash\{(v_{2},v_{2})\}}\Big(\big|\log\big(\partial_{v}\rho\big)\big|+\big|\log\big(-\partial_{u}\rho\big)\big|\Big)<+\infty.\label{eq:BoundDerRho}
\end{equation}
Integrating (\ref{eq:RenormalisedEquations}) in $u,v$ and using
(\ref{eq:BoundedMass}), (\ref{eq:BoundForTuv}) and (\ref{eq:BoundOmegaTilde}),
we also obtain: 
\begin{equation}
\sup_{\mathcal{V}_{v_{1}}^{v_{2}}\backslash\{(v_{2},v_{2})\}}\Big(|\partial_{v}\log(\widetilde{\Omega}^{2})|+|\partial_{u}\log(\widetilde{\Omega}^{2})|\Big).\label{eq:DerivativesOmegaTilde}
\end{equation}
The bounds (\ref{eq:BoundOmegaTilde})\textendash (\ref{eq:DerivativesOmegaTilde})
imply that $\rho,\widetilde{\Omega}^{2}$ admit a $C^{0,1}$ extension
on $(v_{2},v_{2})$, satisfying (\ref{eq:PositiveOmegaTilde}).

Using the identity appearing in the first line of (\ref{eq:UsefulRelationForGeodesic1})
(as well as the analogous identity obtained after switching the roles
of $u,v$) for any null geodesic $\gamma:I\rightarrow\mathcal{V}_{v_{1}}^{v_{2}}\backslash\mathcal{I}_{v_{1}}^{v_{2}}$
in the support of $f$ with non-vanishing angular momentum, possibly
extended to its reflection off $\mathcal{I}_{v_{1}}^{v_{2}}$ (according
to Definition \ref{def:Reflection}), we infer, in view of the assumption
(\ref{eq:CompactSupportF-1}) for the support of $f$ initially, that,
for some absolute constant $C>0$:
\begin{equation}
\sup_{s\in\mathcal{I}}\Big(\Omega^{2}\big(\dot{\gamma}^{v}+\dot{\gamma}^{u}\big)(s)\Big)\le C\exp\Big(C\sup_{\mathcal{V}_{v_{1}}^{v_{2}}\backslash\mathcal{I}_{v_{1}}^{v_{2}}}\big(\big|\partial_{v}\log\Omega^{2}-2\frac{\partial_{v}r}{r}\big|+\big|\partial_{u}\log\Omega^{2}-2\frac{\partial_{u}r}{r}\big|\big)\Big).\label{eq:BoundGeodesicsFromChristoffelPrecise}
\end{equation}
Since the definition (\ref{eq:RenormalisedQuantities}) of $\widetilde{\Omega}^{2}$
implies that
\begin{equation}
\partial_{v}\log\Omega^{2}-2\frac{\partial_{v}r}{r}=\partial_{v}\log\widetilde{\Omega}^{2}+O(r^{-3})\partial_{v}r,
\end{equation}
the estimate (\ref{eq:BoundGeodesicsFromChristoffelPrecise}) yields
(in view of the bounds (\ref{eq:BoundDerRho})\textendash (\ref{eq:DerivativesOmegaTilde}))
for a possibly different absolute constant $C>0$: 
\begin{equation}
\sup_{s\in\mathcal{I}}\Big(\Omega^{2}\big(\dot{\gamma}^{v}+\dot{\gamma}^{u}\big)(s)\Big)\le C.\label{eq:FinalBoundGeodesics}
\end{equation}
Hence, there exists some $C_{*}>0$ such that, for any $(u,v)\in\mathcal{V}_{v_{1}}^{v_{2}}\backslash\mathcal{I}_{v_{1}}^{v_{2}}$:
\begin{equation}
supp\Big(f(u,v;\cdot)\Big)\subset\Big\{\Omega^{2}(p^{u}+p^{v})\le C_{*}\Big\}.\label{eq:BoundSupportFEverywhere}
\end{equation}
Arguing similarly as for the proof of (\ref{eq:InitialBoundEnergyMomentum}),
from (\ref{eq:BoundSupportFEverywhere}) we therefore infer that:
\begin{equation}
\sup_{\mathcal{V}_{v_{1}}^{v_{2}}\backslash\{(v_{2},v_{2})\}}\big(\tau_{uu}+\tau_{uv}+\tau_{vv}\big)<+\infty.\label{eq:InitialBoundEnergyMomentum-1}
\end{equation}

Commuting equations (\ref{eq:RenormalisedEquations}) and (\ref{eq:VlasovFinal})
with $\partial_{u},\partial_{v}$ and arguing inductively (using (\ref{eq:BoundOmegaTilde}),
(\ref{eq:BoundDerRho}), (\ref{eq:DerivativesOmegaTilde}), (\ref{eq:BoundSupportFEverywhere})
and (\ref{eq:InitialBoundEnergyMomentum-1}) as a basis for the induction),
we similarly obtain the following higher order analogues of (\ref{eq:BoundOmegaTilde}),
(\ref{eq:BoundDerRho}), (\ref{eq:DerivativesOmegaTilde}) and (\ref{eq:InitialBoundEnergyMomentum-1})
for any $k\in\mathbb{N}$: 
\begin{equation}
\sup_{\mathcal{V}_{v_{1}}^{v_{2}}\backslash\{(v_{2},v_{2})\}}\Big(\big|\partial_{u,v}^{k}\log(\widetilde{\Omega}^{2})\big|+\big|\partial_{u,v}^{k}\rho\big|+\big|\partial_{u,v}^{k-1}\tau\big|\Big)<+\infty.
\end{equation}
Hence, the smooth extension of $(\rho,\widetilde{\Omega}^{2},\tau)$
on $\{(v_{2},v_{2})\}$ follows readily.
\end{proof}

\subsection{A general extension principle for domains of outer communications }

In this section, we will obtain, as a corollary of Theorem \ref{thm:ExtensionPrinciple}
and Propositions \ref{prop:ExtensionPrincipleMihalis} and \ref{prop:ExtensionPrincipleInfinity},
a general extension principle for asymptotically AdS, smooth solutions
of (\ref{eq:RequationFinal})\textendash (\ref{NullShellFinal}) which
coincide with their domain of outer communications.
\begin{cor}
\label{cor:GeneralContinuationCriterion} For any $v_{\mathcal{I}},u_{1}>0$,
let $(r,\Omega^{2},f)$ be a smooth solution of (\ref{eq:RequationFinal})\textendash (\ref{NullShellFinal})
on $\mathcal{U}_{u_{1};v_{\mathcal{I}}}$, with smooth axis $\gamma_{u_{1}}$
and smooth conformal infinity $\mathcal{I}_{u_{1}}$ (for the relevant
notation, see (\ref{eq:GeneralDomain})\textendash (\ref{eq:ConformalInfinityGeneral})
and Definitions \ref{def:SmoothnessAxis}\textendash \ref{def:SmoothnessGenerally}).
Assume, moreover, that $(r,\Omega^{2},f)$ satisfies 
\begin{equation}
\sup_{\mathcal{U}_{u_{1};v_{\mathcal{I}}}}\frac{2m}{r}<1,\label{eq:NoTrappingContinuity}
\end{equation}
\begin{equation}
\limsup_{(u,v)\rightarrow(u_{1},u_{1})}\frac{2\tilde{m}}{r}\le\delta_{0},\label{eq:NoConcentrationContinuity}
\end{equation}
where $\delta_{0}$ is the constant appearing in the statement of
Theorem \ref{thm:ExtensionPrinciple}, as well as the initial bound
\begin{equation}
supp\Big(f(0,\cdot;\cdot)\Big)\subset\Big\{\Omega^{2}(p^{u}+p^{v})\le C\Big\}\label{eq:CompactSupportFContinuity}
\end{equation}
(for some $C<+\infty$). Then, there exists some $\bar{u}_{1}>u_{1}$,
such that $(r,\Omega^{2},f)$ extends on the whole of $\mathcal{U}_{\bar{u}_{1};v_{\mathcal{I}}}$
as a smooth solution of (\ref{eq:RequationFinal})\textendash (\ref{NullShellFinal})
with smooth axis $\gamma_{\bar{u}_{1}}$ and smooth conformal infinity
$\mathcal{I}_{\bar{u}_{1}}$.
\end{cor}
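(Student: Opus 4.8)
The plan is to piece together the three extension principles already established in this section---Theorem~\ref{thm:ExtensionPrinciple} (extension along $r=0$), Proposition~\ref{prop:ExtensionPrincipleMihalis} (extension away from $r=0,\infty$), and Proposition~\ref{prop:ExtensionPrincipleInfinity} (extension along $\mathcal{I}$)---together with the local well-posedness statements from Section~\ref{sec:Well-posedness-of-the}, in order to extend $(r,\Omega^{2},f)$ past the ``top corner'' $(u_{1},u_{1})$ of $\mathcal{U}_{u_{1};v_{\mathcal{I}}}$ and a little beyond. First I would record the basic monotonicity and boundedness consequences of the hypotheses: from $\sup\frac{2m}{r}<1$ together with the constraint equations (\ref{eq:ConstraintVFinal})--(\ref{eq:ConstraintUFinal}) one gets $\partial_{v}r>0>\partial_{u}r$ throughout $\mathcal{U}_{u_{1};v_{\mathcal{I}}}$; combined with the reflecting boundary condition, Lemma on $\mathcal{I}$ (equation (\ref{eq:ConservedMassI})) gives that $\tilde{m}$ is conserved along $\mathcal{I}$, hence bounded on $\mathcal{I}$, and then $\partial_{u}\tilde{m}\le 0\le\partial_{v}\tilde{m}$ yields $\sup_{\mathcal{U}_{u_{1};v_{\mathcal{I}}}}\tilde{m}<+\infty$. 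The initial bound (\ref{eq:CompactSupportFContinuity}) on the phase-space support of $f$ along $\{u=0\}$ propagates, via the relations (\ref{eq:UsefulRelationForGeodesicWithMu-U})--(\ref{eq:UsefulRelationForGeodesicWithMu-V}) for the energy $\Omega^{2}(\dot\gamma^{u}+\dot\gamma^{v})$ along (possibly reflected) null geodesics, to a bound of the form $supp(f)\subset\{\Omega^{2}(p^{u}+p^{v})\le C'\}$ on any precompact subregion, using the already-established control on the geometry there.

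Next I would carry out the extension corner-by-corner. The key observation is that the closure of $\mathcal{U}_{u_{1};v_{\mathcal{I}}}$ consists of (i) an open neighborhood of the axis segment $\gamma_{u_{1}}\setminus\{(u_{1},u_{1})\}$, (ii) an open neighborhood of $\mathcal{I}_{u_{1}}\setminus\{(u_{1},u_{1})\}$, (iii) the slice $\{u=u_{1}\}\cap\{u_{1}<v<u_{1}+v_{\mathcal{I}}\}$, and (iv) the single corner point $(u_{1},u_{1})$. On (iii), away from the two boundary points, the solution extends smoothly to a slightly larger domain by Cauchy stability / the local existence results applied to the characteristic data induced on $\{u=u_{1}\}$ (using that $r$ is bounded away from $0$ and $\infty$ there and $f$ has bounded support). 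Near $\mathcal{I}$ (region (ii)) the extension across $\{(u_{1},u_{1})\}$-adjacent points of $\mathcal{I}$ follows directly from Proposition~\ref{prop:ExtensionPrincipleInfinity}, whose hypotheses (\ref{eq:AssumptionNoTrapped}) and (\ref{eq:CompactSupportF-1}) are exactly (\ref{eq:NoTrappingContinuity}) and the propagated support bound. For the region (i) near the axis, Proposition~\ref{prop:ExtensionPrincipleMihalis} handles points where $r$ stays bounded away from $0$; but to reach the corner $(u_{1},u_{1})$ itself, which lies on the axis with $r\to 0$, I invoke Theorem~\ref{thm:ExtensionPrinciple}: its hypotheses are precisely $\partial_{u}r|_{u=u_{1}'}<0$ (which holds for a slice $u=u_{1}'$ slightly before $u_{1}$, by monotonicity), bounded support of $f$ in $p^{u}$ at that slice (propagated), and $\limsup_{(u,v)\to(u_{1},u_{1})}\frac{2\tilde{m}}{r}\le\delta_{0}$, which is exactly (\ref{eq:NoConcentrationContinuity}).

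**The main obstacle** I expect is the bookkeeping of how these local extensions fit together into a single solution on a domain of the form $\mathcal{U}_{\bar u_{1};v_{\mathcal{I}}}$ rather than on some irregularly-shaped enlargement: one must check that the extension obtained near the axis corner, the extension near $\mathcal{I}$, and the extension of the interior/slice region overlap consistently (which is guaranteed by the uniqueness statement, Corollary~\ref{cor:UniquenessDevelopments}, and by uniqueness in Propositions~\ref{prop:ExtensionPrincipleMihalis}--\ref{prop:ExtensionPrincipleInfinity}), and that a uniform $\bar u_{1}>u_{1}$ can be chosen so that the combined solution covers the entire rectangle $\{u_{1}\le u<\bar u_{1}\}\cap\{u<v<u+v_{\mathcal{I}}\}$. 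Concretely: having extended $(r,\Omega^2,f)$ to a neighborhood of every point of $clos(\mathcal{U}_{u_{1};v_{\mathcal{I}}})$ (the corner by Theorem~\ref{thm:ExtensionPrinciple}, conformal infinity by Proposition~\ref{prop:ExtensionPrincipleInfinity}, the rest by Proposition~\ref{prop:ExtensionPrincipleMihalis} and the local existence theory), a compactness argument on the compact set $clos(\mathcal{U}_{u_{1};v_{\mathcal{I}}})\cap\{u+v\le u_{1}+(u_{1}+v_{\mathcal{I}})\}$ produces a uniform $\varepsilon>0$ of extension, and one then re-runs the well-posedness machinery of Theorem~\ref{thm:LocalExistenceUniqueness} from the new data on $\{u=u_{1}\}$ to obtain a genuine future development on $\mathcal{U}_{\bar u_{1};v_{\mathcal{I}}}$ for some $\bar u_{1}\in(u_{1},u_{1}+\varepsilon)$, with smooth axis $\gamma_{\bar u_{1}}$ and smooth conformal infinity $\mathcal{I}_{\bar u_{1}}$. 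This last step, though standard, is where the argument must be stated carefully to avoid circularity, since Theorem~\ref{thm:ExtensionPrinciple} only gives a local extension across the corner and does not by itself produce the clean domain $\mathcal{U}_{\bar u_{1};v_{\mathcal{I}}}$.
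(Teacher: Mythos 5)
Your proposal is correct and follows essentially the same route as the paper: establish $\partial_v r>0>\partial_u r$ and the boundedness of $\tilde m$ from the hypotheses, then apply Theorem~\ref{thm:ExtensionPrinciple} at the axis corner $(u_1,u_1)$, Proposition~\ref{prop:ExtensionPrincipleMihalis} plus a continuity/compactness argument along the slice $\{u=u_1\}$, and Proposition~\ref{prop:ExtensionPrincipleInfinity} at the corner $(u_1,u_1+v_{\mathcal I})$ of conformal infinity. Your only slips are notational (the conformal-infinity corner is $(u_1,u_1+v_{\mathcal I})$, not $(u_1,u_1)$), and your closing discussion of how the local extensions are glued into a clean domain $\mathcal U_{\bar u_1;v_{\mathcal I}}$ is in fact more explicit than the paper's, which compresses it into ``a simple continuity argument.''
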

\begin{proof}
Using the condition 
\[
\partial_{v}\big(\Omega^{-2}\partial_{v}r)\le0
\]
(following readily from the constraint equation (\ref{eq:ConstraintVFinal}))
and the fact that $r|_{\mathcal{I}^{+}}=+\infty$, we readily infer
that 
\begin{equation}
\partial_{v}r>0\text{ on }\mathcal{U}_{u_{1};v_{\mathcal{I}}}.
\end{equation}
As a consequence of the assumption (\ref{eq:NoTrappingContinuity})
and the relation (\ref{eq:DefinitionHawkingMass}) between $\Omega^{2}$,
$\partial_{v}r$ and $\partial_{u}r$, we therefore infer that 
\begin{equation}
\partial_{u}r<0\text{ on }\mathcal{U}_{u_{1};v_{\mathcal{I}}}\label{eq:NoWhiteHoleContinuity}
\end{equation}
and hence the condition (\ref{eq:SignR}) holds everywhere on $\mathcal{U}_{u_{1};v_{\mathcal{I}}}$.
By arguing exactly as in the proof of (\ref{eq:BoundedMass}) in Proposition
\ref{prop:ExtensionPrincipleInfinity}, using (\ref{eq:CompactSupportFContinuity})
and the fact that $(r,\Omega^{2},f)$ is smooth on the axis $\gamma_{u_{1}}$,
we infer that
\begin{equation}
0\le\inf_{\mathcal{U}_{u_{1};v_{\mathcal{I}}}}\tilde{m}\le\sup_{\mathcal{U}_{u_{1};v_{\mathcal{I}}}}\tilde{m}<+\infty.\label{eq:BoundsForMassContinuity}
\end{equation}

By applying Theorem \ref{thm:ExtensionPrinciple}, we immediately
obtain that $(r,\Omega^{2},f)$ extends as a smooth solution of (\ref{eq:RequationFinal})\textendash (\ref{NullShellFinal})
with smooth axis on a neighborhood of $(u_{1},u_{1})$. Then, applying
Propostion \ref{prop:ExtensionPrincipleMihalis} combined with a simple
continuity argument, we infer that $(r,\Omega^{2},f)$ extends as
a smooth solution of (\ref{eq:RequationFinal})\textendash (\ref{NullShellFinal})
in an open neighborhood of $\{u_{1}\}\times[u_{1},u_{1}+v_{\mathcal{I}})$.
Finally, by applying Proposition \ref{prop:ExtensionPrincipleInfinity},
we deduce that $(r,\Omega^{2},f)$ extends as a smooth solution of
(\ref{eq:RequationFinal})\textendash (\ref{NullShellFinal}) with
smooth conformal infinity on $\mathcal{V}\cap\{v<u+v_{\mathcal{I}}\}$,
where $\mathcal{V}$ is an open neighborhood of $(u_{1},u_{1}+v_{\mathcal{I}})$.
Hence, for some $\bar{u}_{1}>u_{1}$ sufficiently close to $u_{1}$,
$(r,\Omega^{2},f)$ extends as a smooth solution of (\ref{eq:RequationFinal})\textendash (\ref{NullShellFinal})
on the whole of $\mathcal{U}_{\bar{u}_{1};v_{\mathcal{I}}}$, with
smooth axis $\gamma_{\bar{u}_{1}}$ and smooth conformal infinity
$\mathcal{I}_{\bar{u}_{1}}$.
\end{proof}

\section{\label{sec:Cauchy_Stability_Low_Regularity}A Cauchy stability statement
in a low regularity topology }

In this section, we will introduce a low regularity, scale invariant
norm $||\cdot||$ on the space of smoothly compatible, asymptotically
AdS initial data sets $(r_{/},\Omega_{/}^{2},f_{/};v_{\mathcal{I}})$,
as introduced in Section \ref{subsec:Asymptotically-AdS-Initial-Data}.
This norm will measure the ``concentration of energy'' along the
evolution of a \emph{free} Vlasov field $f$ on $(\mathcal{M}_{AdS},g_{AdS})$
determined from $f_{/}$ by an explicit formula in a renormalised
gauge. We will then proceed to establish a Cauchy stability statement
for the trivial solution $(r_{AdS},\Omega_{AdS}^{2},0)$ of (\ref{eq:RequationFinal})\textendash (\ref{NullShellFinal})
in the initial data topology defined by $||\cdot||$: We will show
that, for any fixed retarded time $U_{*}\ge0$ and for any initial
data set $\mathcal{S}$ with $||\mathcal{S}||$ sufficiently small,
the geometry of the corresponding maximal development $(\mathcal{M},g;f)$
with a reflecting boundary condition on $\mathcal{I}$ exists for
sufficiently long time and remains close to that of $(\mathcal{M}_{AdS},g_{AdS};0)$
in the fixed retarded time interval $u\in[0,U_{*}]$. 

The results of this section allow addressing the AdS instability conjecture
for the system (\ref{eq:RequationFinal})\textendash (\ref{NullShellFinal})
in the low regularity setting of $||\cdot||$ and will be crucial
for the results of our companion paper \cite{MoschidisVlasov} 

\subsection{\label{subsec:TheRoughTopology} A low regularity norm on the space
of initial data}

In this section, we will introduce a low regularity norm on the space
of smoothly compatible, asymptotically AdS initial data sets for (\ref{eq:RequationFinal})\textendash (\ref{NullShellFinal}).
In order to simplify our notations, we will adopt the following definition:
\begin{defn}
\noindent \label{def:Bspace}We will denote with $\mathcal{B}$ the
set of smoothly compatible, asymptotically AdS initial data sets $(r_{/},\Omega_{/}^{2},\bar{f}_{/};v_{\mathcal{I}})$
for (\ref{eq:RequationFinal})\textendash (\ref{NullShellFinal})
which are bounded in phase space, in accordance with Definitions \ref{def:AsymptoticallyAdSData}
and \ref{def:CompatibilityCondition}. For any $(r_{/},\Omega_{/}^{2},\bar{f}_{/};v_{\mathcal{I}})\in\mathcal{B}$,
we will denote with $(V(v);\frac{dU}{du}(0))$ the parameters of the
unique gauge normalising gauge transformation provided by Lemma \ref{lem:SmoothToNorm},
i.\,e.~a gauge transformation $(r_{/},\Omega_{/}^{2},\bar{f}_{/};v_{\mathcal{I}})\rightarrow(r_{/}^{\prime},(\Omega_{/}^{\prime})^{2},\bar{f}_{/}^{\prime};v_{\mathcal{I}})$
defined by the relations \ref{eq:GeneralGaugeTransformationInitialData}
such that the transformed initial data set $(r_{/}^{\prime},(\Omega_{/}^{\prime})^{2},\bar{f}_{/}^{\prime};v_{\mathcal{I}})$
satisfies the normalisation condition (\ref{eq:GaugeConditionNormalisedData}).
\end{defn}
We will define a map from $\mathcal{B}$ to the space of smooth solution
of the (free) massless Vlasov equation (\ref{eq:Vlasov}) on AdS spacetime
as follows:
\begin{defn}
\label{def:ComparisonVlasovField} For any $(r_{/},\Omega_{/}^{2},\bar{f}_{/};v_{\mathcal{I}})\in\mathcal{B}$,
let $\bar{f}_{/}^{(AdS)}:[0,\sqrt{-\frac{3}{\Lambda}}\pi)\times[0,+\infty)^{2}\rightarrow[0,+\infty)$
be given by the expression
\begin{equation}
\bar{f}_{/}^{(AdS)}(v;\,p^{u},l)\doteq\bar{f}_{/}^{\prime}(\frac{\sqrt{-\frac{3}{\Lambda}}\pi}{v_{\mathcal{I}}}\cdot v;\,p^{u},l),\label{eq:RescaledAdSVlasov}
\end{equation}
where\textbf{ $\bar{f}_{/}^{\prime}$ }is the initial Vlasov field
in the gauge normalised expression $(r_{/}^{\prime},(\Omega_{/}^{\prime})^{2},\bar{f}_{/}^{\prime};v_{\mathcal{I}})$
of $(r_{/},\Omega_{/}^{2},\bar{f}_{/};v_{\mathcal{I}})$ provided
by Lemma \ref{lem:SmoothToNorm} (see Definition \ref{def:Bspace}). 

We will define $f^{(AdS)}:T\mathcal{M}_{AdS}\rightarrow[0,+\infty)$
to be the unique solution of the massless Vlasov equation (\ref{eq:Vlasov})
on $(\mathcal{M}_{AdS},g_{AdS})$ with initial conditions on $u=0$
corresponding to $\bar{f}_{/}^{(AdS)}$. In particular, denoting with
$\Omega_{AdS}^{2}(u,v)$, $r_{AdS}(u,v)$ the coefficients of the
AdS metric (\ref{eq:AdSMetricValues-1}), the Vlasov field  $f^{(AdS)}$
is expressed as 
\[
f^{(AdS)}(u,v;p^{u},p^{v},l)=\bar{f}^{(AdS)}(u,v;p^{u},p^{v},l)\cdot\delta\Big(\Omega_{AdS}^{2}(u,v)p^{u}p^{v}-\frac{l^{2}}{r_{AdS}^{2}(u,v)}\Big)
\]
 for some smooth function $\bar{f}^{(AdS)}$ satisfying the initial
condition
\begin{equation}
\bar{f}^{(AdS)}(0,v;p^{u},\frac{l^{2}}{p^{u}\cdot\Omega_{AdS}^{2}r_{AdS}^{2}(0,v)},l)=\bar{f}_{/}^{(AdS)}(v;\,p^{u},l).
\end{equation}
For any $\bar{u}\ge0$ and $\bar{v}\in(\bar{u},\bar{u}+\sqrt{-\frac{3}{\Lambda}}\pi)$,
we will also set
\begin{equation}
\Big[\frac{rT_{vv}}{\partial_{v}r}\Big]^{(AdS)}(\bar{u},\bar{v})\doteq\frac{r_{AdS}T_{vv}[f^{(AdS)}]}{\partial_{v}r_{AdS}}(\bar{u},\bar{v})\label{eq:VlasovEnergyConcentration}
\end{equation}
(and similarly for $\Big[\frac{rT_{uv}}{-\partial_{u}r}\Big]^{(AdS)}$,
$\Big[\frac{rT_{uv}}{\partial_{v}r}\Big]^{(AdS)}$ and $\Big[\frac{rT_{uu}}{-\partial_{u}r}\Big]^{(AdS)}$),
where the energy momentum components $T_{\alpha\beta}[f^{(AdS)}]$
are defined using the relations (\ref{eq:SphericallySymmetricComponentsEnergyMomentum})
(with $\Omega_{AdS}^{2}$, $r_{AdS}$ in place of $\Omega^{2}$, $r$).
\end{defn}
\begin{rem*}
For $u\ge0$, $v\in(u,u+\sqrt{-\frac{3}{\Lambda}}\pi)$, $p^{u},p^{v}\ge0$
and $l\ge0$, the free Vlasov field $f^{(AdS)}(u,v;p^{u},p^{v},l)$
is expressed in terms of $\bar{f}_{/}^{(AdS)}$ through the explicit
relations (\ref{eq:FAsASmoothDeltaFunction-1})\textendash (\ref{eq:ExplicitSolutionVlasovAdS})
with $\bar{f}^{(AdS)}$ in place of $F$ in (\ref{eq:ExplicitSolutionVlasovAdS})
(for various useful identities regarding the geodesic flow on AdS
spacetime, see Section \ref{sec:Geodesic-flow-AdS} of the Appendix). 
\end{rem*}
The following definition introduces a non-negative functional $||\cdot||$
on the space $\mathcal{B}$ which can be used to measure the distance
from the trivial initial data set $\mathcal{S}_{AdS}$: 
\begin{defn}
\label{def:InitialDataNorm}For any $(r_{/},\Omega_{/}^{2},\bar{f}_{/};v_{\mathcal{I}})\in\mathcal{B}$,
we will define:
\begin{align}
||(r_{/},\Omega_{/}^{2},\bar{f}_{/};v_{\mathcal{I}})||\doteq\sup_{U_{*}\ge0} & \int_{U_{*}}^{U_{*}+\sqrt{-\frac{3}{\Lambda}}\pi}\Big(\Big[\frac{rT_{vv}}{\partial_{v}r}\Big]^{(AdS)}(U_{*},v)+\Big[\frac{rT_{uv}}{-\partial_{u}r}\Big]^{(AdS)}(U_{*},v)\Big)\,dv+\label{eq:InitialDataNorm}\\
+ & \sup_{V_{*}\ge0}\int_{\max\{0,V_{*}-\sqrt{-\frac{3}{\Lambda}}\pi\}}^{V_{*}}\Big(\Big[\frac{rT_{uu}}{-\partial_{u}r}\Big]^{(AdS)}(u,V_{*})+\Big[\frac{rT_{uv}}{\partial_{v}r}\Big]^{(AdS)}(u,V_{*})\Big)\,du+\nonumber \\
 & +\sqrt{-\Lambda}\tilde{m}_{/}|_{v=v_{\mathcal{I}}}.\nonumber 
\end{align}
\end{defn}
\begin{rem*}
In view of the periodicity of the null geodesic flow on the spherical
quotient of AdS spacetime (see Section \ref{sec:Geodesic-flow-AdS}
of the Appendix), the value of (\ref{eq:InitialDataNorm}) does not
change if one restricts to the supremum over $U_{*}\in[0,\sqrt{-\frac{3}{\Lambda}}\pi]$
and $V_{*}\in[0,2\sqrt{-\frac{3}{\Lambda}}\pi]$ in the right hand
side of (\ref{eq:InitialDataNorm}). Therefore, the condition that
the elements of $\mathcal{B}$ have bounded support in phase space
(i.\,e.~satisfy (\ref{eq:BoundedSupportDefinition})) implies that
$||\mathcal{S}||$ is finite for any $\mathcal{S}\in\mathcal{B}$. 

The norm $||(r_{/},\Omega_{/}^{2},\bar{f}_{/};v_{\mathcal{I}})||$
vanishes if and only if $f^{(AdS)}\equiv0$, i.\,e.~if $\bar{f}_{/}\equiv0$.
In this case, $(r_{/},\Omega_{/}^{2},0;v_{\mathcal{I}})$ is mapped
through the gauge transformation provided by Lemma \ref{lem:SmoothToNorm}
to the rescaled normalised trivial data $(r_{AdS/}^{(v_{\mathcal{I}})},\big(\Omega_{AdS/}^{(v_{\mathcal{I}})}\big)^{2},0;v_{\mathcal{I}})$,
given by (\ref{eq:AdSMetricValuesRescaled1}).

We should also point out that the quantity (\ref{eq:InitialDataNorm})
is both gauge invariant (i.\,e.~invariant under coordinate transformations
of the form (\ref{eq:GaugeCoordinateChange})\textendash (\ref{eq:GeneralGaugeTransformationInitialData}))
and scale invariant, i.\,e.~invariant under transformations of $(r_{/},\Omega_{/}^{2};\bar{f}_{/})$
of the form 
\begin{gather*}
r_{/}(v)\rightarrow\lambda^{-1}r_{/}(\lambda v),\\
\Omega_{/}^{2}(v)\rightarrow\Omega_{/}^{2}(\lambda v),\\
\bar{f}_{/}(v;p^{v},l)\rightarrow\lambda^{2}(\lambda')^{4}\bar{f}_{/}(\lambda v;\lambda'p^{v},\lambda\cdot\lambda'l),\\
\Lambda\rightarrow\lambda^{2}\Lambda,
\end{gather*}
for any $\lambda,\lambda'>0$. The scale invariance of $||\cdot||$
is used in a fundamental way in the constructions of our companion
paper \cite{MoschidisVlasov}.
\end{rem*}
Any smooth solution $(r,\Omega^{2};f)$ of (\ref{eq:RequationFinal})\textendash (\ref{NullShellFinal})
on a domain $\mathcal{U}_{u_{1};v_{\mathcal{I}}}$ of the form (\ref{eq:GeneralDomain}),
with smooth axis $\{u=v\}$ and smooth conformal infinity $\{u=v-v_{\mathcal{I}}\}$,
induces a smoothly compatible initial data set $(r_{/u_{*}},\Omega_{/u_{*}}^{2},\bar{f}_{/u_{*}};v_{\mathcal{I}})$
on slices of the form $\{u=u_{*}\}\cap\mathcal{U}_{u_{1};v_{\mathcal{I}}}$
for any $u_{*}\in(0,u_{1})$, where
\begin{equation}
(r_{/u_{*}},\Omega_{/u_{*}}^{2})(\bar{v})\doteq(r,\Omega^{2})(u_{*},u_{*}+\bar{v})\label{eq:SliceData}
\end{equation}
and 
\begin{equation}
\bar{f}_{/u_{*}}(\bar{v};p,l)=\bar{f}(u_{*},u_{*}+\bar{v};p,\frac{l^{2}}{\Omega^{2}r^{2}|_{(u_{*},u_{*}+\bar{v})}p},l).\label{eq:SliceF}
\end{equation}
As a result, $||\cdot||$ can be used to measure the ``size'' of
a solution $(r,\Omega^{2};f)$ at time $u=u_{*}$:
\begin{defn}
\label{def:NormSlice} Let $(r,\Omega^{2};f)$, $\mathcal{U}_{u_{1};v_{\mathcal{I}}}$
and $u_{*}\in[0,u_{1})$ be as above, with $f$ of bounded support
in phase space. We will define the norm on the initial data induced
by $(r,\Omega^{2};f)$ on the slice $\{u=u_{*}\}\cap\mathcal{U}_{u_{1};v_{\mathcal{I}}}$
by the relation 
\begin{equation}
||(r,\Omega^{2};f)|_{u=u_{*}}||\doteq||\big(r_{/u_{*}},\Omega_{/u_{*}}^{2},\bar{f}_{/u_{*}};v_{\mathcal{I}}\big)||\label{eq:InitialDataNormSlice}
\end{equation}
where $||\cdot||$ is defined by (\ref{eq:InitialDataNorm}) and $r_{/u_{*}},\Omega_{/u_{*}}^{2},\bar{f}_{/u_{*}}$
are given by (\ref{eq:SliceData})\textendash (\ref{eq:SliceF}).
\end{defn}

\subsection{\label{subsec:CauchyStability} Cauchy stability of AdS in the low
regularity topology}

In this section, we will establish a Cauchy stability statement for
AdS spacetime $(\mathcal{M}_{AdS},g_{AdS})$ as a solution of (\ref{eq:RequationFinal})\textendash (\ref{NullShellFinal}),
with respect to the initial data topology defined by (\ref{eq:InitialDataNorm}).
This result will also provide us with some a priori control on the
geometry of solutions of (\ref{eq:RequationFinal})\textendash (\ref{NullShellFinal})
arising as small perturbations of $(\mathcal{M}_{AdS},g_{AdS})$ with
respect to (\ref{eq:InitialDataNorm}), which will be useful for the
results in our companion paper \cite{MoschidisVlasov}.

In particular, we will prove the following result:
\begin{thm}
\label{thm:CauchyStabilityAdS} For any $v_{\mathcal{I}}>0$, any
$U=n\cdot v_{\mathcal{I}}>0$ (where $n\in\mathbb{N}$) and any $C_{0}>0$,
there exist $\varepsilon_{0}>0$ and $C_{1}>0$ such that the following
statement holds: For any $0\le\varepsilon<\varepsilon_{0}$ and any
smooth initial data set $(r_{/},\Omega_{/}^{2},\bar{f}_{/};v_{\mathcal{I}})\in\mathcal{B}$
satisfying the smallness condition
\begin{equation}
||(r_{/},\Omega_{/}^{2},\bar{f}_{/};v_{\mathcal{I}})||<\varepsilon\label{eq:SmallnessInitialNorm}
\end{equation}
(where $||\cdot||$ is defined by (\ref{eq:InitialDataNorm})), as
well as the bound (\ref{eq:BoundedSupportDefinition}) with $C_{0}$
in place of $C$, the maximal future development $(\mathcal{U}_{max};r,\Omega^{2},f)$
of $(r_{/},\Omega_{/}^{2},\bar{f}_{/};v_{\mathcal{I}})$ with reflecting
boundary conditions on $\mathcal{I}$ (see Corollary \ref{cor:MaximalDevelopment})
is defined on the whole of the the domain $\mathcal{U}_{U;v_{\mathcal{I}}}$
(defined by (\ref{eq:GeneralDomain})), i.\,e.
\begin{equation}
\mathcal{U}_{U;v_{\mathcal{I}}}\subset\mathcal{U}_{max}.\label{eq:COntainedInDomain}
\end{equation}

In addition, $(r,\Omega^{2};f)$ satisfies the estimates: 
\begin{equation}
\sup_{u_{*}\in(0,U)}||(r,\Omega^{2};f)|_{u=u_{*}}||\le C_{1}\varepsilon\label{eq:SmallnessInNormCauchy}
\end{equation}
(where the notation $||(r,\Omega^{2};f)|_{u=u_{*}}||$ was introduced
in Definition \ref{def:NormSlice}), 
\begin{equation}
\sup_{(u,v)\in\mathcal{U}_{U;v_{\mathcal{I}}}}\Big(\sup_{p^{u},p^{v}\in supp(f(u,v;\cdot,\cdot,\cdot))}\Big(\big((-\partial_{u}r)p^{u}+(\partial_{v}r)p^{v}\big)\Big)\Big)\le(1+C_{1}\varepsilon)C_{0},\label{eq:SmallChangeBoundedSupportCauchy}
\end{equation}
\begin{equation}
\sup_{u\in(0,U)}\int_{u}^{u+v_{\mathcal{I}}}r\Big(\frac{T_{vv}}{\partial_{v}r}+\frac{T_{uv}}{-\partial_{u}r}\Big)(u,v)\,dv+\sup_{v\in(0,U+v_{\mathcal{I}})}\int_{\max\{0,v-v_{\mathcal{I}}\}}^{\min\{v,U\}}r\Big(\frac{T_{uv}}{\partial_{v}r}+\frac{T_{uu}}{-\partial_{u}r}\Big)(u,v)\,du\le C_{1}\varepsilon,\label{eq:SmallnessRightHandSideConstraintsCauchy}
\end{equation}
and
\begin{equation}
\sup_{\mathcal{U}_{U;v_{\mathcal{I}}}}\frac{2\tilde{m}}{r}<C_{1}\varepsilon.\label{eq:SmallnessTrappingCauchy}
\end{equation}
\end{thm}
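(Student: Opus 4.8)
The plan is to run a continuity (bootstrap) argument in the retarded time variable $u$, comparing the maximal development $(r,\Omega^2,f)$ with the exact AdS solution. Fix the double null gauge in which the axis is $\{u=v\}$ and conformal infinity is $\{u=v-v_{\mathcal{I}}\}$, and pass to the gauge-normalised representative of the initial data via Lemma~\ref{lem:SmoothToNorm}; the estimate (\ref{eq:C1EstimateSpacetimeTransformation}) guarantees that this transformation is $\varepsilon$-close to the identity in $C^1$, so it distorts none of the norms by more than a constant factor. The main idea is that, along each slice $\{u=u_*\}$, the free-Vlasov functional $||(r,\Omega^2;f)|_{u=u_*}||$ of Definition~\ref{def:NormSlice} is \emph{almost conserved}: the true Vlasov field $f$ differs from the comparison free field $f^{(AdS)}$ only through the gravitational back-reaction, which is quadratically small in the norm. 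Concretely, I would set up a bootstrap assumption of the form
\begin{equation}
\sup_{u_*\in(0,U_b)}||(r,\Omega^2;f)|_{u=u_*}||\le A\varepsilon,\qquad \sup_{\mathcal{U}_{U_b;v_{\mathcal{I}}}}\frac{2\tilde m}{r}\le A\varepsilon,
\end{equation}
for a large constant $A$ and a time $U_b\le U$ up to which the solution is known to exist (by Theorem~\ref{thm:LocalExistenceUniqueness} and Corollary~\ref{cor:MaximalDevelopment}), and then improve the constant $A$ to $A/2$, which combined with the extension criterion Corollary~\ref{cor:GeneralContinuationCriterion} closes the argument and forces $U_b=U$.

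The improvement step proceeds in several pieces. First, the bootstrap bound on $\tfrac{2\tilde m}{r}$ together with (\ref{eq:DefinitionHawkingMass}) gives uniform two-sided control on $\partial_v r$, $-\partial_u r$, $\Omega^2$ and hence on the whole geometry, via integration of the constraint equations (\ref{eq:DerivativeInUDirectionKappa})--(\ref{eq:DerivativeInVDirectionKappaBar}) exactly as in the proof of Theorem~\ref{thm:ExtensionPrinciple}; one also obtains (\ref{eq:SmallChangeBoundedSupportCauchy}) by propagating the support bound along null geodesics using (\ref{eq:UsefulRelationForGeodesicWithMu-U})--(\ref{eq:UsefulRelationForGeodesicWithMu-V}), the correction being $O(A\varepsilon)$. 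Second, I would bound $\tilde m$ itself: integrating (\ref{eq:DerivativeTildeVMass}) and (\ref{eq:DerivativeTildeUMass}) and using (\ref{eq:VDerivativeMassfromReducedParticleCurrent})--(\ref{eq:UDerivativeMassFromReducedParticleCurrent}) expresses $\tilde m(u,v)$ in terms of the flux integrals $\int r\,(T_{vv}/\partial_v r + T_{uv}/(-\partial_u r))$ and the conserved particle currents; the crucial point is that these flux integrals along any slice are controlled, up to the geometric distortion factor $1+O(A\varepsilon)$, by the corresponding AdS free-field fluxes, which by the very definition of $||\cdot||$ are $\le\varepsilon$. This yields $\sup\tfrac{2\tilde m}{r}\le C\varepsilon$ with $C$ independent of $A$, improving the second bootstrap bound. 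Third, and most delicately, I would show that the slice-norm is propagated: this requires comparing, at each $u_*$, the energy concentration of $f$ with that of a free Vlasov field on AdS launched from the slice data. Using the conservation law (\ref{eq:ReducedParticleConservation}) for $N^{(\mathcal{V};l)}_\mu$ and tracking null geodesics forward from $\{u=0\}$, the phase-space density $\bar f$ is literally transported; the energy-momentum components $r^2 T^{(l)}_{\mu\nu}$ at a later slice are then integrals of $\bar f$ against $p_\mu p_\nu$ along the (perturbed) geodesic flow, and comparing the perturbed flow with the AdS flow (via Gronwall on the geodesic equations (\ref{eq:NullGeodesicsSphericalSymmetry}), with forcing controlled by $\tfrac{2\tilde m}{r}\le C\varepsilon$ through (\ref{eq:UsefulRelationForGeodesicWithMu-U})) shows the slice-fluxes change by at most a factor $e^{O(C\varepsilon)}$ over the time interval $[0,U]$; since $U=n v_{\mathcal{I}}$ is fixed, this is $1+O_U(C\varepsilon)$. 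The reflecting boundary condition (\ref{eq:ReflectionConditionSphericallySymmetric}) is what makes the geodesic flow — and hence the comparison — well-defined across $\mathcal{I}$, and (\ref{eq:ConservedMassI}) ensures no mass escapes there.

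I expect the main obstacle to be the third piece: making precise and quantitative the statement that the true (back-reacting) null geodesic flow stays $O(\varepsilon)$-close, over a \emph{fixed} number $n$ of reflections off $\mathcal{I}$, to the exactly periodic AdS flow, uniformly over all of phase space including geodesics with small angular momentum that pass near the axis $r=0$. Near $r=0$ the coefficients $\partial_v r/r$ appearing in the geodesic equations degenerate, so one cannot simply Gronwall in a naive norm; instead I would work in the Cartesian chart of Definition~\ref{def:SmoothnessAxis}, where (by the axis-regularity estimates already available from Theorem~\ref{thm:ExtensionPrinciple}) the metric and Christoffel symbols are uniformly $C^1$-close to those of AdS, and apply the Cartesian comparison argument used at the end of the proof of Theorem~\ref{thm:ExtensionPrinciple} (the map $\mathcal{P}$ and the Jacobi-field bounds (\ref{eq:BoundednessP})). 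A secondary subtlety is purely bookkeeping: one must verify that the gauge-normalising transformation, applied slice-by-slice along the development via Lemma~\ref{lem:TransformationForNormalisationDevelopment}, does not accumulate error over the $n$ periods — but (\ref{eq:C1EstimateSpacetimeTransformation}) is uniform in the slice and linear in the slice-norm, so with the bootstrap bound in hand this is controlled. Once all four bounds (\ref{eq:SmallnessInNormCauchy})--(\ref{eq:SmallnessTrappingCauchy}) are improved with a constant independent of $A$, choosing $A$ large and then $\varepsilon_0$ small closes the bootstrap, and Corollary~\ref{cor:GeneralContinuationCriterion} (whose hypotheses $\tfrac{2m}{r}<1$ and $\limsup\tfrac{2\tilde m}{r}\le\delta_0$ are satisfied since $C_1\varepsilon\ll\delta_0$) upgrades the a priori bounds to the existence statement (\ref{eq:COntainedInDomain}).
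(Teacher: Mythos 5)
Your high-level architecture — bootstrap in $u$, comparison with exact AdS, improvement of the constants, and closure via Corollary~\ref{cor:GeneralContinuationCriterion} — matches the paper, and you correctly isolate the hard part: quantitative $O(\varepsilon)$ closeness of the back-reacting null geodesic flow to the periodic AdS flow over $n$ reflections, uniformly down to small angular momentum. However, your proposed resolution of that difficulty does not work at this regularity. You want to pass to the Cartesian chart and argue that "the metric and Christoffel symbols are uniformly $C^1$-close to those of AdS," then Gronwall/Jacobi as in the end of the proof of Theorem~\ref{thm:ExtensionPrinciple}. But pointwise closeness of the Christoffel symbols (which involve $\partial_{u}\log\Omega^{2}$, $\partial_{v}\log\Omega^{2}$, and through (\ref{eq:EquationOmegaForProof}) the quantities $\tilde{m}/r^{3}$ and $T_{uv}$ pointwise) is simply not controlled by the scale-invariant norm (\ref{eq:InitialDataNorm}), which only bounds \emph{integrated fluxes} of the energy–momentum tensor. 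The smallness hypothesis gives you $C^{0}$ closeness of $r$, $\partial r$, $\Omega^{2}$ (as in (\ref{eq:FirstBoundDvRCauchyImp})--(\ref{eq:OmegaCauchyImp})), but no pointwise smallness of $T_{\mu\nu}$ or of $\partial\log\Omega^{2}-\partial\log\Omega^{2}_{AdS}$; the Jacobi-field machinery of Theorem~\ref{thm:ExtensionPrinciple} is a qualitative regularity device whose constants are useless here.

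The mechanism the paper actually uses, and which your proposal is missing, has three interlocking pieces. First, the geodesic comparison must itself be a \emph{bootstrap quantity}, and the position difference must be weighted by $l/E_{0}$ (see (\ref{eq:GeodesicDistanceBootstrap})): since $r_{\min}\sim l/E_{0}$ along a geodesic of angular momentum $l$ and energy $E_{0}$, this says the two geodesics stay within $O(\varepsilon\, r_{\min})$ of each other, which is exactly what is needed for the error terms supported in the thin tubes $U_{in}^{(k)}\cup U_{out}^{(k)}$ to close. Second, the Christoffel forcing along the geodesic is never estimated pointwise: the identities (\ref{eq:UsefulRelationForGeodesicWithMu-U})--(\ref{eq:UsefulRelationForGeodesicWithMu-V}) convert it into a double spacetime integral of $\tfrac{1}{2}\tfrac{6\tilde{m}/r-1}{r^{2}}\Omega^{2}-24\pi T_{uv}$ over the region between the geodesic and the data curve, and only the \emph{difference} with the AdS counterpart (controlled through the width $|\mathcal{W}_{\tau}|$ of (\ref{eq:DefinitionWidth}) and the flux bounds (\ref{eq:BoundForGeodesicFormulaInU})--(\ref{eq:BoundForGeodesicFormulaInV})) needs to be small. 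Third, the Gronwall kernel $\frac{l^{2}}{r_{\min}E_{0}^{2}}\int\frac{\Omega^{2}}{r^{2}}\big|_{\gamma_{AdS}}d\bar{\tau}$ is bounded \emph{independently of} $r_{\min}$ — this is the precise reason the argument survives the degeneration $l\to 0$ without any recourse to Cartesian coordinates. Without these three ingredients your bootstrap cannot be closed: the slice-norm propagation in your "third piece" presupposes exactly the geodesic-flow comparison whose proof is the content of the missing step.
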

\begin{rem*}
Let us note that the bounds (\ref{eq:SmallnessInNormCauchy})\textendash (\ref{eq:SmallnessTrappingCauchy})
are \emph{gauge invariant}. Furthermore, any coordinate transformation
$(u,v)\rightarrow(U(u),V(v))$ with $U(0)=0$ for which the lines
$\{u=v\}$ and $\{u=v-v_{\mathcal{I}}\}$ remain invariant necessarily
satisfies $U(nv_{\mathcal{I}})=nU(v_{\mathcal{I}})$ for any $n\in\mathbb{N}$;
for this reason, the inclusion (\ref{eq:COntainedInDomain}) is also
a gauge invariant statement.
\end{rem*}
\begin{proof}
In order to establish the gauge invariant estimates (\ref{eq:COntainedInDomain})\textendash (\ref{eq:SmallnessTrappingCauchy}),
it will be convenient for us to work in a gauge which normalises the
initial data. For this reason, we will assume that $(\mathscr{D};r,\Omega^{2},f)$
has been transformed under the normalising gauge transformation $\mathcal{T}$
provided by Lemma \ref{lem:TransformationForNormalisationDevelopment}
and, therefore, that $(r_{/},\Omega_{/}^{2},\bar{f}_{/};v_{\mathcal{I}})$
satisfy the normalisation condition (\ref{def:GaugeConditionNormalise}).
Note that, since $\mathcal{T}$ is only piecewise smooth, the transformed
functions $\Omega^{2}$ and $\partial r$ will only asumed to be bounded.
The gauge condition (\ref{def:GaugeConditionNormalise}), which is
equivalent to (\ref{eq:FormulaForDvR/2}), implies, in view of the
smallness assumption (\ref{eq:SmallnessInitialNorm}) and the definition
(\ref{eq:InitialDataNorm}) of $||\cdot||$, that $r_{/}$ is pointwise
close to the normalised, rescaled AdS radius $r_{AdS/}^{(v_{\mathcal{I}})}$
(defined by (\ref{eq:AdSMetricValuesRescaled1})), satisfying the
estimate
\begin{equation}
\sup_{v\in(0,v_{\mathcal{I}})}\Big|\frac{\partial_{v}r_{/}}{1-\frac{1}{3}\Lambda r_{/}^{2}}(v)-\frac{\partial_{v}r_{AdS/}^{(v_{\mathcal{I}})}}{1-\frac{1}{3}\Lambda(r_{AdS/}^{(v_{\mathcal{I}})})^{2}}(v)\Big|\le C\varepsilon\label{eq:FPointwiseCloseToAdSInitially}
\end{equation}
for some absolute constant $C>0$.

Let us introduce some shorthand notation for various geometric objects
that will appear in the proof of Theorem \ref{thm:CauchyStabilityAdS}:
Let $\mathcal{U}_{U_{*};v_{\mathcal{I}}}\subset\mathscr{D}$ be a
domain of the form (\ref{eq:GeneralDomain}); given any $v_{0}\in(0,v_{\mathcal{I}})$,
$p_{0}\in(0,+\infty]$ and $l\in(0,+\infty)$, we will denote with
$\gamma[\cdot;v_{0},p_{0},l]:[0,a)\rightarrow\mathcal{U}_{U_{*};v_{\mathcal{I}}}$
the affinely parametrised null geodesic of $(r,\Omega^{2})$ which
is uniquely determined by the condition that it has angular momentum
$l$ and satisfies initially: 
\begin{equation}
\begin{cases}
\gamma[0;v_{0},p_{0},l]=(0,v_{0}),\\
\dot{\gamma}^{u}[0;v_{0},p_{0},l]=p_{0}.
\end{cases}\label{eq:InitialConditionsGeodesicNotation}
\end{equation}

\medskip{}

\noindent \emph{Remark.} Since the gauge transformation $\mathcal{T}$
is piecewise smooth on $\mathscr{D}$ and smooth on $\mathscr{D}\backslash\cup_{k=1}^{\infty}\big(\{u=kv_{\mathcal{I}}\}\cup\{v=kv_{\mathcal{I}}\}\big)$
(see \ref{lem:TransformationForNormalisationDevelopment}), the initial
condition (\ref{eq:InitialConditionsGeodesicNotation}) on $u=0$
is regularly transformed under $\mathcal{T}^{-1}$, i.\,e.~$(D_{(0,v_{0})}\mathcal{T}^{-1})_{\mu}\dot{\gamma}^{\mu}$
is well defined; hence the existence and uniqueness of a null geodesic
$\gamma$ satisfying (\ref{eq:InitialConditionsGeodesicNotation})
can be shown readily by applying the inverse transformation $\mathcal{T}^{-1}$
on $(\mathscr{D};r,\Omega^{2},f)$ and appealing to the standard theory.

\medskip{}

\noindent We will denote the dervative of $\gamma$ with resect to
the affine parametrization by $\dot{}$, while $\tau$ will denote
the parameter defined by the function $u+v$. As a function of $\tau$,
we will assume that $\gamma[\cdot;v_{0},p_{0},l]$ is maximally extended
in $\mathcal{U}_{U_{*};v_{\mathcal{I}}}$ through reflections off
$\mathcal{I}$, according to Definition \ref{def:MaximalExtensionReflections}.
Furthermore, we will set 
\begin{equation}
E[\tau;v_{0},p_{0},l]\doteq\frac{1}{2}\Omega^{2}(\gamma[\tau;v_{0},p_{0},l])\Big(\dot{\gamma}^{u}[\tau;v_{0},p_{0},l]+\dot{\gamma}^{v}[\tau;v_{0},p_{0},l]\Big).
\end{equation}
We will also define $\gamma_{AdS}[\cdot;v_{0},p_{0},l]:[0,a')\rightarrow\mathcal{U}_{U_{*};v_{\mathcal{I}}}$
to be the unique maximally extended (through reflections) null geodesic
of the normalised rescaled AdS metric $(r_{AdS}^{(v_{\mathcal{I}})},(\Omega_{AdS}^{(v_{\mathcal{I}})})^{2})$
on $\mathcal{U}_{U_{*};v_{\mathcal{I}}}$ (the coefficients of which
are given by (\ref{eq:AdSMetricValuesRescaled-1})) satisfying the
initial conditions (\ref{eq:InitialConditionsGeodesicNotation}) and
having angular momentum $l$. We will adopt the same notational conventions
for $\gamma_{AdS}$ as for $\gamma$.

Let $C_{2}=C_{2}(U)\gg1$ be a fixed large constant depending only
on $U$. In order to establish Theorem \ref{thm:CauchyStabilityAdS},
we will first assume that, for some $U_{*}\in(0,U)$, 
\[
\mathcal{U}_{U_{*};v_{\mathcal{I}}}\doteq\{0<u<U_{*}\}\cap\{u<v<u+v_{\mathcal{I}}\}\subset\mathscr{D}
\]
 and that the following \emph{bootstrap assumptions }are satisfied\emph{:}
\begin{equation}
\sup_{u_{*}\in(0,U_{*})}||(r,\Omega^{2};f)|_{u=u_{*}}||\le C_{2}\varepsilon,\label{eq:SmallnessInNormCauchyBootstrap}
\end{equation}
\begin{equation}
\sup_{u\in(0,U_{*})}\int_{u}^{u+v_{\mathcal{I}}}r\Big(\frac{T_{vv}}{\partial_{v}r}+\frac{T_{uv}}{-\partial_{u}r}\Big)(u,v)\,dv+\sup_{v\in(0,U_{*}+v_{\mathcal{I}})}\int_{\max\{0,v-v_{\mathcal{I}}\}}^{\min\{v,U_{*}\}}r\Big(\frac{T_{uv}}{\partial_{v}r}+\frac{T_{uu}}{-\partial_{u}r}\Big)(u,v)\,du\le C_{2}\varepsilon,\label{eq:SmallnessRightHandSideConstraintsCauchyBootstrap}
\end{equation}
\begin{equation}
\sup_{\mathcal{U}_{U_{*};v_{\mathcal{I}}}}\frac{2\tilde{m}}{r}<C_{2}\varepsilon\label{eq:SmallnessTrappingCauchyBootstrap}
\end{equation}
and, for any $v_{0}\in(0,v_{\mathcal{I}})$, $p_{0}>0$, $l>0$:
\begin{equation}
\Big|u(\gamma[\tau;v_{0},p_{0},l])-u(\gamma_{AdS}[\tau;v_{0},p_{0},l])\Big|+\Big|v(\gamma[\tau;v_{0},p_{0},l])-v(\gamma_{AdS}[\tau;v_{0},p_{0},l])\Big|\le C_{2}\frac{l}{E[0;v_{0},p_{0},l]}\varepsilon,\label{eq:GeodesicDistanceBootstrap}
\end{equation}
\begin{equation}
\Big|\Omega^{2}\dot{\gamma}^{u}[\tau;v_{0},p_{0},l]-\Omega^{2}\dot{\gamma}_{AdS}^{u}[\tau;v_{0},p_{0},l])\Big|+\Big|\Omega^{2}\dot{\gamma}^{v}[\tau;v_{0},p_{0},l]-\Omega^{2}\dot{\gamma}_{AdS}^{v}[\tau;v_{0},p_{0},l]\Big|\le C_{2}\varepsilon E[0;v_{0},p_{0},l].\label{eq:GeodesicDistanceDerivativeBootstrap}
\end{equation}
We will then show that the following improvement of (\ref{eq:SmallnessInNormCauchyBootstrap})\textendash (\ref{eq:GeodesicDistanceDerivativeBootstrap})
actually holds on $\mathcal{U}_{U_{*};v_{\mathcal{I}}}$:
\begin{equation}
\sup_{u_{*}\in(0,U_{*})}||(r,\Omega^{2};f)|_{u=u_{*}}||\le4\varepsilon,\label{eq:SmallnessInNormCauchyBootstrap-1}
\end{equation}
\begin{equation}
\sup_{u\in(0,U_{*})}\int_{u}^{u+v_{\mathcal{I}}}r\Big(\frac{T_{vv}}{\partial_{v}r}+\frac{T_{uv}}{-\partial_{u}r}\Big)(u,v)\,dv+\sup_{v\in(0,U_{*}+v_{\mathcal{I}})}\int_{\max\{0,v-v_{\mathcal{I}}\}}^{\min\{v,U_{*}\}}r\Big(\frac{T_{uv}}{\partial_{v}r}+\frac{T_{uu}}{-\partial_{u}r}\Big)(u,v)\,du\le4\varepsilon,\label{eq:SmallnessRightHandSideConstraintsCauchyBootstrap-1}
\end{equation}
\begin{equation}
\sup_{\mathcal{U}_{U_{*};v_{\mathcal{I}}}}\frac{2\tilde{m}}{r}<\frac{1}{2}C_{2}\varepsilon\label{eq:SmallnessTrappingCauchyBootstrap-1}
\end{equation}
and, for any $v_{0}\in(0,v_{\mathcal{I}})$, $p_{0}>0$, $l>0$:
\begin{equation}
\Big|u(\gamma[\tau;v_{0},p_{0},l])-u(\gamma_{AdS}[\tau;v_{0},p_{0},l])\Big|+\Big|v(\gamma[\tau;v_{0},p_{0},l])-v(\gamma_{AdS}[\tau;v_{0},p_{0},l])\Big|\le\frac{1}{2}C_{2}\frac{l}{E[0;v_{0},p_{0},l]}\varepsilon,\label{eq:GeodesicDistanceBootstrap-1}
\end{equation}

\begin{equation}
\Big|\Omega^{2}\dot{\gamma}^{u}[\tau;v_{0},p_{0},l]-\Omega^{2}\dot{\gamma}_{AdS}^{u}[\tau;v_{0},p_{0},l])\Big|+\Big|\Omega^{2}\dot{\gamma}^{v}[\tau;v_{0},p_{0},l]-\Omega^{2}\dot{\gamma}_{AdS}^{v}[\tau;v_{0},p_{0},l]\Big|\le\frac{1}{2}C_{2}\varepsilon E[0;v_{0},p_{0},l].\label{eq:GeodesicDistanceDerivativeBootstrap-1}
\end{equation}
The proof of Theorem \ref{thm:CauchyStabilityAdS} will then follow
immediately through a standard continuity argument (using also the
gauge dependend bounds (\ref{eq:FirstBoundDvRCauchy})\textendash (\ref{eq:FirstBoundDuRCauchy})
and (\ref{eq:OmegaCauchyImp}) to compare $\Omega^{2}\dot{\gamma}^{v}$,
$\Omega^{2}\dot{\gamma}^{u}$ with $\partial_{v}r\dot{\gamma}^{v}$,
$-\partial_{u}r\dot{\gamma}^{u}$), by applying the inverse transformation
$\mathcal{T}^{-1}$ on $\mathscr{D}$ and using the extension principle
of Corollary \ref{cor:GeneralContinuationCriterion} (which guarantees
that, given a smooth solution $(r,\Omega^{2},f)$ on $\mathcal{U}_{U_{*};v_{\mathcal{I}}}$
satisfying (\ref{eq:SmallnessTrappingCauchyBootstrap-1}), it can
be smoothly extended on $\mathcal{U}_{U_{*}+c;v_{\mathcal{I}}}$ for
some $c>0$). 

We will now proceed to establish \ref{eq:SmallnessInNormCauchyBootstrap-1}\textendash \ref{eq:GeodesicDistanceDerivativeBootstrap-1}
in two steps, assuming that (\ref{eq:SmallnessInNormCauchyBootstrap})\textendash (\ref{eq:GeodesicDistanceDerivativeBootstrap})
hold. 

\medskip{}

\noindent \emph{Step 1: Proof of (\ref{eq:SmallnessInNormCauchyBootstrap-1})\textendash (\ref{eq:SmallnessTrappingCauchyBootstrap-1}).}
Integrating (\ref{eq:DerivativeInUDirectionKappa}) and (\ref{eq:DerivativeInVDirectionKappaBar})
using the initial conditions 
\begin{equation}
(r,\Omega^{2})(0,v)=(r_{/},\Omega_{/}^{2})(v),\label{eq:InitialROmega}
\end{equation}
the initial bound (\ref{eq:FPointwiseCloseToAdSInitially}), the bounds
(\ref{eq:SmallnessRightHandSideConstraintsCauchyBootstrap}) and (\ref{eq:SmallnessTrappingCauchyBootstrap}),
as well as the boundary conditions 
\begin{equation}
\begin{cases}
\partial_{v}r|_{\{u=v\}}=-\partial_{u}r|_{\{u=v\}},\\
\frac{\partial_{v}r}{1-\frac{1}{3}\Lambda r^{2}}|_{\{u=v-v_{\mathcal{I}}\}}=-\frac{\partial_{u}r}{1-\frac{1}{3}\Lambda r^{2}}|_{\{u=v-v_{\mathcal{I}}\}}
\end{cases}\label{eq:BoundaryConditionsRAxisConformalInfinity}
\end{equation}
on the axis and conformal infinity and the expression (\ref{eq:FormulaForDvR/2})
for $r_{/}$ (combined with the initial bound (\ref{eq:SmallnessInitialNorm})),
we can readily estimate for some absolute constant $C>0$, noting
that $U_{*}\le U$:
\begin{equation}
\sup_{\mathcal{U}_{U_{*};v_{\mathcal{I}}}}\Big|\frac{\partial_{v}r}{1-\frac{1}{3}\Lambda r^{2}}-\frac{\partial_{v}r_{AdS}^{(v_{\mathcal{I}})}}{1-\frac{1}{3}\Lambda(r_{AdS}^{(v_{\mathcal{I}})})^{2}}\Big|\le CC_{2}\varepsilon\cdot(1+\sqrt{-\Lambda}U)\label{eq:FirstBoundDvRCauchy}
\end{equation}
and 
\begin{equation}
\sup_{\mathcal{U}_{U_{*};v_{\mathcal{I}}}}\Big|\frac{-\partial_{u}r}{1-\frac{1}{3}\Lambda r^{2}}-\frac{-\partial_{u}r_{AdS}^{(v_{\mathcal{I}})}}{1-\frac{1}{3}\Lambda(r_{AdS}^{(v_{\mathcal{I}})})^{2}}\Big|\le CC_{2}\varepsilon\cdot(1+\sqrt{-\Lambda}U).\label{eq:FirstBoundDuRCauchy}
\end{equation}
Integrating (\ref{eq:FirstBoundDvRCauchy}) and using the boundary
conditions $r|_{\{u=v\}}=0$ and $r|_{\{u=v-v_{\mathcal{I}}\}}=+\infty$,
we infer that 
\begin{equation}
\sup_{\mathcal{U}_{U_{*};v_{\mathcal{I}}}}\frac{|r-r_{AdS}^{(v_{\mathcal{I}})}|}{r}\le CC_{2}\varepsilon\cdot(1+\sqrt{-\Lambda}U).\label{eq:FirstBoundRCauchy}
\end{equation}

As a consequence of the initial bound (\ref{eq:SmallnessInitialNorm}),
Definition (\ref{def:NormSlice}) of $||(r,\Omega^{2};f)|_{u=u_{*}}||$,
the bounds (\ref{eq:FirstBoundDvRCauchy})\textendash (\ref{eq:FirstBoundRCauchy}),
the bootstrap estimates (\ref{eq:GeodesicDistanceBootstrap})\textendash (\ref{eq:GeodesicDistanceDerivativeBootstrap})
for the difference of the dynamics of the geodesic flow of $(r,\Omega^{2})$
and $(r_{AdS}^{(v_{\mathcal{I}})},(\Omega_{AdS}^{(v_{\mathcal{I}})})^{2})$
and the explicit description (\ref{eq:AbstractFormulaGeodesicFlowAdS})\textendash (\ref{eq:ExplicitFormulaGeodesicFlow})
of the geodesic flow in AdS spacetime (in particular, the bound (\ref{eq:MinRAdS})),
we can readily obtain the following improvement of (\ref{eq:SmallnessInNormCauchyBootstrap})
and (\ref{eq:SmallnessRightHandSideConstraintsCauchyBootstrap}) (assuming
$\varepsilon_{0}$ is small enough in terms of $C_{2}$ and $U$):
\begin{equation}
\sup_{u_{*}\in(0,U_{*})}||(r,\Omega^{2};f)|_{u=u_{*}}||\le\big(1+CC_{1}\varepsilon)^{5}\varepsilon\le2\varepsilon\label{eq:SmallnessInNormCauchyImproved}
\end{equation}
and
\begin{equation}
\sup_{u\in(0,U_{*})}\int_{u}^{u+v_{\mathcal{I}}}r\Big(\frac{T_{vv}}{\partial_{v}r}+\frac{T_{uv}}{-\partial_{u}r}\Big)(u,v)\,dv+\sup_{v\in(0,U_{*}+v_{\mathcal{I}})}\int_{\max\{0,v-v_{\mathcal{I}}\}}^{\min\{v,U_{*}\}}r\Big(\frac{T_{uv}}{\partial_{v}r}+\frac{T_{uu}}{-\partial_{u}r}\Big)(u,v)\,du\le2\varepsilon.\label{eq:SmallnessRightHandSideConstraintsCauchyImproved}
\end{equation}
Therefore, we infer (\ref{eq:SmallnessInNormCauchyBootstrap-1})\textendash (\ref{eq:SmallnessRightHandSideConstraintsCauchyBootstrap-1}).
Returning to the proof of (\ref{eq:FirstBoundDvRCauchy})\textendash (\ref{eq:FirstBoundRCauchy})
and using (\ref{eq:SmallnessRightHandSideConstraintsCauchyImproved})
in place of (\ref{eq:SmallnessRightHandSideConstraintsCauchyBootstrap}),
we infer that 
\begin{equation}
\sup_{\mathcal{U}_{U_{*};v_{\mathcal{I}}}}\Big|\frac{\partial_{v}r}{1-\frac{1}{3}\Lambda r^{2}}-\frac{\partial_{v}r_{AdS}^{(v_{\mathcal{I}})}}{1-\frac{1}{3}\Lambda(r_{AdS}^{(v_{\mathcal{I}})})^{2}}\Big|\le C(1+\sqrt{-\Lambda}U)\varepsilon,\label{eq:FirstBoundDvRCauchyImp}
\end{equation}
\begin{equation}
\sup_{\mathcal{U}_{U_{*};v_{\mathcal{I}}}}\Big|\frac{-\partial_{u}r}{1-\frac{1}{3}\Lambda r^{2}}-\frac{-\partial_{u}r_{AdS}^{(v_{\mathcal{I}})}}{1-\frac{1}{3}\Lambda(r_{AdS}^{(v_{\mathcal{I}})})^{2}}\Big|\le C(1+\sqrt{-\Lambda}U)\varepsilon\label{eq:FirstBoundDuRCauchyImp}
\end{equation}
and
\begin{equation}
\sup_{\mathcal{U}_{U_{*};v_{\mathcal{I}}}}\frac{|r-r_{AdS}^{(v_{\mathcal{I}})}|}{r}\le C(1+\sqrt{-\Lambda}U)\varepsilon.\label{eq:FirstBoundRCauchyImp}
\end{equation}

Integrating the relation (\ref{eq:DerivativeTildeVMass}) for $\tilde{m}$
and using the boundary condition $\tilde{m}|_{\{u=v\}}=0$ (following
from the smoothness of $\mathcal{T}^{-1}(r,\Omega^{2};f)$ on the
axis) and using the bounds (\ref{eq:SmallnessTrappingCauchyBootstrap})
and (\ref{eq:SmallnessRightHandSideConstraintsCauchyImproved})\textendash (\ref{eq:FirstBoundRCauchyImp}),
we can readily estimate provided $\varepsilon_{0}$ is sufficiently
small with respect to $C_{2}$, $U$: 
\begin{equation}
\sup_{\mathcal{U}_{U_{*};v_{\mathcal{I}}}\cap\big\{ r\le\sqrt{-\frac{3}{\Lambda}}\big\}}\frac{2\tilde{m}}{r}\le C\varepsilon.\label{eq:BoundTrappingNearAxis}
\end{equation}
On the other hand, in view of the initial bound (\ref{eq:SmallnessInitialNorm}),
the fact that 
\[
\sqrt{-\Lambda}\tilde{m}_{/}|_{v=v_{\mathcal{I}}}\le||(r_{/},\Omega_{/}^{2},\bar{f}_{/};v_{\mathcal{I}})||
\]
and the conservation of $\tilde{m}$ along conformal infinity $\{u=v-v_{\mathcal{I}}\}$,
we can immediately bound:
\begin{equation}
\sup_{\mathcal{U}_{U_{*};v_{\mathcal{I}}}\cap\big\{ r\ge\sqrt{-\frac{3}{\Lambda}}\big\}}\frac{2\tilde{m}}{r}\le C\varepsilon.\label{eq:BoundTrappingAway}
\end{equation}
In particular, combining (\ref{eq:BoundTrappingNearAxis}) and (\ref{eq:BoundTrappingAway}),
we infer (\ref{eq:SmallnessTrappingCauchyBootstrap-1}). As a consequence
of (\ref{eq:DefinitionHawkingMass}) and (\ref{eq:RenormalisedHawkingMass}),
from (\ref{eq:FirstBoundDvRCauchyImp})\textendash (\ref{eq:BoundTrappingAway})
we deduce that 
\begin{equation}
\sup_{\mathcal{U}_{U_{*};v_{\mathcal{I}}}}\Big|\frac{\Omega^{2}}{1-\frac{1}{3}\Lambda r^{2}}-\frac{(\Omega_{AdS}^{(v_{\mathcal{I}})})^{2}}{1-\frac{1}{3}\Lambda(r_{AdS}^{(v_{\mathcal{I}})})^{2}}\Big|\le C(1+\sqrt{-\Lambda}U)\varepsilon.\label{eq:OmegaCauchyImp}
\end{equation}

\medskip{}

\noindent \emph{Step 2: Proof of (\ref{eq:GeodesicDistanceBootstrap-1})\textendash (\ref{eq:GeodesicDistanceDerivativeBootstrap-1}).}
As a consequence of the bound (\ref{eq:MinRAdS}) for geodesics on
AdS and the bootstrap assumption (\ref{eq:GeodesicDistanceBootstrap})
(using also (\ref{eq:FirstBoundDvRCauchyImp})\textendash (\ref{eq:FirstBoundRCauchyImp})),
we infer (provided $\varepsilon_{0}$ is small enough in terms of
$C_{2}(U)$) that, for any $v_{0}\in(0,v_{\mathcal{I}})$, $p_{0}>0$,
$l>0$:
\begin{equation}
\inf_{\gamma[\cdot;v_{0},p_{0},l]}r\doteq r_{min}[v_{0},p_{0},l]\ge\frac{1}{2}\Big(\frac{E^{2}[\gamma[0;v_{0},p_{0},l]]}{l^{2}}+\frac{1}{3}\Lambda\Big)^{-\frac{1}{2}}.\label{eq:LowerBoundRGeodesics}
\end{equation}

Let us set for any $k\in\mathbb{N}$ and any $v_{0}\in(0,v_{\mathcal{I}})$,
$p_{0}>0$ and $l>0$:
\begin{equation}
\tau_{k}[v_{0}]\doteq\min\big\{(2k-1)v_{\mathcal{I}}+2v_{0},\text{ }U_{*}\big\}
\end{equation}
and
\[
\tau_{k}^{\prime}[v_{0},p_{0},l]\doteq\min\big\{ U_{*},\text{ }\text{value of }\tau\text{ when }\gamma[\tau;v_{0},p_{0},l]\text{ reaches }\{u=v-v_{\mathcal{I}}\}\text{ for the }k\text{-th time}\big\}.
\]
We will also set for convenience 
\begin{equation}
\tau_{0}[v_{0}]=\tau_{0}^{\prime}[v_{0},p_{0},l]=v_{0}.
\end{equation}
Note that, as a consequence of the bootstrap assumption \ref{eq:GeodesicDistanceBootstrap}
and the explicit description (\ref{eq:AbstractFormulaGeodesicFlowAdS})\textendash (\ref{eq:ExplicitFormulaGeodesicFlow})
of the geodesic flow on AdS, we have for any $k\in\mathbb{N}$
\begin{equation}
|\tau_{k}[v_{0}]-\tau_{k}^{\prime}[v_{0},p_{0},l]|\le C_{2}\varepsilon\frac{l}{E[0;v_{0},p_{0},l]}.\label{eq:DifferenceForHittingPoints}
\end{equation}

\noindent \medskip{}
\emph{Remark}. In order to simplify our notation, we will frequently
drop the parameters $v_{0},p_{0},l$ from the notation for $\tau_{k}$,
$\tau_{k}^{\prime}$ when no confusion arises. 

\medskip{}

In view of the bootstrap assumptions \ref{eq:GeodesicDistanceBootstrap}\textendash \ref{eq:GeodesicDistanceDerivativeBootstrap},
the bounds (\ref{eq:FirstBoundDvRCauchyImp})\textendash (\ref{eq:FirstBoundRCauchyImp})
and the explicit description (\ref{eq:AbstractFormulaGeodesicFlowAdS})\textendash (\ref{eq:ExplicitFormulaGeodesicFlow})
of the geodesic flow on AdS (using, in particular, the fact that the
energy (\ref{eq:EnergyAdS}) of null geodesics in AdS is conserved),
we can readily show that, for some constant $C>0$ depending only
on $\Lambda$, we have for any $k\ge0$ (provided $\varepsilon_{0}$
is small enough in terms of $C_{2}(U)$):
\begin{equation}
\bigcup_{\tau\in[\tau_{k},\tau_{k+1}]}\gamma[\tau;v_{0},p_{0},l]\subset\Big\{ r\ge r_{min}[v_{0},p_{0},l]\Big\}\cap\Big(U_{in}^{(k)}\cup U_{out}^{(k)}\Big)\cap\mathcal{U}_{U^{*};v_{\mathcal{I}}},\label{eq:ShapeForGammaAfterReflections}
\end{equation}
where 
\begin{align}
U_{in}^{(k)} & =[(k-1)v_{\mathcal{I}}+v_{0}-C\frac{l}{E_{0}},kv_{\mathcal{I}}+v_{0}+C\frac{l}{E_{0}}]\times[kv_{\mathcal{I}}+v_{0}-C\frac{l}{E_{0}},kv_{\mathcal{I}}+v_{0}+C\frac{l}{E_{0}}],\label{eq:IngoingU}\\
U_{out}^{(k)} & =[kv_{\mathcal{I}}+v_{0}-C\frac{l}{E_{0}},kv_{\mathcal{I}}+v_{0}+C\frac{l}{E_{0}}]\times[kv_{\mathcal{I}}+v_{0}-C\frac{l}{E_{0}},(k+1)v_{\mathcal{I}}+v_{0}+C\frac{l}{E_{0}}]\label{eq:OutgoingU}
\end{align}
and we have denoted for simplicity 
\[
E_{0}\doteq E[0;v_{0},p_{0},l]
\]
(see Figure \ref{fig:Tubes_AdS}). Moreoever, 
\begin{equation}
\frac{1}{2}\le\frac{E[\tau;v_{0},p_{0},l]}{E_{0}}\le\frac{3}{2}.\label{eq:EnergyEstimate}
\end{equation}
\begin{figure}[h] 
\centering 
\scriptsize
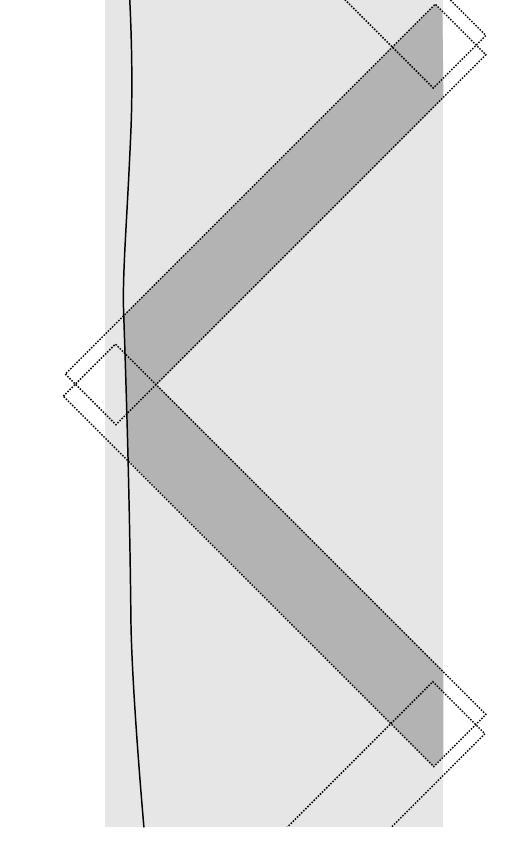 
\caption{Schematic depiction of the domains $U_{in}^{(k)}$ and $U_{out}^{(k)}$ for $k\ge1$ when $\frac{l}{E_0}\ll 1$. \label{fig:Tubes_AdS}}
\end{figure}

\medskip{}

\noindent \emph{Remark}. The bound (\ref{eq:ShapeForGammaAfterReflections})
becomes non-trivial only when $l/E_{0}\ll1$, since when $l/E_{0}\gtrsim1$
the regions in the right hand side of (\ref{eq:ShapeForGammaAfterReflections})
might contain all of $\{r\ge r_{min}[v_{0},p_{0},l]\}$.

\medskip{}

Let us define the \emph{approximately ingoing} and \emph{approximately
outgoing} intervals for $\gamma[\cdot;v_{0},p_{0},l]$ as follows:
For any integer $k\ge0$, we will set (for some fixed $C\gg1$ depending
on $\Lambda$)
\begin{equation}
I_{in}^{(k)}=[\tau_{k}^{\prime},\tau_{k+1}^{\prime}-v_{\mathcal{I}}+C\frac{l}{E_{0}}],\text{ }\hphantom{a}I_{out}^{(k)}=[\tau_{k+1}^{\prime}-v_{\mathcal{I}}-C\frac{l}{E_{0}},\tau_{k+1}^{\prime}].\label{IngoingOutgoingIntervalK}
\end{equation}
Note that $I_{in}^{(k)}\cap I_{out}^{(k)}\neq\emptyset$. Notice also
that, as a consequence of the bootstrap assumptions \ref{eq:GeodesicDistanceBootstrap}\textendash \ref{eq:GeodesicDistanceDerivativeBootstrap},
the bounds (\ref{eq:FirstBoundDvRCauchyImp})\textendash (\ref{eq:FirstBoundRCauchyImp})
and the explicit description (\ref{eq:AbstractFormulaGeodesicFlowAdS})\textendash (\ref{eq:ExplicitFormulaGeodesicFlow})
of the geodesic flow on AdS, we have for any $k\ge0$: 
\begin{align}
\bigcup_{\tau\in I_{in}^{(k)}}\gamma[\tau;v_{0},p_{0},l] & \subset\Big\{ r\ge r_{min}[v_{0},p_{0},l]\Big\}\cap U_{in}^{(k)},\label{eq:IngoingDomain}\\
\bigcup_{\tau\in I_{out}^{(k)}}\gamma[\tau;v_{0},p_{0},l] & \subset\Big\{ r\ge r_{min}[v_{0},p_{0},l]\Big\}\cap U_{out}^{(k)},\label{eq:OutgoingDOmain}
\end{align}
and, for some fixed $c>0$ depending on $\Lambda$: 
\begin{align}
\Omega^{2}\dot{\gamma}^{u}[\tau;v_{0},p_{0},l] & \ge cE_{0}\text{ for }\tau\in I_{in}^{(k)},\label{eq:IngoingToEnergy}\\
\Omega^{2}\dot{\gamma}^{v}[\tau;v_{0},p_{0},l] & \ge cE_{0}\text{ for }\tau\in I_{out}^{(k)}.\label{eq:OutgoingToEnergy}
\end{align}

\medskip{}

\noindent \emph{Remark}. In view of (\ref{eq:NullShellAngularMomentum})
and the conservation of (\ref{eq:EnergyAdS}) on pure AdS spacetime,
the bounds (\ref{eq:IngoingToEnergy})\textendash (\ref{eq:OutgoingToEnergy})
become non-trivial only when $l/E_{0}\ll1$.

\medskip{}

For any $k\in\mathbb{N}$, $v_{0}\in(0,v_{\mathcal{I}})$, $p_{0}>0$,
$l>0$, let $\mathcal{W}$ denote the region between $\gamma[\cdot;v_{0},p_{0},l_{0}]$
and $\gamma_{AdS}[\cdot;v_{0},p_{0},l_{0}]$ in $\mathcal{U}_{U_{*};v_{\mathcal{I}}}$,
i.\,e.: 
\begin{align}
\mathcal{W}=\mathcal{W}[v_{0},p_{0},l_{0}]\doteq\Big\{(\bar{u},\bar{v})\in\mathcal{U}_{U_{*};v_{\mathcal{I}}}:\text{ }\min\big\{(v-u) & |_{\gamma[\bar{u}+\bar{v};v_{0},p_{0},l_{0}]},(v-u)|_{\gamma_{AdS}[\bar{u}+\bar{v};v_{0},p_{0},l_{0}]}\big\}\le\bar{v}-\bar{u}\le\\
 & \le\max\big\{(v-u)|_{\gamma[\bar{u}+\bar{v};v_{0},p_{0},l_{0}]},(v-u)|_{\gamma_{AdS}[\bar{u}+\bar{v};v_{0},p_{0},l_{0}]}\big\}\Big\}.\nonumber 
\end{align}

\begin{figure}[h] 
\centering 
\scriptsize
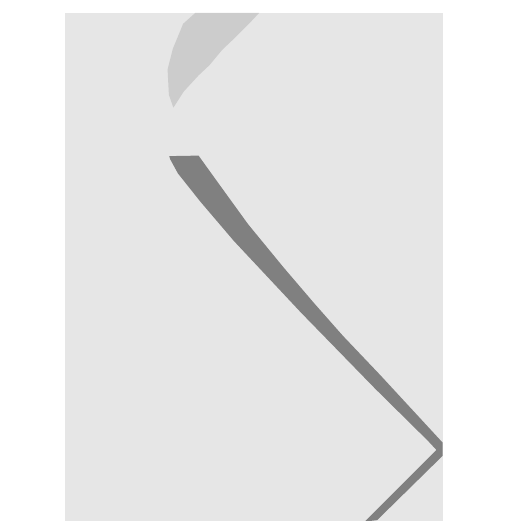 
\caption{Schematic depiction of the region $\mathcal{W}_{\tau}$ bounded by $\gamma[\cdot;v_{0},p_{0},l_{0}]$, $\gamma_{AdS}[\cdot;v_{0},p_{0},l_{0}]$ and $\{u+v\le \tau\}$. \label{fig:Region_Between}}
\end{figure}

Moreover, for any $\tau\in(v_{0},U_{*}+v_{\mathcal{I}})$, we will
set: 
\begin{equation}
\mathcal{W}_{\tau}=\mathcal{W}_{\tau}[v_{0},p_{0},l_{0}]\doteq\mathcal{W}\cap\{u+v\le\tau\}
\end{equation}
and 
\begin{equation}
|\mathcal{W}_{\tau}|\doteq\frac{1}{r_{min}[v_{0},p_{0},l]}\Big(\sup\big\{|u_{1}-u_{1}|:\text{ }(u_{1},v_{1}),(u_{2},v_{2})\in\mathcal{W}_{\tau}\text{ with }u_{1}+v_{1}=u_{2}+v_{2}\big\}\label{eq:DefinitionWidth}
\end{equation}
(see Figure \ref{fig:Region_Between}). Using the geodesic equation
(\ref{eq:NullGeodesicsSphericalSymmetry}) and the fact that $\tau$
corresponds to the parametrization of $\gamma$, $\gamma_{AdS}$ with
respect to $u+v$, we can readily estimate:
\begin{equation}
\frac{d}{d\tau}|\mathcal{W}_{\tau}|\le\frac{1}{r_{min}[v_{0},p_{0},l]}\Bigg|\frac{\Omega^{2}\dot{\gamma}^{v}-\Omega^{2}\dot{\gamma}^{u}}{\Omega^{2}\dot{\gamma}^{u}+\Omega^{2}\dot{\gamma}^{v}}[\tau;v_{0},p_{0},l]-\frac{(\Omega_{AdS}^{(v_{\mathcal{I}})})^{2}\dot{\gamma}_{AdS}^{v}-(\Omega_{AdS}^{(v_{\mathcal{I}})})^{2}\dot{\gamma}_{AdS}^{u}}{(\Omega_{AdS}^{(v_{\mathcal{I}})})^{2}\dot{\gamma}_{AdS}^{u}+(\Omega_{AdS}^{(v_{\mathcal{I}})})^{2}\dot{\gamma}_{AdS}^{v}}[\tau;v_{0},p_{0},l]\Bigg|.\label{eq:ForulaDerivativeWt}
\end{equation}

Using (\ref{eq:NullShellAngularMomentum}), (\ref{eq:EnergyEstimate})
and the fact that 
\[
\frac{1}{2}\le\frac{\Omega^{2}\dot{\gamma}^{v}}{\Omega^{2}\dot{\gamma}^{u}}\cdot\frac{(\Omega_{AdS}^{(v_{\mathcal{I}})})^{2}\dot{\gamma}_{AdS}^{u}}{(\Omega_{AdS}^{(v_{\mathcal{I}})})^{2}\dot{\gamma}_{AdS}^{v}}[\tau;v_{0},p_{0},l]\le2
\]
(following from the bootstrap assumption (\ref{eq:GeodesicDistanceDerivativeBootstrap})),
from (\ref{eq:ForulaDerivativeWt}) we readily infer for some absolute
constant $C>0$: 
\begin{equation}
\frac{d}{d\tau}|\mathcal{W}_{\tau}|\le\frac{C}{r_{min}[v_{0},p_{0},l]}\frac{l^{2}\Omega^{2}}{r^{2}}\Big|_{\gamma_{AdS}[\tau;v_{0},p_{0},l]}\cdot\frac{1}{E_{0}^{2}}E_{D}[\tau;v_{0},p_{0},l_{0}],\label{eq:BoundForWidthGrowth}
\end{equation}
where the \emph{energy difference} $E_{D}[\tau;v_{0},p_{0},l_{0}]$
is defined as 
\begin{equation}
E_{D}[\tau;v_{0},p_{0},l_{0}]\doteq\Big|\big(\Omega^{2}\dot{\gamma}^{u}+\Omega^{2}\dot{\gamma}^{v}\big)[\tau;v_{0},p_{0},l]-\big((\Omega_{AdS}^{(v_{\mathcal{I}})})^{2}\dot{\gamma}_{AdS}^{u}+(\Omega_{AdS}^{(v_{\mathcal{I}})})^{2}\dot{\gamma}_{AdS}^{v}\big)[\tau;v_{0},p_{0},l]\Big|.\label{eq:DefinitionEnergyDifference}
\end{equation}

\medskip{}

\noindent \emph{Remark.} Note that the factor in front of $E_{D}$
in the right hand side of (\ref{eq:BoundForWidthGrowth}) is expected
to behave like $\sim\frac{r_{min}}{r^{2}}$ along $\gamma$. Thus,
its integral over the geodesic is \emph{independent} of the value
of $r_{min}$ (that would not have been the case if the factor was
merely $\sim\frac{1}{r}$). This fact will be crucial for an application
of Gronwal's inequality later in the proof.

\medskip{}

In view of (\ref{eq:FirstBoundDvRCauchyImp})\textendash (\ref{eq:FirstBoundRCauchyImp}),
(\ref{eq:OmegaCauchyImp}), (\ref{eq:LowerBoundRGeodesics}) for $r_{min}$
and the definition (\ref{eq:IngoingU}), (\ref{eq:OutgoingU}) of
$U_{in}^{(k)}$, $U_{out}^{(k)}$, we can readily estimate for some
constant $C>0$ depending only on $\Lambda$: 
\begin{equation}
\int_{U_{in}^{(k)}\cap\{r\ge r_{min}[v_{0},p_{0},l]\}}\frac{\Omega^{2}}{r^{2}}\,dudv+\int_{U_{out}^{(k)}\cap\{r\ge r_{min}[v_{0},p_{0},l]\}}\frac{\Omega^{2}}{r^{2}}\,dudv\le C.\label{eq:BoundForBulkInGeodesicDifference}
\end{equation}
We can also bound for any $\tau\in(v_{0},U_{*}+v_{\mathcal{I}})$:
\begin{equation}
\int_{\mathcal{W}_{\tau}\cap U_{in}^{(k)}}\frac{\Omega^{2}}{r^{2}}\,dudv+\int_{\mathcal{W}_{\tau}\cap U_{out}^{(k)}}\frac{\Omega^{2}}{r^{2}}\,dudv\le C|\mathcal{W}_{\tau}|.\label{eq:BoundUsingDifferenceGeodesics}
\end{equation}
Setting 
\[
\mathcal{S}^{(k)}=\begin{cases}
U_{in}^{(0)}\cap\{u=0\}, & \text{ if }k=0\\
U_{in}^{(k)}\cap\{u=v-v_{\mathcal{I}}\}, & \text{ if }k>0,
\end{cases}
\]
using the relations (\ref{eq:DerivativeTildeUMass}), (\ref{eq:DerivativeTildeVMass})
and the bounds (\ref{eq:FirstBoundDvRCauchyImp})\textendash (\ref{eq:OmegaCauchyImp}),
we can also estimate 
\begin{align}
 & \int_{U_{in}^{(k)}\cap\{r\ge r_{min}[v_{0},p_{0},l]\}}\Big(\frac{\tilde{m}}{r^{3}}\Omega^{2}+T_{uv}\Big)\,dudv\le\label{eq:BoundForGeodesicFormulaInU}\\
 & \hphantom{\int_{U_{in}^{(k)}\cap\{r\ge}}\le C\int_{U_{in}^{(k)}\cap\{r\ge r_{min}[v_{0},p_{0},l]\}}\Big(\frac{\tilde{m}}{r^{3}}\Omega^{2}+\frac{-\partial_{u}\tilde{m}}{r^{2}}(1-\frac{1}{3}\Lambda r^{2})\Big)\,dudv\le\nonumber\\
 & \hphantom{\int_{U_{in}^{(k)}\cap\{r\ge}}\le C\int_{U_{in}^{(k)}\cap\{r\ge r_{min}[v_{0},p_{0},l]\}}\frac{\tilde{m}}{r^{3}}(1-\frac{1}{3}\Lambda r^{2})\,dudv+C\int_{\mathcal{S}^{(k)}}\frac{\tilde{m}}{r^{2}}(1-\frac{1}{3}\Lambda r^{2})\,dv\le\nonumber\\
 & \hphantom{\int_{U_{in}^{(k)}\cap\{r\ge}}\le C\int_{U_{in}^{(k)}\cap\{r\ge r_{min}[v_{0},p_{0},l]\}}\frac{\tilde{m}}{r^{3}}\Omega^{2}\,dudv+C\sup_{U_{in}^{(k)}\cap\{r\ge r_{min}[v_{0},p_{0},l]\}}\Big(\frac{\tilde{m}}{r^{2}}(1-\frac{1}{3}\Lambda r^{2})\Big)\cdot r_{min}[v_{0},p_{0},l]\le\nonumber \\
 & \hphantom{\int_{U_{in}^{(k)}\cap\{r\ge}}\le C\varepsilon.\nonumber 
\end{align}
where, in passing from the first to the second line in (\ref{eq:BoundForGeodesicFormulaInU}),
we have integrated by parts in $u$ and used the fact that $\frac{\tilde{m}}{r^{2}}(1-\frac{1}{3}\Lambda r^{2})$
is non-negative. Similarly, 
\begin{equation}
\int_{U_{out}^{(k)}\cap\{r\ge r_{min}[v_{0},p_{0},l]\}}\Big(\frac{\tilde{m}}{r^{3}}\Omega^{2}+T_{uv}\Big)\,dudv\le C\varepsilon.\label{eq:BoundForGeodesicFormulaInV}
\end{equation}

In view of the boundary condition $1/r|_{\{u=v-v_{\mathcal{I}}\}}=0$,
the finiteness of $\tilde{m}|_{\{u=v-v_{\mathcal{I}}\}}$, $r^{2}T_{uv}|_{\{u=v-v_{\mathcal{I}}\}}$,
formula (\ref{eq:DefinitionHawkingMass}) and the renormalised equation
(\ref{eq:RenormalisedEquations}) for $\tan^{-1}(\sqrt{-\Lambda}r)$),
the renormalised quantity $\Omega^{2}/(1-\frac{1}{3}\Lambda r^{2})$
satisfies the following boundary condition on conformal infinity:
\begin{equation}
(\partial_{v}-\partial_{u})\frac{\Omega^{2}}{1-\frac{1}{3}\Lambda r^{2}}\Big|_{\{u=v-v_{\mathcal{I}}\}}=0.\label{eq:BoundaryConditionOmegaInfinity}
\end{equation}
Setting 
\[
u_{1}(v)=\begin{cases}
0, & v\le v_{\mathcal{I}},\\
v-v_{\mathcal{I}}, & v\ge v_{\mathcal{I}}
\end{cases}
\]
and subtracting from the relation (\ref{eq:UsefulRelationForGeodesicWithMu-U})
for $\gamma[\cdot;v_{0},p_{0},l_{0}]$ the same relation for $\gamma_{AdS}[\cdot;v_{0},p_{0},l_{0}]$
(thus replacing $r,\Omega$ in (\ref{eq:UsefulRelationForGeodesicWithMu-U})
with $r_{AdS}^{(v_{\mathcal{I}})}$, $\Omega_{AdS}^{(v_{\mathcal{I}})}$
and setting $\tilde{m}=0$, $T_{uv}=0$), using also the relation
\begin{equation}
\partial_{v}\Big(\frac{\Omega^{2}}{1-\frac{1}{3}\Lambda r^{2}}\Big)\Big|_{\{u=v-v_{\mathcal{I}}\}}=\frac{1}{2}(\partial_{u}+\partial_{v})\Big(\frac{\Omega^{2}}{1-\frac{1}{3}\Lambda r^{2}}\Big)\Big|_{\{u=v-v_{\mathcal{I}}\}}\label{eq:BoundaryderivativesOmega}
\end{equation}
(and the analogous relation for $r_{AdS}^{(v_{\mathcal{I}})}$, $(\Omega_{AdS}^{(v_{\mathcal{I}})})^{2}$)
following readily from (\ref{eq:BoundaryConditionOmegaInfinity}),
we infer after using the bounds (\ref{eq:FirstBoundDvRCauchyImp})\textendash (\ref{eq:OmegaCauchyImp})
and (\ref{eq:BoundForBulkInGeodesicDifference})\textendash (\ref{eq:BoundForGeodesicFormulaInV})
that, for some $C>0$ depending only on $\Lambda$, the following
bound holds for any $\tau\in I_{in}^{(0)}$:
\begin{align}
\Big|\log\Big(\Omega^{2}\dot{\gamma}^{u} & [\tau;v_{0},p_{0},l_{0}]\Big)-\log\Big(\Omega_{AdS}^{2}\dot{\gamma}_{AdS}^{u}[\tau;v_{0},p_{0},l_{0}]\Big)\Big|\le\label{eq:AlmostFirstBoundForGronwallCauchy}\\
\le & C\int_{U_{in}^{(k)}\cap\{r\ge r_{min}[v_{0},p_{0},l]\}}\Big(\frac{\tilde{m}}{r^{3}}\Omega^{2}+T_{uv}\Big)\,dudv+C\int_{U_{in}^{(k)}\cap\{r\ge r_{min}[v_{0},p_{0},l]\}}\frac{(1+\sqrt{-\Lambda}U)\varepsilon}{r^{2}}\,dudv+\nonumber \\
 & +\int_{\mathcal{W}_{\tau}\cap U_{in}^{(k)}}\frac{\Omega^{2}}{r^{2}}\,dudv+C\Big|\int_{U_{out}^{(k)}\cap\{u=0\}}\partial_{v}\Big(\frac{\Omega^{2}}{-\Lambda r^{2}}-\frac{(\Omega_{AdS}^{(v_{\mathcal{I}})})^{2}}{-\Lambda(r_{AdS}^{(v_{\mathcal{I}})})^{2}}\Big)\,dv\Big|+\nonumber \\
 & +\frac{1}{2}\Big|\int_{U_{out}^{(k)}\cap\{u=v-v_{\mathcal{I}}\}}(\partial_{u}+\partial_{v})\Big(\frac{\Omega^{2}}{-\Lambda r^{2}}-\frac{(\Omega_{AdS}^{(v_{\mathcal{I}})})^{2}}{-\Lambda(r_{AdS}^{(v_{\mathcal{I}})})^{2}}\Big)\,dv\Big|\le\nonumber\\
\le & C(1+\sqrt{-\Lambda}U)\Big(\varepsilon+|\mathcal{W}_{\tau}|\Big).\nonumber 
\end{align}
From (\ref{eq:AlmostFirstBoundForGronwallCauchy}), using (\ref{eq:NullShellAngularMomentum}),
(\ref{eq:IngoingToEnergy}), (\ref{eq:EnergyEstimate}), (\ref{eq:BoundForWidthGrowth})
and the bootstrap assumption (\ref{eq:GeodesicDistanceBootstrap}),
we infer that, for any $\tau\in I_{in}^{(0)}$ (recalling the definition
(\ref{eq:DefinitionEnergyDifference}) of the energy difference $E_{D}$):
\begin{equation}
E_{D}[\tau;v_{0},p_{0},l_{0}]\le C(1+\sqrt{-\Lambda}U)E_{0}\Bigg(\varepsilon+C\frac{l^{2}}{r_{min}[v_{0},p_{0},l]E_{0}^{2}}\int_{v_{0}}^{\tau}\frac{(\Omega_{AdS}^{(v_{\mathcal{I}})})^{2}}{(r_{AdS}^{(v_{\mathcal{I}})})^{2}}\Big|_{\gamma_{AdS}[\bar{\tau};v_{0},p_{0},l]}E_{D}[\bar{\tau};v_{0},p_{0},l_{0}]\,d\bar{\tau}\Bigg).\label{eq:BoundForGronwallCauchy}
\end{equation}

Repeating the same procedure for $\tau\in I_{out}^{(0)}$ using (\ref{eq:UsefulRelationForGeodesicWithMu-V})
with $v_{1}(u)=v_{0}-C\frac{l}{E_{0}}$ (replacing (\ref{eq:IngoingToEnergy})
with (\ref{eq:OutgoingToEnergy}) and using (\ref{eq:BoundForGronwallCauchy})
to obtain an initial estimate for $E_{D}$ on $I_{in}^{(0)}\cap I_{out}^{(0)}$),
we deduce that (\ref{eq:AlmostFirstBoundForGronwallCauchy}) also
holds for $\tau\in I_{out}^{(0)}$. Furthermore, the same argument
yields that (\ref{eq:AlmostFirstBoundForGronwallCauchy}) also holds
for $\tau\in I_{in}^{(k)},I_{out}^{(k)}$ (for a possibly larger $C>0$)
for any $k>0$. Therefore, since, (\ref{eq:BoundForGronwallCauchy})
holds for all $\tau\in[v_{0},U_{*}+v_{\mathcal{I}})$, an application
of Gronwall's inequality, in combination with the trivial estimate
\begin{equation}
\frac{l^{2}}{r_{min}[v_{0},p_{0},l]E_{0}^{2}}\int_{v_{0}}^{\tau}\frac{(\Omega_{AdS}^{(v_{\mathcal{I}})})^{2}}{(r_{AdS}^{(v_{\mathcal{I}})})^{2}}\Big|_{\gamma_{AdS}[\bar{\tau};v_{0},p_{0},l]}\,d\bar{\tau}\le C\lceil\frac{\tau}{2v_{\mathcal{I}}}\rceil
\end{equation}
(following from the lower bound (\ref{eq:LowerBoundRGeodesics}) for
$r_{min}[v_{0},p_{0},l]$ and the explicit relations (\ref{eq:AbstractFormulaGeodesicFlowAdS})\textendash (\ref{eq:ExplicitFormulaGeodesicFlow})
for the geodesic flow on AdS), yield that 
\begin{equation}
\sup_{\tau}E_{D}[\tau;v_{0},p_{0},l_{0}]\le CE_{0}\exp(C(-\Lambda)U^{2})\cdot\varepsilon.\label{eq:BoundEnergyDifference}
\end{equation}

Using the relation (\ref{eq:NullShellAngularMomentum}) and the bounds
(\ref{eq:FirstBoundDvRCauchyImp})\textendash (\ref{eq:OmegaCauchyImp})
and (\ref{eq:LowerBoundRGeodesics}), from (\ref{eq:BoundEnergyDifference})
we infer (\ref{eq:GeodesicDistanceDerivativeBootstrap-1}), provided
$C_{2}(U)\gg\exp(C(-\Lambda)U^{2})$. Integrating the bound (\ref{eq:BoundForWidthGrowth})
and recalling the definition (\ref{eq:DefinitionWidth}) of $|\mathcal{W}_{\tau}|$,
we then obtain (\ref{eq:GeodesicDistanceBootstrap-1}). Hence, the
proof of Theorem \ref{thm:CauchyStabilityAdS} is complete. 
\end{proof}

\appendix

\section{\label{sec:Geodesic-flow-AdS}Geodesic flow on AdS spacetime}

In thi s section, we will collect a few useful relations regarding
the geodesic flow on AdS spacetime $(\mathcal{M}_{AdS}^{3+1},g_{AdS})$.
Let us fix a spherically symmetric double null coordinate pair $(u,v)$
on $\mathcal{M}_{AdS}$ by the condition 
\[
\frac{\partial_{v}r}{1-\frac{1}{3}\Lambda r^{2}}=-\frac{\partial_{u}r}{1-\frac{1}{3}\Lambda r^{2}}=\frac{1}{2}.
\]
(see Section \ref{subsec:Spherically-symmetric-spacetimes} for the
relevant definitions). In this coordinate chart, $\mathcal{M}_{AdS}\backslash\mathcal{Z}$
is mapped to the coordinate domain 
\[
\mathcal{U}_{AdS}=\big\{ u<v<u+\sqrt{-\frac{3}{\Lambda}}\pi\big\},
\]
while the metric $g_{AdS}$ is expressed as 
\[
g_{AdS}=-\Omega_{AdS}^{2}dudv+r^{2}g_{\mathbb{S}^{2}},
\]
where $\Omega_{AdS}^{2}$, $r$ are given by (\ref{eq:AdSMetricValues-1}).

In view of the fact that the vector field 
\[
T=\partial_{u}+\partial_{v}
\]
on $(\mathcal{M}_{AdS},g_{AdS})$ is Killing, the quantity 
\begin{equation}
E[\gamma]\doteq\frac{1}{2}\Omega^{2}(\dot{\gamma}^{u}+\dot{\gamma}^{v})\label{eq:EnergyAdS}
\end{equation}
 is constant along any affinely parametrised geodesic $\gamma$. Given
an initial point $\gamma(0)$ for a null geodesic, the values of the
conserved quantities $E[\gamma]$ and $l[\gamma]$ (see Section \ref{subsec:VlasovEquations}),
together with the sign of $\dot{\gamma}^{v}-\dot{\gamma}^{u}$, determine
$\dot{\gamma}(0)$ (and thus the whole geodesic $\gamma$) uniquely
up to a rotation of $\mathcal{M}_{AdS}$. For a future directed null
geodesic $\gamma$, the relation (\ref{eq:NullShellAngularMomentum})
yields (after multiplication with $\Omega_{AdS}^{2}$) that, at any
point $\gamma(s)$ on $\gamma$: 
\[
\frac{E^{2}[\gamma]}{l^{2}[\gamma]}\ge\frac{1}{r^{2}(\gamma(s))}-\frac{1}{3}\Lambda.
\]
Furthermore, using the relation (\ref{eq:NullShellAngularMomentum}),
we can determine the minimum value of $r$ along an inextendible,
future directed null geodesic $\gamma$ (attained at the point where
$\dot{\gamma}^{v}=\dot{\gamma}^{u}$) only in terms of $E[\gamma],l[\gamma]$;
in particular: 
\begin{equation}
\min_{\gamma}r=\Big(\frac{E^{2}[\gamma]}{l^{2}[\gamma]}+\frac{1}{3}\Lambda\Big)^{-\frac{1}{2}}.\label{eq:MinRAdS}
\end{equation}
Note that only radial null geodesics (i.\,e.~those with $l=0$)
pass through the center $r=0$. 

Let $\gamma$ be any future inextendible, future directed null geodesic
in $(\mathcal{M}_{AdS},g_{AdS})$ with affine parameter $s$, such
that $\gamma(0)$ lies on $u=0$ and $\gamma(\infty)$ (i.\,e.~the
intersection point of $\gamma$ with $\mathcal{I}$) lies at $u+v=\tau_{\infty}[\gamma]$.
Integrating the geodesic equation for (\ref{eq:AdSMetricValues-1}),
we obtain the following useful relations for $\gamma$: Denoting with
$sgn(\cdot)$ the sign function on $\mathbb{R}$, we calculate for
any $\tau\in[v(0),\tau_{\infty}[\gamma])$
\begin{align}
u|_{\gamma\cap\{u+v=\tau\}} & =U\big\{\tau;v(0),E[\gamma],l[\gamma],(\dot{\gamma}^{v}-\dot{\gamma}^{u})(0)\big\},\label{eq:AbstractFormulaGeodesicFlowAdS}\\
v|_{\gamma\cap\{u+v=\tau\}} & =V\big\{\tau;v(0),E[\gamma],l[\gamma],(\dot{\gamma}^{v}-\dot{\gamma}^{u})(0)\big\},\nonumber \\
\frac{d}{ds}\gamma^{u}|_{\gamma\cap\{u+v=\tau\}} & =\Omega_{AdS}^{-2}|_{\gamma\cap\{u+v=\tau\}}\cdot G^{u}\big\{\tau;v(0),E[\gamma],l[\gamma],sgn(\dot{\gamma}^{v}-\dot{\gamma}^{u})(0)\big\},\nonumber \\
\frac{d}{ds}\gamma^{v}|_{\gamma\cap\{u+v=\tau\}} & =\Omega_{AdS}^{-2}|_{\gamma\cap\{u+v=\tau\}}\cdot G^{v}\big\{\tau;v(0),E[\gamma],l[\gamma],sgn(\dot{\gamma}^{v}-\dot{\gamma}^{u})(0)\big\},\nonumber 
\end{align}
where, setting 
\begin{equation}
\rho_{min}\big\{ E,l\big\}=\tan^{-1}\Big(\Big(1-\frac{3}{\Lambda}\frac{E^{2}}{l^{2}}\Big)^{-\frac{1}{2}}\Big)
\end{equation}
and

\begin{equation}
\omega_{0}\big\{ v_{0},E,l,\sigma\big\}=-\sigma\cdot\cos^{-1}\Big(\frac{\cos\big(\frac{1}{2}\sqrt{-\frac{\Lambda}{3}}v_{0}\big)}{\cos\rho_{min}}\Big),\label{eq:omega0}
\end{equation}
the functions $U,V,G^{u},G^{v}$ appearing in (\ref{eq:AbstractFormulaGeodesicFlowAdS})
are defined by the relations
\begin{align}
U\big\{\tau;v_{0},E,l,\sigma\big\} & \doteq\frac{1}{2}\tau-\sqrt{-\frac{3}{\Lambda}}\cos^{-1}\Big\{\cos\rho_{min}\cdot\cos\Big[\omega_{0}-\frac{1}{2}\sqrt{-\frac{\Lambda}{3}}\Big(\tau-v_{0}\Big)\Big]\Big\}\label{eq:ExplicitFormulaGeodesicFlow}\\
V\big\{\tau;v_{0},E,l,\sigma\big\} & \doteq\frac{1}{2}\tau+\sqrt{-\frac{3}{\Lambda}}\cos^{-1}\Big\{\cos\rho_{min}\cdot\cos\Big[\omega_{0}-\frac{1}{2}\sqrt{-\frac{\Lambda}{3}}\Big(\tau-v_{0}\Big)\Big]\Big\},\nonumber \\
G^{u}\big\{\tau;v_{0},E,l,\sigma\big\} & \doteq2E\cdot\frac{dU}{d\tau}\big\{\tau;v_{0},E,l,\sigma\big\},\nonumber \\
G^{v}\big\{\tau;v_{0},E,l,\sigma\big\} & \doteq2E\cdot\frac{dV}{d\tau}\big\{\tau;v_{0},E,l,\sigma\big\}.\nonumber 
\end{align}
Note that $\tau_{\infty}[\gamma]$ can be explicitly expressed as
\[
\tau_{\infty}[\gamma]=v[0]+\sqrt{-\frac{3}{\Lambda}}(2\omega_{0}+\pi).
\]
\begin{rem*}
In the expressions (\ref{eq:omega0})\textendash (\ref{eq:ExplicitFormulaGeodesicFlow}),
we use the convention that $\cos^{-1}x\ge0$. In particular, for $\theta\in[-\frac{\pi}{2},\frac{\pi}{2}]$,
we have that $\cos^{-1}(\cos\theta)=|\theta|$. Note also that, when
$(\dot{\gamma}^{v}-\dot{\gamma}^{u})(0)=0$, the relation (\ref{eq:omega0})
yields $\omega_{0}=0$ independently of the convention for $sgn(0)$
(and, thus, the expressions (\ref{eq:AbstractFormulaGeodesicFlowAdS})
are smooth along variations of $(\dot{\gamma}^{v}-\dot{\gamma}^{u})(0)$
with non-constant sign).
\end{rem*}
By extending the geodesic $\gamma$ through its reflection off $\mathcal{I}$
(according to Definition \ref{def:Reflection}), we can extend the
functions $U$, $V$, $G^{u}$ and $G^{v}$ (defined by (\ref{eq:ExplicitFormulaGeodesicFlow}))
for $\tau$ belonging to the whole interval $[v_{0},+\infty)$. After
this extension, the functions $U,V$ become continuous and piecewise
smooth in $\tau$, while $G^{u},G^{v}$ have a jump discontinuity
at $\tau=\tau_{\infty}[\gamma]+2k\sqrt{-\frac{3}{\Lambda}}\pi$, $k\in\mathbb{N}$.
In particular, $U$, $V$, $G^{u}$ and $G^{v}$ satisfy for any $\tau\in[v_{0},+\infty)$
\begin{equation}
(U,V,G^{u},G^{v})\big\{\tau+2\sqrt{-\frac{3}{\Lambda}}\pi;v_{0},E,l,\sigma\big\}=(U,V,G^{u},G^{v})\big\{\tau;v_{0},E,l,\sigma\big\}\label{eq:PeriodicExtension}
\end{equation}
and 
\begin{align}
\lim_{\tau\rightarrow(\tau_{\infty}[\gamma])^{-}}G^{u}\big\{\tau;v_{0},E,l,\sigma\big\} & =\lim_{\tau\rightarrow(\tau_{\infty}[\gamma])^{+}}G^{v}\big\{\tau;v_{0},E,l,\sigma\big\},\label{eq:JumpDerivativeOnReflection}\\
\lim_{\tau\rightarrow(\tau_{\infty}[\gamma])^{-}}G^{v}\big\{\tau;v_{0},E,l,\sigma\big\} & =\lim_{\tau\rightarrow(\tau_{\infty}[\gamma])^{+}}G^{u}\big\{\tau;v_{0},E,l,\sigma\big\}.\nonumber 
\end{align}
 
\begin{rem*}
For a null geodesic $\gamma$ such that 
\[
\varepsilon\doteq\frac{l[\gamma]}{E[\gamma]}\ll\sqrt{-\frac{3}{\Lambda}},
\]
the relations (\ref{eq:ExplicitFormulaGeodesicFlow}) simplify as
follows (for $\tau\in[v(0),\tau_{\infty}[\gamma])$): 
\begin{align}
U\big\{\tau;v_{0},E,l,\sigma\big\} & =\frac{1}{2}\tau-\sqrt{\varepsilon^{2}+\frac{1}{4}\Big(\tau-v_{0}-2\sqrt{-\frac{3}{\Lambda}}\omega_{0}\Big)^{2}}+O(\varepsilon^{2})\label{eq:SimplifiedExplicitFormulaGeodesicFlow}\\
V\big\{\tau;v_{0},E,l,\sigma\big\} & =\frac{1}{2}\tau+\sqrt{\varepsilon^{2}+\frac{1}{4}\Big(\tau-v_{0}-2\sqrt{-\frac{3}{\Lambda}}\omega_{0}\Big)^{2}}+O(\varepsilon^{2}),\nonumber \\
G^{u}\big\{\tau;v_{0},E,l,\sigma\big\} & =E\cdot\Big(1-\frac{\frac{1}{2}\Big(\tau-v_{0}-2\sqrt{-\frac{3}{\Lambda}}\omega_{0}\Big)}{\sqrt{\varepsilon^{2}+\frac{1}{4}\Big(\tau-v_{0}-2\sqrt{-\frac{3}{\Lambda}}\omega_{0}\Big)^{2}}}+O(\varepsilon^{2})\Big),\nonumber \\
G^{v}\big\{\tau;v_{0},E,l,\sigma\big\} & =E\cdot\Big(1-\frac{\frac{1}{2}\Big(\tau-v_{0}-2\sqrt{-\frac{3}{\Lambda}}\omega_{0}\Big)}{\sqrt{\varepsilon^{2}+\frac{1}{4}\Big(\tau-v_{0}-2\sqrt{-\frac{3}{\Lambda}}\omega_{0}\Big)^{2}}}+O(\varepsilon^{2})\Big),\nonumber 
\end{align}
i.\,e.~$\gamma$ approximately traces a hyperbola in the $(u,v)$
plane (when $\varepsilon>0$), with its vertex at a point where $r\simeq\varepsilon$.
\end{rem*}
\begin{figure}[h] 
\centering 
\scriptsize
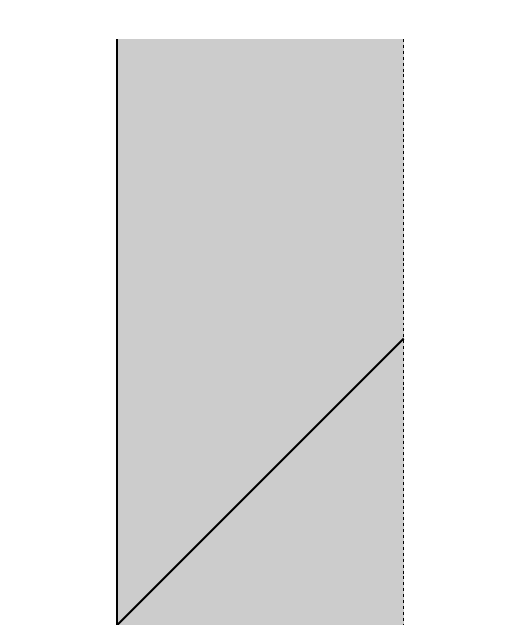 
\caption{Schematic depiction of the projection onto the $(u,v)$-plane of two null geodesics in AdS spacetime emanating from the same point on $\{ u=0 \}$, with $l=0$ and $l=\epsilon E_0$, respectively, where $\epsilon \ll \sqrt{-\frac{3}{\Lambda}}$. Due to the special form of the AdS metric, null geodesics emanating, instead, from the same point on $\mathcal{I}$ will also terminate on the same point on $\mathcal{I}$. \label{fig:Geodesics_AdS}}
\end{figure}

The relations (\ref{eq:AbstractFormulaGeodesicFlowAdS}) allow us
to obtain some simple expressions for solutions of the massless Vlasov
equation on $(\mathcal{M}_{AdS},g_{AdS})$. In particular, for any
smooth function $F:[0,\sqrt{-\frac{3}{\Lambda}}\pi)\times[0,+\infty)\times[0,+\infty)\rightarrow[0,+\infty)$,
the unique solution $f$ of the massless Vlasov equation (\ref{eq:Vlasov})
satisfying at $u=0$ the initial condition 
\begin{equation}
f(0,v;p^{u},p^{v},l)=F(v;p^{u},l)\cdot\delta\big(\Omega_{AdS}^{2}p^{u}p^{v}-\frac{l^{2}}{r^{2}}\big)
\end{equation}
can be expressed as
\begin{equation}
f(u,v;p^{u},p^{v},l)=\bar{f}(u,v;p^{u},p^{v},l)\cdot\delta\big(\Omega_{AdS}^{2}p^{u}p^{v}-\frac{l^{2}}{r^{2}}\big),\label{eq:FAsASmoothDeltaFunction-1}
\end{equation}
where the smooth function $\bar{f}(u,v;p^{u},p^{v},l)$ satisfies
for any $\tau\ge0$ and any $v\in[0,\sqrt{-\frac{3}{\Lambda}}\pi)$,
$E>0$ and $l\ge0$ the relation 
\begin{equation}
\bar{f}\Big(U\{\tau\},V\{\tau\};\Omega_{AdS}^{-2}(U\{\tau\},V\{\tau\})G^{u}\{\tau\},\Omega_{AdS}^{-2}(U\{\tau\},V\{\tau\})G^{v}\{\tau\},l\Big)=F(v,p^{u}[E,v],l).\label{eq:ExplicitSolutionVlasovAdS}
\end{equation}
where $U\{\tau\}$ is shorthand for $U\big\{\tau;v,E,l,sgn(p^{u}[E,v]-\frac{l^{2}}{r^{2}\Omega_{AdS}^{2}(0,v)p^{u}[E,v]})\big\}$
(and similarly for $V\{\tau\}$, $G^{u}\{\tau\}$ and $G^{v}\{\tau\}$),
while $p^{u}[E,v]$ (corresponding to $p^{u}$ along $u=0$ at energy
level $E$) is determined by the relation
\[
\Omega_{AdS}^{2}(0,v)p^{u}+\frac{l^{2}}{r^{2}(0,v)p^{u}}=2E.
\]

\section{\label{sec:CompleteI} Completeness of $\mathcal{I}$ in the presence
of a trapped sphere}

In this section, we will address the question of completeness of conformal
infinity $\mathcal{I}$ for the maximal future development of characteristic,
asymptotically AdS initial data sets. In general, we will not be able
to show that the maximal future development of a smoothly compatible
initial data set $(r_{/},\Omega_{/}^{2},\bar{f}_{/};v_{\mathcal{I}})$
(in accordance with Definition \ref{def:CompatibilityCondition})
has future complete conformal infinity $\mathcal{I}$,\footnote{The statement that for generic initial data, $\mathcal{I}$ is future
complete, is of course equivalent to the statement of the weak cosmic
censorship conjecture in the asymptotically AdS settings for (\ref{eq:EinsteinVlasovEquations})
in spherical symmetry. } i.\,e.~satisfies 
\begin{equation}
\int_{0}^{u_{\mathcal{I}}}\frac{\Omega}{\big(1-\frac{1}{3}\Lambda r^{2}\big)^{\frac{1}{2}}}(u,u+v_{\mathcal{I}})\,du=+\infty.\label{eq:ConditionFutureCompleteInfinity}
\end{equation}
 However, we will be able to infer future completeness for $\mathcal{I}$
in the presence of a trapped sphere; this result will be useful in
our companion paper \cite{MoschidisVlasov}.
\begin{lem}
\label{lem:CompletenessOfI} Let $(\mathcal{U}_{max};r,\Omega^{2},f)$
be the maximal future development of a smoothly compatible, asymptotically
AdS initial data set $(r_{/},\Omega_{/}^{2},\bar{f}_{/};v_{\mathcal{I}})$
for (\ref{eq:RequationFinal})\textendash (\ref{NullShellFinal})
with bounded support in phase space, in accordance with Definitions
\ref{def:AsymptoticallyAdSData} and \ref{def:CompatibilityCondition}.
Assume that there exists some $(\bar{u},\bar{v})\in\mathcal{U}_{max}$
such that 
\begin{equation}
\frac{2m}{r}(\bar{u},\bar{v})>1.\label{eq:TrappedPoint}
\end{equation}
Then the conformal infinity $\mathcal{I}$ of $(\mathcal{U}_{max};r,\Omega^{2},f)$
is future complete, i.\,e.~(\ref{eq:ConditionFutureCompleteInfinity})
is satisfied.
\end{lem}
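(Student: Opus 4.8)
The plan is to argue by contradiction: assume conformal infinity $\mathcal{I}$ is \emph{incomplete}, i.e. the integral in (\ref{eq:ConditionFutureCompleteInfinity}) is finite, and derive a contradiction with the presence of the trapped sphere (\ref{eq:TrappedPoint}). The strategy follows the classical Dafermos--type argument for completeness of null infinity in the presence of a trapped surface, adapted to the asymptotically AdS setting with the reflecting boundary condition. First I would recall that, by the monotonicity coming from the constraint equations (\ref{eq:ConstraintVFinal})--(\ref{eq:ConstraintUFinal}), once $\frac{2m}{r}>1$ at one point, the quantity $\frac{2m}{r}$ stays $>1$ in the entire causal future of $(\bar u,\bar v)$ intersected with $\mathcal{U}_{max}$ wherever the solution is defined (since $\partial_v r<0$ and $\partial_u r<0$ propagate into a trapped region, and $\tilde m$ is monotone there). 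In particular $J^+(\bar u,\bar v)\cap\mathcal{U}_{max}$ cannot contain points of $\{r=\infty\}$, so conformal infinity $\mathcal{I}$ ``terminates'' — there is a last outgoing ray $\{u=U_\infty\}$ emanating from the Cauchy hypersurface before $\mathcal{I}$ is swallowed.

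The key step is to show that, if $\mathcal{I}$ is incomplete, then the solution extends smoothly past the putative endpoint of $\mathcal{I}$, using the extension principles of Section~\ref{sec:Extension-principles-for}; this contradicts the \emph{maximality} of $(\mathcal{U}_{max};r,\Omega^2,f)$ combined with the trapped-region structure. More precisely, I would proceed as follows. Incompleteness of $\mathcal{I}$ means the affine length $\int_0^{u_\mathcal{I}}\Omega(1-\tfrac13\Lambda r^2)^{-1/2}\,du<+\infty$ along $\mathcal{I}$. Renormalizing the gauge (as in Definition~\ref{def:AsADS} and the renormalised equations (\ref{eq:RenormalisedEquations})) and using the conservation of $\tilde m$ along $\mathcal{I}$ (the Lemma preceding Section~\ref{sec:InitialValueProblem}) together with the bounded-support-in-phase-space hypothesis, one controls the renormalised quantities $\widetilde\Omega^2$, $\rho$, $\tau_{\mu\nu}$ up to and slightly beyond the endpoint of $\mathcal{I}$, exactly along the lines of the proof of Proposition~\ref{prop:ExtensionPrincipleInfinity}: the bounds (\ref{eq:CompactSupportF-1})-type estimate on $\mathrm{supp}(f)$ propagate via the geodesic identities (\ref{eq:UsefulRelationForGeodesicWithMu-U})--(\ref{eq:UsefulRelationForGeodesicWithMu-V}) and the reflecting condition, giving $\sup(\tau_{uu}+\tau_{uv}+\tau_{vv})<+\infty$ and a $C^{0,1}$ (then $C^\infty$) extension of $(\rho,\widetilde\Omega^2,\tau)$ across the endpoint. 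But such an extension forces the region $\{r=\infty\}$ to continue, contradicting the fact that it was cut off by $J^+(\bar u,\bar v)$; equivalently, the trapped region $\{\frac{2m}{r}>1\}$ cannot be consistently glued to an extended conformal infinity, since $\frac{2m}{r}>1$ is incompatible with $\partial_v r>0$ which must hold near $\mathcal{I}$ by (\ref{eq:1/r0}) and (\ref{eq:DefinitionHawkingMass}).

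The cleanest route, which I would ultimately adopt, is to combine these observations with the general continuation criterion of Corollary~\ref{cor:GeneralContinuationCriterion}: if $\mathcal{I}$ were incomplete, then on the domain of outer communications $\mathcal{U}_{U_\infty;v_\mathcal{I}}$ one has $\sup \frac{2m}{r}<1$ (the domain of outer communications is by definition the region ``visible'' from $\mathcal{I}$ and hence untrapped) and $\limsup_{(u,v)\to(U_\infty,U_\infty)}\frac{2\tilde m}{r}\le \delta_0$, because the incompleteness bound on the affine length of $\mathcal{I}$ plus conservation of $\tilde m$ forces $\tilde m$ to remain finite and, after the scale-invariant normalization, small near the corner point; the bounded-support hypothesis gives (\ref{eq:CompactSupportFContinuity}). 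Corollary~\ref{cor:GeneralContinuationCriterion} then yields a strict extension of the domain of outer communications, contradicting maximality. Hence (\ref{eq:TrappedPoint}) is incompatible with incompleteness, i.e. $\mathcal{I}$ is future complete.

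The main obstacle I anticipate is the bookkeeping around the \emph{corner point} where $\mathcal{I}$ would terminate: one must show that incompleteness of the affine length of $\mathcal{I}$ genuinely implies the smallness $\limsup \frac{2\tilde m}{r}\ll 1$ (in the renormalised gauge) needed to invoke the extension principle, rather than merely finiteness — this requires a careful argument using the precise relation between the affine parameter along $\mathcal{I}$, the renormalised metric coefficient $\widetilde\Omega^2$, and the (conserved) boundary mass, together with the fact that in an incomplete scenario the solution near that corner is forced to be close to a member of the Schwarzschild--AdS family with the prescribed boundary mass, via the bounds on $T_{\mu\nu}$ from the phase-space support control. A secondary subtlety is ensuring that the extension principles, which were stated for solutions \emph{already} possessing a smooth conformal infinity on part of the relevant boundary, apply at the very corner where the three boundary pieces (axis image, $\mathcal{I}$, and the continuation) meet; here one uses that the trapped-region monotonicity keeps one safely away from the axis, so only Proposition~\ref{prop:ExtensionPrincipleInfinity} and Proposition~\ref{prop:ExtensionPrincipleMihalis} are needed, not Theorem~\ref{thm:ExtensionPrinciple}.
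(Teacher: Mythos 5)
Your proposal assembles some of the right ingredients (a trapped sphere forces every outgoing cone through it to have bounded $r$, hence $\mathcal{I}$ terminates at a finite $u_{\mathcal{I}}$; maximality combined with Proposition \ref{prop:ExtensionPrincipleInfinity} constrains what can happen at the endpoint of $\mathcal{I}$), but the logical architecture is inverted and the decisive quantitative step is missing. The paper's proof is \emph{direct}, not by contradiction: it shows (i) $\frac{2m}{r}<1$ on $\{u<u_{\mathcal{I}}\}$ and $\bar{u}\ge u_{\mathcal{I}}$, so the corner $(u_{\mathcal{I}},u_{\mathcal{I}}+v_{\mathcal{I}})$ is an interior boundary point of $\mathcal{U}_{max}$ away from $\zeta$; (ii) since the solution cannot be extended past $u=u_{\mathcal{I}}$ with smooth conformal infinity, the \emph{only} hypothesis of Proposition \ref{prop:ExtensionPrincipleInfinity} that can fail is (\ref{eq:AssumptionNoTrapped}), so there is a sequence $(u_{n},v_{n})\to(u_{\mathcal{I}},u_{\mathcal{I}}+v_{\mathcal{I}})$ with $\frac{2m}{r}(u_{n},v_{n})\to1$; (iii) the monotonicity $\partial_{v}\big(\frac{-\partial_{u}r}{1-\frac{2m}{r}}\big)\ge0$ from (\ref{eq:DerivativeInVDirectionKappaBar}), integrated over the triangle $\{u\le u_{n}\}\cap\{v\ge v_{n}\}\cap\{u>v-v_{\mathcal{I}}\}$, together with the explicit computation
\begin{equation*}
\int_{r(u_{n},v_{n})}^{+\infty}\frac{dr}{1-\frac{2\tilde{m}(u_{n},v_{n})}{r}-\frac{1}{3}\Lambda r^{2}}\ge-\log\Big(1-\frac{2m}{r}(u_{n},v_{n})\Big)-C
\end{equation*}
and the identity $\frac{-\partial_{u}r}{1-\frac{2m}{r}}\big|_{\mathcal{I}}=\frac{1}{2}\Omega(1-\frac{1}{3}\Lambda r^{2})^{-1/2}$, bounds the affine length of $\mathcal{I}$ below by a quantity diverging as $n\to\infty$. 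Step (iii) is the heart of the lemma and is absent from your proposal: it is what converts ``$\frac{2m}{r}\to1$ at the endpoint of $\mathcal{I}$'' into ``infinite affine length''.

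Your contradiction scheme cannot close without it. You assume $\mathcal{I}$ incomplete and try to derive extendibility, but the case that actually occurs — $\frac{2m}{r}\to1$ along a sequence approaching $(u_{\mathcal{I}},u_{\mathcal{I}}+v_{\mathcal{I}})$, so that no extension principle applies — is not excluded by anything you write; incompleteness of the affine length gives you no handle on $\frac{2m}{r}$ or $\frac{2\tilde{m}}{r}$ anywhere, and indeed your appeal to Corollary \ref{cor:GeneralContinuationCriterion} requires smallness of $\frac{2\tilde{m}}{r}$ at the \emph{axis} corner $(u_{*},u_{*})$ (hence Theorem \ref{thm:ExtensionPrinciple}, which you explicitly say you will avoid), a condition that is both irrelevant to the corner at infinity and unobtainable from your hypotheses. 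The ``main obstacle'' you flag is therefore not bookkeeping: it is the substance of the proof, and the correct resolution is the direct estimate above rather than a reductio.
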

\begin{proof}
Let $u_{\gamma_{\mathcal{Z}}},u_{\mathcal{I}}\in(0,+\infty]$ be the
endpoint parameters of $\gamma_{\mathcal{Z}},\mathcal{I}$ and let
$\zeta$ be the (possibly empty) achronal future boundary of $\mathcal{U}_{max}$,
defined as in Definition \ref{def:DevelopmentSets}. As a consequence
of the relation (\ref{eq:DefinitionHawkingMass}) and the fact that
$\partial_{u}r<0$ everywhere on $\mathcal{U}$ (following from (\ref{eq:ConstraintUFinal})
and the fact that $\partial_{u}r<0$ on $\{u=0\}\cup\mathcal{I}$),
for any point $(u_{*},v_{*})\in\mathcal{U}$ satisfying 
\begin{equation}
\frac{2m}{r}(u_{*},v_{*})\ge1,\label{eq:bbbbb}
\end{equation}
 we can bound from above
\[
\partial_{v}r(u_{*},v_{*})<0.
\]
Thus, integrating (\ref{eq:ConstraintVFinal}) along $u=u_{*}$ starting
from $(u_{*},v_{*})$, we infer that:
\begin{equation}
\sup_{v\ge v_{*}}\partial_{v}r(u_{*},v)<0.
\end{equation}
Therefore, the function $r(u_{*},\cdot)$ is bounded from above, which
implies that 
\[
\{u=u_{*}\}\cap\mathcal{I}=\emptyset
\]
(since $\mathcal{I}=\{r=\infty\}$) or, equivalently: 
\begin{equation}
u_{*}\ge u_{\mathcal{I}}.\label{eq:ccccc}
\end{equation}
Since (\ref{eq:ccccc}) holds for any $u_{*}$ for which (\ref{eq:bbbbb})
is satisfied for some $v_{*}$, we infer that:
\begin{equation}
\frac{2m}{r}(u,v)<1\text{ for all }(u,v)\in\{u<u_{\mathcal{I}}\}\cap\mathcal{U}.\label{eq:NoTrappingPastInfinity}
\end{equation}

Since $(\bar{u},\bar{v})$ satisfies (\ref{eq:bbbbb}) (in view of
the assumption (\ref{eq:TrappedPoint})), it also satisfies (\ref{eq:ccccc}),
i.\,e.
\begin{equation}
\bar{u}\ge u_{\mathcal{I}}.\label{eq:UpperBoundEndpointInfinity}
\end{equation}
In view of the fact that $\zeta$ is achronal (yielding that $u|_{\zeta}\le\max\{u_{\mathcal{I}},u_{\gamma_{\mathcal{Z}}}\}$)
and $(\bar{u},\bar{v})$ is an interior point of $\mathcal{U}_{max}$,
\ref{eq:UpperBoundEndpointInfinity} also implies that
\begin{equation}
\bar{u}<u_{\gamma_{\mathcal{Z}}}.\label{eq:LowerBoundEndpointAxis}
\end{equation}
As a consequence of (\ref{eq:UpperBoundEndpointInfinity}) and (\ref{eq:LowerBoundEndpointAxis})
and the fact that $\zeta$ is achronal, we therefore infer that, for
any $\varepsilon>0$, there exists some $\delta(\varepsilon)>0$ such
that 
\begin{equation}
\Big((u_{\mathcal{I}}-\delta(\varepsilon),u_{\mathcal{I}}+\delta(\varepsilon))\times(u_{\mathcal{I}},v_{\mathcal{I}}+u_{\mathcal{I}}-\varepsilon)\Big)\cap\{u<v\}\subset\mathcal{U}_{max}.\label{eq:NeighborhoodOfHorizon}
\end{equation}

The maximality of $(\mathcal{U}_{max};r,\Omega^{2},f)$ implies that
$(\mathcal{U}_{max};r,\Omega^{2},f)$ \emph{cannot} be extended as
a smooth solution of (\ref{eq:RequationFinal})\textendash (\ref{NullShellFinal})
with smooth conformal infinity $\mathcal{I}$ beyond $u=u_{\mathcal{I}}$
to any open set of the form $\{u\le u_{\mathcal{I}}+\delta\}\cap\{u<v<u+v_{\mathcal{I}}\}$
for $\delta>0$. This fact, together with the fact that (\ref{eq:NeighborhoodOfHorizon})
holds for any $\varepsilon>0$, implies that the conditions of the
extension principle of Proposition \ref{prop:ExtensionPrincipleInfinity}
fail to hold for $(\mathcal{U}_{max};r,\Omega^{2},f)$ at the boundary
point at $(u_{\mathcal{I}},u_{\mathcal{I}}+v_{\mathcal{I}})$, i.\,e.~there
exists a sequence $\{(u_{n},v_{n})\}_{n\in\mathbb{N}}\subset\mathcal{U}_{max}$
with $u_{n}<u_{\mathcal{I}}$ and $(u_{n},v_{n})\rightarrow(u_{\mathcal{I}},u_{\mathcal{I}}+v_{\mathcal{I}})$,
such that 
\begin{equation}
\lim_{n\rightarrow\infty}\frac{2m}{r}(u_{n},v_{n})\ge1.\label{eq:AlmostFailureExtension}
\end{equation}
In view of (\ref{eq:NoTrappingPastInfinity}), the inequality (\ref{eq:AlmostFailureExtension})
trivially reduces to the equality 
\begin{equation}
\lim_{n\rightarrow\infty}\frac{2m}{r}(u_{n},v_{n})=1.\label{eq:FailureExtension}
\end{equation}

The relation (\ref{eq:DerivativeInVDirectionKappaBar}) (which is
well defined on $\{u<u_{\mathcal{I}}\}\cap\mathcal{U}$ in view of
(\ref{eq:NoTrappingPastInfinity})) implies that 
\begin{equation}
\partial_{v}\Big(\frac{-\partial_{u}r}{1-\frac{2m}{r}}\Big)\ge0.\label{eq:IncreasingHalfOmegaOutgoing}
\end{equation}
Integrating (\ref{eq:IncreasingHalfOmegaOutgoing}) over the triangle
$\{u\le u_{n}\}\cap\{v\ge v_{n}\}\cap\{u>v-v_{\mathcal{I}}\}$, we
infer that 
\begin{equation}
\int_{\max\{v_{n}-v_{\mathcal{I}},0\}}^{u_{n}}\frac{-\partial_{u}r}{1-\frac{2m}{r}}\Big|_{\mathcal{I}}(u,u+v_{\mathcal{I}})\,du\ge\int_{\max\{v_{n}-v_{\mathcal{I}},0\}}^{u_{n}}\frac{-\partial_{u}r}{1-\frac{2m}{r}}(u,v_{n})\,du\label{eq:TrivialInequality}
\end{equation}
for all $n\in\mathbb{N}$. Using the relation (\ref{eq:RenormalisedHawkingMass})
between $m$ and $\tilde{m}$ and the fact that $\partial_{u}\tilde{m}\le0$
on $\{u<u_{\mathcal{I}}\}\cap\mathcal{U}$ (following from (\ref{eq:DerivativeTildeUMass})
and (\ref{eq:NoTrappingPastInfinity})), we can trivially estimate
from below
\begin{align}
\int_{v_{n}-v_{\mathcal{I}}}^{u_{n}}\frac{-\partial_{u}r}{1-\frac{2m}{r}}(u,v_{n})\,du & \ge\int_{v_{n}-v_{\mathcal{I}}}^{u_{n}}\frac{-\partial_{u}r}{1-\frac{2\tilde{m}(u_{n},v_{n})}{r}-\frac{1}{3}\Lambda r^{2}}(u,v_{n})\,du=\label{eq:LowerBoundRLength}\\
 & =\int_{r(u_{n},v_{n})}^{+\infty}\frac{1}{1-\frac{2\tilde{m}(u_{n},v_{n})}{r}-\frac{1}{3}\Lambda r^{2}}\,dr\ge\nonumber \\
 & \ge-\log\Big(1-\frac{2m}{r}(u_{n},v_{n})\Big)-C\nonumber 
\end{align}
for some $C>0$ independent of $n$. On the other hand, in view of
the relations (\ref{eq:DefinitionHawkingMass}), (\ref{eq:RenormalisedHawkingMass})
and the bound $\tilde{m}|_{\mathcal{I}}<\infty$ (following from the
fact that the initial data $(r_{/},\Omega_{/}^{2},\bar{f}_{/};v_{\mathcal{I}})$
were assumed to be of bounded support in phase space), we readily
infer using the boundary condition (\ref{eq:BoundaryConditionRInfinity})
that
\begin{equation}
\frac{-\partial_{u}r}{1-\frac{2m}{r}}\Big|_{\mathcal{I}}=\frac{1}{2}\frac{\Omega}{\big(1-\frac{1}{3}\Lambda r^{2}\big)^{\frac{1}{2}}}.\label{eq:HalfOmegaAtInfinity}
\end{equation}
Using (\ref{eq:LowerBoundRLength}) and (\ref{eq:HalfOmegaAtInfinity}),
the inequality (\ref{eq:TrivialInequality}) yields for any $n\in\mathbb{N}$
\begin{equation}
\int_{v_{n}-v_{\mathcal{I}}}^{u_{n}}\frac{\Omega}{\big(1-\frac{1}{3}\Lambda r^{2}\big)^{\frac{1}{2}}}(u,u+v_{\mathcal{I}})\,du\ge-\log\Big(1-\frac{2m}{r}(u_{n},v_{n})\Big)-C.\label{eq:AlmostThereCompleteInfinity=00005D}
\end{equation}
Considering the limit $n\rightarrow\infty$ for (\ref{eq:AlmostThereCompleteInfinity=00005D})
and using (\ref{eq:FailureExtension}), we therefore infer (\ref{eq:ConditionFutureCompleteInfinity}).
\end{proof}
\bibliographystyle{plain}
\bibliography{DatabaseExample}

\end{document}